\newif\iffinal
\else\usepackage[notref,notcite]{showkeys}\fi
\tikzset{every path/.style={line width=.07 cm}}
\definecolor{darkblue}{rgb}{0,0,0.6}
\def\tcr#1{\textcolor{black}{#1}}
\newenvironment{enumeratei}{\begin{enumerate}[\upshape (i)]}{\end{enumerate}}
\newenvironment{enumeratea}{\begin{enumerate}[\upshape (a)]}{\end{enumerate}}
\newenvironment{inparaenuma}{\begin{inparaenum}[\upshape \bfseries (a) ]}{\end{inparaenum}}
\numberwithin{equation}{section}
\numberwithin{figure}{section}
\numberwithin{table}{section}
\newtheorem{thm}{Theorem}[section]
\newtheorem{lem}[thm]{Lemma}
\newtheorem{theorem}[thm]{Theorem}
\newtheorem{corollary}[thm]{Corollary}
\newtheorem{prop}[thm]{Proposition}
\newtheorem{defn}[thm]{Definition}
\newtheorem{ass}[thm]{Assumption}
\newtheorem{lemma}[thm]{Lemma}
\theoremstyle{definition}
\newtheorem{rem}{Remark}
\newcommand{\ind}{\mathds{1}}
\newcommand{\eps}{\varepsilon}
\newcommand{\set}[1]{\left\{#1\right\}}
\newcommand{\equald}{\stackrel{\mathrm{d}}{=}}
\newcommand{\probc}{\stackrel{\mathrm{P}}{\longrightarrow}}
\newcommand{\convas}{\stackrel{\mathrm{a.s.}}{\longrightarrow}}
\newcommand{\hght}{\mathrm{ht}}
\newcommand{\stod}{\preceq_{\mathrm{st}}}
\def\qed{ \hfill $\blacksquare$}
\newcommand{\cD}{\mathcal{D}}\newcommand{\cE}{\mathcal{E}}\newcommand{\cF}{\mathcal{F}}
\newcommand{\cG}{\mathcal{G}}\newcommand{\cH}{\mathcal{H}}
\newcommand{\cL}{\mathcal{L}}
\newcommand{\cN}{\mathcal{N}}
\newcommand{\cP}{\mathcal{P}}\newcommand{\cR}{\mathcal{R}}
\newcommand{\cS}{\mathcal{S}}\newcommand{\cT}{\mathcal{T}}
\newcommand{\cV}{\mathcal{V}}
\newcommand{\cZ}{\mathcal{Z}}
\newcommand{\vA}{\mathbf{A}}\newcommand{\vB}{\mathbf{B}}
\newcommand{\vP}{\mathbf{P}}
\newcommand{\vZ}{\mathbf{Z}}
\newcommand{\ve}{\mathbf{e}}
\newcommand{\vp}{\mathbf{p}}
\newcommand{\vs}{\mathbf{s}}\newcommand{\vt}{\mathbf{t}}\newcommand{\vu}{\mathbf{u}}
\newcommand{\vv}{\mathbf{v}}\newcommand{\vx}{\mathbf{x}}
\newcommand{\mvzero}{\boldsymbol{0}}
\newcommand{\mvxi}{\boldsymbol{\xi}}
\newcommand{\fR}{\mathfrak{R}}
\newcommand{\bL}{\mathbb{L}}
\newcommand{\bN}{\mathbb{N}}
\newcommand{\bR}{\mathbb{R}}
\newcommand{\bT}{\mathbb{T}}
\newcommand{\bZ}{\mathbb{Z}}
\DeclareMathOperator{\E}{\mathbb{E}}
\DeclareMathOperator{\pr}{\mathbb{P}}
\DeclareMathOperator{\var}{Var}
 \DeclareMathOperator{\height}{ht}
 \DeclareMathOperator{\dist}{dist}
\DeclareMathOperator{\PR}{PR}
\newcommand{\convd}{\stackrel{d}{\longrightarrow}}
\newcommand{\convp}{\stackrel{P}{\longrightarrow}}
\def\RR{\mathbb{R}}
\definecolor{aqua}{rgb}{0.0, 1.0, 1.0}
\definecolor{boo}{rgb}{1.0, 0.0, 1.0}
\definecolor{stred}{rgb}{1.0, 0.44, 0.37}
\newcommand{\yu}{{\sf Yule} }
\newcommand{\BRW}{{\sf BRW} }
\newcommand{\MBP}{{\sf MBP}}
\newcommand{\bbT}{\mathbb{T}}
\newcommand{\TT}{\mathcal{T}}
\newcommand{\bs}{\mathbf{s}}
\newcommand{\bt}{\mathbf{t}}
\newcommand{\convprf}{\stackrel{\mbox{\bf P-fr}}{\longrightarrow}}
\newcommand{\prob}{\mathbb{P}}
\newcommand{\bfomega}{{\boldsymbol \omega}}
\newcommand{\Zbold}{{\mathbb{Z}}}
\DeclareMathAlphabet\mathbfcal{OMS}{cmsy}{b}{n}
\newcommand{\bcF}{\mathbfcal{F}}
\newcommand{\bcP}{\mathbfcal{P}}
\newcommand\thickbar[1]{\accentset{\rule{.6em}{1.3pt}}{#1}}
\newcommand{\barT}{\thickbar{T}}
\newcommand{\cpst}{\cP^\star}
\def\beq{ \begin{equation} }
\def\eeq{ \end{equation} }
\def\FF{\mathcal{F}}
\def\LL{\mathcal{L}}
\def\ep{\varepsilon}
\def\mn{\medskip\noindent}
\newcommand{\chnr}[1]{\textcolor{black}{{#1}}}
\begin{document}

\title[Co-evolving dynamic networks]{Co-evolving dynamic  networks}

\date{}
\subjclass[2010]{Primary: 60K35, 05C80. }% Secondary: ;}
\keywords{continuous time branching processes, temporal networks, PageRank, random trees, stable age distribution theory, local weak convergence, multitype branching processes, Perron-Frobenius theory, quasi-stationary distribution, phase transition}

\author[Banerjee]{Sayan Banerjee}
\author[Bhamidi]{Shankar Bhamidi}
\address{Department of Statistics and Operations Research, 304 Hanes Hall, University of North Carolina, Chapel Hill, NC 27599}
\email{sayan@email.unc.edu, bhamidi@email.unc.edu, zoehuang@unc.edu}
\author[Huang]{Xiangying Huang}
%\email{zoehuang@math.ubc.ca}
%\address{$^2$Department of Mathematics, University of British Columbia, BC, Canada}

\begin{abstract}
Co-evolving network models, wherein dynamics such as random walks on the network influence the evolution of the network structure, which in turn influences the dynamics, are of interest in a range of domains. While much of the literature in this area is currently supported by numerics, providing evidence for fascinating conjectures and phase transitions,  proving rigorous results has been quite challenging. 
We propose a general class of co-evolving tree network models  driven by local exploration, started from a single vertex called the root.  New vertices attach to the current network via randomly sampling a vertex and then exploring the graph for a random number of steps in the direction of the root, connecting to the terminal vertex. Specific choices of the exploration step distribution lead to the well-studied affine preferential attachment and uniform attachment models, as well as less well understood dynamic network models with global attachment functionals such as PageRank scores \cite{chebolu2008pagerank}. We obtain local weak limits for such networks and use them to derive asymptotics for the limiting empirical degree and PageRank distribution. We also quantify asymptotics for the degree and PageRank of fixed vertices, including the root, and the height of the network. Two distinct regimes are seen to emerge, based on the expected exploration distance of incoming vertices, which we call the `fringe' and `non-fringe' regimes. These regimes are shown to exhibit different qualitative and quantitative properties. In particular, networks in the non-fringe regime undergo `condensation' where the root degree grows at the same rate as the network size. Networks in the fringe regime do not exhibit condensation. 
\tcr{Further, non-trivial phase transition phenomena are shown to arise for: (a) height asymptotics in the non-fringe regime, driven by the subtle competition between the condensation at the root and network growth; (b) PageRank distribution in the fringe regime, connecting to the well known power-law hypothesis.}  
In the process, we develop a general set of techniques involving local limits, infinite-dimensional urn models, related multitype branching processes and corresponding Perron-Frobenius theory, branching random walks, and in particular relating tail exponents of various functionals to the scaling exponents of quasi-stationary distributions of associated random walks. These techniques are expected to shed light on a variety of other co-evolving network models.

% \end{enumeratea}	\cite{chebolu2008pagerank}
\end{abstract}

\maketitle

\section{Introduction}
\label{sec:int}

\subsection{Motivation}

Driven by the explosion in the amount of data on various real world networks, the last few years have seen the emergence of many new mathematical network models. Goals underlying these models include, \begin{inparaenuma}
	\item extracting unexpected connectivity patterns within the network (e.g. community detection);
	\item understanding properties of dynamics on these real world systems such as the spread of epidemics and opinion dynamics;
	\item understanding mechanistic reasons for the emergence of empirically observed properties of these systems such as heavy tailed degree distribution or the small world property;
\end{inparaenuma} 
see e.g. \cites{albert2002statistical,newman2003structure,newman2010networks,bollobas2001random,durrett-rg-book,van2009random} and the references therein. Within this vast research area, dynamic or temporal networks, namely systems that change over time, play an important role both in applications such as understanding social networks or the evolution of gene regulatory networks \cites{holme2012temporal, holme2019temporal, masuda2016guidance}. One major frontier, especially for developing rigorous understanding of proposed models, are the so-called co-evolutionary (or adaptive) networks, where specific dynamics (e.g. random walk explorations) on the network influence the structure of the network, which in turn influences the dynamics; thus both modalities (dynamics on the network and the network itself) co-evolve \cites{gross2008adaptive,aoki2016temporal,sayama2013modeling,sayama2015social}. Quoting \cite{porter2020nonlinearity+}, {\it ``$\ldots$ adaptive networks provide a promising mechanistic approach to simultaneously explain both structural features and temporal features $\ldots$ and can produce extremely interesting and rich dynamics, such as the spontaneous development of extreme states in opinion models $\ldots$ ''} Despite significant interest in such models, deriving rigorous results has been challenging. Let us describe two concrete motivations behind this paper: 

\begin{enumeratea}
\item {\bf Evolving networks driven through local exploration:} Motivated by the growth of social networks, there has been significant interest in trying to understand the influence of processes such as search engines or influence ranking mechanisms in the growth of networks. A number of papers \cites{pandurangan2002using,blum2006random,chebolu2008pagerank} have explored the dynamic evolution of networks through new nodes first exploring neighborhoods of randomly selected vertices before deciding on whom to connect. Even simple microscopic rules seemed to result in non-trivial phase transitions. 
\item {\bf Preferential attachment models driven by global attachment schemes:} Perhaps the most well known class of co-evolving network models in practice are the so-called preferential attachment class. Typically one fixes an attachment function $f_{att}:\mathbb{N} \to \bR_+$, with $f_{att}(k)$ denoting the attractiveness of a degree $k$ vertex for new vertices joining the system.   New vertices enter the system and attach to existing vertices with probability proportional to their attractiveness. Such models have been used in diverse settings including positing causal mechanisms for heavy tailed degree distributions \cite{barabasi1999emergence}, understanding robustness of networks to attacks \cites{bollobas2003mathematical,bollobas2004robustness}, and modeling retweet networks \cite{bhamidi2012twitter}. Such models require global knowledge of the network at each stage, yet use only the degree of each vertex for attachment, eschewing potentially more relevant global attractiveness functions such as centrality measures of vertices like the PageRank score (described in more detail below).  
\end{enumeratea}
% \noindent{\bf Goals:} 

\noindent {\bf Organization of the paper:} We describe the general class of models in Section \ref{sec:mod-def}. Section \ref{sec:not-constr} describes initial constructions required to state the main results. Section \ref{sec:res} contains all the main results, where we also connect the results to existing literature and conjectures. Proofs are commenced in Section \ref{sec:proofs} with individual subsections connecting functionals of the  model to continuous time branching processes, branching random walks, quasi-stationary distributions and so on, and deriving relevant results for this paper. The rest of the Sections then use the technical foundations in Section \ref{sec:proofs} to complete the proofs of the main theorems.

\subsection{Model definition}
\label{sec:mod-def}
Fix a probability mass function $\vp:= \set{p_k:k\geq 0}$ on $\bZ_+$. For the rest of the paper, let $\vZ = \set{Z_1, Z_2, \ldots}$ be an \emph{i.i.d} sequence with distribution $\vp$.  We now describe the recursive construction of a sequence of random trees $\set{\cT_n:n\geq 1}$, always rooted at vertex $\set{v_0}$, with edges pointed from descendants to their parents.  Start with two vertices $\set{v_0, v_1}$, with $\cT_1$ a rooted tree at $\set{v_0}$, an oriented edge from $v_1$ to $v_0$.  Assume for some $n\geq 1$, we have constructed $\cT_n$. Then to construct $\cT_{n+1}$:

\begin{enumeratea}
    \item New vertex $\set{v_{n+1}}$ enters the system at time $n+1$. 
    \item This new vertex selects a vertex $V_n$, \emph{uniformly at random}, amongst the existing vertices $\cV(\cT_n) = \set{v_0, \ldots, v_n} $. 
    \item  Let $\cP(v_0, V_n)$ denote the path from the root to this vertex. This new vertex traverses up this path for a random length $Z_{n+1}$ and attaches to the terminal vertex. If the \chnr{graph distance to the root,} $\dist(v_0, V_n) \leq Z_{n+1}$ then this new vertex attaches to the root $v_0$. 
\end{enumeratea}

% \SB{Should we put a pic?}
\begin{figure}[h]
     \centering     
\begin{subfigure}[b]{0.3\textwidth}
         \centering
         \begin{center}
\begin{tikzpicture}[background rectangle/.style=
		     {draw=red!80,fill=red!10,rounded corners=2ex},
		   show background rectangle]
  \draw[gray](0,0)--(0,-1);
  \draw[gray](0,0)--(-1,-1);
  \draw[gray](-1,-1)--(-1,-2);
  \draw[gray](-1,-2)--(-1,-3);
  \draw[gray](-1,-2)--(-2,-3);
    \filldraw(0,0) circle (2pt) node[above] {$v_0$};
   \filldraw(0,-1) circle (2pt) node[right] {$v_2$};
    \filldraw(-1,-1) circle (2pt) node[right] {$v_1$};
    \filldraw(-1,-2) circle (2pt) node[right] {$v_3$};
     \filldraw(-1,-3) circle (2pt) node[right] {$v_5$};
       \filldraw[red](-2,-3) circle (2pt) node[right] {$v_4$};
       \draw[dashed, blue,->](2,-3)--(1,-2);
       \filldraw[blue](2,-3) circle (2pt) node[right] {$v_6$};
\end{tikzpicture}
\end{center}
\end{subfigure}
\hfill
\begin{subfigure}[b]{0.3\textwidth}
         \centering
         \begin{center}
\begin{tikzpicture}[background rectangle/.style=
		     {draw=red!80,fill=red!10,rounded corners=2ex},
		   show background rectangle]
  \draw[gray](0,0)--(0,-1);
  \draw[gray](0,0)--(-1,-1);
  \draw[gray](-1,-2)--(-1,-3);
 \filldraw(0,0) circle (2pt) node[above] {$v_0$};
   \filldraw(0,-1) circle (2pt) node[right] {$v_2$};
    \filldraw[blue](-1,-1) circle (2pt) node[right] {$v_1$};
    \filldraw[lightgray](-1,-2) circle (2pt) node[right] {$v_3$};
     \filldraw(-1,-3) circle (2pt) node[right] {$v_5$};
       \filldraw[red](-2,-3) circle (2pt) node[right] {$v_4$};
         \draw[->, red](-1,-2)--(-1,-1);
          \draw[->, red](-2,-3)--(-1,-2);   
          
              \draw[dashed, blue,->](2,-3)--(1,-2);
       \filldraw[blue](2,-3) circle (2pt) node[right] {$v_6$};
\end{tikzpicture}
\end{center}
     \end{subfigure}
\hfill     
\begin{subfigure}[b]{0.3\textwidth}
         \centering
     \begin{center}
\begin{tikzpicture}[background rectangle/.style=
		     {draw=blue!80,fill=blue!10,rounded corners=2ex},
		   show background rectangle]
  \draw[gray](0,0)--(0,-1);
  \draw[gray](0,0)--(-1,-1);
  \draw[gray](-1,-1)--(-1,-2);
  \draw[gray](-1,-2)--(-1,-3);
  \draw[gray](-1,-2)--(-2,-3);
    \filldraw(0,0) circle (2pt) node[above] {$v_0$};
   \filldraw(0,-1) circle (2pt) node[right] {$v_2$};
    \filldraw[blue](-1,-1) circle (2pt) node[right] {$v_1$};
    \filldraw(-1,-2) circle (2pt) node[right] {$v_3$};
     \filldraw(-1,-3) circle (2pt) node[right] {$v_5$};
       \filldraw(-2,-3) circle (2pt) node[right] {$v_4$};
          \draw[->, blue](0,-2)--(-1,-1);
        \filldraw[blue](0,-2) circle (2pt) node[right] {$v_6$};
\end{tikzpicture}
\end{center}
     \end{subfigure}
        \caption{$v_6$ is a new incoming vertex, and selects $v_4$ to start exploring the network,  with sampled number of exploration steps $Z_6=2$.}
        \label{fig:three graphs}
\end{figure}
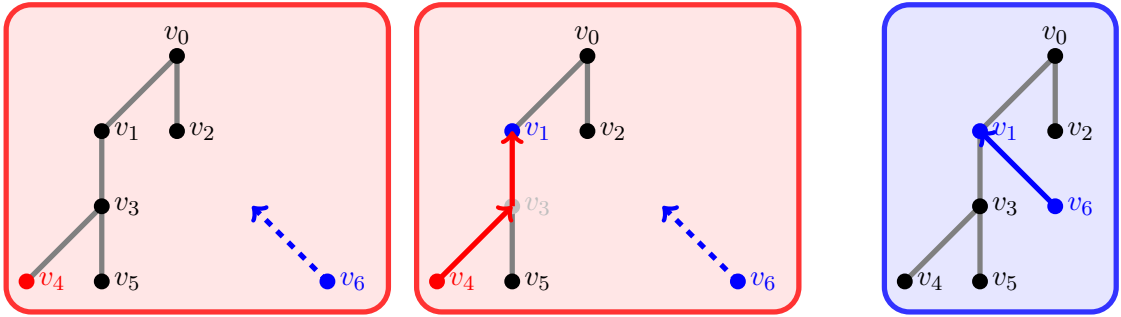

See \chnr{Figure} \ref{fig:three graphs} for a pictorial description. Thus new vertices enter the system and perform local explorations, before attaching themselves to an existing vertex. This, at first sight, simple mechanism, results in a host of important special cases depending on the choice of $\vp$ \chnr{which we describe next}. We will let $\set{\cT_n(\vp):n\geq 1}$ denote the corresponding tree process and suppress dependence on $\vp$ when this is \chnr{clear} from the context. 

\subsubsection{Random recursive tree:} If $p_0 =1$ then one obtains the random recursive tree where new vertices connect to previously existing vertices uniformly at random. See \cites{mahmoud2008polya,smythe1995survey,szymanski1987nonuniform} and the references therein for the extensive use of this model in computer science. 

\subsubsection{Affine preferential attachment:} Suppose $\vp$ is Bernoulli($p$) for some $0< p<1$, namely $p_0 = 1-p, p_1 = p$. Then one can check that the corresponding tree process has the same distribution as an affine preferential attachment model with attachment function $f_{att}(k) = k + (1-2p)/p$. 

\subsubsection{PageRank driven preferential attachment:}
In trying to understand models where vertices try to game search engines and attempt to increase their popularity by connecting to popular existing vertices in the system, one natural approach is via preferential attachment models where attractiveness of existing vertices is measured by their PageRank score (a global, as opposed to more local degree-only based attachment schemes). \chnr{The probabilisitic models of network evolution that at first sight seem to need global information on the network, but then have equivalent representations as local exploration schemes, has inspired a large body of work especially in statistical physics, under the general area of \emph{network growth with redirection} \cites{ben2010random,vazquez2003growing,krapivsky2005network,PhysRevE.70.056107,gabel2013highly,gabel2013sublinear,krapvisky2023magic,krapivsky2017emergent}.}

% \soutt{We emphasize here that the proposed models cover a much larger variety of co-evolving network dynamics than the three mentioned above. In practice, $\vp$ can be appropriately chosen according to the observed network dynamics and our results apply to all such models. }

    \begin{defn}[PageRank scores]
    \label{def:page-rank}
	    For a directed graph $\cG = (\cV, \cE)$, the PageRank scores of vertices $v\in \cV$ with damping factor $c$, is the stationary distribution $(\fR_{v,c}: v\in \cG)$ of the following random walk. At each step, with probability $c$, follow an outgoing edge (uniform amongst available choices) from the current location in the graph while, with probability $1-c$, restart at a uniformly selected vertex in the entire graph. These scores are given by the linear system of equations:
	    \begin{equation}
	        \label{eqn:page-rank}
	        \fR_{v,c} = \frac{1-c}{n} + c\sum_{u\in \cN^{-}(v)} \frac{\fR_{u,c}}{d^{+}(u)}
	    \end{equation}
	    where $\cN^{-}(v)$ is the set of vertices with edges pointed at $v$ and $d^{+}(u)$ is the out-degree of vertex $u$. 
	\end{defn}
     At vertices with zero out-degree (e.g. the root of directed tree), the random walk stays in place with probability $c$ and jumps to a uniformly chosen vertex with probability $1-c$. The stationary probabilities at such vertices are then multiplied by $1-c$ to keep the formula \eqref{eqn:page-rank} same for all vertices. 
     %For future use let us describe the explicit model formulated in applications. 
     \begin{defn}[PageRank driven preferential attachment {\cite{pandurangan2002using}}]
     Fix damping factor $c \in (0,1)$. Consider the following sequence of directed random trees, started with a root $v_0$ and another vertex $v_1$ with directed edge pointed to $v_0$. At each discrete time step $n\geq 2$, a new vertex $v_n$ enters the system and connects to a previously existing vertex with probability proportional to the PageRank of the existing vertex.  
     \end{defn}

Now consider the process $\set{\cT_n(\vp):n\geq 1}$, with $\vp$ as Geometric($p$), namely for $k\geq 0$, $p_k = p(1-p)^k$. Thus a new vertex selects an existing vertex at random in the current tree and then walks up the path from the selected vertex to the root, wherein at each step, it decides to attach itself to the present location with probability $p$ or move upwards with probability $1-p$.

	\begin{theorem}[{\cite[Thm 1.1]{chebolu2008pagerank}}]
	    The model with $\vp$ as Geometric($p$) has the same distribution as the PageRank driven preferential attachment model with damping factor $c = 1-p$. 
	\end{theorem}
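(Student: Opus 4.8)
The plan is to realize both models as one and the same time-inhomogeneous Markov chain on the set of finite rooted trees whose vertices are labelled by their order of insertion. Both chains start from $\cT_1$ (the root $v_0$ together with the edge $v_1\to v_0$), so it will suffice to check that, conditionally on the current tree $\cT_n$, the two models attach the incoming vertex $v_{n+1}$ to any prescribed vertex $u$ with the same probability; the identity in distribution of the full processes then follows by induction on $n$. Throughout we set $c=1-p$.

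\emph{Step 1: the attachment law of the local-exploration model.} I would condition on $\cT_n$ and split according to the uniformly chosen vertex $V_n$ and the exploration length $Z_{n+1}$. The incoming vertex attaches to $u$ precisely when $V_n$ lies in the subtree rooted at $u$ and the upward walk halts at $u$: when $u\neq v_0$ this requires $Z_{n+1}=\dist(V_n,u)$, of probability $p(1-p)^{\dist(V_n,u)}$, while when $u=v_0$ it requires $Z_{n+1}\ge \dist(V_n,v_0)$ (the exploration overshoots and lands on the root), of probability $(1-p)^{\dist(V_n,v_0)}$. Summing over $V_n$, and writing $S_u=\sum_{w}(1-p)^{\dist(w,u)}$ with the sum over all $w$ in the subtree rooted at $u$, this gives that $v_{n+1}$ attaches to $u$ with probability $\frac{p}{n+1}S_u$ if $u\neq v_0$ and $\frac{1}{n+1}S_{v_0}$ if $u=v_0$; a short calculation verifies that these probabilities sum to one.

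\emph{Step 2: identification with PageRank and conclusion.} Since every non-root vertex of $\cT_n$ has out-degree one, the set $\cN^-(v)$ appearing in \eqref{eqn:page-rank} is simply the set $C(v)$ of children of $v$, and \eqref{eqn:page-rank} reduces to the recursion $\fR_{v,c}=\frac{1-c}{n+1}+c\sum_{u\in C(v)}\fR_{u,c}$, valid at the root as well by the convention in the remark following Definition~\ref{def:page-rank}. I would solve this recursion from the leaves upward, obtaining $\fR_{u,c}=\frac{1-c}{n+1}\sum_{w}c^{\dist(w,u)}$ (again summing over the subtree rooted at $u$) for every vertex $u$, so that the stationary probability of the PageRank random walk on $\cT_n$ equals $\fR_{u,c}$ at non-root vertices and $\fR_{v_0,c}/(1-c)$ at $v_0$. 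Putting $c=1-p$ and comparing with Step 1 shows that the local model's attachment probabilities are exactly these stationary probabilities, hence exactly the PageRank-proportional weights used by the preferential attachment model; therefore the two one-step transition kernels coincide and the theorem follows. Conceptually the match is transparent: between two successive restarts the PageRank walk begins at a uniformly chosen vertex and moves toward the root for a geometric number of steps with parameter $1-c$, getting absorbed upon reaching $v_0$, which is precisely the time-reversal of the local exploration rule.

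The step I expect to require the most care is the bookkeeping at the root: one must reconcile the overshoot rule ``attach to $v_0$ as soon as $Z_{n+1}\ge \dist(V_n,v_0)$'' with the zero-out-degree convention in Definition~\ref{def:page-rank} --- equivalently, handle the fact that $v_0$ acts as an absorbing self-loop for the PageRank walk, so that probability mass accumulates at $v_0$ without the usual $(1-c)$ restart leakage. Away from the root the argument is just the elementary geometric-series manipulation indicated above.
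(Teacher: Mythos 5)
The paper does not prove this statement --- it is quoted verbatim from Chebolu--Frieze with a citation to \cite{chebolu2008pagerank} --- so there is no internal proof to compare against; I have checked your argument on its own terms, and it is correct. Step 1 is right: conditioning on $V_n$ and $Z_{n+1}$ gives attachment weight $\frac{p}{n+1}S_u$ at a non-root $u$ and $\frac{1}{n+1}S_{v_0}$ at the root (using $\pr(Z\ge k)=(1-p)^k$ for the overshoot), and your normalization check $\;p\sum_u S_u=(n+1)-(1-p)S_{v_0}\;$ does show these sum to one. Step 2 is also right: on a directed tree the recursion \eqref{eqn:page-rank} collapses to $\fR_{v,c}=\frac{1-c}{n+1}+c\sum_{u\in C(v)}\fR_{u,c}$, whose leaf-to-root solution is $\fR_{u,c}=\frac{1-c}{n+1}\sum_{w}c^{\dist(w,u)}$, matching \eqref{eqn:non-root-page-rank}; with $c=1-p$ this equals $\frac{p}{n+1}S_u$, and the root's stationary probability $\fR_{v_0,c}/(1-c)=\frac{1}{n+1}S_{v_0}$ matches the overshoot weight. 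The one genuinely delicate point is the one you flag at the end: the theorem is true only if ``probability proportional to the PageRank'' is read as proportional to the \emph{stationary probability} of the walk, i.e.\ with the root's score un-deflated by the factor $1-c$ introduced in the remark after Definition \ref{def:page-rank}. Your own computation shows the two readings give different kernels (the $(1-c)$-deflated weights do not sum to one, and renormalizing them changes the root's attachment probability), so it is worth stating explicitly in a final write-up that you adopt the stationary-distribution reading, which is the one consistent with the definition of $(\fR_{v,c})$ as a stationary distribution and the one under which the two one-step kernels coincide.
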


% \subsection{Our contribution in words}

% \SB{Write at the end}

\section{Notation and initial constructions}
\label{sec:not-constr}

\subsection{Mathematical notation}
 For $J\geq 1$ let $[J]:= \set{1,2,\ldots, J}$. If $Y$ has an exponential distribution with rate $\lambda$, write this as $Y\sim \exp(\lambda)$. Write $\bZ$ for the set of integers, $\bR$ for the real line, $\bN$ for the natural numbers and let $\bR_+:=(0,\infty)$. Write $\convas,\convp,\convd$ for convergence almost everywhere, in probability and in distribution respectively. For a non-negative function $g:\bZ_+\to [0,\infty)$ and another function $f:\bZ_+\to \bR$,
write $f(n)=O(g(n))$ when $f(n)/g(n)$ is uniformly bounded, and
$f(n)=o(g(n))$ when $\lim_{n\rightarrow \infty} f(n)/g(n)=0$. Write $f(n) = \Omega(g(n))$ if $\liminf_{n\rightarrow \infty} f(n)/g(n)>0$. 
Furthermore, write $f(n)=\Theta(g(n))$ if $f(n)=O(g(n))$ and $g(n)=O(f(n))$. For two real valued stochastic processes on the same space $\set{X_t:t\geq 0}$ and $\set{Y_t:t\geq 0}$,  denote $X_t = Y_t + o_{a.s.}(1)$ if the random variable $X_t-Y_t \rightarrow 0$ almost surely as $t \rightarrow \infty$.
We write that a sequence of events $(A_n)_{n\geq 1}$
occurs \emph{with high probability} (whp) if $\pr(A_n)\rightarrow 1$ as $n \rightarrow \infty$. We use $\stod$ for stochastic domination between two real valued probability measures. 
For a rooted finite tree $\bt$, let $|\bt|$ denote the number of vertices in $\bt$ and $\height(\bt)$ denote its height, namely, the maximal graph distance of the root from any other vertex. \chnr{For graph $\cG$, we write $\dist_{\cG}(\cdot, \cdot)$ for graph distance and in most cases suppress dependence on $\cG$ when this is clear from context. }

\subsection{Continuous time models}

Recall the description of the model in Section \ref{sec:mod-def}. For much of the proofs, we will work with the following continuous time versions of the above process. 

\begin{defn}[Continuous time versions]
\label{defn:cts-time}
    We let $\set{\cT(t, \vp):t\geq 0}$ denote the continuous time version of the above process wherein $\cT(0,\vp)$ is a tree with vertex set $\set{v_0}$ rooted at $v_0$. % and a directed edge from $v_1$ to $v_0$.  
    Each existing vertex $v$ in the tree reproduces at rate one. When vertex $v$ reproduces, a random variable $Z$ following the law $\vp$ is sampled independently. 
\begin{enumeratea}
\item If $Z\leq dist(v_0,v)$, then a new vertex $\tilde{v}$ is attached to the unique vertex $u$ lying on the path between $v$ and $v_0$ that satisfies $dist(v,u)=Z$ via a directed edge from $\tilde{v}$ to $u$.
\item If $Z> dist(v_0,v)$, attach the new vertex $\tilde{v}$ to the root $v_0$ via a directed edge towards $v_0$.
\end{enumeratea} 
\smallskip
Let $\set{\cT^*(t, \vp):t\geq 0}$ denote the continuous time process which follows the dynamics as above but with (b) above replaced by, 
\begin{enumeratea}
    \item[\text{(b)}$^\prime$] If $Z> dist(v_0,v)$, nothing happens. 
\end{enumeratea}
\end{defn}
It will turn out later that the process  $\cT^*$ describes the evolution of the fringe tree below \emph{non-root} vertices. Define the stopping times
$$T_n=\inf \{ t\geq 0:|\cT(t, \vp)|=n+1\},$$
which is the birth time of the incoming vertex $v_{n}$.
\chnr{Let $\cF_t$ denote the natural sigma-field of the process up to time $t$ and  $\set{\cF_t:t\geq 0}$ the corresponding filtration.} The following is obvious from construction. 
\begin{lem}
\label{lem:cts-disc}
The processes $\set{\cT_n(\vp):n\geq 1}$ and $\set{\cT(T_n,\vp):n\geq 1}$ have the same distribution.
\end{lem}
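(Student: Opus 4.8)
The plan is to recognize both processes as discrete-time Markov chains on the countable state space of finite rooted trees with root-directed edges, and then check that they have the same initial law and the same one-step transition kernel. For the discrete process $\set{\cT_n(\vp):n\ge 1}$ this is immediate from the construction in Section \ref{sec:mod-def}: given $\cT_n$, the tree $\cT_{n+1}$ is obtained by choosing $V_n$ uniformly among the $n+1$ vertices of $\cT_n$, drawing $Z_{n+1}\sim\vp$ independently, and attaching $v_{n+1}$ to the vertex on $\cP(v_0,V_n)$ at distance $\min(Z_{n+1},\dist(v_0,V_n))$ from $V_n$; call this Markov kernel $K$. Both chains start from $\cT_1$, the tree on $\set{v_0,v_1}$ with the oriented edge $v_1\to v_0$; on the continuous-time side this holds because $T_0=0$, $\cT(0,\vp)=\set{v_0}$, and at its first reproduction $v_0$ necessarily attaches the newborn to $v_0$ itself under either rule (a) (only possible when $Z=0$) or rule (b), so $\cT(T_1,\vp)=\cT_1$.

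Next I would run the competing-exponentials argument for the embedded chain $\set{\cT(T_n,\vp):n\ge 1}$. Since each reproduction event adds exactly one vertex and, when $k$ vertices are present, the total reproduction rate is $k$, the size process $|\cT(t,\vp)|$ is a Yule-type pure-birth process; in particular the $T_n$ are a.s.\ finite and strictly increasing, and $|\cT(T_n,\vp)|=n+1$. By the strong Markov property of $\cT(\cdot,\vp)$ at the stopping time $T_n$ together with the memorylessness of the exponential clocks, conditionally on $\cF_{T_n}$ the next reproduction occurs after an $\exp(n+1)$ waiting time and the reproducing vertex $V$ is uniform among the $n+1$ current vertices, independently of the shape of $\cT(T_n,\vp)$ — this independence is exactly because all clocks share the same rate. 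When $V$ reproduces it samples $Z\sim\vp$ independently and, by the dynamics in Definition \ref{defn:cts-time}(a)--(b), attaches the new vertex to the point at distance $\min(Z,\dist(v_0,V))$ from $V$ on $\cP(v_0,V)$. This is precisely the kernel $K$, so the conditional law of $\cT(T_{n+1},\vp)$ given $\cF_{T_n}$ depends only on $\cT(T_n,\vp)$ and equals $K$.

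Having matched the initial laws and the transition kernels, a routine induction on $n$ (equivalently, an application of the Kolmogorov extension theorem to finite-dimensional distributions) gives that the two processes agree in law, which is the assertion of the lemma. The only step needing genuine care is the strong-Markov/competing-exponentials observation, specifically that the uniformly chosen reproducing vertex is independent of the realized tree; everything else is bookkeeping matching the continuous-time attachment rule to the discrete construction term by term. Accordingly I do not anticipate any substantive obstacle, in line with the text's remark that the statement is obvious from the construction.
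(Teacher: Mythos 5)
Your proof is correct and is precisely the argument the paper has in mind when it declares the lemma ``obvious from construction'': matching initial laws, then using memorylessness of the rate-one clocks at the stopping times $T_n$ to see that the reproducing vertex is uniform and independent of the tree shape, so the embedded jump chain has the same transition kernel as the discrete recursion. The handling of the initial step (that the first newborn attaches to $v_0$ under either rule (a) with $Z=0$ or rule (b)) is the one detail worth writing down, and you did.
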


\subsection{Fringe convergence}
The aim of this section is to formalize the notion of convergence of neighborhoods of large random trees to neighborhoods of limiting infinite discrete structures.  Local weak convergence of discrete random structures  has now become quite standard in probabilistic combinatorics see e.g. \cite{aldous-steele-obj, benjamini-schramm}.  In the case of trees, following \cite{aldous-fringe}, the theory of local graph limits has an equivalent formulation in terms of convergence of fringe distribution.

\subsection{Extended fringe decomposition}
\label{sec:fri-decomp}
 Given two rooted (unlabeled) finite trees $\bs, \bt$, say that $\bs \sim \bt $ or $\bs=\bt$ if there exists a root
preserving isomorphism between the two trees.  For $n\geq 1$, let $ \bT_n $ be the space of all rooted trees on  $n$  vertices and let $ \bbT =
\cup_{n=0}^\infty \bT_n $ be the space of all finite rooted trees. Here $\bT_0 = \emptyset $ will be used to represent the empty tree (tree on zero vertices). Fix a tree $\bt\in \bbT$ with root $\rho$ and a vertex $v$ at distance $h$ from the root.  Let $(v_0 =v, v_1, v_2, \ldots, v_h = \rho)$ be the unique path from $v$ to $\rho$. The tree $\bt$  can be decomposed as $h+1$ rooted trees $f_0(v,\bt), \ldots, f_h(v,\bt)$, where $f_0(v,\bt)$ is the tree rooted at $v$ consisting of all vertices for which there exists a path from the root passing through $v$, and for $i \ge 1$, $f_i(v,\bt)$ is the subtree rooted at $v_i$ consisting of all vertices for which the path from the root passes through $v_i$ but not through $v_{i-1}$. %See Figure \ref{fig:fringe} for a pictorial description of this decomposition. 
  Call the map
$F: \bbT \to \bbT^\infty$
defined by,
\[F(v, \bt) = \left(f_0(v,\bt), f_1(v,\bt) , \ldots, f_h(v,\bt), \emptyset, \emptyset, \ldots \right),\]
as the fringe decomposition of $\bt$ about the vertex $v$. Call $f_0(v,\bt)$ the {\bf fringe} of the tree $\bt$ at $v$.
For $k\geq 0$, call $F_k(v,\bt) = (f_0(v,\bt) , \ldots, f_k(v,\bt))$ the {\bf extended fringe} of the tree $\bt$ at $v$ truncated at distance $k$ from $v$ on the path to the root.

Now consider the space $\bbT^\infty$. An element $\bfomega = (\bt_0, \bt_1, \ldots) \in \bbT^\infty$ with $|\bt_i|\geq 1$ for all $ i\geq 0$,  can be thought of as a locally finite infinite rooted tree with a {\bf s}ingle  path to {\bf in}finity (thus called a {\tt sin}-tree \cite{aldous-fringe}) as follows: Identify the sequence of roots of $\set{\bt_i:i\geq 0}$ with the integer lattice $\Zbold_+ = \set{0,1,2,\ldots}$, equipped with the natural nearest neighbor edge set, rooted at $\rho=0$.  Analogous to the definition of extended fringes for finite trees, for any $k\geq 0$ write 
$F_k(0,\bfomega)= (\bt_0, \bt_1, \ldots, \bt_k)$. 
Call this the extended fringe of the tree $\bfomega$ at vertex $0$, till distance $k$, on the infinite path from $0$. Call $\bt_0 = F_0(0,\bfomega)$ the {\bf fringe} of the {\tt sin}-tree $\bfomega$. Now suppose $\prob$ is a probability measure on $\bbT^\infty$ such that if $\TT:= (\bt_0(\TT), \bt_1(\TT),\ldots)\sim \prob$, then  $|\bt_i(\TT)|\geq 1$ a.s.  $\forall~ i\geq 0$. Then $\TT$ can be thought of as an infinite {\bf random} {\tt sin}-tree.

\subsection{Convergence on the space of trees}
\label{sec:fringe-convg-def}

 Let $\TT_\infty$ be a random {\tt sin}-tree with distribution $\prob$ on $\bbT^\infty$.  
Suppose $\set{\cT_n}_{n\geq 1}$ be a sequence of {\bf finite} rooted random trees all constructed on the same probability space (for notational convenience  assume $|\cT_n|= n$, all one needs is $|\cT_n|\probc \infty$). Fix $k\geq 0$ and (non-empty) trees $\bs_0,\bs_1,\ldots, \bs_k \in \bbT$. Let $\hat{\bs} = (\bs_0,\ldots, \bs_k)\in \bbT^{k+1}$ and define the empirical proportions,
\[H_n^k(\hat{\bs}) = \frac{1}{n} \sum_{v\in \cT_n} \ind\set{F_k(v,\cT_n) = (\bs_0,\bs_1,\ldots, \bs_k)}.\]
As before ``$f_j(v,\cT_n) = \bs_j$'' implies identical up to a root preserving isomorphism. 
Consider two (potentially distinct) notions of convergence of $\set{\cT_n:n\geq 1}$:
\begin{enumeratea}
    \item \label{it:fringe-a} Fix a probability measure $\pi$ on $\bT$. Say that a sequence of trees $\set{\cT_n}_{n\geq 1}$ converges in probability, in the fringe sense, to $\pi$, if for every  rooted tree $\vt\in \bT$, $H_n^{0}(\bt) \probc \pi(\bt)$. 
    \item \label{it:fringe-b} Say that a sequence of trees $\set{\cT_n}_{n\geq 1}$ converges in probability, in the {\bf extended fringe sense}, to $\TT_{\infty}$ if for all $k\geq 0$ and $\hat{\bs} \in \bbT^{k+1}$, one has
  \[H_n^k(\hat{\bs}) \stackrel{P}{\longrightarrow} \prob\left(F_k(0,\TT_{\infty}) = \hat{\bs} \right).\]
We shall denote this convergence by $\TT_n\convprf \TT_{\infty}$ as $n\to\infty$.
\end{enumeratea}
Letting $\pi_{\cT_{\infty}}(\cdot) = \pr(F_0(0, \cT_{\infty}) = \cdot)$ denote the distribution of the fringe of $\cT_\infty$ on $\bT$, convergence in (b) above implies convergence in notion (a). Further, both notions imply convergence of functionals such as the degree distribution. For example notion (a) implies,  for any $k\geq 0$, 
\begin{equation}
\label{eqn:deg-convg-fr}
	\frac{\#\set{v\in \TT_n, \deg(v) = k+1}}{n} \convp \prob(\deg(0,\TT_{\infty})=k).
\end{equation}
Here $\deg(0,\TT_{\infty})$ denotes the number of edges connected to $0$ in its fringe $\bt_0(\TT_{\infty})$. However, this convergence gives much more information about the asymptotic properties of $\cT_n$, including convergence of global functionals \cite{bhamidi2012spectra}.

In terms of going from convergence in (a) to (b), Aldous in \cite{aldous-fringe} showed that, amongst the distributions $\pi$ that arise as potential limits of the fringe convergence in (a), there is a special sub-class of measures called fringe distributions, which automatically imply the existence of and convergence to a limit infinite {\tt sin}-tree.  For each $\vs\in \bT$, suppose the root has children $v_1, v_2, \ldots, v_d$ for some $d\geq 0$. Let $\set{f(\vs, v_i): 1\leq i\leq d}$ denote the subtrees at these vertices, with $f(\vs, v_i)$ rooted at $v_i$. For each $\vt\in \bT$, let $Q(\vs, \vt) = \sum_{i} \ind\set{f(\vs, v_i) = \vt}$.  

\begin{defn}
    \label{def:fringe}
    Call a probability distribution $\pi$ on $\bT$ a {\bf fringe distribution} if, 
    \[\sum_{\vs \in \bT} \pi(\vs) Q(\vs, \vt) = \pi(\vt), \qquad \forall~ \vt\in \bT. \]
\end{defn}
It is easy to check that the space of fringe measures is a convex subspace of the space of probability measures on $\bT$.  The next result summarizes some of the remarkable findings in \cite{aldous-fringe}, relevant to this paper.

\begin{theorem}[{\cite{aldous-fringe}}]
\label{thm:aldous}
Fix a probability measure $\pi$ on $\bT$.  Suppose $\set{\cT_n}_{n\geq 1}$ converges in probability in the fringe sense to $\pi$. 
\begin{enumeratea}
    \item $\pi$ is a fringe distribution in the sense of Definition \ref{def:fringe} iff $\sum_{\vt} \pi(\vt) \text{root-degree}(\vt) = 1$. 
    \item If $\pi$ is a fringe distribution then, convergence in probability in the fringe sense (notion \eqref{it:fringe-a}), implies the existence of a random {\tt sin}-tree $\cT_{\infty}$ such that $\set{\cT_n}_{n\geq 1}$ converges in probability in the extended fringe sense to $\cT_\infty$ (notion \eqref{it:fringe-b}). 
    \item If $\pi$ is a fringe distribution then $\height(\cT_n)\to\infty$ in probability. 
\end{enumeratea}
    
\end{theorem}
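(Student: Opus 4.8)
I would prove Theorem~\ref{thm:aldous} following the strategy of \cite{aldous-fringe}: first extract a universal constraint satisfied by every fringe limit (which gives (a)), then construct the limiting {\tt sin}-tree explicitly from $\pi$ as a Kolmogorov-consistent family of finite-dimensional laws (which gives (b)), and finally deduce (c) from (b). For \emph{part (a)}, the starting point is a deterministic identity in the finite tree $\cT_n$ with root $\rho_n$: for fixed $\vt\in\bT$, counting the child--parent pairs whose child has fringe $\vt$ and using that each such child has a unique parent unless it is the root,
\[
\sum_{v\in\cT_n}Q\big(f_0(v,\cT_n),\vt\big)\;=\;n\,H_n^0(\vt)-\ind\{f_0(\rho_n,\cT_n)=\vt\}.
\]
The left side is $\sum_{\vs}Q(\vs,\vt)H_n^0(\vs)$, so dividing by $n$ and using $H_n^0(\vt)\convp\pi(\vt)$ shows it converges in probability to $\pi(\vt)$. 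Since $Q(\cdot,\vt)$ need not be bounded, I would pass to the limit through the truncations $Q\wedge M$ along finite subsets of $\bT$ and then let $M\to\infty$ by monotone convergence, obtaining $\sum_{\vs}\pi(\vs)Q(\vs,\vt)\le\pi(\vt)$ for every $\vt$. Summing over $\vt$, with $\sum_{\vt}Q(\vs,\vt)=\text{root-degree}(\vs)$, gives $\sum_{\vs}\pi(\vs)\,\text{root-degree}(\vs)\le1$; as the deficits $\pi(\vt)-\sum_{\vs}\pi(\vs)Q(\vs,\vt)$ are nonnegative, equality to $1$ is equivalent to all deficits vanishing, i.e.\ to $\pi$ being a fringe distribution, which is (a).

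\emph{Part (b).} For a tuple $\hat\bs=(\bs_0,\dots,\bs_k)$ introduce the ``glued spine'' trees $\gamma_j(\hat\bs)\in\bT$, obtained by attaching $\bs_0,\dots,\bs_j$ at consecutive vertices of a path rooted at the end carrying $\bs_j$; thus $\gamma_0=\bs_0$ and $\gamma_j=\bs_j\uplus\gamma_{j-1}$, where $\bt\uplus\bt'$ adjoins $\bt'$ as one additional root-subtree of $\bt$. The ancestor at distance $k$ of any $v$ with extended fringe $\hat\bs$ must carry $\gamma_k(\hat\bs)$ as its own fringe, and organizing the count by this ancestor I would prove the exact identity
\[
\#\{v\in\cT_n:F_k(v,\cT_n)=\hat\bs\}\;=\;\#\{u\in\cT_n:f_0(u,\cT_n)=\gamma_k(\hat\bs)\}\cdot\prod_{j=1}^{k}Q\big(\gamma_j(\hat\bs),\gamma_{j-1}(\hat\bs)\big),
\]
the product being a deterministic constant (each factor records the number of ways the spine can be continued one step further, and deleting any of the $Q(\gamma_j,\gamma_{j-1})$ sub-copies of $\gamma_{j-1}$ inside $\gamma_j$ returns $\bs_j$). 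Dividing by $n$ and using only the fringe hypothesis $H_n^0(\gamma_k)\convp\pi(\gamma_k)$ yields $H_n^k(\hat\bs)\convp\mu^k(\hat\bs):=\pi\big(\gamma_k(\hat\bs)\big)\prod_{j=1}^{k}Q\big(\gamma_j(\hat\bs),\gamma_{j-1}(\hat\bs)\big)$. It remains to check that $(\mu^k)_{k\ge0}$ is a consistent family of \emph{probability} measures: since $\bs\mapsto\bs\uplus\gamma$ is a bijection of $\bT$ onto $\{\vr:Q(\vr,\gamma)\ge1\}$, applying the fringe equation of Definition~\ref{def:fringe} with $\vt=\gamma_{k-1}(\hat\bs)$ gives $\sum_{\bs_k}\mu^k(\bs_0,\dots,\bs_k)=\mu^{k-1}(\bs_0,\dots,\bs_{k-1})$, and iterating down to $\mu^0=\pi$ gives total mass $1$ for every $k$; this is the only point at which the fringe-distribution property of $\pi$ enters. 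Kolmogorov's extension theorem then produces a random {\tt sin}-tree $\cT_\infty$ (all coordinates a.s.\ non-empty) with $F_k(0,\cT_\infty)\sim\mu^k$ for every $k$ and fringe $\pi$, which is precisely convergence in the extended fringe sense.

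\emph{Part (c).} Given (b) this is short: $\cT_\infty$ is a {\tt sin}-tree, so $F_k(0,\cT_\infty)$ is a.s.\ defined and $\sum_{\hat\bs}\mu^k(\hat\bs)=1$. Since $\tfrac1n\#\{v\in\cT_n:\dist(v,\rho_n)\ge k\}=\sum_{\hat\bs}H_n^k(\hat\bs)$ is always at most $1$, while $\sum_{\hat\bs\in S}H_n^k(\hat\bs)\convp\sum_{\hat\bs\in S}\mu^k(\hat\bs)$ for every finite $S$, choosing $S$ with $\mu^k(S)$ close to $1$ forces $\tfrac1n\#\{v\in\cT_n:\dist(v,\rho_n)\ge k\}\convp1$, hence $\pr(\height(\cT_n)\ge k)\to1$ for every fixed $k$, i.e.\ $\height(\cT_n)\to\infty$ in probability.

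\emph{Main obstacle.} The single delicate step is the combinatorial identity in Part (b): making it precise requires careful tracking of isomorphism classes of \emph{spine-marked} trees and of the multiplicities with which sub-copies occur (together with the truncation in Part (a) needed because root-degrees may be unbounded). No idea beyond this bookkeeping appears to be required.
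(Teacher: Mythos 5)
The paper does not prove this theorem: it is imported verbatim from Aldous's fringe-distribution paper \cite{aldous-fringe} and stated without argument, so there is no in-paper proof to compare against. Your reconstruction is correct and follows Aldous's original route: the parent--child counting identity plus a truncation/Fatou step (needed because $Q(\cdot,\vt)$ is unbounded) gives the sub-invariance $\sum_{\vs}\pi(\vs)Q(\vs,\vt)\le\pi(\vt)$ and hence part (a); the spine-gluing identity $H_n^k(\hat\bs)=H_n^0(\gamma_k(\hat\bs))\prod_j Q(\gamma_j,\gamma_{j-1})$ is an exact deterministic statement (each copy of $\gamma_{j-1}$ among the root-subtrees of $\gamma_j$ leaves $\bs_j$ upon deletion, and the map $\bs\mapsto\bs\uplus\gamma$ is the bijection you claim), so (b) reduces correctly to consistency of the $\mu^k$ via the fringe equation and Kolmogorov extension; and (c) follows as you say. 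I see no gaps.
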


\section{Results}
\label{sec:res}
\chnr{We start by describing local weak convergence of the network model in Section \ref{sec:res-local-weak}. This gives convergence of the degree distribution to an explicit limit. Further the tail exponent of the limit degree distribution is shown to be related to a large deviation rate constant of the quasi-stationary distribution of a random walk associated to the exploration step distribution. Section \ref{sec:res-conden} describes asymptotics of the root and other fixed vertex degrees, in particular deriving necessary and sufficient conditions on $\vp$ for \emph{condensation}, namely the root obtaining a fixed density of the total number of edges. Section \ref{sec:res-height} derives asymptotics for the height, appropriately normalized,  in terms of extremal statistics of related branching random walks, showing the emergence of a phase transition.}  We conclude in Section \ref{sec:res-page-rank} with asymptotics and phase transitions for the asymptotic PageRank of fixed vertices as well as the limiting PageRank distribution.  

\subsection{Local weak convergence}
\label{sec:res-local-weak}
Recall the process $\set{\cT^*(t,\vp):t\geq 0}$ in Definition \ref{defn:cts-time}.
\begin{defn}[Fringe limit]
\label{def:fringe-limit}
     Let $\tau \sim \mathrm{Exp}(1)$ independent of the process $\cT^*$.  Let $\pi_{\vp}(\cdot)$ denote the distribution of $\cT^*(\tau,\vp)$, viewed as a random a.s. finite rooted tree in $\bT$. 
\end{defn}

\begin{ass}
\label{ass:mean}
Let $Z\sim \vp$. Assume $\E[Z] <\infty$. 
\end{ass}
\begin{theorem}[Fringe convergence]\label{thm:fringe}
\begin{enumeratea}
\item 	Under Assumption \ref{ass:mean}, the sequence of random trees $\set{\cT_n(\vp):n\geq 1}$ converges in probability, in the fringe sense, to $\pi_{\vp}(\cdot)$. Writing $D$ for the root degree of the random tree sampled using $\pi_{\vp}$, for every $k\geq 0$,
	\[\frac{1}{n+1}\sum_{v\in \cT_n(\vp)} \ind\set{\deg(v) = k+1} \probc \pr(D=k), \qquad \mbox{ as } n\to\infty. \]
\item $\pi_{\vp}(\cdot)$ is a fringe distribution, as in  Definition \ref{def:fringe}, if and only if $\E[Z]\leq 1$ where $Z\sim \vp$.
\item The root degree $D$ and the size of the limit fringe tree $|\cT^*(\tau,\vp)|$ satisfy:
\begin{enumeratei}
\item If $\E[Z] \leq 1$ then $\E[D] =1$ and $\E[|\cT^*(\tau,\vp)|] = \infty$,  
\item If $\E[Z] > 1$ then $\E[D] < 1$ and $\E[|\cT^*(\tau,\vp)|] < \infty$. 
\end{enumeratei}
\end{enumeratea}

\end{theorem}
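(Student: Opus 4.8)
The plan is to work throughout with the continuous-time model of Definition~\ref{defn:cts-time} (permissible by Lemma~\ref{lem:cts-disc}), establish the fringe limit for $\{\cT(t,\vp)\}_{t\ge 0}$, read off (b) from Aldous's Theorem~\ref{thm:aldous}, and obtain (c) from a first-moment analysis of the auxiliary process $\cT^*$. For part (a), the structural starting point is that, for a \emph{non-root} vertex $v$ of $\cT(t,\vp)$, the subtree hanging below $v$ (its fringe $f_0(v,\cT(t,\vp))$) changes only through reproduction events of vertices it already contains, and an incoming vertex lands inside it precisely when it is produced by such a vertex \emph{and} its exploration length does not overshoot $v$; unwinding this shows that, started at the birth time $b_v$ of $v$, the fringe below $v$ evolves exactly as $\{\cT^*(t-b_v,\vp)\}_{t\ge b_v}$. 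Since every reproduction in $\cT(\cdot,\vp)$ adds exactly one vertex, $|\cT(t,\vp)|$ is an ordinary Yule process, so $e^{-t}|\cT(t,\vp)|$ converges a.s.\ to a finite positive limit and the age $t-b_V$ of a uniformly chosen vertex $V$ converges in distribution to $\exp(1)$; combining these two facts, the fringe of a uniformly chosen vertex converges in distribution to $\cT^*(\tau,\vp)$ with $\tau\sim\exp(1)$ independent of $\cT^*$, that is, to $\pi_\vp$ of Definition~\ref{def:fringe-limit} (the single root vertex contributes $O(1/n)$ and is harmless). To upgrade this to convergence \emph{in probability} of the empirical fringe proportions $H_n^0(\bt)$, I would estimate first and second moments: the first moment converges to $\pi_\vp(\bt)$ by the above, and for the variance one uses that when neither of $u,v$ is an ancestor of the other, $f_0(u,\cT_n)$ and $f_0(v,\cT_n)$ are driven by disjoint families of reproduction clocks and exploration samples, hence conditionally independent given the birth times, while the fraction of ancestor-comparable pairs is $O(\height(\cT_n)/n)=o(1)$; Chebyshev then yields $H_n^0(\bt)\convp\pi_\vp(\bt)$, and the degree statement follows by truncating the (infinite) sum over trees of a fixed root-degree and sending the truncation level to infinity.

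For part (b): by (a), $\{\cT_n(\vp)\}$ converges in probability in the fringe sense to $\pi_\vp$, so Theorem~\ref{thm:aldous}(a) applies and $\pi_\vp$ is a fringe distribution (Definition~\ref{def:fringe}) if and only if $\sum_{\bt}\pi_\vp(\bt)\,\text{root-degree}(\bt)=\E[D]=1$. Hence (b) is precisely the assertion ``$\E[D]=1\iff\E[Z]\le 1$'', which is part of (c); so it suffices to establish (c).

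For part (c), let $N_\ell(t)$ count the vertices at distance $\ell$ from the root in $\cT^*(t,\vp)$ and put $m_\ell(t)=\E[N_\ell(t)]$. In $\cT^*$ a distance-$k$ vertex that reproduces and samples $Z$ creates a new vertex at distance $k+1-Z$ if $Z\le k$ and nothing otherwise, so it produces a distance-$\ell$ vertex at rate $p_{k+1-\ell}$ for $1\le\ell\le k+1$, no distance-$0$ vertex is ever produced, and $m_0\equiv 1$ while $m_\ell'(t)=\sum_{j\ge 0}p_j\,m_{\ell-1+j}(t)$ for $\ell\ge 1$. Multiplying by $e^{-t}$, integrating, and using $\E[f(\tau)]=\int_0^\infty e^{-t}f(t)\,dt$ with $m_\ell(0)=\ind\{\ell=0\}$, the quantities $h_\ell:=\E[N_\ell(\cT^*(\tau,\vp))]$ satisfy $h_0=1$ and $h_\ell=\sum_{j\ge 0}p_j\,h_{\ell-1+j}$ for $\ell\ge 1$. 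One checks directly that $h_\ell=\rho^\ell$ is a solution, where $\rho=\rho(\vp)\in[0,1]$ is the smallest fixed point of the probability generating function $g(s):=\E[s^{Z}]$ (the ansatz $h_\ell=z^\ell$ forces $g(z)=z$). To identify this as the \emph{correct} solution one integrates $M'(t)=M(t)-\sum_\ell m_\ell(t)\pr(Z>\ell)$, $M(t):=\E[|\cT^*(t,\vp)|]$, to get $e^{-t}M(t)=1-\int_0^t e^{-s}\sum_\ell m_\ell(s)\pr(Z>\ell)\,ds$, a non-negative and non-increasing function of $t$; this forces the a priori bound $\sum_\ell\pr(Z>\ell)h_\ell\le 1$, which, together with uniform boundedness of $(h_\ell)$ and strict convexity of $g$ (i.e.\ $\vp\neq\delta_1$), eliminates the remaining modes and leaves $h_\ell=\rho^\ell$. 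Then the recursion at $\ell=1$ gives $\E[D]=h_1=\sum_{j\ge 0}p_j h_j=g(\rho)=\rho$, and $\E[|\cT^*(\tau,\vp)|]=\sum_{\ell\ge 0}h_\ell=(1-\rho)^{-1}$. Since $g$ is convex with $g(1)=1$ and $g'(1^-)=\E[Z]$, we have $\rho=1$ when $\E[Z]\le 1$ and $\rho<1$ when $\E[Z]>1$, which yields $\E[D]=1$, $\E[|\cT^*(\tau,\vp)|]=\infty$ in the first regime and $\E[D]=\rho<1$, $\E[|\cT^*(\tau,\vp)|]=(1-\rho)^{-1}<\infty$ in the second; in particular this also proves (b).

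The main obstacle is the solution-selection step in (c): pinning down $h_\ell=\rho^\ell$ amounts to controlling the exponential growth rate of the level sizes $N_\ell(\cT^*(t,\vp))$ and excluding faster-growing solutions of the forward recursion. This is exactly where Assumption~\ref{ass:mean} and the sign constraint $e^{-t}\E[|\cT^*(t,\vp)|]\ge 0$ are used, and establishing the uniform boundedness of $(h_\ell)$ --- most naturally via the branching-random-walk-with-killing (CMJ) representation of $\cT^*$, whose Malthusian parameter equals $1$ exactly when $\E[Z]\le 1$ --- is the key technical input. The second-moment/correlation-decay estimate in (a), while routine for recursive trees, also has to be carried out with some care.
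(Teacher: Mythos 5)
Your treatment of parts (a) and (b) follows the paper's route essentially verbatim: the continuous-time (Yule) embedding, the identification of the fringe below a non-root vertex with $\cT^*(\cdot,\vp)$ started at its birth time, the exponential limit of the age of a uniform vertex, a first/second conditional moment computation with the ancestor-comparable pairs controlled by $O(\height(\cT_n)\, n)$ out of $n^2$, and the reduction of (b) to ``$\E[D]=1$ iff $\E[Z]\le 1$'' via Theorem \ref{thm:aldous}(a). This is exactly the content of Theorem \ref{thm:local_limit-cts-time}, Proposition \ref{prop:moments} and Theorem \ref{fringe} in the paper (the paper additionally has to verify Lipschitz continuity of $u\mapsto\pr(\cT^*(u)=\mathbf{s}_0)$, which your sketch implicitly needs as well).

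For part (c) you take a genuinely different route, and this is where there is a real gap. The paper proves the exact identity $\E[\cP_k(t)]=\sum_i \frac{t^i}{i!}\pr(\barT_k=i)$ (Lemma \ref{Pk}, via a Gr\"onwall uniqueness argument for the infinite ODE system), from which $h_k:=\E[\cP_k(\tau)]=\pr(\barT_k<\infty)=q_*^k$ falls out immediately, giving $\E[D]=\pr(\barT_1<\infty)$ and $\E[|\cT^*(\tau,\vp)|]=\sum_k q_*^k$ with no solution-selection issue. You instead derive only the stationary system $h_0=1$, $h_\ell=\sum_j p_j h_{\ell-1+j}$, and must then argue that $h_\ell=q_*^\ell$ rather than some other non-negative solution. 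As you note, $h_\ell\equiv 1$ is \emph{always} a solution of this system, so in the non-fringe regime the recursion plus boundedness cannot distinguish the right answer; your mass constraint $\sum_\ell \pr(Z>\ell)h_\ell\le 1$ is the right extra input, but the elimination argument is not actually carried out. To make it work one needs: (i) a uniform a priori bound $\sup_\ell h_\ell<\infty$ — this is nontrivial (the crude bound $m_\ell(t)\le \sum_{i\ge\ell}t^i/i!$ integrates to $+\infty$), and the natural way to get it is precisely the killed-random-walk representation of $\E[\cP_\ell(t)]$, i.e., Lemma \ref{Pk} itself; and (ii) given boundedness, a Riesz-type decomposition $h_\ell=q_*^\ell+u_\ell$ with $u\ge 0$ harmonic for the walk killed at $0$, combined with the identity $\sum_{\ell\ge 0}\pr(Z>\ell)q_*^\ell=\frac{1-f(q_*)}{1-q_*}=1$, so that the mass constraint forces $\sum_\ell\pr(Z>\ell)u_\ell=0$ and hence (propagating zeros through the harmonic equation, using $p_0>0$) $u\equiv 0$. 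Neither ingredient appears in the proposal beyond the acknowledgment that ``uniform boundedness of $(h_\ell)$ is the key technical input,'' so part (c) is a correct plan with an unclosed hole at exactly the step you identified. Since closing it most naturally reproduces Lemma \ref{Pk}, the paper's direct computation is both simpler and what you should adopt; note also that the interchange of sum, derivative and Laplace transform in your derivation of the recursion needs the same kind of justification the paper supplies via Gr\"onwall.
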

\begin{rem}
Owing to the above Theorem, we will refer to the setting $\E[Z]\leq 1 $ as the \emph{fringe regime} whilst $\E[Z]>1$ will be referred to as the \emph{non-fringe regime}. 
\end{rem}
\noindent The next result follows from Theorem \ref{thm:fringe} and \chnr{results in} \cite{aldous-fringe} summarized in Theorem \ref{thm:aldous}.

\begin{corollary}[Convergence to limiting {\tt sin}-tree]\label{cor:sin-tree-convg}
	Assume for $Z\sim \vp$, $\E[Z]\leq 1$.
	\begin{enumeratea}
	    \item There exists a limiting infinite {\tt sin}-tree $\cT_\infty(\vp)$ such that $\set{\cT_n(\vp):n\geq 1}$ converges in probability in the extended fringe sense to $\cT_\infty(\vp)$. 
	    \item $\height(\cT_n)\probc  \infty$. 
	\end{enumeratea}

\end{corollary}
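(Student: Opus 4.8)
The plan is to obtain Corollary~\ref{cor:sin-tree-convg} as an immediate consequence of the fringe convergence already established in Theorem~\ref{thm:fringe} combined with Aldous's general results recorded in Theorem~\ref{thm:aldous}. First, observe that the hypothesis $\E[Z]\leq 1$ forces $\E[Z]<\infty$, so Assumption~\ref{ass:mean} is in force and Theorem~\ref{thm:fringe}(a) applies: the sequence $\set{\cT_n(\vp):n\geq 1}$ converges in probability, in the fringe sense, to $\pi_{\vp}$. (The discrepancy between the normalization $1/(n+1)$ appearing in Theorem~\ref{thm:fringe}(a) and the $1/n$ in the definition of $H_n^0$ is harmless, since $n/(n+1)\to 1$; more generally, all that is required for the fringe formalism is $|\cT_n(\vp)|\probc\infty$, which holds because $|\cT_n(\vp)|=n+1$ deterministically.)

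Next, invoke Theorem~\ref{thm:fringe}(b): since $\E[Z]\leq 1$, the limit $\pi_{\vp}$ is a fringe distribution in the sense of Definition~\ref{def:fringe}. With this in hand, part~(a) of the Corollary is exactly the conclusion of Theorem~\ref{thm:aldous}(b) applied with $\pi=\pi_{\vp}$: fringe convergence in probability to a genuine fringe distribution automatically upgrades to the existence of a random {\tt sin}-tree $\cT_\infty(\vp)$ (whose fringe law is $\pi_{\vp}$) together with convergence of $\set{\cT_n(\vp)}_{n\geq 1}$ to it in the extended fringe sense. Part~(b) is then immediate from Theorem~\ref{thm:aldous}(c), which guarantees $\height(\cT_n)\probc\infty$ precisely when the fringe limit is a fringe distribution. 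Note also that the "in probability" mode of convergence is inherited directly, since Theorem~\ref{thm:aldous}(b)--(c) are themselves phrased for convergence in probability.

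Since the argument is essentially a citation chase, there is no substantial obstacle; the only point requiring attention is verifying that the hypotheses of Aldous's theorem genuinely hold, i.e.\ that $\pi_{\vp}$ is a bona fide fringe distribution. This is precisely Theorem~\ref{thm:fringe}(b), which in turn is consistent with (and, via Theorem~\ref{thm:aldous}(a), equivalent to) the mean-one identity $\sum_{\vt}\pi_{\vp}(\vt)\,\text{root-degree}(\vt)=\E[D]=1$ recorded in Theorem~\ref{thm:fringe}(c)(i). Thus, once the nontrivial input Theorem~\ref{thm:fringe} is available, both assertions of the Corollary follow with no further work.
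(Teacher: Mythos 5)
Your proposal is correct and follows exactly the paper's own route: the paper derives this corollary by combining Theorem~\ref{thm:fringe}(a)--(b) with Theorem~\ref{thm:aldous}(b)--(c), which is precisely the citation chase you describe. The extra checks you supply (the harmless $1/(n+1)$ versus $1/n$ normalization and the consistency with $\E[D]=1$) are sound but not needed beyond what the cited results already give.
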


\begin{rem}
\begin{enumeratei}
\item By \cite{bhamidi2012spectra}, when $\E(Z) \leq 1$, local weak convergence above implies that even global functionals such as the spectral distribution of the adjacency matrix converge (in this case the limit spectral distribution can be shown to be non-random with an infinite set of atoms).   In Aldous' terminology \cite{aldous-fringe}, it would be interesting to derive a \emph{reduced Markov description} of this limit object. 
\item In the non-fringe case when $\E(Z) > 1$, whilst Theorem \ref{thm:fringe} shows convergence in probability in the fringe sense, \chnr{as described below in} Theorem \ref{thm:max-degree}(a), in this regime the degree of the root scales like $\sim (1-q_*)n$, and thus if one selects a vertex at random, with probability $\sim (1-q_*)$ the parent of this vertex is the root whose degree $\to\infty$. In particular, there is {\bf no convergence} in the extended fringe sense to a {\tt sin}-tree, which by definition has to be locally finite. 
\end{enumeratei}

\end{rem}

In order to derive quantitative bounds on explicit functionals such as the degree distribution and PageRank, we will need to make a few more assumptions on $\vp$. Recall that in Section \ref{sec:mod-def}, the case $p_0+p_1 =1$ corresponds to either the random recursive tree or affine preferential attachment, which have already been thoroughly analyzed in the literature. While the techniques described below recover many results for these models, we are mainly interested in settings not covered under these two models. Let $\{Z_i\}_{i\geq 1}$ be a collection of i.i.d. random variables following the law $\vp$. Consider the random walk, 
\begin{equation}
    \label{eqn:rw-def}
    S_n=S_0+\sum_{i=1}^n (Z_i-1), \qquad S_0 \in \mathbb{Z},\, n\geq 1.
\end{equation}
Define the probability generating function (pgf):
\beq\label{genfunc}
f(s):=\sum_{k=0}^\infty p_ks^k, \quad s\geq 0.
\eeq

\begin{ass}
\label{ass:pmf}
    Assume that $\vp$ satisfies the following:
\begin{inparaenuma}
\item $p_0\in (0,1)$, $p_0 + p_1 <1$. 
\item Assume that the random walk $\{S_n:n\geq 0\}$ in \eqref{eqn:rw-def} is aperiodic, i.e., $gcd\{ j: p_j>0\}=1$.
\end{inparaenuma}

\end{ass}
\begin{rem}
\tcr{Most of our results can be extended to the case $p_0+p_1=1$ by straightforward modifications of our proof techniques. See Remark \ref{rem_affine}.} 

% \soutt{The case where $p_0+p_1=1$ corresponds to the affine preferential attachment model discussed in Section \ref{sec:mod-def}, which has been studied extensively in the literature and hence is not treated in our proofs.}
\end{rem}

\begin{defn}
\label{def:R-q-s0}
\begin{enumeratea}
        \item Let $s_0$ be the unique positive root of $sf'(s)=f(s)$ if it exists, otherwise let $s_0$ be the radius of convergence of the power series $f(\cdot)$. Define $R:=\lim_{s\uparrow s_0} s/f(s)$.
        \item Let  $q_*$ be the unique positive solution to the fixed point equation $f(q)=q$. 
\end{enumeratea}
\end{defn}

Let us briefly describe probabilistic interpretations of these objects before stating our main results. Recall that $q_*$ denotes the extinction probability of a Galton-Watson branching process with offspring distribution $\vp$.   Next define
\beq
\label{eqn:hit-rw}
\barT_i:=\inf\{ n\geq 0: S_n=0| S_0=i\}
\eeq
 to be the hitting time of zero  of the random walk starting from state $i>0$. Then $q_*$ has the alternate interpretation $q_*= \pr(\barT_1< \infty)$. 
 Moreover, by \cite{daley1969quasi,pakes1973conditional}, under Assumptions \ref{ass:mean} and \ref{ass:pmf}, $R$ arises as the following limit and lies in the asserted interval:
 \begin{equation}
     \label{eqn:R-def}
   \lim_{n\to\infty} \left( \pr(n< \barT_1<\infty)\right)^{1/n} =   \frac{1}{R}\in (0,1].
 \end{equation}
 Standard results, connecting random walks to branching processes, imply $q_*=\pr(\barT_1<\infty) <1$ if and only if $\E[Z] >1$. The next lemma collects some classical facts about $R,s_0,q_*$ arising in the analysis of quasi-stationary distributions for random walks. We provide brief pointers to the literature for completeness. In the following \chnr{lemma}, let $r_f$ denote the radius of convergence of the pgf $f$.
\begin{lemma}
\label{lemma:prop-R-s0}
Suppose Assumptions \ref{ass:mean} and \ref{ass:pmf} hold.
\begin{enumeratea}
\item $g(s) := f(s)/s$ is strictly decreasing on $(0,s_0)$ and, if $s_0<r_f$, $g$ is strictly increasing on $(s_0,r_f)$. In particular, $\inf_{s\in (0,1)} f(s)/s \geq 1/R$. 
\item For $\E[Z]<1$, if $f(s)$ is analytic at $s=1$, then $R\in (1,1/\E[Z]), s_0\in(1,\infty)$, otherwise $R=s_0=1$.
\item For $\E[Z]=1$, $R=s_0=1$.
\item For $\E[Z]> 1$, $R>1$ and $q_*<s_0<1$.
%\item Further when $\E[Z] >1$, the function $g(\cdot)$ defined via $g(s) = f(s)/s$ is strictly convex in the interval $(0,1)$ and is uniquely minimized at $s_0 <1$.  

\end{enumeratea}
\end{lemma}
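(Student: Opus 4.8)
The plan is to reduce the entire lemma to the behaviour of the single auxiliary function $h(s):=sf'(s)-f(s)$ on $(0,r_f)$, together with the identities $g(s)=f(s)/s$ and $g'(s)=h(s)/s^2$. Under Assumption~\ref{ass:pmf} the power series $f,f',f''$ have nonnegative coefficients, $f''\not\equiv 0$ (since $p_0+p_1<1$ forces $p_k>0$ for some $k\ge 2$), $r_f\ge 1$ because $f(1)=1$, and $f'(s)\uparrow\E[Z]$, $f(s)\uparrow 1$ as $s\uparrow 1$ by monotone convergence; hence $h(0)=-p_0<0$, $h'(s)=sf''(s)>0$ on $(0,r_f)$, and $h(s)\to\E[Z]-1$ as $s\uparrow1$. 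Strict monotonicity of $h$ shows it has at most one zero in $(0,r_f)$; reconciling this with Definition~\ref{def:R-q-s0} gives $h<0$ on $(0,s_0)$ and $h>0$ on $(s_0,r_f)$ when $s_0<r_f$, and $h<0$ on all of $(0,r_f)$ when $s_0=r_f$ — which is exactly the monotonicity asserted in (a). For the ``in particular'' part, this monotonicity forces $\lim_{s\uparrow s_0}g(s)=\inf_{(0,r_f)}g$, so $R=1/\inf_{(0,r_f)}g$; since $(0,1)\subseteq(0,r_f)$ this yields $\inf_{s\in(0,1)}f(s)/s\ge 1/R$, while the two-sided bound $1/R\in(0,1]$ (equivalently $1\le R<\infty$, so that $1/R$ is genuinely defined) is the classical quasi-stationary fact recorded in \eqref{eqn:R-def}, cf.\ \cite{daley1969quasi,pakes1973conditional}.

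Parts (b)--(d) then amount to locating $s_0$ relative to $1$ via the sign of $\E[Z]-1=\lim_{s\uparrow1}h(s)$, and reading off $R=1/g(s_0)$ from the monotonicity of $g$ and the anchor values $g(1)=1$ and $g(q_*)=f(q_*)/q_*=1$. If $\E[Z]<1$ and $f$ is analytic at $1$ (equivalently $r_f>1$ by Pringsheim), then $h(1)<0$ gives $s_0>1$, with $s_0<\infty$ since $h(s)=\sum_{k\ge2}(k-1)p_ks^k-p_0\to\infty$ when $r_f=\infty$ and $s_0\le r_f$ otherwise; strict decrease of $g$ on $(0,s_0)\ni1$ gives $g(s_0)<g(1)=1$, hence $R>1$, while the tangent-line bound from convexity, $f(s)\ge 1+\E[Z](s-1)$, gives $g(s)\ge \E[Z]+(1-\E[Z])/s>\E[Z]$ for $s>1$, hence $g(s_0)>\E[Z]$ and $R<1/\E[Z]$. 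If instead $f$ is not analytic at $1$ then $r_f=1$, $h<0$ on $(0,1)$, so $s_0=r_f=1$ and $R=\lim_{s\uparrow1}s/f(s)=1$. For $\E[Z]=1$ (part (c)) one has $\lim_{s\uparrow1}h(s)=0$ with $h$ strictly increasing, so $h<0$ on $(0,1)$, and either $h(1)=0$ when $r_f>1$ or $s_0=r_f=1$ when $r_f=1$; in both cases $s_0=1$ and $R=1/g(1)=1$. For $\E[Z]>1$ (part (d)), $\lim_{s\uparrow1}h(s)=\E[Z]-1>0$ together with $h(0)<0$ forces the zero $s_0\in(0,1)$, so in particular $s_0<r_f$, $g$ is continuous at $s_0$, and attains its strict minimum there; $q_*\in(0,1)$ is the standard extinction probability of the supercritical Galton--Watson process with offspring law $\vp$, characterised by $g(q_*)=1$; since $g$ is strictly increasing on $(s_0,r_f)$ up to the value $1$ attained (as a limit) at $s=1$, the equation $g=1$ has a unique solution in $(0,s_0)$ and only the solution $s=1$ in $[s_0,r_f)$, forcing $q_*<s_0$; and $R=1/g(s_0)>1$ because $g(s_0)<g(1^-)=1$.

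The calculus is elementary; the genuinely delicate points — and the part I expect to take the most care — are the boundary cases where $f$ fails to be analytic at $1$, i.e.\ $r_f=1$, in which one must consistently interpret $f'(1)$ and $h(1)$ as one-sided limits (justified by monotone convergence), check that $R=\lim_{s\uparrow s_0}s/f(s)$ still equals $1/\inf_{(0,r_f)}g$ when $s_0=r_f$, and verify $0<g(s_0)<\infty$ so that $R$ is a bona fide positive finite number. The only non-self-contained inputs are the quasi-stationary-distribution facts \eqref{eqn:R-def} and $1/R\in(0,1]$, taken from \cite{daley1969quasi,pakes1973conditional}; everything else follows from convexity of $f$ and strict monotonicity of $h$.
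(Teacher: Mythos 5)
Your proposal is correct, and for part (a) it coincides with the paper's argument: both reduce everything to the auxiliary function $h(s)=sf'(s)-f(s)$, which is strictly increasing on $(0,r_f)$ because $p_0+p_1<1$ and negative at $0$ because $p_0>0$. Where you diverge is in parts (b)--(d): the paper proves only the analytic case of (b) directly (using the first-order condition $s_0f'(s_0)=f(s_0)$ to get $R=s_0/f(s_0)\le 1/f'(s_0)<1/\E[Z]$) and disposes of the non-analytic case of (b), together with (c) and (d), by citing \cite[Lemma 1]{pakes1973conditional} and the remark following it. You instead give a self-contained elementary treatment: you locate $s_0$ relative to $1$ from the sign of $\lim_{s\uparrow 1}h(s)=\E[Z]-1$, read off $R=1/\lim_{s\uparrow s_0}g(s)$ from the monotonicity of $g$ and the anchors $g(1)=1$, $g(q_*)=1$, and obtain $R<1/\E[Z]$ from the tangent-line bound $f(s)\ge 1+\E[Z](s-1)$ rather than from the stationarity condition at $s_0$ (both work; yours has the minor advantage of not requiring $s_0$ to be an actual root of $h$). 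Your careful handling of the boundary cases $s_0=r_f$ and $r_f=1$ (via Pringsheim and one-sided limits), and your two-branch argument showing that $g=1$ has exactly one solution in $(0,s_0)$ and only $s=1$ in $[s_0,r_f)$, make the inequality $q_*<s_0$ transparent in a way the paper's citation does not. The trade-off is simply length versus self-containedness; there is no gap in your argument.
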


\begin{proof}
Part (a) follows from the observation that $g'(s) = (sf'(s) - f(s))/s^2$, whose numerator is strictly increasing on $(0,r_f)$ as $p_0 + p_1<1$, and negative at $s=0$ as $p_0>0$. For part (b), when $f(s)$ is analytic at $s=1$, note that $sf'(s) - f(s)<0$ at $s=1$ as $\E(Z)<1$, which implies $s_0>1$. Moreover, if $s_0=\infty$, $r_f=\infty$ and $g(s)$ is strictly decreasing on $(0,\infty)$, which is a contradiction as $\lim_{s\rightarrow \infty}g(s) = \infty$ (as $p_0 + p_1<1$). Thus, $s_0 \in (1,\infty)$. Further, this also implies $f(s_0)<\infty$ and hence, using part (a) and the definition of $s_0$,
$1 < R=s_0/f(s_0) \le 1/f'(s_0) < 1/\E[Z]$. For the remaining assertions, see \cite[Lemma 1]{pakes1973conditional} and the Remark following it. 
 \end{proof}
 Note that Lemma \ref{lemma:prop-R-s0} implies that, under Assumptions \ref{ass:mean} and \ref{ass:pmf}, $s_0,f(s_0)$ are both finite and $R=s_0/f(s_0)$. Now let \chnr{the random variable} $D$ be as in Theorem \ref{thm:fringe}, following the law of the limiting degree distribution of the discrete tree network $\{\cT_n(\vp)\}_{n\geq 1}$.

%The next result essentially proves that the $R$ is the degree exponent of the limit degree distribution. 
\begin{theorem}[Tail of limit Degree distribution]\label{thm:deg-dist}
Under Assumptions \ref{ass:mean} and \ref{ass:pmf}:
\begin{enumeratei}
\item {\bf Fringe regime:} When $\E[Z]\leq 1$, with $R$ as in Definition \ref{def:R-q-s0},
\beq
\lim_{k\to\infty} \frac{\log \pr(D\geq k)}{\log k}=-R.
\eeq
\item {\bf Non-fringe regime:} When $\E[Z]>1$, 
\beq
-R\leq \liminf_{k\to\infty} \frac{\log \pr(D\geq k)}{\log k}\leq \limsup_{k\to\infty} \frac{\log \pr(D\geq k)}{\log k}\leq -\left(R\wedge \frac{\log q_*}{\log s_0}\right).
\eeq
\end{enumeratei}

\end{theorem}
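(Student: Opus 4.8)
The plan is to work with the continuous-time fringe tree. By Theorem~\ref{thm:fringe}(a), $D$ has the law of the root degree of $\cT^*(\tau,\vp)$ with $\tau\sim\exp(1)$, and this root degree equals $N_1(\tau)$, the number of height-one vertices of $\cT^*$. The first step is to view the height profile $(N_h(t))_{h\ge0}$ as an infinite-type branching process (equivalently, an infinite-colour P\'olya-type urn): a height-$h$ vertex, at rate one, draws $Z\sim\vp$ and produces a new vertex at height $h-Z+1$ if $Z\le h$ (while surviving), and does nothing if $Z>h$ --- equivalently, height-$h$ vertices beget height-$h'$ ones at rate $p_{h-h'+1}$ for $1\le h'\le h+1$, so the mean kernel is a shifted, boundary-truncated copy of $\vp$. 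The crucial observation is that the \emph{creation line} of a height-$j$ vertex --- the nested chain of vertices whose reproduction events successively produced it, traced back to the root --- has heights that change, step by step, by the increments $Z-1$, i.e.\ along a path of the walk $S_n=S_0+\sum_{i=1}^n(Z_i-1)$ started from $j$ and absorbed at $0$ upon reaching the root, the rule ``do nothing if $Z>h$'' being exactly this absorption. In particular, writing $\hat\mu_j:=\E[N_j(\tau)]$, the mean semigroup identity $\tfrac{d}{dt}\E[N_j]=\sum_{k\ge0}p_k\E[N_{j-1+k}]$ together with $\tau\sim\exp(1)$ gives $\hat\mu_j=\E[\hat\mu_{S_1}\mid S_0=j]$ for $j\ge1$ and $\hat\mu_0=1$, so $\hat\mu_j=\pr(\barT_j<\infty)$, which is $1$ in the fringe regime and $q_*^{\,j}$ in the non-fringe regime; hence $\E[D]=\hat\mu_1$ equals $1$, resp.\ $f(q_*)=q_*$, consistently with Theorem~\ref{thm:fringe}(c).

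For the upper bound I would estimate the integer moments of $D=N_1(\tau)$. Expanding $\E[N_1(\tau)^m]$ over $m$-tuples of height-one vertices and resolving it through the branching/creation-line description, each tuple contributes a product of random-walk excursion weights whose exponential rate is pinned by Lemma~\ref{lemma:prop-R-s0} and \eqref{eqn:R-def}; the resulting multiple series is summable exactly when $m<R$ in the fringe regime. Thus $\E[D^m]<\infty$ for every $m<R$, and Markov's inequality gives $\pr(D\ge k)\le C_m k^{-m}$, hence $\limsup_k\log\pr(D\ge k)/\log k\le -R$. In the non-fringe regime the moments pick up a further, heavier contribution from configurations in which a child of the root roots a subtree that descends to depth $h$ --- an event whose probability is of the order of the mean profile $q_*^{\,h}$ --- and from that depth spawns a large sub-structure feeding the root over a longer time window; a calculation relating $h$ to the number of children such a configuration can deliver (through $s_0$) and balancing it against $q_*^{\,h}$ shows $\E[D^m]<\infty$ iff $m<R\wedge\tfrac{\log q_*}{\log s_0}$, which yields $\limsup_k\log\pr(D\ge k)/\log k\le-\bigl(R\wedge\tfrac{\log q_*}{\log s_0}\bigr)$.

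For the matching lower bound $\liminf_k\log\pr(D\ge k)/\log k\ge -R$ (valid in both regimes) I would construct an explicit event of probability $k^{-R(1+o(1))}$ on which $D\ge k$: force the creation line of a designated line of descent in $\cT^*$ to realise a random-walk excursion of the type counted in \eqref{eqn:R-def} --- probability $R^{-\ell(1+o(1))}$ --- so that $\cT^*$ cheaply acquires a well-placed deep structure, and then, conditionally on this, invoke a bounded-time, constant-probability event converting that structure into $k$ extra children of the root, with $\ell$ calibrated to $k$ via the Malthusian rate of the mean semigroup. Together with the moment bounds this gives the exact exponent $-R$ when $\E[Z]\le1$ and the stated two-sided bound (with the gap in the non-fringe regime genuine and left open) when $\E[Z]>1$. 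The crux is the lower bound: one has to show the cheap excursion can be turned into $k$ new root-children within an $\exp(1)$ time budget with only polynomially small probability in $k$, which requires simultaneous control of the depth profile of $\cT^*$ near the relevant heights and of the correlations induced by the shared horizon $\tau$; it is precisely the Perron--Frobenius / quasi-stationary spectral analysis of the walk-type mean kernel that forces the exponent to be $R$ rather than the crude value $1$ coming from the first moment.
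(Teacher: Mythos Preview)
Your upper-bound strategy (moments of $D$ plus Markov) is the right shape, and indeed the paper also proves $\E[D^\theta]<\infty$ for all $\theta<R$ (resp.\ $\theta<R\wedge\frac{\log q_*}{\log s_0}$). However, the moment computation you sketch---expanding over $m$-tuples of height-one vertices and summing creation-line weights---is left entirely at the level of heuristics, and it is not clear it can be closed. The paper avoids combinatorics altogether: it introduces the weighted height profile $\cpst_s(t)=\sum_{i\ge1}s^{-i}\cP_i(t)$, for which $(s^{-i})_{i\ge0}$ is a left sub-invariant vector of the mean kernel with eigenvalue $f(s)/s$, and then bounds $\E[(\cpst_s(t))^\theta]$ by a generator/ODE argument (your $N_1(t)=\cP_1(t)\le s\,\cpst_s(t)$). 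Integrating against the law of $\tau\sim\exp(1)$ yields the stated finiteness of moments. The non-fringe constraint $\theta<\frac{\log q_*}{\log s_0}$ emerges not from a heuristic about deep subtrees but from the condition $f(s_0^\theta)/s_0^\theta<1$ in the generator estimate.

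The real gap is in your lower bound. You propose to manufacture, inside an $\exp(1)$ time window, a rare random-walk excursion event of probability $R^{-\ell(1+o(1))}$ and then ``convert'' it into $k$ root-children, calibrating $\ell$ to $k$. This is the wrong mechanism and, as you yourself flag, the conversion step is where everything would have to happen---you give no argument for it. The dominant contribution to $\{D\ge k\}$ is not a rare tree configuration at all: it is simply the event that $\tau$ is atypically large. The paper's route is far shorter. First one proves the \emph{almost sure} growth
\[
e^{-(1/R-\delta)t}\,\cP_1(t)\ \longrightarrow\ \infty\qquad\text{for every }\delta>0,
\]
by truncating the height profile at level $k$, recognising the result as a finite multitype branching process with Perron root $\alpha_k$, and showing $\alpha_k\uparrow 1/R$ (this is where the quasi-stationary/spectral input enters, via an $\alpha$-recurrence computation for the walk killed at $0$). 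Once this is in hand, the lower bound is immediate:
\[
\pr(D\ge k)=\int_0^\infty e^{-s}\pr(\cP_1(s)\ge k)\,ds
\ \ge\ \int_{(R+\delta)\log k}^\infty e^{-s}\,\pr\!\big(e^{-s/(R+\delta)}\cP_1(s)\ge 1\big)\,ds
\ \ge\ \tfrac12\,k^{-(R+\delta)}
\]
for all large $k$, since the integrand's probability tends to $1$. No rare-excursion construction, no conditional ``conversion'', no correlation control is needed. Your proposal misses this two-line reduction; without it, your lower bound is not a proof.
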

Although the upper and lower bounds in part (ii) above are different, they can be checked to be equal in several cases even in the non-fringe regime. See Remark \ref{sharp} for a discussion.

\subsection{Condensation and fixed vertex degree asymptotics}
\label{sec:res-conden}

The next result describes asymptotics for the root degree. In particular, part (a) shows that in the non-fringe regime, there is a condensation phenomenon at the root and the root neighbors asymptotically comprise a positive limiting density  of all the vertices in the tree. 
%This proportion is explicitly computed. 
\begin{theorem}[Root degree asymptotics]\label{thm:max-degree}
	Let $\deg(v_0, n)$ denote the degree of the root in $\cT_n(\vp)$.  Under Assumptions \ref{ass:mean} and \ref{ass:pmf}:
	\begin{enumeratea}
	    \item {\bf Non-Fringe regime:} Assume $\E[Z]> 1$. Then 
	    \[\frac{\deg(v_0, n)}{n} \stackrel{\prob}{\longrightarrow} 1-q_*>0,\]
	    where $q_*$ is defined in Definition \ref{def:R-q-s0}(b).
	    \item {\bf Fringe regime:}  Assume $\E[Z]\leq 1$. Then for any $\delta>0$,  as $n\to\infty$,
\begin{align}\label{root_order_fringe}
\frac{\deg(v_0, n)}{n^{\frac{1}{R}-\delta}} &\overset{a.s.}{\longrightarrow} \infty, \quad \text{ and }\quad \frac{\deg(v_0, n)}{n^{\frac{1}{R}}(\log n)^{1+\delta}} \overset{a.s.}{\longrightarrow} 0.
\end{align} 
	\end{enumeratea}
\end{theorem}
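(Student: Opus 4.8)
The plan is to pass to the continuous-time embedding $\cT(t,\vp)$ of Definition \ref{defn:cts-time} and analyze the degree of the root as a pure-birth-type process, since by Lemma \ref{lem:cts-disc} it suffices to understand $\deg(v_0,\cdot)$ along the times $T_n$, and $T_n = \log n + O_{a.s.}(1)$ by the usual Yule-process control on $|\cT(t,\vp)| = e^{t(1+o(1))}$. The key observation is that a new child is attached to the root at rate equal to the number of vertices $v$ at distance exactly $j$ from $v_0$ whose sampled exploration length $Z$ is $\ge j$; summing over $j\ge 1$ and over the single level-$0$ vertex ($v_0$ itself, contributing $p_0$ wait—actually $v_0$ reproduces at rate one and attaches to itself only if $Z>\dist(v_0,v_0)=0$, i.e.\ with probability $1-p_0$). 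Thus, writing $N_j(t)$ for the number of vertices at height $j$ in $\cT(t,\vp)$, the instantaneous rate of increase of $\deg(v_0,t)$ is $(1-p_0) + \sum_{j\ge1} N_j(t)\,\pr(Z\ge j) = \sum_{j\ge 0} N_j(t)\,\pr(Z> j)$ minus the $j=0$ bookkeeping; call this rate $\Lambda(t)$.

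For part (a), the non-fringe regime, I would show $\deg(v_0,t)/|\cT(t,\vp)| \to 1-q_*$ a.s. The heuristic is that $|\cT(t,\vp)|$ grows like $W e^{\alpha t}$ for the Malthusian parameter $\alpha$ of the associated CTBP (here $\alpha = 1$ since each vertex reproduces at rate one and has exactly one offspring per reproduction event), while the height profile $(N_j(t))_j$ converges, after normalization by $|\cT(t,\vp)|$, to the stable-age/level distribution of the branching random walk of heights; the probability that a uniformly chosen vertex has a given height is governed by the generating-function identity, and $\sum_j (\text{limiting level-}j\text{ density})\,\pr(Z>j)$ collapses to $1-q_*$ via the fixed-point equation $f(q_*)=q_*$ together with the renewal structure linking $\pr(Z>j)$ to the walk $S_n$. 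More robustly, I would argue directly: the fraction of vertices whose \emph{parent along the exploration} is the root equals the fraction of (vertex, exploration-length) pairs that reach $v_0$, and reaching $v_0$ from a uniformly chosen vertex at height $H$ requires $Z\ge H$; since the height of a uniform vertex is tight in the fringe regime but in the non-fringe regime a positive fraction of mass concentrates at bounded heights, the limiting probability that $Z\ge H$ is exactly $\pr(\barT_1=\infty \text{ from the appropriate level}) = 1-q_*$ by \eqref{eqn:R-def} and the interpretation $q_*=\pr(\barT_1<\infty)$. I would make this rigorous through the infinite-dimensional urn / multitype CTBP machinery promised in Section \ref{sec:proofs}, using Perron--Frobenius theory to get a.s.\ convergence of the normalized type-counts $N_j(t)/|\cT(t,\vp)|$ and then a dominated-convergence argument (justified by $\E[Z]<\infty$, so $\sum_j \pr(Z>j)<\infty$) to pass the limit inside the sum defining $\deg(v_0,t)$.

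For part (b), the fringe regime, the behavior is polynomial rather than linear, and the relevant exponent is $1/R$. Here I would track the root degree via the rate $\Lambda(t)$ again, but now $N_j(t)/|\cT(t,\vp)|$ decays in $j$ and the effective rate of attachment to the root behaves like $\sum_j N_j(t)\,\pr(Z>j)$ with $\pr(Z> j)$ (or more precisely the relevant quasi-stationary weights $\pr(j<\barT_1<\infty)$) decaying like $R^{-j}$ by \eqref{eqn:R-def}. The key point is that the tilted/reweighted height distribution has exponential decay rate matched precisely to $1/R$, so that $d\,\E[\deg(v_0,t)]/dt \asymp |\cT(t,\vp)|^{1/R}$-ish more carefully $\asymp e^{t/R}$, giving $\E[\deg(v_0,t)]\asymp e^{t/R}$ and hence $\deg(v_0,T_n)\asymp n^{1/R}$ up to lower-order corrections. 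The two-sided a.s.\ statement \eqref{root_order_fringe} I would obtain by: (i) a lower bound using the second-moment method / Borel--Cantelli on the martingale $e^{-t/R}\deg(v_0,t)$-type object (or a comparison with a supercritical branching process restricted to shallow vertices) to get growth faster than $n^{1/R-\delta}$; (ii) an upper bound by bounding $\deg(v_0,t)$ stochastically above by the total offspring (to all generations) produced by a single-type pure-birth process run at the enhanced rate $\sum_j R^{-j}\cdot(\text{size})$, integrating, and applying Markov with a union bound over a geometric sequence of times together with monotonicity of $\deg(v_0,\cdot)$ to upgrade to a.s.\ and to insert the $(\log n)^{1+\delta}$ safety factor.

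The main obstacle I anticipate is part (b): establishing that the height profile's reweighting by $\pr(Z>j)$ genuinely produces the exponent $1/R$ and not something larger or smaller. This requires sharp control, uniformly in $t$, of $N_j(t)$ for $j$ up to order $t$ (not just fixed $j$), i.e.\ a local-CLT-type or large-deviation-type estimate for the number of vertices at a height growing linearly with $t$; equivalently it demands marrying the branching-random-walk extremal/Biggins-type asymptotics for the height profile with the quasi-stationary-distribution constant $R$ from \cite{daley1969quasi,pakes1973conditional}. Getting the matching upper and lower bounds with only a $(\log n)^{1+\delta}$ gap — rather than an $n^{o(1)}$ gap — is where the delicate work lies, and I expect it to route through a careful Borel--Cantelli argument on the continuous-time martingale together with the tilted-walk estimate $\pr(j<\barT_1<\infty)^{1/j}\to 1/R$.
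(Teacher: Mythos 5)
Your overall architecture (continuous-time embedding, level counts $N_j(t)=\tilde\cP_j(t)$, attachment rate $\sum_j \pr(Z\ge j)\tilde\cP_j(t)$, the constants $1-q_*$ and $1/R$) matches the paper, and your part (b) plan is essentially the one used there: the lower bound does come from comparison with finite-type branching processes obtained by truncating the height profile at level $k$ — though this requires the nontrivial fact (Proposition \ref{Perroneigen}) that the Perron--Frobenius eigenvalues $\alpha_k$ of the truncations increase to $1/R$, which is where the quasi-stationary constant actually enters — and the upper bound comes from the weighted sum $\sum_i s_0^{-i}\tilde\cP_i(t)$, which is an exact Lyapunov function because $(s_0^{-i})_{i}$ is a sub-invariant left eigenvector of the rate matrix with eigenvalue $f(s_0)/s_0=1/R$; a first-moment bound, Markov's inequality over integer times, monotonicity and Borel--Cantelli then give \eqref{root_order_fringe}. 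Your anticipated difficulty (sharp, uniform-in-$t$ control of the individual $N_j(t)$ for $j$ up to order $t$, via local-CLT or large-deviation estimates) is a red herring: the generating-function weighting collapses the entire profile into a single scalar supermartingale, and no pointwise control of the profile is needed.

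The genuine gap is in part (a). You propose to obtain almost-sure convergence of the normalized type counts $N_j(t)/|\cT(t)|$ ``using Perron--Frobenius theory'' and then sum over $j$ by dominated convergence. But the relevant branching process has infinitely many types, the operator $\vB$ is not compact, and — crucially — every individual reproduces forever, so its total offspring $\xi(S\times\RR_+)$ is infinite; the standard Nerman--Jagers almost-sure (and even $\mathbb{L}^1$) convergence theorems for multitype branching processes counted with characteristics do not apply off the shelf. The paper must instead (i) verify irreducibility and $\alpha$-recurrence of the kernel built from $\vB$, (ii) identify the invariant measure $\pi(j)=(1-q_*)q_*^{j-1}$ — this is exactly where $f(q_*)=q_*$ enters, as a right-eigenvector identity for $\vB$, not through a computation of $\sum_j \nu_j\pr(Z\ge j)$ — and (iii) prove the $\mathbb{L}^1$ convergence $e^{-t}\tilde\cP_1(t)\to(1-q_*)W$ by a bespoke decomposition (Theorem \ref{root_condensation}), followed by a nontrivial transfer from continuous time $t$ to the random times $T_n$, which ultimately yields only convergence in probability. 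Your claim of almost-sure convergence is stronger than what this route delivers, and as written the central step of your part (a) rests on a convergence theorem that does not exist for this process.
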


The next result describes the degree evolution of a fixed {\bf non-root} vertex. In particular in the non-fringe regime there is a marked difference between the evolution of the degree of the root $v_0$ and any non-root vertex.  

\begin{theorem}[Fixed vertex degree asymptotics]
    \label{thm:fixed-vertex}
    Fix $k\geq 1$ and let $\deg(v_k, n)$ denote the degree of vertex $v_k$ in $\cT_n(\vp)$. Then under Assumptions \ref{ass:mean} and \ref{ass:pmf}, for any $\delta> 0$,  
\end{theorem}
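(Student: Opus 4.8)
The plan is first to pass to continuous time. By Lemma~\ref{lem:cts-disc} it suffices to analyse $\deg(v_k,\cT(T_n,\vp))$, and the key structural input is the remark after Definition~\ref{defn:cts-time}: for $t\ge T_k$ the fringe subtree below $v_k$, namely $f_0(v_k,\cT(t,\vp))$, evolves (conditionally on $\cF_{T_k}$, on fresh randomness) as an autonomous copy of the process $\cT^*$ started from a single vertex at time $T_k$. Indeed a vertex of $f_0(v_k,\cT(t))$ at graph-distance $j$ from $v_k$ reproduces at rate one, draws $Z\sim\vp$, and attaches a new vertex at distance $j-Z+1$ from $v_k$ when $Z\le j$ and strictly above $v_k$ (hence outside the fringe) when $Z>j$; and no vertex outside the fringe can ever attach inside it. Thus $\deg(v_k,\cT(t,\vp))=1+D^*(t-T_k)$, where $D^*(s)$ is the root degree of $\cT^*(s,\vp)$. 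Since every reproduction in $\cT$ adds a vertex, $|\cT(t,\vp)|$ is a Yule process, so $T_n=\log n+O_{a.s.}(1)$, while for fixed $k$ the time $T_k$ is a.s.\ finite. The theorem therefore reduces to the continuous-time statement
\[
\frac{D^*(s)}{e^{(1/R-\delta)s}}\;\longrightarrow\;\infty\quad\text{a.s.},\qquad
\frac{D^*(s)}{e^{s/R}\,s^{1+\delta}}\;\longrightarrow\;0\quad\text{a.s.},\qquad s\to\infty,
\]
for every $\delta>0$ (one then substitutes $s=T_n-T_k$ and shrinks $\delta$ slightly), which identifies $n^{1/R}$, up to polylogarithmic corrections, as the growth order of $\deg(v_k,n)$, the argument being uniform over the fringe and the non-fringe regime.

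Second, I would recognise $D^*(s)$ as the number $N_1^*(s)$ of distance-one vertices of $\cT^*(s,\vp)$: a child of the root is created either by the root drawing $Z=0$, or by a distance-$j$ vertex drawing $Z=j$, and in either case it lands at distance one. This displays $\cT^*$ as the infinite-type continuous-time branching process --- equivalently a branching random walk on $\Zbold_+$ with killing at $0$ --- in which a type-$j$ particle branches at rate one and produces a single type-$(j-Z+1)$ particle if $Z\le j$ and none otherwise. Feeding this into the urn/multitype-CTBP framework of Section~\ref{sec:proofs}: the first-moment operator has Perron root $1$ but a non-summable stable type-distribution (mass escapes to $\infty$), so the occupation of any \emph{fixed} low type is instead governed by the $R$-positive, quasi-stationary structure of the killed walk $\{S_n\}$ of \eqref{eqn:rw-def}, whose decay constant is $1/R$ by \eqref{eqn:R-def} and Lemma~\ref{lemma:prop-R-s0}; in particular $\E[N_1^*(s)]=\Theta(e^{s/R})$.

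For the upper bound I would combine $\E[N_1^*(s)]=O(e^{s/R})$ with Markov's inequality and Borel--Cantelli along an arithmetic subsequence $s_m=\delta m$, then interpolate using the monotonicity of $s\mapsto D^*(s)$, producing the a.s.\ bound $D^*(s)=O(e^{s/R}s^{1+\delta})$. For the lower bound I would use the quasi-stationary martingale: with $\phi$ the $R$-harmonic function of the walk killed at $0$ (so $\phi(j)\asymp s_0^{-j}$, $s_0$ as in Definition~\ref{def:R-q-s0}), set $M(s):=e^{-s/R}\sum_{v\in\cT^*(s,\vp)}\phi(\dist(\rho^*,v))$, $\rho^*$ the root of $\cT^*$. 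By $R$-harmonicity $M$ is a nonnegative martingale, which one checks (via the moment control in Section~\ref{sec:proofs}) to be uniformly integrable, so $M(s)\to M_\infty$ a.s.\ with $0<M_\infty<\infty$ --- nondegenerate since $\cT^*$ never goes extinct, the root reproducing forever. Finally I would transfer from the weighted global count $\sum_j N_j^*(s)\phi(j)\sim M_\infty e^{s/R}$ to a single low type, showing that $e^{-s/R}N_j^*(s)$ converges a.s.\ to a strictly positive limit for each fixed $j$ --- either from the Perron--Frobenius limit theorem for the urn scheme of Section~\ref{sec:proofs}, or by rerunning the spine/martingale argument along level-$j$ lineages --- whence $N_1^*(s)\ge e^{(1/R-\delta)s}$ eventually.

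The hard part is this last transfer: pinning the \emph{fixed} type $N_1^*$ rather than the smooth weighted functional $\sum_j N_j^*\phi(j)$. The sub-dominant $e^{s/R}$-mode that controls the low types sits on top of the dominant $e^{s}$-mode whose stable distribution escapes to infinity, so no single martingale reads off $N_1^*$; one genuinely needs $R$-positivity --- the quasi-stationary measure being an honest probability distribution, which is exactly where the aperiodicity and $p_0+p_1<1$ clauses of Assumption~\ref{ass:pmf} enter --- together with the refined renewal/Perron--Frobenius asymptotics of Section~\ref{sec:proofs}. The critical case $\E[Z]=1$, where $R=s_0=1$ and $\phi$ is no longer geometric, needs a little extra care and is responsible for the logarithmic slack $(\log n)^{1+\delta}$ in the statement rather than a sharp order of growth.
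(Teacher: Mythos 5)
Your reduction to continuous time and your upper bound are essentially the paper's argument: the fringe below $v_k$ evolves as an independent copy of $\cT^*$, so $\deg(v_k,n)\overset{d}{=}\cP_1(T_n-\sigma_k)$ up to an additive constant, and the bound $\E[\cP_1(t)]\le s_0\E[\cpst(t)]=O(e^{t/R})$ from Theorem \ref{updegree}(i) plus Markov and Borel--Cantelli on unit intervals gives the $n^{1/R}(\log n)^{1+\delta}$ statement (this is Corollary \ref{corr:cp1-infty}, \eqref{upper}).

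The lower bound as you primarily describe it, however, has a genuine gap. First, $\phi(j)=s_0^{-j}$ is only a \emph{sub}-invariant left eigenvector of $\vA$ (Proposition \ref{prop:spectral-prop}(a)): the strict inequality comes from the killing at the root, so $M(s)=e^{-s/R}\cpst(s)$ is a supermartingale, not a martingale, and non-extinction of $\cT^*$ does not give positivity of its limit. Second, and more seriously, the ``$R$-positivity'' you invoke is exactly what fails here: the paper proves in Lemma \ref{lemma:appen-A-trans} that the kernel built from $\vA$ is $\alpha$-\emph{transient} at rate $1/R$ (since $\chi(R)=s_0<\infty$), which is precisely why the authors cannot run the Nerman--Jagers machinery on $\cT^*$ and must truncate (see the Remark after Proposition \ref{Perroneigen}). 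Concretely, your claimed a.s.\ convergence of $e^{-s/R}N_1^*(s)$ to a \emph{strictly positive} limit is false in general: by Lemma \ref{Pk}, $\E[\cP_1(t)]=\sum_i \frac{t^i}{i!}\pr(\barT_1=i)$, and under Assumption \ref{ass:pmf} the hitting probabilities carry a polynomial correction ($\pr(\barT_1=n)\asymp n^{-3/2}R^{-n}$ in the generic case where $s_0$ solves $sf'(s)=f(s)$), so $e^{-t/R}\E[\cP_1(t)]\to 0$ and any a.s.\ limit of $e^{-t/R}\cP_1(t)$ is $0$. This is why the theorem asserts only a $\delta$-gapped lower bound and a logarithmically padded upper bound, not a sharp rate; the log slack has nothing to do with the case $\E[Z]=1$. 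The route that actually works is the one you mention only as a fallback: truncate to types $\{1,\dots,k\}$, apply the Athreya--Karlin/Janson limit theorem to the finite multitype process to get $e^{-\alpha_k t}\bar\cP_1(t)\to W\bar\vv_k(1)>0$ a.s.\ (Proposition \ref{prop:finite-urn}), use the coupling $\cP_1(t+\sigma_1)\ge\bar\cP_1(t)$ (Lemma \ref{lem:urn-mod-stoch-dom}), and then prove the nontrivial fact that the truncated Perron roots satisfy $\alpha_k\uparrow 1/R$ (Proposition \ref{Perroneigen}, whose proof via the identity $\E_1[e^{\theta_k\tau}\ind\{\tau<T\wedge\tau_k\}]=1$ and Lemma \ref{Phi_func} is the real work). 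Your proposal does not supply this last ingredient.
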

\begin{align}\label{degreeofv}
\frac{\deg(v_k,n)}{n^{\frac{1}{R}-\delta}} &\overset{a.s.}{\longrightarrow} \infty, \quad \text{ and }\quad \frac{\deg(v_k,n)}{n^{\frac{1}{R}}(\log n)^{1+\delta}} \overset{a.s.}{\longrightarrow} 0.
\end{align}

\begin{rem}
When $\vp$ is Geometric($p$) for $p\in (0,1)$ so that $\E[Z] = (1-p)/p$, the model was was first rigorously analyzed in \cite{chebolu2008pagerank} using combinatorial recursions.  The authors observed a phase transition for the expected root degree at $p=1/2$ (precisely as one transitions from $\E[Z] <1$ to $\E[Z] \geq 1$). In brief, \cite{chebolu2008pagerank} showed that the expected degree of the root in this special case satisfies $\E(\deg(v_0, \cT_n)) = \Omega(n (\log n)^{k})$ if $p\leq 1/2$ whilst $\E(\deg(v_0, \cT_n)) = O(n^{4pq}(\log n)^{k'})$ if $p > 1/2$, for some $k,k' \in \bZ$. One can check that in this case, $R = 1/(4pq)$, thus this paper also clarifies the reason for the mysterious constant $4pq$, in terms of a large deviations exponent of the hitting time of zero by the associated random walk.

\end{rem}

\subsection{Height asymptotics}
\label{sec:res-height}
\color{black}
Recall that $f(\cdot)$ denotes the pgf of $\vp$. Define
$$
\kappa(s) := \frac{ f(s)}{s\log (1/s)}, \ s \in (0,1).
$$
It is shown in Lemma \ref{kappa_properties} that the infimum of $s \mapsto \kappa(s)$ is attained at a unique point in $(0,s_0]$.
 \begin{defn}\label{kappa}
 Define,
\begin{equation}
    \label{eqn:kappa-ht}
    \kappa_0 :=\inf_{s\in(0,1)} \frac{ f(s)}{s\log (1/s)}
\end{equation}
Let $s^* \in (0,1)$ be the point where the infimum of $\frac{ f(s)}{s\log (1/s)}$ is attained, i.e., $\kappa_0 =\frac{ f(s^*)}{s^*\log (1/s^*)}$.
\end{defn}
%We will see in Lemma \ref{kappa_properties} that the choice of $s^*$ is unique and $s^*\in (0,s_0]$. 
The following theorem gives height asymptotics for our model. Interestingly, we observe a phase transition in the limiting behavior of rescaled heights in the non-fringe regime.
\begin{theorem}[Height asymptotics]\label{thm:height}
	Let $\cH_n$ denote the height of $\cT_n(\vp)$. Then, under Assumptions \ref{ass:mean} and \ref{ass:pmf}: 
	\begin{enumeratei}
	 \item {\bf Fringe regime:} When $\E[Z]\leq 1$, as $n\to\infty$,
	 \[\frac{\cH_n}{\log{n}}\overset{a.s.}{\longrightarrow} \kappa_0. \]
	 \item {\bf Non-Fringe regime:} When $\E[Z]> 1$, as $n\to\infty$,
	 $$
	 \frac{\cH_n}{\log{n}}\overset{a.s.}{\longrightarrow}
	 \begin{cases}
	  \kappa_0 &\quad \text{ if } s^*\in (0,q_*],\\
	  \frac{1}{\log(1/q_*)} &\quad \text{ if } s^*\in (q_*,s_0],
	 \end{cases}
	 $$
	 where $q_*$ is defined as in Definition \ref{def:R-q-s0} (b). 
	\end{enumeratei}
\end{theorem}
\color{black}
Three remarks are in order.

%\tcr{Sayan, can you update the remarks.}

\begin{rem}[Non-triviality in the non-fringe regime]
Theorem \ref{thm:max-degree}(a) implies that in the non-fringe regime, there are $\Theta(n)$ vertices within distance one of the root whp so it is not obvious that in this case the height should diverge. Thus the result above on the height shows, that even in this case, the height scales like $\log{n}$ with an appropriate limit constant.  
\end{rem}
 \color{black}
\begin{rem}[{\bf Probabilistic interpretation of limit and phase transition}]
The limiting rescaled height and associated phase transition can be probabilistically understood via branching random walks \cite{biggins1995growth,biggins1997fast}.
\begin{defn}
\label{defn:brw}
Fix pmf $\vp$. Consider a branching random walk with individuals being born into the population in continuous time, and with spatial locations on $\bZ$,  starting with one individual at location zero with dynamics:
\begin{enumeratea}
\item Each vertex gives rise to offspring according to a rate one Poisson process.
\item Writing $\rho_v$ for the location of particle $v$, if $v$ is born to $u$ then $\rho_v = \rho_u + \zeta_{uv}$, where $\zeta_{uv}  \stackrel{d}{=} (1-Z)$, independent across parent offspring connections and times of birth, where $Z\sim \vp$.
\end{enumeratea} 
Write $\set{\BRW(t):t\geq 0}$, for the corresponding process keeping track of genealogical structure and locations.
\end{defn}
 Let $B(t)$ denote the location of the rightmost particle at time $t$. Then, it turns out, using  \cite{biggins1995growth} (see Section \ref{sec:BRW-def}), in our setting,  $\lim_{t\to\infty} B(t)/t \convas \kappa_0$. Hence, the rescaled height asymptotics agrees with that of $B(\cdot)$ when $\E[Z] \le 1$, or $\E[Z] >1$ and $s^* \in (0,q_*]$.
 
However, when $\E[Z]>1$ and $s^* \in (q_*,s_0]$, there is a competition between the reproduction rate of the root and the upper tail large deviations behavior of $B(\cdot)$, which is obtained in Lemma \ref{BRW_deviation}. An inspection of the proof reveals that, for large $t$, the height of $\cT(t,\vp)$ has the same asymptotics as the maximum of the heights of the subtrees rooted at the children of the root that are born in the time interval $[\frac{1-\ep}{(1-f'(q_*))\log(1/q_*)}, \frac{1+\ep}{(1-f'(q_*))\log(1/q_*)}]$ for small $\ep>0$. This phenomenon manifests itself through the phase transition observed in Theorem \ref{thm:height}.
\end{rem}

\begin{rem}[{\bf Explicit height computation for PageRank driven networks}]\label{pdnht}
When $\vp$ is Geometric($p$), the pgf is given by $f(s)=\frac{p}{1-qs}, \, s \in [0,1/q)$. 
Asymptotics for the height in this model were previously addressed in \cite{mehrabian2016sa} using quite different techniques. For an explicit constant $\tilde p\approx .206$, they show that for $p \in [\tilde p,1)$, $\cH_n/\log{n}$ converges to a limit constant whilst for $p\in (0,\tilde p)$, there exist constants $c_L(p) < c_U(p)$ such that whp for any $\eps> 0$, $\cH_n \in [(c_L(p) -\eps)\log{n}, (c_U(p)+\eps)\log{n}] $ as $n\to\infty$. Our result shows that, contrary to what is conjectured in \cite{mehrabian2016sa}, there is indeed a phase transition in the height asymptotics at $p = \tilde p$.
%It was conjectured in their paper, we complete this arc of their work by showing that the limit constant exists in the entire regime $p \in (0,1)$ and matches their lower bound $\kappa_0(p) = c_L(p)$ which is verified as follows. 
More precisely, the minimizer $s^* \in (0,1)$ in \eqref{eqn:kappa-ht} can be seen to be the unique solution to the equation 
\[(1-p)s=\frac{1+\log s}{1+2\log s}, \qquad  s\in (0,1).\]
Define $u^*$ via the relation $s^* =e^{-1/u^*}$. Then,
\[\kappa_0= pe^{1/u^*} u^* (2-u^*),\]
which matches the expression for $c_L(p)$ in \cite[Theorem 2]{mehrabian2016sa}. Moreover, for $p\in (0,1/2)$, $q_* = p/(1-p)$ and hence
\[\frac{1}{\log(1/q_*)} = \left(\log\left(\frac{1-p}{p}\right)\right)^{-1},\]
which matches $c_U(p)$ in \cite[Theorem 2]{mehrabian2016sa} for $p\in (0,\tilde p)$. Finally, the value of $p$ where the height phase transition happens in Theorem \ref{thm:height} is seen to be the unique such value which gives $s^*=q_*$, or equivalently $\kappa'(q_*)=0$. This characterizes the value as the unique solution to
$$
\log\left(\frac{1-p}{p}\right) = \frac{1-p}{1-2p}, \ p \in (0,1/2),
$$
which agrees with $\tilde p$ obtained in \cite[Theorem 2]{mehrabian2016sa}.

Our proofs thus elucidate the connections between limit constants in \cite{mehrabian2016sa} obtained through subtle combinatorial analysis and extremal statistics in branching random walks \cite{biggins1995growth}.
\end{rem}
\color{black}

\subsection{PageRank asymptotics}
\label{sec:res-page-rank}
Recall the PageRank scores $\set{\fR_{v,c}(n): v\in \cT_n(\vp)}$ in Definition \ref{def:page-rank}. For any $v\in \cT_n(\vp)$, let $P_l(v,n)$ denote the number of \emph{directed} paths of length $l$ that end at $v$ in $\cT_n(\vp)$. Since $\cT_n(\vp)$ is a directed tree, it is easy to check that the PageRank scores have the explicit formulae for any vertex $v$, 
\beq
\label{eqn:non-root-page-rank}
\fR_{v,c}(n)=\frac{(1-c)}{n}\left(1 + \sum_{l=1}^\infty c^l P_l(v,n)\right).
\eeq

For the sequel, it will be easier to formulate results in terms of the \emph{graph normalized} PageRank scores \cite{garavaglia2020local} $\set{R_{v,c}(n):v\in \cT_n(\vp)} = \set{n\fR_{v,c}(n): v\in \cT_n(\vp)}$. The first result shows a non-trivial phase transition of the PageRank scores for fixed vertices in the fringe regime. This phase transition carries over to the empirical distribution of PageRank scores which will conclude this Section. 
%Note that in the non-fringe regime, it is trivial to see that for the root vertex $R_{v_0, c}(n) = \Theta(n)$ whp. 
% Thus the first assertion deals only with fixed non-root vertices.  

\begin{theorem}[PageRank asymptotics for fixed vertices]
\label{thm:page-rank-fixed}
Fix vertex $v_k$ and damping factor $c\in (0,1)$. Then under Assumptions \ref{ass:mean} and \ref{ass:pmf}, 
\begin{enumeratea}
    \item {\bf Non-fringe regime:} when $\E[Z] > 1$, 
 \begin{enumeratei}
\item When $k=0$ (root PageRank), $R_{v_0,c}(n)/n$ is bounded away from zero in probability: for any $\delta>0$, there exists $\ep>0$ such that
$$
\liminf_{n \rightarrow \infty}\pr\left(\frac{R_{v_0,c}(n)}{n} \ge c(1-c)(1-q_*) - \ep\right)\ge 1-\delta.
$$
\item For any $k \ge 1$, as $n\to\infty$,
\begin{align}\label{profv}
\frac{R_{v_k,c}(n)}{n^{\frac{1}{R}-\delta}} &\overset{a.s.}{\longrightarrow} \infty, \quad \text{ and }\quad \frac{R_{v_k,c}(n)}{n^{\frac{1}{R}}(\log n)^{1+\delta}} \overset{a.s.}{\longrightarrow} 0.
\end{align} 
\end{enumeratei}   
\item {\bf Fringe regime:} When $\E[Z]\leq 1$, 
\begin{enumeratei}
\item Fix any $c\in (0,s_0^{-1}]$ with $c<1$. Then for any $\delta>0$, and any vertex $v_k$, $k \ge 0$,
as $n\to\infty$,
\begin{align}\label{profv}
\frac{R_{v_k,c}(n)}{n^{\frac{1}{R}-\delta}} &\overset{a.s.}{\longrightarrow} \infty, \quad \text{ and }\quad \frac{R_{v_k,c}(n)}{n^{\frac{1}{R}}(\log n)^{1+\delta}} \overset{a.s.}{\longrightarrow} 0.
\end{align} 
\item Suppose $s_0>1$. Fix any $c\in( s_0^{-1},1)$ and $v_k$, $k \ge 0$. Then there exists a non-negative random variable $W_{k,c}$ with $\pr(W_{k,c} >0) >0$ such that, 
\[\frac{R_{v_k,c}(n)}{n^{cf(1/c)}}\convas W_{k,c}, \qquad \mbox{ as } n\to\infty. \]
\end{enumeratei} 
\end{enumeratea}
\end{theorem}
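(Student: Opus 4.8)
The whole argument rests on the path-count formula \eqref{eqn:non-root-page-rank}: in the directed tree $\cT_n(\vp)$ a directed path of length $l$ ending at $v$ is just a descendant of $v$ at graph distance $l$, so
\[
R_{v,c}(n)\;=\;(1-c)\sum_{u\,\preceq\, v}c^{\dist(u,v)},
\]
the sum over the subtree rooted at $v$ (with $v$ contributing $1$). I would pass to the continuous-time process of Definition \ref{defn:cts-time}: since every vertex reproduces successfully, $|\cT(t,\vp)|$ is a Yule process, $T_n=\log n+O_{a.s.}(1)$, and, as noted in the text, the subtree below any non-root vertex, re-rooted there, evolves as a copy of $\cT^*$. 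Hence, by Lemma \ref{lem:cts-disc}, for $k\ge1$ one has the pathwise identity $R_{v_k,c}(n)=(1-c)\,\mathsf M_c(T_n-T_k)$ with $\mathsf M_c(t):=\sum_{u\in\cT^*(t,\vp)}c^{\,\mathrm{depth}(u)}$, where $\mathrm{depth}$ is the distance to the root of $\cT^*$; for $k=0$ the same holds with $\cT$ in place of $\cT^*$, and it is convenient to expand $\cT$ into the subtrees hanging from the children of $v_0$, each a copy of $\cT^*$, so that $R_{v_0,c}(n)=(1-c)\bigl(1+c\sum_{w}\mathsf M_c^{(w)}(T_n-T_w)\bigr)$, the sum over the children $w$ of $v_0$. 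So everything reduces to the a.s.\ growth of $\mathsf M_c(t)$. Tracking the depth of each vertex, $\cT^*$ is the infinite-type Markov branching process in which a type-$d$ particle, for each $0\le j\le d$, produces a type-$(d-j+1)$ particle at rate $p_j$; it never dies, so $|\cT^*(t)|\uparrow\infty$ a.s. Its mean profile $N_l(t)=\E[\#\{u:\mathrm{depth}(u)=l\}]$ satisfies $N_0\equiv1$ and $\dot N_l=\sum_{j\ge0}p_jN_{l-1+j}$ for $l\ge1$, from which one gets
\[
\tfrac{d}{dt}\E[\mathsf M_c(t)]=cf(1/c)\,\E[\mathsf M_c(t)]-\sum_{m\ge0}N_m(t)\,c^{m+1}\!\!\sum_{j>m}p_j c^{-j},
\]
the correction being nonnegative, so $\E[\mathsf M_c(t)]\le e^{cf(1/c)t}$ when $1/c<r_f$.

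The crux is to identify the true exponential rate of $\mathsf M_c(t)$. Writing $N_{\lfloor\beta t\rfloor}(t)=e^{(\Lambda(\beta)+o(1))t}$ for the depth-profile large-deviation rate of the branching random walk with increments $1-Z$ and rate-one reproduction, one expects $\lim_{t}\tfrac1t\log\mathsf M_c(t)=\sup_{\beta\ge0}\bigl(\Lambda(\beta)+\beta\log c\bigr)$. The quasi-stationary-distribution input \eqref{eqn:R-def} together with Lemma \ref{lemma:prop-R-s0} evaluates this: the number of directed length-$l$ paths into a fixed vertex — i.e.\ $\Lambda$ near $\beta=0$ — is governed by $\pr(m<\barT_1<\infty)^{1/m}\to 1/R$, which forces $\Lambda(0^+)=1/R$ with right slope $\log s_0$, while $\inf_s f(s)/s=1/R$ controls $\Lambda$ globally; tilting, $\sup_{\beta}(\Lambda(\beta)+\beta\log c)$ equals $e^{\log c}f(e^{-\log c})=cf(1/c)$ when the tilt parameter is admissible (maximiser interior) and otherwise linearises to the boundary value $\tfrac1{s_0}f(s_0)=\tfrac1R$. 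Consequently the rate equals $1/R$ when $c\le s_0^{-1}$ — in particular always in the non-fringe regime, where $s_0\le1$ forces $s_0^{-1}\ge1>c$ — and equals $cf(1/c)\in(1/R,1)$ when $c\in(s_0^{-1},1)$ (possible only when $s_0>1$, i.e.\ in the fringe regime). Establishing this dichotomy rigorously is where essentially all the work lies: there being no Perron eigenvalue for the non-compact mean operator, one must carry out the renewal/quasi-stationary analysis by hand, in particular showing that for $c\le s_0^{-1}$ the bounded-depth vertices genuinely dominate, so the correction to $\E[\mathsf M_c]$ is lower order and the rate does not exceed $1/R$.

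Granting this, the cases are routine. When $c\le s_0^{-1}$: a first-moment bound $\E[\mathsf M_c(t)]\le Ce^{t/R}$ (the maximiser at $\beta=0$ being strict for $c<s_0^{-1}$, with the endpoint $c=s_0^{-1}$ needing the sharper second-order quasi-stationary asymptotics), Borel--Cantelli along integer times, and monotonicity of $t\mapsto\mathsf M_c(t)$ give $\mathsf M_c(t)\le e^{t/R}t^{1+\delta}$ eventually a.s.; with $T_n=\log n+O_{a.s.}(1)$ this yields $R_{v_k,c}(n)\le n^{1/R}(\log n)^{1+\delta}$ (for $k=0$, one extra $\log$ from the children-sum is absorbed into $\delta$). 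The matching lower bound is immediate: for $k\ge1$, $R_{v_k,c}(n)\ge(1-c)c\,(\deg(v_k,n)-1)$ and $\deg(v_k,n)\gg n^{1/R-\delta}$ a.s.\ by Theorem \ref{thm:fixed-vertex}; for the root in the fringe regime, $R_{v_0,c}(n)\ge(1-c)c\,\deg(v_0,n)$ and one invokes Theorem \ref{thm:max-degree}(b). Part (a)(i) is separate and elementary: $R_{v_0,c}(n)\ge(1-c)(1+c\,\deg(v_0,n))$ and $\deg(v_0,n)/n\to1-q_*$ by Theorem \ref{thm:max-degree}(a). Finally, for $c\in(s_0^{-1},1)$: now $cf(1/c)$ is a genuine eigenvalue of the mean operator, with right eigenfunction $\phi$ whose generating function is proportional to $f(s)/(f(s)-cf(1/c)s)$ and which satisfies $\phi(l)\sim\mathrm{const}\cdot c^l$; the intrinsic martingale $\sum_{u\in\cT^*(t)}\phi(\mathrm{depth}(u))e^{-cf(1/c)t}$ is uniformly integrable (the $L\log L$ and integrability hypotheses holding exactly in this interior-maximiser regime), and a Nerman-type strong law for the branching process counted with characteristic $c^{\mathrm{depth}}$ gives $e^{-cf(1/c)t}\mathsf M_c(t)\to W_c$ a.s., with $W_c>0$ a.s. Since $n=We^{T_n}(1+o(1))$ for the Yule limit $W>0$, this transfers to $R_{v_k,c}(n)/n^{cf(1/c)}\to W_{k,c}$ a.s., where $W_{k,c}=(1-c)W_c\,W^{-cf(1/c)}e^{-cf(1/c)T_k}$ for $k\ge1$ and $W_{0,c}=(1-c)c\,W^{-cf(1/c)}\sum_{w}e^{-cf(1/c)T_w}W_c^{(w)}$ for $k=0$, the last series converging a.s.\ because $cf(1/c)>1/R$ while the children of $v_0$ are born at rate $\asymp e^{t/R}$; in particular $W_{k,c}>0$ a.s.

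The main obstacle is the second paragraph: the growth exponent of $\mathsf M_c(t)$ is not given by any classical Perron--Frobenius eigenvalue, and one must prove the stated dichotomy — that for $c\le s_0^{-1}$ the contribution of bounded-depth vertices dominates (delivering exponent $1/R$ with only a polylogarithmic, not polynomial, correction) while for $c\in(s_0^{-1},1)$ the rate $cf(1/c)$ is actually attained and comes with a non-degenerate limiting martingale. The first half requires precise quasi-stationary/renewal asymptotics for the killed walk $\barT_1$ (and at the threshold $c=s_0^{-1}$ a second-order refinement); the second requires verifying the nonstandard, threshold-null hypotheses under which a Nerman-type law applies to this infinite-type continuous-time branching process. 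A secondary, more technical point is making the continuous-to-discrete transfer with errors compatible with the stated $n^{\delta}$ and $(\log n)^{1+\delta}$ slacks.
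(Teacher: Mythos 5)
Your overall architecture — continuous-time embedding, the path-count identity $R_{v,c}=(1-c)\sum_l c^l\cP_l$, lower bounds from the degree theorems, and the transfer via $T_n=\log n-\log W+o_{a.s.}(1)$ — matches the paper, and part (a)(i) and all the lower bounds are handled exactly as in the paper's proof (via Theorem \ref{thm:max-degree}(a), Corollary \ref{corr:cp1-infty} and Theorem \ref{thm:fixed-vertex}). But the two steps you yourself flag as "where essentially all the work lies" are left as gaps, and in both cases the paper closes them by mechanisms much simpler than the branching-random-walk tilting programme you sketch. For the upper bound when $c\le s_0^{-1}$ there is no large-deviation analysis of the depth profile and no second-order refinement at the endpoint: one simply dominates $c^l\le s_0^{-l}$ termwise, so $R^*_c(t)\le(1-c)\bigl(1+\cpst_{s_0}(t)\bigr)$, and Theorem \ref{updegree}(i) — which is nothing but the left sub-invariant eigenvector relation $\sum_i s^{-i}A_{ij}\le s^{-j}f(s)/s$ of Proposition \ref{prop:spectral-prop}(a) fed into the generator — gives $\E[\cpst_{s_0}(t)]\le \frac{p_0}{f(s_0)}e^{t/R}$ for all $t$, valid exactly at $s=s_0$. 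Borel--Cantelli then yields the $(\log n)^{1+\delta}$ upper bound. Your worry that the threshold $c=s_0^{-1}$ needs sharper quasi-stationary asymptotics is unfounded, but as written your argument for the exponent being $1/R$ (rather than something larger) in this range is not a proof.

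For $c\in(s_0^{-1},1)$ the paper does not verify Nerman-type hypotheses for an infinite-type process (indeed the kernel $\mu^{\vA}$ is $\alpha$-transient, Lemma \ref{lemma:appen-A-trans}, which is precisely why that machinery is avoided for $\cT^*$). Instead, since $(c^i)_{i\ge0}$ is a left sub-invariant eigenvector of $\vA$ (and of $\vB$, as $1/c>1$) with eigenvalue $cf(1/c)$, the generator computation shows $M^*_c(t)=e^{-cf(1/c)t}R^*_c(t)$ is a nonnegative supermartingale, hence converges a.s.; $L^2$-boundedness from Theorem \ref{updegree}(ii) with $s=1/c\in[1,s_0]$ gives convergence of means, and the change-of-measure lower bound of Lemma \ref{plb} (positive drift of the $s$-tilted walk for $s\in[1,s_0)$) shows $\E[W^*_c]\ge\pr(\tau^*_{1/c}=\infty)>0$, whence $\pr(W^*_c>0)>0$. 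Note the theorem only claims positive probability of positivity; your assertion that $W_c>0$ almost surely is stronger than what is stated and is not justified by anything you write. Your root decomposition into children's subtrees in this regime is also unnecessary: the same supermartingale argument applies directly to $R_c(t)$ using Proposition \ref{prop:spectral-prop}(b). So: right skeleton, correct lower bounds and part (a)(i), but the two upper-bound/convergence cruxes are acknowledged rather than proved, and the proposed routes for closing them (BRW tilting; Nerman) are substantially harder than, and in the second case possibly inapplicable compared with, what the paper actually does.
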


Next define the empirical distribution of normalized PageRank scores, 
$$\hat{\mu}_{n, \PR} := n^{-1} \sum_{v\in \cT_n(\vp)} \delta\set{R_{v,c}(n)}.$$   
General results on the implication of local weak convergence of sparse graphs on the convergence of the empirical distribution of PageRank scores was derived in \cite{garavaglia2020local,banerjee2021pagerank}. In particular, the local weak convergence in Theorem \ref{thm:fringe} coupled with \cite{garavaglia2020local,banerjee2021pagerank} leads to the following result. Recall the finite rooted random tree $\cT^*(\tau,\vp)$ from Definition \ref{def:fringe-limit} and let $P_l(\emptyset)$ denote the number of directed paths of length $l$ that end at the root denoted as $\emptyset$. Define the \emph{normalized} PageRank score at $\emptyset$ as, 
\begin{equation}\label{lwlpr}
    \cR_{\emptyset, c}(\infty) = (1-c)\left(1 + \sum_{l=1}^{\infty} c^l P_l(\emptyset)\right).
\end{equation}
\begin{corollary}[PageRank asymptotics]\label{cor:page-rank}
    Under Assumption \ref{ass:mean}, the random variable $\cR_{\emptyset, c}(\infty)$ is finite a.s. Further for every continuity point $r$ of the distribution of $\cR_{\emptyset, c}(\infty)$, 
    \[n^{-1}\sum_{v\in \cT_n(\vp)} \ind\set{R_{v,c}(n) > r} \probc \pr(\cR_{\emptyset, c}(\infty) > r).  \]
\end{corollary}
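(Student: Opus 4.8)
The plan begins with a structural simplification special to our directed trees: since every edge of $\cT_n(\vp)$ points towards the root, a directed path of length $l$ ending at a vertex $v$ must start from a descendant of $v$, so $P_l(v,n)=\#\set{u\in f_0(v,\cT_n(\vp)):\dist(u,v)=l}$ depends only on the fringe $f_0(v,\cT_n(\vp))$. Hence, defining for a finite rooted tree $\bt$ with root $\rho$ the functional
\[\phi_c(\bt):=(1-c)\sum_{u\in\bt}c^{\dist(u,\rho)}=(1-c)\Bigl(1+\sum_{l\ge 1}c^{l}\,\#\set{u\in\bt:\dist(u,\rho)=l}\Bigr),\]
formula \eqref{eqn:non-root-page-rank} reads $R_{v,c}(n)=\phi_c(f_0(v,\cT_n(\vp)))$ and \eqref{lwlpr} reads $\cR_{\emptyset,c}(\infty)=\phi_c(\cT^*(\tau,\vp))$. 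Because $0\le\phi_c(\bt)\le(1-c)|\bt|$ and $\cT^*(\tau,\vp)$ is an a.s.\ finite rooted tree (Definition \ref{def:fringe-limit}), $\cR_{\emptyset,c}(\infty)$ is finite a.s.; and $\hat\mu_{n,\PR}$ is (up to the asymptotically negligible difference between normalizing by $n$ and by $|\cT_n(\vp)|$) the pushforward under $\phi_c$ of the empirical fringe measure $\bt\mapsto H_n^{0}(\bt)$, whose convergence in probability to $\pi_{\vp}$ is precisely Theorem \ref{thm:fringe}(a). The task therefore reduces to transferring that convergence through $\phi_c$, i.e.\ to showing $\hat F_n(r):=n^{-1}\sum_{v\in\cT_n(\vp)}\ind\set{\phi_c(f_0(v,\cT_n(\vp)))>r}\probc F(r):=\pr(\phi_c(\cT^*(\tau,\vp))>r)$ at every continuity point $r$ of $F$.

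I would carry this out by a depth truncation. Put $\phi_c^{(L)}(\bt):=(1-c)(1+\sum_{l=1}^{L}c^{l}\#\set{u\in\bt:\dist(u,\rho)=l})$, so $\phi_c^{(L)}\le\phi_c$ and $\phi_c^{(L)}\uparrow\phi_c$ pointwise, with $\phi_c^{(L)}(\bt)=\phi_c(\bt)$ once $L\ge\height(\bt)$. The lone quantitative ingredient is the deterministic fact that any tree on $n+1$ vertices has at most $n+1$ directed paths of length $l$ (each determined by its lowest endpoint), which gives, uniformly in $n\ge 1$,
\[\frac1n\sum_{v\in\cT_n(\vp)}\bigl(\phi_c(f_0(v,\cT_n(\vp)))-\phi_c^{(L)}(f_0(v,\cT_n(\vp)))\bigr)=(1-c)\sum_{l>L}\frac{c^{l}}{n}\#\set{\text{directed length-}l\text{ paths in }\cT_n(\vp)}\le 2c^{L+1}.\]
On the other hand $\phi_c^{(L)}(\bt)$ depends on the first $L$ levels of $\bt$ only, so from Theorem \ref{thm:fringe}(a) — truncating the sum over $L$-level shapes at total size $N$ to a convergent finite sum, and controlling the remainder by $n^{-1}\#\set{v:|f_0(v,\cT_n(\vp))|>N}\probc\pr(|\cT^*(\tau,\vp)|>N)\to0$ (a.s.\ finiteness of $\cT^*(\tau,\vp)$) — I would deduce that for each fixed $L$ and all but countably many $r$,
\[\frac1n\sum_{v\in\cT_n(\vp)}\ind\set{\phi_c^{(L)}(f_0(v,\cT_n(\vp)))>r}\probc\pr\bigl(\phi_c^{(L)}(\cT^*(\tau,\vp))>r\bigr).\]

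To finish I would sandwich: for $\eta>0$, $\ind\set{\phi_c^{(L)}>r}\le\ind\set{\phi_c>r}\le\ind\set{\phi_c^{(L)}>r-\eta}+\ind\set{\phi_c-\phi_c^{(L)}>\eta}$, so averaging over $v$ and applying Markov's inequality together with the deterministic bound yields
\[\frac1n\sum_{v}\ind\set{\phi_c^{(L)}(f_0(v,\cT_n))>r}\le\hat F_n(r)\le\frac1n\sum_{v}\ind\set{\phi_c^{(L)}(f_0(v,\cT_n))>r-\eta}+2\eta^{-1}c^{L+1}.\]
Sending $n\to\infty$ (with $r,r-\eta$ chosen among continuity points of the truncated limits), then $L\to\infty$ (using $\pr(\phi_c^{(L)}(\cT^*)>t)\uparrow\pr(\phi_c(\cT^*)>t)$ for all $t$, and $\pr(\phi_c(\cT^*)-\phi_c^{(L)}(\cT^*)>\eta)\to0$ since $\phi_c^{(L)}\uparrow\phi_c$ a.s.\ on the a.s.\ finite tree $\cT^*(\tau,\vp)$), and finally $\eta\downarrow 0$ along a suitable sequence using continuity of $F$ at $r$, squeezes $\hat F_n(r)$ between two quantities both tending to $F(r)$, which is the claim. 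This is simply the directed-tree instance of the general transfer principle from local weak convergence to empirical PageRank distributions of \cite{garavaglia2020local,banerjee2021pagerank}, so one may alternatively quote those results directly, feeding in Theorem \ref{thm:fringe}. The step I expect to be the main obstacle is exactly this transfer: going from the per-shape fringe convergence of Theorem \ref{thm:fringe}(a) to convergence of the distribution function of $\phi_c$, an \emph{unbounded} functional reading arbitrarily deep and arbitrarily wide portions of the fringe, requires both the depth cutoff kept in check by the clean deterministic $O(c^{L})$ tail and the size/tightness estimate coming from a.s.\ finiteness of $\cT^*(\tau,\vp)$, while getting convergence in probability rather than merely in distribution rests on the in-probability nature of the fringe convergence.
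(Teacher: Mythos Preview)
Your proposal is correct and follows essentially the same approach as the paper: the paper simply states that the result follows from the fringe convergence of Theorem \ref{thm:fringe} together with the general transfer principles of \cite{garavaglia2020local,banerjee2021pagerank}, and you have spelled out precisely that transfer in the directed-tree setting (via the fringe-measurability of $R_{v,c}(n)$, depth truncation with the deterministic $O(c^{L})$ tail, and a size/tightness cutoff). Your write-up is thus a self-contained version of what the paper delegates to the cited references.
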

   
   The next result which, in particular, displays a qualitative phase transition of the tail exponent of the limiting empirical distribution of the PageRank scores in the fringe regime (Theorem \ref{thm:pr2}(b)), has not been observed before in the literature and continues the vein of results displayed in Theorem \ref{thm:page-rank-fixed}(b).
   
 \begin{theorem}[Tail behavior of PageRank distribution]\label{thm:pr2}
 Under Assumptions \ref{ass:mean} and \ref{ass:pmf},
 \begin{enumeratea}
 \item {\bf Non-fringe regime:} When $\E[Z]>1$, for $c\in (0,1)$,
$$-R\leq \liminf_{r\to\infty} \frac{ \log(\pr(\cR_{\emptyset, c}(\infty)\geq r))}{\log r}\leq  \limsup_{r\to\infty} \frac{ \log(\pr(\cR_{\emptyset, c}(\infty)\geq r))}{\log r}\leq -\left(R\wedge \frac{\log q_*}{\log s_0}\right).
$$
where as before,  $q_*<1$ is the solution to $f(q)=q$.
\item {\bf Fringe regime:} When $\E[Z]\leq 1$,
$$\lim_{r\to\infty} \frac{ \log(\pr(\cR_{\emptyset, c}(\infty)\geq r))}{\log r}=
\begin{cases}
-R &\text{ for $c\in(0,s_0^{-1}]$ with $c<1$},\\
-\frac{1}{cf(1/c)} & \text{ for $c\in (s_0^{-1},1)$, provided $s_0>1$}.
\end{cases}
$$
\end{enumeratea}
 \end{theorem}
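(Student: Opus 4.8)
\emph{Reduction.} By Corollary~\ref{cor:page-rank} it is enough to find the polynomial tail exponent of $\cR_{\emptyset,c}(\infty)$. Since $\cT^*(\tau,\vp)$ is a tree with all edges oriented toward $v_0$, a directed path of length $l$ ending at $v_0$ is the same thing as a vertex at graph distance $l$ from $v_0$, so $P_l(\emptyset)=N_l(\tau):=\#\{v\in\cT^*(\tau,\vp):\dist(v,v_0)=l\}$ and, with $\tau\sim\exp(1)$ independent,
\[
\cR_{\emptyset,c}(\infty)=(1-c)\,W_c(\tau),\qquad W_c(t):=\sum_{v\in\cT^*(t,\vp)}c^{\dist(v,v_0)}=\sum_{l\ge0}c^l N_l(t).
\]
Thus we must determine the tail of $W_c(\tau)$. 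Assigning to each $v\in\cT^*(t)$ the ``position'' $\dist(v,v_0)$ turns $(N_l(t))_l$ into a continuous-time branching process/branching random walk (Yule branching, each event producing one child displaced by $1-Z$ for $Z\sim\vp$, with births landing below level one suppressed), and $W_c(t)$ is its additive martingale functional at parameter $\log c<0$; the growth of this functional and of its moments is, via many-to-one and many-to-few formulae, governed by the quasi-stationary analysis of the random walk $\{S_n\}$ of \eqref{eqn:rw-def}: the exponents $R$ and $\log q_*/\log s_0$ are the decay rate of $\pr(n<\barT_1<\infty)$ (cf.\ \eqref{eqn:R-def}) and the geometric rate $s_0^{-1}$ of the tail of the quasi-stationary law of the walk killed at $0$, while $cf(1/c)$ is the Malthusian parameter of $W_c$ when $1/c$ lies in the uniform-integrability range $(1,s_0)$.

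\emph{Lower bounds.} Since $\cR_{\emptyset,c}(\infty)\ge(1-c)\,c\,D$ with $D=N_1(\tau)$, Theorem~\ref{thm:deg-dist} gives $\liminf_r \log\pr(\cR_{\emptyset,c}(\infty)\ge r)/\log r\ge -R$ in both regimes; this yields the lower bounds in (a) and in (b) for $c\in(0,s_0^{-1}]$. For (b) with $c\in(s_0^{-1},1)$ (so $s_0>1$ and $1/c\in(1,s_0)$), Lemma~\ref{lemma:prop-R-s0}(a) gives that $g(s)=f(s)/s$ is strictly decreasing at $s=1/c$, which places $\theta=\log c$ strictly inside the region where the additive martingale is uniformly integrable; dividing $W_c(t)$ by $e^{t\,cf(1/c)}$ therefore yields an a.s.-positive limit, and conditioning on $\{\tau\ge (cf(1/c))^{-1}\log r+O(1)\}$ together with a lower bound on that martingale limit shows $\pr(W_c(\tau)\ge r)\ge r^{-1/(cf(1/c))+o(1)}$, hence $\liminf_r\log\pr(\cR_{\emptyset,c}(\infty)\ge r)/\log r\ge-1/(cf(1/c))$.

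\emph{Upper bounds.} We bound $\E[W_c(\tau)^\beta]$ and apply Markov's inequality, so that $\limsup_r\log\pr(\cR_{\emptyset,c}(\infty)\ge r)/\log r\le-\sup\{\beta:\E[W_c(\tau)^\beta]<\infty\}$. The structural input is the branching recursion
\[
W_c(\tau)=1+c\sum_{i=1}^{D}W_c^{(i)},
\]
where, conditionally on the evolution of $\cT^*$ up to time $\tau$, the $W_c^{(i)}$ are independent and $W_c^{(i)}$ is a copy of $W_c$ at the (smaller) residual lifetime of the $i$-th depth-one vertex; indeed the subtree below a depth-one vertex is an exact fresh copy of $\cT^*$, since a ``leak to the root'' from inside that subtree is a vertex that is neither added to it nor would it be present in a standalone copy. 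Iterating this recursion, and feeding in (i) the moment index of $D$ obtained in the proof of Theorem~\ref{thm:deg-dist} (equal to $R$ in the fringe regime and to $R\wedge(\log q_*/\log s_0)$ in the non-fringe regime) and (ii) the Lyapunov-type bound $\tfrac1t\log\E[W_c(t)^\beta]\le \beta\,cf(1/c)+o(1)$ valid for $1/c\in(1,s_0)$, one gets $\E[W_c(\tau)^\beta]<\infty$ for every $\beta<R\wedge(\log q_*/\log s_0)$ in the non-fringe regime, and for every $\beta<R$ when $c\in(0,s_0^{-1}]$, resp.\ for every $\beta<1/(cf(1/c))$ when $c\in(s_0^{-1},1)$ (in which case Lemma~\ref{lemma:prop-R-s0}(a) gives $1/(cf(1/c))<R$, so (ii) is the binding constraint). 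Together with the lower bounds, and noting $cf(1/c)=f(s_0)/s_0=1/R$ at $c=s_0^{-1}$ so that the two cases of (b) agree at the transition, this proves the theorem.

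\emph{Main obstacle.} The delicate point is the sharp treatment of the ``global'' mechanism on $c\in(s_0^{-1},1)$: one must show that the additive branching-random-walk martingale of the truncated level process is uniformly integrable and that its limit has a finite moment of order exactly $1/(cf(1/c))$, so that this mechanism simultaneously attains the lower bound $r^{-1/(cf(1/c))}$ and does not produce a tail heavier than the ``local'' contribution $r^{-R}$ --- equivalently, that the two candidate exponents cross precisely at $c=s_0^{-1}$. This forces one to use the dictionary between tail exponents of fringe-tree functionals and scaling exponents of the killed random walk's quasi-stationary law in both directions, which is the technical heart of the argument.
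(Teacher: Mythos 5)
Your overall architecture coincides with the paper's: reduce to the continuous-time fringe functional $R^*_c(\tau)=(1-c)(1+\sum_l c^l\cP_l(\tau))$ with $\tau\sim\exp(1)$, get the $-R$ lower bound from the root-degree term $(1-c)c\cP_1(\tau)$, get the $-1/(cf(1/c))$ lower bound from the a.s.\ convergence of $e^{-cf(1/c)t}R^*_c(t)$, and get upper bounds from moment bounds plus Markov's inequality. Your lower-bound arguments are correct as stated, with one caveat: the limit $W^*_c$ of the rescaled score is a \emph{supermartingale} limit that the paper shows is positive only with positive probability (via the change-of-measure Lemma \ref{plb} and the quasi-stationary walk $\tau^*_s$), not almost surely positive; the weaker statement suffices for your conditioning argument, so this is an over-claim rather than a gap.

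The genuine gap is in your upper bound. The recursion $W_c(\tau)=1+c\sum_{i=1}^{D}W_c^{(i)}$ cannot be iterated as a standard smoothing/branching recursion, because in $\cT^*$ the depth-one vertices are not produced only by the root: a vertex at depth $d$ inside the subtree below an existing depth-one vertex creates a \emph{new} depth-one vertex whenever it reproduces with $Z=d$ (this is exactly the first row $A_{1j}=p_j$ of the matrix $\vA$). Hence $D$ is a functional of the internal structure of the very subtrees $W_c^{(i)}$ you are summing over, and conditioning on $D$ and the birth times tilts the law of those subtrees; the conditional independence you invoke fails. Your fallback, the ``Lyapunov-type bound'' $\tfrac1t\log\E[W_c(t)^\beta]\le\beta cf(1/c)+o(1)$, is essentially Theorem \ref{updegree}(ii) of the paper, which is proved not by recursion but by an inductive generator computation for $\cpst_s(t)=\sum_i s^{-i}\cP_i(t)$ using the sub-invariant left eigenvector $(s^{-i})$ of $\vA$ (Lemma \ref{generator}, Proposition \ref{prop:spectral-prop}); granting that bound makes the recursion superfluous, since $R^*_c(t)\le(1-c)(1+\cpst_{1/c}(t))$ directly and Corollary \ref{tau_moment} finishes the job. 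Note also that in the non-fringe regime this generator induction degrades (the inequality $s^{-il}\le s^{-i}$ needs $s\ge1$, which fails for $s_0<1$), which is precisely where the extra exponent $\log q_*/\log s_0$ enters via $\alpha^*(\theta)$ in Theorem \ref{updegree}(iii); your sketch attributes that exponent to the moment index of $D$ alone, which does not by itself control the full weighted sum $\sum_l c^l\cP_l(\tau)$.
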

 \begin{rem}[\textbf{PageRank and the power-law hypothesis}]
 Since its introduction by Brin and Page \cite{page1999pagerank}, PageRank has been largely and successfully used to identify influential nodes in a variety of network models \cite{Haveliwala2002personalization, Gyongyi04, Andersen06}. Although PageRank `looks beyond' degrees and captures more intricate local geometry around vertices, it can be computed efficiently in a distributed fashion. The well known power-law hypothesis conjectures that for real world networks with power-law (in)degree distribution, the PageRank also has a power-law distribution with the same exponent as the degree. This has been shown to hold in several \emph{static} network models like the directed configuration model \cite{chen2017generalized, olvera2019pagerank} and the inhomogeneous random digraph \cite{lee2020pagerank, olvera2019pagerank}.
 Recently, \cite{banerjee2021pagerank} showed that the power-law hypothesis is false for the affine preferential attachment model: the PageRank distribution has a strictly heavier tail than the degrees. This suggests that for \emph{dynamic} networks (evolving over time), the PageRank captures strictly more information than the empirical degree structure even at the level of large deviations.
 
 In Theorem \ref{thm:pr2}, we show for the first time a \emph{phase transition} for the limiting PageRank distribution in a random network model: in the fringe regime, the power law hypothesis holds for damping factor $c \in (0,s_0^{-1}]$, but the PageRank tail becomes heavier for larger $c$. Intuitively, when $c$ crosses a threshold, the PageRank score incorporates information from a large enough local neighborhood of each vertex so as to distinguish the extremal behavior of the limiting PageRank and degree distributions.  Identifying the class of network models for which such phase transitions occur should further quantify the efficacy and limits of the power-law hypothesis.
 \end{rem}

 \begin{rem}\label{sharp}
 Although we explicitly characterize the tail exponent for the limiting degree and PageRank distribution in the fringe regime (see Theorem \ref{thm:deg-dist}(i) and Theorem \ref{thm:pr2}(b)), only upper and lower bounds are established on this exponent in the non-fringe regime (see Theorem \ref{thm:deg-dist}(ii) and Theorem \ref{thm:pr2}(a)). However, computation of the quantities $R, q_*,s_0$ in the specific models below suggests that the upper and lower bounds actually match even in the non-fringe regime unless $\E[Z]$ is much greater than $1$. 
 \begin{enumeratei}
 \item \emph{PageRank driven preferential attachment}: This model was described in example (c) of Section \ref{sec:mod-def} (see also Remark \ref{pdnht}). In this case, $p_k = p(1-p)^k, k \ge 0$, and $\E[Z] = \frac{1}{p}-1$. Thus, the non-fringe regime corresponds to $p \in [0,1/2)$. Writing $q=1-p$, the pgf is $f(s)= p/(1-qs), \, s\in [0,1/q),$ which gives $R = 1/(4pq)$, $q_* = (p/q) \wedge 1$ and $s_0=1/(2q)$. Hence, the upper bound in the tail exponent in the non-fringe regime is given by
 $$
 -\left(R\wedge \frac{\log q_*}{\log s_0}\right) = -\left(\frac{1}{4pq}\wedge \frac{\log (q/p)}{\log (2q)}\right) = \begin{cases}
-\frac{1}{4pq} &\text{ for $p \in [p_0',1/2)$},\\
-\frac{\log (q/p)}{\log (2q)} & \text{ for $p \in (0,p_0')$}.
\end{cases},
 $$
 where $p_0' \approx 0.0616$ is obtained using Mathematica. In particular, the tail exponent for the limiting degree distribution is exactly $-1/(4pq)$ for all $p\in [p_0',1)$. For the limiting PageRank, as described in Theorem \ref{thm:pr2}(b), we see a phase transition in the fringe regime as the damping factor $c$ crosses $s_0^{-1}$. In the non-fringe regime, for $p \in [p_0',1/2)$, the tail exponent equals $-1/(4pq)$ for all values of the damping factor $c \in (0,1)$.
 
 \item \emph{Simple random walk driven attachment}: Here, we take $p_0=p=1-p_2$. Thus, at each attachment, the distance of the new vertex from the root behaves like a (biased) simple random walk with increment $\pm1$. $\E[Z] = 2(1-p)$, and thus the non-fringe regime corresponds to $p \in [0,1/2)$. In this case, writing $q=1-p$, the pgf is $f(s) = p + qs^2, \, s \ge 0,$ which gives $R = 1/(2\sqrt{pq})$, $q_*= p/q \wedge 1$ and $s_0= \sqrt{p/q}$. The upper bound in the tail exponent in the non-fringe regime is given in this case by
 $
 -\left(R\wedge \frac{\log q_*}{\log s_0}\right) = - \left(\frac{1}{2\sqrt{pq}} \wedge 2\right).
 $
 In particular, the tail exponent is exactly characterized for $p \in (\frac{1}{2}-\frac{\sqrt{3}}{4},1)$. 
 \end{enumeratei}
 The disparity between the upper and lower bounds in the non-fringe regime may appear to be an artifact of our proof techniques (see Theorem \ref{updegree}(iii) where this discrepancy appears from estimating moments of a functional of the process). However, preliminary calculations using the many-to-few formula for branching random walks \cite{harris2017many} (which gives more refined estimates) suggest that this discrepancy might be qualitative in nature due to certain rare events that affect the tail exponent when $\E[Z]\gg 1$. We will investigate this in future work.
 \end{rem}
\begin{rem}[\textbf{Affine preferential attachment}]\label{rem_affine}
As discussed before, affine preferential attachment with attachment function $f_{att}(k) = k + (1- 2p)/p$, $p \in (0,1)$, is a special case of our model, corresponding to $p_0 = 1- p, p_1 = p$. One can easily verify that in this case $s_0=\infty$ and $R=1/p$.
Although we assume $p_0 + p_1<1$, most of our proof techniques can be extended in a straightforward manner to the case $p_0+p_1=1$. Extrapolation of our results to the affine case recovers several known results which we now describe. In this case, since $\E[Z] <1$, we are always in the fringe regime. Theorem \ref{thm:deg-dist}(i) implies that the limiting degree distribution has a power-law with exponent $R=1/p$. This is well known (see e.g. \cites{bollobas2001degree,rudas2007random}).
The limiting PageRank distribution, with damping factor $c \in (0,1)$, turns out to have a heavier tail than the degrees, with exponent $1/((1-p)c + p)$, which was recently shown in \cite{banerjee2021pagerank}.
\end{rem}

\section{Proofs: Technical foundations}
\label{sec:proofs}
\subsection{Roadmap of the proofs}
The goal of this Section is to build the technical underpinnings for the proofs of the main results, as well as elucidate the connections between the functionals of the model and corresponding core areas in probability. 
\begin{enumeratei}
\item Section \ref{sec:prelim-est-embed}  deals with properties of the continuous time embeddings in Definition \ref{defn:cts-time}; these are then used in Section \ref{sec:fringe-proof} to prove Theorem \ref{thm:fringe}. 
\item Section \ref{sec:BRW-def} describes stochastic orderings between the height and extremal displacements of associated (upper and lower bounding) branching random walks. \tcr{Further, large deviations estimates are derived for branching random walks.} These results are then used in Section \ref{sec:proof-height} to prove Theorem \ref{thm:height}. 
\item While Section \ref{sec:prelim-est-embed} and \ref{sec:BRW-def} deal with direct embeddings of the process, the next few subsections describe ``non-obvious'' embeddings. Section \ref{sec:quasi} derives connections between the distance profile (in continuous time) and functionals of the quasi-stationary distribution of the random walk in \eqref{eqn:rw-def} through two infinite dimensional matrices in \eqref{eqn:matrix-def}, spectral properties of which result in the key role of the constants $R, s_0, q_*$ in the main results.
\item Analysis of truncations of the height profile leads to finite dimensional urn models and their corresponding Athreya-Karlin embeddings in finite dimensional multitype branching processes in Section \ref{sec:urn-model-def}; asymptotics of these processes, in particular as the level of truncation $K\to\infty$ needs a careful analysis of the Perron-Frobenius  eigenvalue, since the corresponding limit infinite dimensional operator is non-compact; this analysis culminates in Proposition \ref{Perroneigen}. These results form the core ingredients in obtaining lower bounds connected to evaluating power-law exponents of the degree (Theorem \ref{thm:deg-dist}) in Section \ref{sec:proof-deg-dist} and PageRank distribution (Theorem \ref{thm:pr2}) in Section \ref{sec:proof-pagerank} respectively. They are also used for lower bounds in the analysis of degree and PageRank of fixed (non-root) vertices (Theorem \ref{thm:fixed-vertex} in Section \ref{sec:proof-fixed-vertex} and Theorem \ref{thm:page-rank-fixed} in Section \ref{sec:proof-pagerank}).
\item Asymptotics of the degree and PageRank of the root necessitate the construction and analysis of an infinite dimensional multitype branching process (MTBP) in Section \ref{sec:inf-dim-mtbp}. These results play a central role in the proof of Theorem \ref{thm:max-degree} in Section \ref{sec:proof-fixed-vertex} and Theorem \ref{thm:page-rank-fixed} in Section \ref{sec:proof-pagerank}. Technical properties related to $\alpha$-recurrence and transience of kernels arising in the analysis of the MTBP are proven in Appendix \ref{apprec}.   
 \end{enumeratei}

\subsection{Size estimates on the continuous time embedding}
\label{sec:prelim-est-embed}
Recall, from Definition \ref{defn:cts-time}, the construction of the tree process in continuous time. Let $n(t) = |\cT(t,\vp)|$.

\begin{defn}[Rate $\nu$ Yule process]
\label{def:yule-process}
    Fix $\nu > 0$. A rate $\nu$ Yule process is a pure birth process $\set{Y_\nu(t):t\geq 0}$ with $Y_\nu(0)=k \in \mathbb{N}$ and where the rate of birth of new individuals is equal to $\nu$ times the size of the current population. More precisely, $\pr(Y_\nu(t+dt) - Y_\nu(t) = 1|\cF(t)):= \nu Y_\nu(t) dt + o(dt)$ and $\pr(Y_\nu(t+dt) - Y_\nu(t) \ge 2|\cF(t)):= o(dt),$ where $\set{\cF(t):t\geq 0}$ is the natural filtration of the process. Write $\set{\yu(t):t\geq 0}$ for the corresponding forest valued (tree valued if $k=1$) rate one process that keeps track of the genealogy of the process. 
\end{defn}

The following is a standard property of the Yule process. 

\begin{lem}[{\cite[Section 2.5]{norris-mc-book}}]
\label{lem:yule-prop}
Fix $t >0$ and rate $\nu > 0$ and assume $Y_{\nu}(0)=1$. Then $Y_\nu(t)$ has a Geometric distribution with parameter $p=e^{-\nu t}$. Precisely, $\text{ }\pr(Y_\nu(t) = k) = e^{-\nu t}(1-e^{-\nu t})^{k-1}, k\geq 1.$ The process $\set{Y_\nu(t)\exp(-\nu t):t\geq 0}$ is an $\bL^2$ bounded martingale and thus $\exists~ W>0 $ such that  $Y_\nu(t)\exp(-\nu t)\convas W$. Further $W \sim \mathrm{Exp}(1)$. 
\end{lem}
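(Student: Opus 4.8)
The plan is to establish the four assertions in order, each reducing to a short standard computation. First I would compute the one‑dimensional marginal law of $Y_\nu(t)$; the martingale and $\bL^2$‑boundedness claims then follow from the branching property and the second moment of that law; finally the limit $W$ is identified by matching the almost sure limit with the classical geometric‑to‑exponential scaling limit.

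\textbf{Geometric law.} Writing $p_k(t) = \pr(Y_\nu(t) = k)$, the forward Kolmogorov equations for the pure birth process with birth rate $\nu k$ in state $k$ are $p_k'(t) = \nu(k-1)p_{k-1}(t) - \nu k\, p_k(t)$ for $k\geq 1$, with $p_1(0)=1$. Equivalently, the probability generating function $G(s,t) := \E[s^{Y_\nu(t)}]$ solves the transport equation $\partial_t G = \nu s(s-1)\,\partial_s G$ with $G(s,0)=s$. Solving by characteristics (the logistic ODE $\dot s = \nu s(1-s)$ shows $\tfrac{s}{1-s}e^{-\nu t}$ is constant along characteristics) gives $G(s,t) = \tfrac{s e^{-\nu t}}{1 - s(1-e^{-\nu t})}$, which is precisely the pgf of a Geometric$(e^{-\nu t})$ law on $\{1,2,\dots\}$; expanding the series recovers $p_k(t) = e^{-\nu t}(1-e^{-\nu t})^{k-1}$. (Alternatively, verify this formula by induction on $k$ in the forward equations, or recognise $Y_\nu(t)-1$ as a sum of independent exponential holding times with rates $\nu, 2\nu, 3\nu, \dots$ and compute the hypoexponential tail.)

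\textbf{Martingale, $\bL^2$ bound, and convergence.} By the branching property of the Yule process, conditionally on $\cF_t$ the population after time $t$ is a superposition of $Y_\nu(t)$ independent copies of the process started from one individual; since each copy has mean size $e^{\nu s}$ at time $s$, this yields $\E[Y_\nu(t+s)\mid \cF_t] = e^{\nu s} Y_\nu(t)$, hence $M_t := e^{-\nu t}Y_\nu(t)$ is a nonnegative $(\cF_t)$‑martingale. From the Geometric law, $\E[Y_\nu(t)^2] = (2-e^{-\nu t})e^{2\nu t}\leq 2e^{2\nu t}$, so $\sup_{t\geq 0}\E[M_t^2]\leq 2<\infty$; thus $M$ is $\bL^2$‑bounded and, by the martingale convergence theorem, $M_t\convas W$ (also in $\bL^2$) for some $W\geq 0$ with $\E[W]=\E[M_0]=1$.

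\textbf{Identifying $W$ and positivity.} To pin down the law of $W$, I would invoke the elementary fact that if $X_p\sim$ Geometric$(p)$ then $\pr(pX_p > x) = (1-p)^{\lceil x/p\rceil}\to e^{-x}$ as $p\downarrow 0$, i.e. $pX_p\convd \exp(1)$. Taking $p=e^{-\nu t}$ gives $e^{-\nu t}Y_\nu(t)\convd\exp(1)$ as $t\to\infty$; since also $e^{-\nu t}Y_\nu(t)\convas W$, the distributional limit is $W$, so $W\sim\exp(1)$, and in particular $\pr(W>0)=1$. The only step requiring any care is the derivation of the Geometric marginal (via characteristics for the pgf PDE, or induction on the forward equations); the martingale property, the $\bL^2$ bound, and the identification $W\sim\exp(1)$ then follow mechanically.
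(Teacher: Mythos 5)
Your proof is correct. The paper offers no proof of this lemma — it simply cites Norris's textbook — and your derivation (Geometric marginal via the forward equations/pgf characteristics, martingale property from branching, $\bL^2$ bound from the second moment of the Geometric law, and identification of $W$ by matching the almost sure limit with the $pX_p\convd\exp(1)$ scaling limit) is the standard argument that the cited reference supplies.
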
 
This property leads directly to the next two results. 

\begin{lem}
\label{lem:yule-asymp}
Let $n(t) = |\cT(t,\vp)|$. Then $\set{n(t):t\geq 0}$ has the same distribution as a rate one  Yule process started with one individual at time zero.  By Lemma \ref{lem:yule-prop}, 
\begin{enumeratea}
\item $\set{e^{-t}n(t): t\geq 0}$ is an $\bL^2$ bounded martingale, $e^{-t} n(t) \stackrel{a.s., \bL^2}{\longrightarrow} W$ with $W\sim \mathrm{Exp}(1)$.  
\item Defining  $T_n = \inf\set{t\geq 0: n(t) = n+1}$, then $T_n -\log{n} \convas -\log{W}$ as $n\to\infty$. 
\end{enumeratea}
\end{lem}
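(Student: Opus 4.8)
The plan is to reduce everything to the single observation that the size process $\set{n(t):t\geq 0}$ is itself a rate one Yule process, after which (a) is a direct appeal to Lemma \ref{lem:yule-prop} and (b) is a short deterministic manipulation. First I would check this reduction: in the construction of $\set{\cT(t,\vp):t\geq 0}$ in Definition \ref{defn:cts-time}, every vertex currently in the tree reproduces at rate one, and each reproduction event --- whether it falls under case (a) or the ``attach to the root'' case (b) --- creates exactly one new vertex. Hence, conditionally on $\cF_t$, the superposition of the $n(t)$ reproduction clocks rings at rate $n(t)$, no two ring simultaneously a.s., and $n(\cdot)$ increases by exactly one at each ring; thus $\set{n(t):t\geq 0}$ is the pure birth process on $\bN$ with birth rate $k$ in state $k$ started from $n(0)=1$, i.e. a rate one Yule process. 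I would also record here that this process is non-explosive: the successive holding times $E_k\sim\exp(k)$ are independent and $\sum_{k\geq1}\E[E_k\wedge 1] = \sum_{k\geq1}(1-e^{-k})/k = \infty$, so $T_\infty := \lim_n T_n = \infty$ a.s.

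Part (a) is then immediate from Lemma \ref{lem:yule-prop} with $\nu = 1$: the process $\set{e^{-t}n(t):t\geq 0}$ is an $\bL^2$-bounded nonnegative martingale, hence converges a.s. and in $\bL^2$, and that lemma identifies the limit $W$ as $\exp(1)$-distributed.

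For part (b) I would work on the full-probability event on which $e^{-t}n(t)\to W$, on which $W>0$ (which holds since $W\sim\exp(1)$), and on which $T_n\uparrow\infty$ (non-explosion). On this event the a.s. limit may be evaluated along the random times $T_n$; since $n(T_n) = n+1$ (right-continuity and unit jumps), this gives $e^{-T_n}(n+1)\to W$, and taking logarithms --- legitimate because $W>0$ --- yields $T_n - \log(n+1)\to -\log W$, whence $T_n-\log n\to-\log W$ because $\log(n+1)-\log n\to 0$. I do not anticipate any real obstacle: the points needing a line of care are the non-explosion of the Yule process (so that the random times $T_n$ tend to infinity and the a.s. limit of $e^{-t}n(t)$ transfers to the subsequence $\set{T_n}$) and the strict positivity of $W$ (so that $\log W$ is defined), both elementary. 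The only genuinely conceptual point is that the local-exploration attachment rule, including the fallback to the root in case (b), plays no role whatsoever in the count $n(t)$, which merely sees one new vertex per reproduction.
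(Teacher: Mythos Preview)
Your proposal is correct and matches the paper's approach: the paper does not give an explicit proof at all, writing only that Lemma~\ref{lem:yule-prop} ``leads directly to the next two results,'' so your argument simply fills in the details the paper leaves implicit. The key observation you make---that every reproduction event in $\cT(\cdot,\vp)$ (unlike in $\cT^*$) adds exactly one vertex regardless of where it attaches, so the count process is a rate one Yule process---and the subsequent evaluation of the a.s.\ limit along the random times $T_n$ are exactly what the paper has in mind.
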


\begin{lemma}\label{mart_power}
Let $0\leq s<t$. We have
\begin{align*} 
\E[ (e^{-t}n(t))^2|\FF_s]&\leq  (e^{-s}n(s))^2+e^{-2s}n(s),\\
\E\left[ (e^{-t}n(t))^3|\FF_s\right]&\leq 8(e^{-s}n(s))^3.
\end{align*}
\end{lemma}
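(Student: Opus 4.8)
The plan is to use the branching property of the rate-one Yule process together with the explicit law from Lemma~\ref{lem:yule-prop}. By Lemma~\ref{lem:yule-asymp}, $\set{n(t):t\ge0}$ is a rate-one Yule process, so conditionally on $\FF_s$ the population at time $t>s$ is the superposition of $n(s)$ independent copies of such a process, each started from a single individual at time $s$. Each of these, evaluated at time $t$, is geometric with parameter $p:=e^{-(t-s)}$ by Lemma~\ref{lem:yule-prop}, so that, writing $n_s:=n(s)$,
\[
n(t)\mid\FF_s \;\equald\; \sum_{i=1}^{n_s}G_i, \qquad G_i \text{ i.i.d. with } \pr(G_1=j)=p(1-p)^{j-1},\; j\ge1.
\]

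First, I would record the elementary moments $\E[G_1]=1/p$, $\E[G_1^2]=(2-p)/p^2$ and $\E[G_1^3]=(6-6p+p^2)/p^3$, and expand $\E[n(t)^2\mid\FF_s]$ and $\E[n(t)^3\mid\FF_s]$ by multilinearity, using that the $G_i$ are independent and $n_s$ is $\FF_s$-measurable. A short computation gives
\[
\E[n(t)^2\mid\FF_s]=\frac{n_s^2+n_s(1-p)}{p^2},\qquad \E[n(t)^3\mid\FF_s]=\frac{n_s(n_s+1)(n_s+2)-3n_s(n_s+1)p+n_sp^2}{p^3}.
\]
Since $e^{-2t}=e^{-2s}p^2$ and $e^{-3t}=e^{-3s}p^3$, multiplying through clears the denominators: for the square, $\E[(e^{-t}n(t))^2\mid\FF_s]=e^{-2s}(n_s^2+n_s(1-p))\le e^{-2s}(n_s^2+n_s)$ since $p\in(0,1]$, which is exactly the first claim. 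For the cube, the correction term satisfies $-3n_s(n_s+1)p+n_sp^2=n_sp\,(p-3(n_s+1))<0$ for all $p\le1$ and $n_s\ge1$, so it may be dropped, leaving $\E[(e^{-t}n(t))^3\mid\FF_s]\le e^{-3s}n_s(n_s+1)(n_s+2)$. The proof then closes with the elementary inequality $(n+1)(n+2)\le8n^2$ for $n\ge1$ (equivalently $7n^2-3n-2\ge0$, which holds at $n=1$ and for all larger integers).

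There is no serious obstacle here; the only points needing mild care are justifying the conditional geometric-sum representation cleanly from the branching property of the Yule process, and keeping track of signs so that the sub-leading $p$-dependent terms are discarded rather than bounded, together with identifying the right one-line inequality that produces the constant $8$. If one prefers to avoid the geometric moments altogether, an equivalent route is to apply Dynkin's formula to the maps $n\mapsto n,\ n^2,\ n^3$, obtaining the linear ODEs $\tfrac{d}{dt}\E[n(t)^k\mid\FF_s]=k\,\E[n(t)^k\mid\FF_s]+(\text{lower-order terms})$ for $k=1,2,3$, which solved recursively yield the same closed forms and hence the same bounds.
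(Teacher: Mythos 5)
Your proof is correct, and it takes a genuinely different route from the paper's. The paper argues via the generator: it computes $\LL\bigl[(e^{-t}n(t))^2\bigr]=e^{-2t}n(t)$, deduces that $(e^{-t}n(t))^2-\int_0^t e^{-2u}n(u)\,du$ is a martingale, and integrates (the cubic case is asserted to follow "by the same reasoning" with details omitted) — essentially the Dynkin's-formula alternative you mention in your last sentence. You instead exploit the branching property to write $n(t)$ conditionally on $\FF_s$ as a sum of $n(s)$ i.i.d.\ Geometric$\bigl(e^{-(t-s)}\bigr)$ variables and compute the second and third moments exactly; I checked your closed forms
\[
\E[n(t)^2\mid\FF_s]=\frac{n_s^2+n_s(1-p)}{p^2},\qquad
\E[n(t)^3\mid\FF_s]=\frac{n_s(n_s+1)(n_s+2)-3n_s(n_s+1)p+n_sp^2}{p^3},
\]
and they are right, as are the sign argument for dropping the $p$-dependent corrections and the inequality $(n+1)(n+2)\le 8n^2$ for $n\ge1$. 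What your approach buys is exact conditional moments (strictly more information than the stated bounds, and it makes the constant $8$ transparent as coming from the factorial-moment product $n(n+1)(n+2)$); what the generator approach buys is brevity and the reusable martingale $\bar M_2$, which the paper in fact invokes later. The only point to make airtight in a write-up is the conditional representation itself — that given $\FF_s$ the descendants of the $n(s)$ individuals alive at time $s$ evolve as independent single-ancestor Yule processes — which is the standard Markov/branching property of a pure birth process with per-individual rate one.
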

\begin{proof}
Applying the generator $\LL$ of the Yule process on $(e^{-t}n(t))^2$ we get,  
\begin{align*}
\LL (e^{-t}n(t))^2&= e^{-2t} n(t)\left( (n(t)+1)^2-n(t)^2\right)-2e^{-2t}n^2(t)=e^{-2t}n(t).\\
\end{align*}
It follows that $\set{\bar M_2(t):t\geq 0}$ defined next is a martingale:
$$\bar M_2(t):=(e^{-t}n(t))^2-\int_0^t e^{-2u}n(u)du$$
Thus
\begin{align*}
\E[ (e^{-t}n(t))^2|\FF_s]&=(e^{-s}n(s))^2+\int_s^t e^{-2u} \E[n(u)|\FF_s]du\\
&\leq (e^{-s}n(s))^2+e^{-2s}n(s).
\end{align*}
The second assertion of the Lemma follows the same reasoning, starting with the application of the generator on $(e^{-t}n(t))^3$. We omit the details. 

\end{proof}

\begin{lemma}\label{diam_tail}
Consider a rate one Yule process $\set{\yu(t):t\geq 0}$ started with a single individual at $t=0$.  Let $\height(t)$ denote the corresponding height (maximal distance from the root) of the corresponding genealogical tree at time $t$. For any $\beta\geq 1$, we have $\E(\beta^{\height(t)}) \le 2e^{2\beta t} < \infty$.
\end{lemma}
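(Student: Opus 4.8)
The statement bounds $\E[\beta^{\height(t)}]$ for the rate-one Yule genealogical tree. The natural approach is to set up a differential inequality (or a direct generator computation) for the function $g(t) := \E[\beta^{\height(t)}]$. The key observation is that the height of a Yule tree can be analyzed by tracking, for each level $k \geq 0$, the number $N_k(t)$ of individuals at distance exactly $k$ from the root. Indeed $\beta^{\height(t)} \leq \sum_{k \geq 0} \beta^k \ind\{N_k(t) \geq 1\} \leq \sum_{k\geq 0}\beta^k \min(N_k(t),1) \le 1 + \sum_{k\ge 1}\beta^k N_k(t)$, so it suffices to bound $\sum_{k\ge 0}\beta^k \E[N_k(t)]$.

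**Main computation.** Let $m_k(t) := \E[N_k(t)]$. In the rate-one Yule process every individual (at any level $k$) gives birth at rate one to a child at level $k+1$; nobody ever changes level or dies. Hence the vector $(m_k(t))_{k\ge 0}$ solves $m_0(t) = 1$ (the root, which never moves) — wait, more carefully: $N_0(t) \equiv 1$, and for $k \ge 1$, $\frac{d}{dt} m_k(t) = m_{k-1}(t)$, since level-$k$ individuals are produced at rate one by each level-$(k-1)$ individual and nothing removes them. This linear system is solved explicitly by $m_k(t) = t^k/k!$. Therefore
\[
\E\!\left[\beta^{\height(t)}\right] \le \sum_{k\ge 0} \beta^k \E[N_k(t)] = \sum_{k\ge 0} \beta^k \frac{t^k}{k!} = e^{\beta t}.
\]
This already gives the bound $e^{\beta t} \le 2 e^{2\beta t}$ for all $t \ge 0$, $\beta \ge 1$, which is what is claimed (the stated form leaves comfortable slack, so any of the crude steps above is harmless). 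I would present it cleanly: first note $\height(t) \le $ "total number of levels occupied", bound $\beta^{\height(t)} \le \sum_k \beta^k N_k(t)$ pathwise (using $N_k(t) \ge 1$ on the event $\{\height(t) \ge k\}$ and $\beta^k \ge 0$), take expectations, and then identify $\E[N_k(t)]$ via the elementary recursion, or alternatively just cite that $N_k(t)$ counts individuals at generation $k$ in a Yule tree and the generating function $\sum_k m_k(t)s^k = e^{st}$ is standard.

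**Alternative and the main obstacle.** An even slicker route avoids $N_k$ entirely: condition on the first split of the root's tree. But tracking level indices makes the conditioning awkward, so the level-counting argument above is cleaner. The only point requiring a little care is the pathwise inequality $\beta^{\height(t)} \le 1 + \sum_{k\ge 1}\beta^k N_k(t)$: since on $\{\height(t)=h\}$ we have $N_h(t)\ge 1$, the single term $\beta^h N_h(t)$ already dominates $\beta^h = \beta^{\height(t)}$, so the inequality holds term-by-term. Honestly there is no real obstacle here; the "hard part" is merely resisting the temptation to be precise enough to prove the sharp constant $e^{\beta t}$ when the crude $2e^{2\beta t}$ suffices and matches whatever the downstream application (bounding $\E[\beta^{\height(T_n)}]$ via $T_n - \log n \to -\log W$) needs.
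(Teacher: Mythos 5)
Your proof is correct, and it takes a genuinely different (and more elementary) route than the paper. The paper bounds $\E[\beta^{\height(t)}]\le\sum_{n\ge 0}\beta^{n}\pr(\height(t)\ge n)=\sum_{n\ge 0}\beta^{n}\pr(B_{n}\le t)$, where $B_{n}$ is the time of the first birth in generation $n$, and then controls $\pr(B_{n}\le t)$ by an exponential Chernoff bound $\pr(B_n \le t) \le e^{\theta t}\E[e^{-\theta B_n}]$ combined with Kingman's first-birth theorem, which gives $\E[e^{-\theta B_{n}}]\le\theta^{-n}$; taking $\theta=2\beta$ yields exactly the stated constant $2e^{2\beta t}$. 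You instead compute the expected occupancy of each level directly, $\E[N_{k}(t)]=t^{k}/k!$ from the linear ODE system $\frac{d}{dt}m_k = m_{k-1}$, and use the pathwise domination $\beta^{\height(t)}\le\sum_{k\ge 0}\beta^{k}N_{k}(t)$; this avoids any reference to first-birth times and in fact gives the sharper bound $e^{\beta t}$, which trivially implies the claimed $2e^{2\beta t}$. The two arguments are close cousins — your step is essentially a first-moment (Markov) bound on $\pr(\height(t)\ge n)=\pr(N_{n}(t)\ge 1)\le t^{n}/n!$, whereas the paper uses an exponential-moment bound on the same event — but yours is self-contained, needs no external citation, and is what one would use to match the exact constant in the lemma only if forced to. All the individual steps (the level recursion, the pathwise inequality, the interchange of expectation and sum by Tonelli) are sound.
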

\begin{proof}
Let $\cZ_n(t)$ denote the number of $n$-th generation individuals born before time $t$ and $B_n$ denote the time of the first birth  in the $n$-th generation. Then for $\theta>0$,
\begin{align}
\E(\beta^{\height(t)})&\leq \sum_{n=0}^\infty \beta^n \pr(\height(t)\geq n)=\sum_{n=0}^\infty \beta^n \pr(B_n\leq t)
\leq \sum_{n=0}^\infty \beta^n e^{\theta t} \E[ e^{-\theta B_n}]. \label{eqn:711}
\end{align}
Theorem 1 in \cite{kingman1975first} implies that $ \E[ e^{-\theta B_n}]\leq \psi(\theta)^n$ where
$$\psi(\theta)=\int_0^\infty \theta e^{-\theta t} \E[\cZ_1(t)] dt=\frac{1}{\theta}.$$
Thus $\E[ e^{-\theta B_n}]\leq \theta^{-n}$. Using this in \eqref{eqn:711} with  $\theta = 2\beta$ completes the proof. 

\end{proof}

\subsection{Branching random walks}
\label{sec:BRW-def}

Recall, the definition of $\BRW$, in Definition \ref{defn:brw}. Consider the following variations in step (b) of the dynamics:

\begin{enumeratea}
\item[(b)$^\prime$] If the prospective location of a new particle is at zero or below, it is ``reflected'' to location one. Then in terms of locations describing graph distance to the root, this is precisely the evolution of distances in $\cT$. Thus the height $\cH_{\cT(t)}$ is precisely the location of the rightmost particle.  
\item[(b)$^{\prime\prime}$] If the prospective location of a new particle is zero or below it is killed (removed from the system). This gives the distance process in $\cT^*$. As before the height $\cH_{\cT^*(t)}$ is precisely the location of the rightmost particle in this process. 
\end{enumeratea}

From the description of the dynamics, the following is obvious. 
\begin{lemma}
\label{lem:ht-domination}
Let $B(t)$ be the rightmost particle in $\BRW(t)$. One can couple $\cT, \cT^*, \BRW$ on a common probability space such that for all $t\geq 0$, $\cH_{\cT^*(t)} \leq B(t) \leq \cH_{\cT(t)} $. 
\end{lemma}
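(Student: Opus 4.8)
The plan is to build $\BRW$, $\cT$ and $\cT^*$ simultaneously on one probability space from a single source of randomness, after which both inequalities become pathwise and almost immediate. First I would fix the genealogical backbone of a rate-one Yule process: attach to every potential individual an independent rate-one Poisson clock dictating its reproduction epochs, and to every potential parent--offspring edge $e$ an independent mark $Z_e\sim\vp$, writing $\zeta_e:=1-Z_e$. Call the resulting (random) rooted tree $\cG$; its vertices, their birth times, and the number of vertices born by time $t$ will be common to all three processes.

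On this data I would realize the three processes as follows. For $\BRW$ (Definition \ref{defn:brw}): put $\rho_v:=\sum_{e}\zeta_e$, the sum over edges on the root-to-$v$ path in $\cG$, and let $B(t)=\max\set{\rho_v: v\text{ born by time }t}$. For $\cT$ (Definition \ref{defn:cts-time}): propagate locations along $\cG$ by the reflecting rule $\rho^{\cT}_{\mathrm{child}}=\max\big(\rho^{\cT}_v+\zeta_e,\,1\big)$ at each reproduction of $v$ along $e$; since $\zeta_e=1-Z_e$ and $\rho^{\cT}_v$ is exactly the graph distance of $v$ to the root, this coincides with the attachment rule of Definition \ref{defn:cts-time} (the newcomer joins the ancestor of $v$ at height $\rho^{\cT}_v-Z_e$ when this is $\ge 1$ and joins the root otherwise), which is variation (b)$'$. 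For $\cT^*$: keep $\cG$ but prune it --- the root is present, and a child born from a present $v$ along $e$ is present iff $Z_e\le\rho^{*}_v$, in which case $\rho^{*}_{\mathrm{child}}=\rho^{*}_v+\zeta_e$, and otherwise that child together with its entire would-be subtree is absent; this is variation (b)$''$.

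Granting that these realizations carry the laws prescribed by Definitions \ref{defn:brw} and \ref{defn:cts-time}, the upper bound $B(t)\le\cH_{\cT(t)}$ follows from the pathwise claim $\rho^{\cT}_v\ge\rho_v$ for all $v$, proved by induction on birth order: equality $0=0$ at the root, and $\rho^{\cT}_{\mathrm{child}}=\max(\rho^{\cT}_v+\zeta_e,1)\ge\rho^{\cT}_v+\zeta_e\ge\rho_v+\zeta_e=\rho_{\mathrm{child}}$ at the inductive step; maximizing over vertices born by $t$ gives the inequality. For the lower bound $\cH_{\cT^*(t)}\le B(t)$, a dual induction gives $\rho^{*}_v=\rho_v$ for every present $v$ (pruning never displaces a surviving vertex), and the present vertices of $\cT^*$ born by time $t$ form a subset of the vertices of $\cG$ born by $t$; hence $\cH_{\cT^*(t)}=\max\set{\rho_v: v\text{ present in }\cT^*\text{ by time }t}\le B(t)$, with the convention that this maximum equals $0$ when only the root has survived.

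Everything here is routine; the one place I would write out with some care --- and the main (mild) obstacle --- is checking that the realizations of $\cT$ and $\cT^*$ built from $(\cG,\set{Z_e})$ really do have the laws of Definition \ref{defn:cts-time}. The point is that the multiset of root-distances evolves as an autonomous Markovian functional of the process: one need not match the realized parent--offspring structure of $\cT$ with that of $\cG$, only the collection of distances, so that $\cH_{\cT(t)}$ equals $\max_v\rho^{\cT}_v$ over $\cG$-vertices born by $t$; similarly for $\cT^*$ one checks that each present vertex reproduces at rate one and that a reproduction of $v$ is accepted with probability $\pr(Z\le\dist(v_0,v))$, the accepted child being placed at distance $\dist(v_0,v)-Z+1$. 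Once this bookkeeping is in place both inequalities are immediate, in keeping with the assertion that the statement is ``obvious''.
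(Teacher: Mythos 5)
Your proposal is correct and is exactly the argument the paper has in mind: the paper offers no written proof (it declares the lemma ``obvious from the description of the dynamics''), having just introduced the reflected variant (b)$'$ and the killed variant (b)$''$ of the branching random walk as realizations of the distance processes in $\cT$ and $\cT^*$. Your explicit coupling through a common Yule genealogy with shared marks $Z_e$, together with the two inductions $\rho^{\cT}_v \ge \rho_v$ and $\rho^{*}_v = \rho_v$ on surviving lineages, simply writes out that intended construction carefully.
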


For $\BRW$, the offspring process for each individual is a rate one Poisson process and the corresponding branching process (without location information) is a Yule process. In particular, the offspring process is non-lattice and underlying branching process is supercritical. Following \cite{biggins1995growth}, write $\mu(dz, d\tau) = \sum_{k=0}^\infty p_k \delta_{1-k}(dz) \otimes d\tau$ for the mean intensity measure of the walk.  %The following Proposition follows from \cite[Theorem xx]{biggins1995growth}.
Define for $\theta \in \bR, \phi \geq 0$ the following functionals:
$$m(\theta,\phi)=\int e^{-\theta z-\phi \tau} \mu (dz,d\tau)=\int_0^\infty \sum_{k=0}^\infty e^{-\theta(1-k)} e^{-\phi\tau} p_k d\tau=\frac{f(e^\theta)}{e^{\theta} \phi}.$$
$$\alpha(\theta)=\inf\{ \phi: m(\theta,\phi)\leq 1\}=\frac{f(e^\theta)}{e^{\theta}}.$$
\begin{equation}
    \label{eqn:alphast}
    \alpha^*(x)=\inf_{\theta<0}\{x\theta+\alpha(\theta)\}=\inf_{s\in(0,1)} \{ x\log s+\frac{f(s)}{s}\}.
\end{equation}

\color{black}
We will be using the results in \cite{biggins1995growth} that concern a very general branching random walk model, where particles are allowed to move after birth. In comparison, a particle in our branching random walk $\BRW$ performs no further movement beyond the initial displacement from its parent at birth. It would be straightforward for the curious reader to verify that $\BRW$ satisfies the mild assumptions in  \cite{biggins1995growth}, so we will refrain from repeating those detailed assumptions here.

We will rephrase Theorem 4 in \cite{biggins1995growth} into the following proposition, which will be used to prove large deviations results for $\BRW$ later.
\begin{prop}\label{Thm4}
Let $N_t[xt,\infty)$ denote the number of particles in $\BRW(t)$ that lie in $[xt,\infty)$. For all $x\neq \sup\{ y: \alpha^*(y)>-\infty\}$,
$$\frac{\log( \E[ N_t[xt,\infty)])}{t} \to \alpha^*(x) \quad \text{ as }t\to\infty.$$
\end{prop}

Recall from Definition \ref{kappa} that 
$$\kappa_0=\inf_{s\in (0,1)}\frac{f(s)}{s\log(1/s)}.$$
The following result follows from \cite[Corollary 2]{biggins1995growth}. 
\begin{prop}
\label{prop:bt-limit}
For $\BRW(t)$ the rightmost particle satisfies, 
\beq\label{Biggins}
\frac{B(t)}{t}\to \inf \{ x:\alpha^*(x)<0\} = \kappa_0 \quad \text{ almost surely}.
\eeq
\end{prop}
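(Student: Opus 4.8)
The plan is to deduce the statement directly from Biggins' theory of the speed of a general branching random walk --- specifically \cite[Corollary 2]{biggins1995growth} --- after checking that $\BRW$ meets its hypotheses, and then to match the resulting speed constant $\inf\{x : \alpha^*(x)<0\}$ with $\kappa_0$ via a short Legendre-transform computation.

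First I would verify that $\BRW$ sits inside the framework of \cite{biggins1995growth}. By Definition \ref{defn:brw}, each individual reproduces along a rate-one Poisson process and each birth carries an independent displacement distributed as $1-Z$, $Z\sim\vp$, so the mean reproduction measure is $\mu(dz,d\tau)=\sum_{k\ge0}p_k\,\delta_{1-k}(dz)\otimes d\tau$, as recorded above. The transform $m(\theta,\phi)=f(e^\theta)/(e^\theta\phi)$ is finite for all $\theta\le0$ and $\phi>0$ (since $f(e^\theta)\le f(1)=1$), the reproduction process is non-lattice in time (the Poisson process has a density), and the embedded genealogical process is a supercritical Yule process with Malthusian parameter $1$ (as $\int_0^\infty e^{-\phi\tau}\,d\tau=1/\phi=1$ at $\phi=1$). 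The number of offspring in any bounded time window is Poisson, so the regularity/$x\log x$-type conditions of \cite{biggins1995growth} hold trivially, while Assumption \ref{ass:mean} supplies the remaining integrability of the displacement. Since individuals in $\BRW$ never die, the rightmost particle born by time $t$ coincides with the rightmost alive at time $t$, so $B(t)$ is exactly the quantity Biggins' result governs. Thus \cite[Corollary 2]{biggins1995growth} gives $B(t)/t\to\inf\{x:\alpha^*(x)<0\}$ almost surely, with $\alpha^*$ as in \eqref{eqn:alphast}.

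Next I would identify $\inf\{x:\alpha^*(x)<0\}$ with $\kappa_0$. Writing $s=e^\theta$ (so that $\theta<0\leftrightarrow s\in(0,1)$), we have $\alpha^*(x)<0$ iff $x\log s+f(s)/s<0$ for some $s\in(0,1)$; since $\log s<0$, for a fixed $s$ this is equivalent to $x>\frac{f(s)/s}{-\log s}=\frac{f(s)}{s\log(1/s)}$. Taking the union over $s\in(0,1)$ then yields $\{x:\alpha^*(x)<0\}=\bigl(\inf_{s\in(0,1)}\tfrac{f(s)}{s\log(1/s)},\,\infty\bigr)=(\kappa_0,\infty)$, whence $\inf\{x:\alpha^*(x)<0\}=\kappa_0$. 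I would also remark in passing that $\kappa_0\in(0,\infty)$, with the infimum attained at an interior $s'\in(0,1)$, because $\frac{f(s)}{s\log(1/s)}\to\infty$ both as $s\to0^+$ (using $p_0>0$, so $f(s)/s\to\infty$) and as $s\to1^-$ (using $f(1)=1$); but the displayed set identity needs none of this.

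I do not expect a genuine obstacle here: the mathematical content is an application of an off-the-shelf theorem together with a one-line computation. The only thing requiring care is the bookkeeping --- translating Biggins' abstract hypotheses and the functionals $m,\alpha,\alpha^*$ into the concrete objects of Definition \ref{defn:brw}, and confirming that the ``rightmost particle'' in his statement is precisely our $B(t)$.
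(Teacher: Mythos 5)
Your proposal is correct and follows essentially the same route as the paper: both invoke \cite[Corollary 2]{biggins1995growth} for the speed $\inf\{x:\alpha^*(x)<0\}$ and then identify this constant with $\kappa_0$ via the observation that, for $s\in(0,1)$, $x\log s+f(s)/s<0$ is equivalent to $x>f(s)/(s\log(1/s))$. The extra verification of Biggins' hypotheses is a reasonable addition (the paper handles it in the surrounding discussion rather than inside the proof), but it does not change the argument.
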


\begin{proof}
The convergence ${B(t)}/{t}\to \inf \{ x:\alpha^*(x)<0\}$ is proven in \cite[Corollary 2]{biggins1995growth}. To see that $ \kappa_0 = \inf \{ x:\alpha^*(x)<0\}$, note that for $s\in(0,1)$, $ x\log s+\frac{f(s)}{s}<0$ is equivalent to $x>\frac{f(s)}{s\log(1/s)}$. Thus, $x>\inf_{s\in(0,1)}\frac{f(s)}{s\log(1/s)}$ if and only if  $\alpha^*(x)<0$. 

\end{proof}

Next we state a large deviations result for $\pr(B(t)\geq xt)$ for large $t$ when $x>\kappa_0$, which is the key to our proof of Theorem \ref{thm:height}, and is interesting in its own right.
\begin{lemma}\label{BRW_deviation}
With $\kappa_0$ as in \eqref{eqn:kappa-ht} and $B(t)$ as the rightmost particle in $\BRW(t)$, for $x>\kappa_0$,
$$\pr(B(t)\geq xt) = \exp(\alpha^*(x)t + o(t)) \ \text{ as } \ t \to \infty.$$
\end{lemma}
\begin{proof}
We start with the upper bound. For fixed $x> \kappa_0$, recall the functional $N_t[xt,\infty)$ from Proposition \ref{Thm4}. Using Proposition \ref{Thm4} gives,
$$\pr(B(t)\geq xt)\leq \pr(N_t[xt,\infty)\geq 1)\leq \E[N_t[xt,\infty)]\leq \exp(\alpha^*(x)t+o(t)).$$

To prove the lower bound, we will use an argument of induction in time. For $\ep \in (0,1)$, let $\FF_{\ep t}$ denote the filtration generated by the genealogies and locations of all the particles in $\BRW$ born up till time $\ep t$.  For $v \in \BRW(\ep t)$, let $S_v$ denote the location of the particle $v$ in $\BRW(\ep t)$. For each such $v$ consider the branching random walk encoding the genealogy and location of particles born after time $\ep t$ whose most recent common ancestor in $\BRW(\ep t)$ is $v$. %(namely consider the branching random walk emanating from $v$ started at location $S_v$) 
For $t' \ge 0$, let $B^v(t')$ denote the location of the rightmost particle in this branching random walk originating from particle $v$ observed $t'$ time units after $\ep t$. It is easy to see that%, conditional on $\cF_{\ep t}$,
\begin{align*}
\pr(B(t) \geq xt \, | \, \FF_{\ep t})&=\pr( \max_{v\in \BRW(\ep t)} B^v((1-\ep)t)\geq xt \, | \, \FF_{\ep t})\\
&=\pr\left(\max_{v\in \BRW(\ep t)}\bigg\{ \frac{B^v((1-\ep)t)-S_v}{(1-\ep)t}+\frac{S_v}{(1-\ep)t}\bigg\}\geq \frac{x}{1-\ep} \, \bigg| \, \FF_{\ep t}\right).
\end{align*}

If there exists a particle $v\in \BRW(\ep t)$ such that both $S_v\geq x\ep t$ and $B^v((1-\ep)t)-S_v\geq x(1-\ep)t$, then we will have $B(t)\geq xt$. Based on this observation, we define $D_{\ep t}:=\{ v\in \BRW(\ep t): S_v\geq x\ep t\}$, noting that $D_{\ep t}$ is measurable with respect to $\FF_{\ep t}$. In addition, the collection $\{ B^v((1-\ep)t)-S_v: v\in \BRW(\ep t)\}$ comprise i.i.d. random variables, independent of $\FF_{\ep t}$, each distributed as the location of the rightmost particle in $\BRW((1-\ep)t)$. Hence,
\begin{align*}
\pr(B(t) \geq xt \, | \, \FF_{\ep t})&\geq \pr\left( \max_{ v\in D_{\ep t}} \frac{B^v((1-\ep)t)-S_v}{(1-\ep)t}\geq x \, \bigg| \, \FF_{\ep t} \right)\\
&=1-\left( 1-\pr\left(\frac{B((1-\ep)t)}{(1-\ep)t}\geq x\right)\right)^{|D_{\ep t}|}\\
&\geq \pr\left(\frac{B((1-\ep)t)}{(1-\ep)t}\geq x\right)|D_{\ep t}| \left(1 - \pr\left(\frac{B((1-\ep)t)}{(1-\ep)t}\geq x\right)\cdot |D_{\ep t}|\right),
\end{align*}
where the last line follows from the elementary inequality  $1-(1-x)^y\geq 1-e^{-xy}\geq xy(1-xy)$ for   $x\in [0,1], y\geq 0$. Taking expectations on both sides of the above bound, we have
\begin{align}\label{Bt_lb1}
\pr(B(t) \geq xt)
\geq  \pr\left(\frac{B((1-\ep)t)}{(1-\ep)t}\geq x\right) \left(\E |D_{\ep t}| - \pr\left(\frac{B((1-\ep)t)}{(1-\ep)t}\geq x\right)\cdot \E(|D_{\ep t}|^2)\right),
\end{align}
where $|D_{\ep t}|$ denotes the size of $D_{\ep t}$. Noting that $|D_{\ep t}|=N_{\ep t}[x\ep t,\infty)$, it follows directly from Proposition \ref{Thm4}  that for $x>\kappa_0$, 
$$\frac{\log (\E  |D_{\ep t}|) }{\ep t} \to \alpha^*(x)=\inf_{s\in (0,1)}\{x\log s+f(s)/s\}<0,$$
i.e., 
\beq \label{lb_sizeD}
\E  |D_{\ep t}| \geq \exp(\alpha^*(x) \ep t -o( t)).
\eeq
We claim that $\pr\left(\frac{B((1-\ep)t)}{(1-\ep)t}\geq x\right)\cdot \E(|D_{\ep t}|^2)=o(   \E |D_{\ep t}|)$ when $\ep>0$ is chosen to be sufficiently small. To see this, we use the upper bound proved earlier to get
$$
 \pr\left(\frac{B((1-\ep)t)}{(1-\ep)t}\geq x\right)\leq \exp( \alpha^*(x) (1-\ep) t+o(t)).
$$
In addition, Lemma \ref{mart_power} gives a trivial upper bound on $\E(|D_{\ep t}|^2)\leq \E[ n(\ep t)^2]\leq 2e^{2\ep t}$.
Knowing \eqref{lb_sizeD} and noting that $\alpha^*(x)<0$ for $x>\kappa_0$, we can then choose $\ep<\frac{\alpha^*(x)}{3\alpha^*(x)-2}$ so that
\begin{align*}
\pr\left(\frac{B((1-\ep)t)}{(1-\ep)t}\geq x\right)\cdot \E(|D_{\ep t}|^2)&\leq 2\exp\left( ( \alpha^*(x)(1-\ep) +2\ep )t\right)\leq \exp \left( 2\alpha^*(x)\ep t\right)=o(\E  |D_{\ep t}|). 
\end{align*}
Therefore, when $t$ is sufficiently large, combining the above with \eqref{Bt_lb1} and \eqref{lb_sizeD} leads to
\begin{align*}
\pr(B(t) \geq xt)&\geq  \frac{1}{2} \pr\left(\frac{B((1-\ep)t)}{(1-\ep)t}\geq x\right)\cdot  \E |D_{\ep t}|\\
&\geq   \frac{1}{2} \pr\left(\frac{B((1-\ep)t)}{(1-\ep)t}\geq x\right)\exp(\alpha^*(x) \ep t -o(t)),
\end{align*}
which then implies the following relation
\begin{align*}
&\liminf_{t\to\infty}\frac{1}{t}\log \pr\left(\frac{B(t)}{t}\geq x\right)\geq \alpha^*(x)\ep+(1-\ep)\liminf_{t\to\infty}\frac{1}{(1-\ep)t}\log \pr\left(\frac{B((1-\ep)t)}{(1-\ep)t}\geq x \right).
\end{align*}
This proves our desired conclusion
\begin{align*}
\pr\left(\frac{B(t)}{t}\geq x \right)&\geq \exp(\alpha^*(x)t-o(t)).
\end{align*}
\end{proof}
\color{black}

\subsection{Connection to Quasi stationary random walks}
\label{sec:quasi}
We start by clarifying the appearance of the mysterious functionals such as $R, s_0$ etc in the statement of the main results. Recall the process $\cT^*(\cdot), \cT(\cdot)$ in Definition \ref{defn:cts-time}. For $i\geq 0$, let $\cP_i(t)$ denote the number of vertices at distance $i$ to the root in $\TT^*(t)$ with $\cP_0(t) \equiv 1$ for all $t$.  Write $\bcP(t)=(\cP_0(t),\cP_1(t),\ldots)^\prime$ for the entire column vector. Let $\tilde{\cP}_i(t)$ and $\tilde{\bcP}(t) = (\tilde{\cP}_0(t),\tilde{\cP}_1(t),\ldots)^\prime$ denote the analogous objects for $\cT$.  Define the two (infinite dimensional) matrices $\vA = (A_{ij})_{i,j\geq 0}$ and $\vB = (B_{ij})_{i,j\geq 0}$  ,  
\begin{equation}
\label{eqn:matrix-def}
    \vA=\begin{pmatrix}
0 & 0 &0 & \cdots \\
p_0& p_1 & p_2 &\cdots \\
0 & p_0 & p_1&\cdots\\
0& 0 &p_0 &\cdots\\
0 &\cdots
\end{pmatrix}, \qquad \vB =\begin{pmatrix}
0 & 0 &0 & \cdots \\
c_0& c_1 & c_2 &\cdots \\
0 & p_0 & p_1&\cdots\\
0& 0 &p_0 &\cdots\\
\cdots
\end{pmatrix},
\end{equation}
where $c_i=\sum_{k=i}^\infty p_k$. In particular, $c_0=1$. 
The following is easy to check from the evolution dynamics. We omit the proof.

\begin{lemma}
\label{lem:inf-ODE}
For $t\geq 0$, 
\[\frac{d}{dt}\E[\bcP(t)]=\vA\cdot \E[\bcP(t)], \qquad \frac{d}{dt}\E[\tilde{\bcP}(t)]=\vB\cdot \E[\tilde{\bcP}(t)]. \]
\end{lemma}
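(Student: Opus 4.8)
The plan is to read the infinitesimal transition rates of the pure--jump Markov chains $\bcP(\cdot)$ and $\tilde{\bcP}(\cdot)$ directly off the dynamics in Definition \ref{defn:cts-time}, write down the generator acting on the coordinate maps $\vn \mapsto n_i$, and then convert the resulting Dynkin identity into the stated ODE after a short integrability check. First I would fix $i \ge 1$ and observe that in $\cT^*$ a new vertex lands at distance $i$ from $v_0$ exactly when some present vertex $v$ with $\dist(v_0,v) = j$ reproduces --- which happens at total rate $\cP_j(t)$ over all such $v$ --- and the independently sampled $Z \sim \vp$ equals $Z = j - i + 1$, so that the attachment site $u$ is at distance $j - Z = i-1$ from the root and the new vertex sits at distance $i$; this forces $j \ge i-1$, and in $\cT^*$ the overflow event $\{Z > \dist(v_0,v)\}$ contributes nothing. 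Hence $\bcP$ performs the jump $\vn \mapsto \vn + \ve_i$ at rate $\sum_{j \ge i-1} p_{j-i+1} n_j = (\vA \vn)_i$, while $\cP_0(t) \equiv 1$ is frozen, in agreement with the vanishing zeroth row of $\vA$. For $\cT$ the only change is that $\{Z > j\}$ now deposits the new vertex at $v_0$, i.e. at distance $1$; so the rate of $\vn \mapsto \vn + \ve_1$ becomes $\sum_j \bigl( p_j + \sum_{k > j} p_k \bigr) n_j = \sum_j c_j n_j = (\vB \vn)_1$, with every other row unchanged. This is precisely the structure of $\vA$ and $\vB$ in \eqref{eqn:matrix-def}.

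Next I would handle integrability. Since $\sum_{i \ge 0} \cP_i(t) = |\cT^*(t,\vp)|$ is a pure birth process dominated by the Yule process $n(t) = |\cT(t,\vp)|$, Lemma \ref{lem:yule-asymp}(a) gives $\E[\cP_i(t)] \le \E[n(t)] = e^t < \infty$ for all $i,t$, and similarly for $\tilde{\cP}_i$. Consequently $\sum_j A_{ij} \E[\cP_j(t)] \le e^t \sum_j p_{j-i+1} = e^t < \infty$, and the map $t \mapsto \sum_j A_{ij} \E[\cP_j(t)]$ is finite and continuous (by dominated convergence on compact time intervals, using $\E[\cP_j(s)] \le e^s$); the same holds for $\vB$ and $\tilde{\cP}$. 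The generator computation above shows that $M_i(t) := \cP_i(t) - \cP_i(0) - \int_0^t \sum_j A_{ij} \cP_j(s)\,ds$ is a martingale (the integrability bound removes the usual localization caveat); taking expectations and using $\cP_i(0) = \ind\set{i = 0}$ yields, for $i \ge 1$, the integral equation $\E[\cP_i(t)] = \int_0^t \sum_j A_{ij} \E[\cP_j(s)]\,ds$, while $\E[\cP_0(t)] \equiv 1$. Differentiating in $t$ --- legitimate by the continuity just established --- gives $\frac{d}{dt}\E[\bcP(t)] = \vA\,\E[\bcP(t)]$, and the verbatim argument with the $\cT$--rates produces $\frac{d}{dt}\E[\tilde{\bcP}(t)] = \vB\,\E[\tilde{\bcP}(t)]$.

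The only genuine subtlety --- and the part I would be most careful about --- is that this is an infinite coupled system: the equation for $\E[\cP_i]$ involves $\E[\cP_j]$ for every $j \ge i-1$, with no finite closed truncation, so one must verify that the generator sums converge and that Dynkin's formula and Fubini genuinely apply. This is entirely controlled by the crude Yule domination $\E[\cP_i(t)] \le e^t$; no spectral input about $\vA$ or $\vB$ is needed here (that enters only later, e.g. in Section \ref{sec:urn-model-def} and Proposition \ref{Perroneigen}).
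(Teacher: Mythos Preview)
Your proof is correct and supplies precisely the argument the paper omits (the paper simply says ``easy to check from the evolution dynamics'' and gives no proof). Your reading of the jump rates --- that a reproducer at distance $j$ places a child at distance $i$ with probability $p_{j-i+1}$ in $\cT^*$, and that the overflow mass $c_{j+1}$ is rerouted to distance $1$ in $\cT$ --- exactly recovers the rows of $\vA$ and $\vB$, and the Yule domination $\E[\cP_i(t)]\le e^t$ is enough to justify Dynkin/Fubini since the chain is non-explosive (being dominated by the Yule process) and the integrated generator has finite expectation.
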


Now we will come to the first connection between the tree valued process and random walk $\{S_n:n\geq 0\}$ as in \eqref{eqn:rw-def} with increments distributed as $Z-1$. Recall the hitting times of zero started from location $k$ namely $\barT_k$ in \eqref{eqn:hit-rw}. \tcr{The next lemma intuitively follows from solving the first differential equation above and noting that $\vA$ is the transition matrix for the Markov chain $\{S_n:n\geq 0\}$.}
\begin{lemma}\label{Pk}
For all $k\geq 1$,
$$\E[\cP_k(t)]=\sum_{i=0}^\infty \frac{t^i}{i!}\pr(\barT_k=i).$$
\end{lemma}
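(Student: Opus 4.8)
The plan is to extract $\E[\cP_k(t)]$ from the linear system in Lemma \ref{lem:inf-ODE} and then recognize the powers of $\vA$ as first-passage probabilities of the walk $\{S_n\}$ of \eqref{eqn:rw-def}. Since $\cT^*(0,\vp)$ is just the root, $\E[\bcP(0)]=\ve_0$ (the $0$-th coordinate vector), so Lemma \ref{lem:inf-ODE} reads $\tfrac{d}{dt}\E[\bcP(t)]=\vA\,\E[\bcP(t)]$ with this initial datum. Every row of $\vA$ has sum $\le 1$ (row $0$ sums to $0$, each later row to $\sum_{k\ge0}p_k=1$), which together with the a priori bound $0\le\E[\cP_k(t)]\le\E[|\cT^*(t,\vp)|]\le e^t$ (the size of $\cT^*$ being stochastically dominated by the rate-one Yule process $n(t)$ of Lemma \ref{lem:yule-asymp}) lets a routine Gr\"onwall argument establish uniqueness among at-most-exponentially-growing solutions; hence $\E[\cP_k(t)]$ equals the exponential series $(e^{t\vA}\ve_0)_k=\sum_{i\ge0}\tfrac{t^i}{i!}(\vA^i)_{k,0}$, convergent since $0\le(\vA^i)_{k,0}\le(\vA^i\mvone)_k\le1$.

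The crux is then the purely combinatorial identity $(\vA^i)_{k,0}=\pr(\barT_k=i)$ for $k\ge1$. Reading off \eqref{eqn:matrix-def}, $A_{0,j}=0$ while $A_{i,j}=p_{\,j-i+1}$ for $i\ge1$ (with $p_m:=0$ for $m<0$); so, restricted to states $\ge1$, the $i$-th row of $\vA$ is exactly the one-step law of a chain moving $i\mapsto i+(W-1)$, $W\sim\vp$ — which is the increment law of $S$ — and state $0$ is absorbing. Expanding $(\vA^i)_{k,0}=\sum A_{k,k_1}A_{k_1,k_2}\cdots A_{k_{i-1},0}$: any path visiting $0$ before step $i$ hits the zero row and contributes nothing, while each surviving term equals $\pr(S_1=k_1,\dots,S_{i-1}=k_{i-1},\,S_i=0\mid S_0=k)$. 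Hence $(\vA^i)_{k,0}=\pr(S_1\ge1,\dots,S_{i-1}\ge1,\,S_i=0\mid S_0=k)$, and since the increments $Z-1\ge-1$ forbid overshooting $0$, this event coincides with $\{\barT_k=i\}$; substituting into the series finishes the proof. I expect the only genuinely delicate point to be the rigorous passage to the infinite matrix exponential (handled by the row-sum/Gr\"onwall remark); the path expansion and the no-overshoot observation are routine.

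For a cross-check that bypasses the infinite-dimensional ODE, one can instead build $\cT^*$ inside a genuine rate-one Yule genealogy carrying i.i.d.\ $\vp$-marks on its potential edges: define along each root-to-$v$ path a height $D$ by $D(\mathrm{root})=0$, $D(\mathrm{child})=D(\mathrm{parent})-Z_{\mathrm{edge}}+1$, and note that a depth-$n$ vertex belongs to $\cT^*$ with $D(v)=k$ precisely when its path heights satisfy $D_1,\dots,D_{n-1}\ge1$ and $D_n=k$. The marks being independent of the tree shape and birth times, and the expected number of depth-$n$ vertices present by time $t$ in a rate-one Yule tree being $t^n/n!$, one obtains $\E[\cP_k(t)]=\sum_{n\ge0}\tfrac{t^n}{n!}\,\pr(D_1,\dots,D_{n-1}\ge1,\,D_n=k)$; reversing the order of the i.i.d.\ increments turns the last probability into $\pr(\barT_k=n)$ exactly as before.
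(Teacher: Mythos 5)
Your main argument is correct and is essentially the paper's proof: both rest on Lemma \ref{lem:inf-ODE} together with a Gr\"onwall uniqueness argument, and both identify the coefficients of the exponential series with $\pr(\barT_k=i)$ --- the paper verifies that $f_k(t)=\sum_i \frac{t^i}{i!}\pr(\barT_k=i)$ satisfies the same ODE via the one-step decomposition of $\barT_k$, which is exactly your path expansion of $(\vA^i)_{k,0}$ read one step at a time, and it runs Gr\"onwall on $\sum_k|h_k|$ using column sums $\le 1$ where you use row sums plus the a priori exponential bound (both work). Your second, genealogical cross-check is a genuinely different and valid route; it is precisely the time-reversal identity $\pr(\barT_k=i)=\pr(\tilde S_i=k,\ \tilde\tau>i)$ that the paper only introduces later, in the proof of Lemma \ref{plb}, so it buys a proof that bypasses the infinite-dimensional ODE entirely at the cost of setting up the marked Yule coupling.
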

\begin{proof}
From Lemma \ref{lem:inf-ODE}, we have, 
\begin{equation}
    \label{eqn:536}
    \frac{d}{dt}\E[\cP_k(t)]=\sum_{j=0}^\infty A_{kj}\E[\cP_j(t)].
\end{equation}
Let $f_k(t):=\sum_{i=0}^\infty \frac{t^i}{i!}\pr(\barT_k=i)$ for $k\geq 1$ and write $f_0(t)\equiv 1$. Recalling the matrix $\vA = (A_{kj})_{k\geq 0, j\geq 0}$ from \eqref{eqn:matrix-def}, note that 
\begin{align*}
f_k'(t)&=\sum_{i=0}^\infty \frac{t^{i}}{i!}\pr(\barT_k=i+1)=\pr(\barT_k=1)+\sum_{i=1}^\infty \frac{t^i}{i!}\sum_{j=1}^\infty A_{kj}\pr(\barT_j=i)\\
&=A_{k0}+\sum_{j=1}^\infty A_{kj}f_j(t)=\sum_{j=0}^\infty A_{kj}f_j(t).
\end{align*}
We then compare this system of ODEs with \eqref{eqn:536}. Let $h_k(t)=f_k(t)-\E[\cP_k(t)]$ for $t\geq 0$. We have $h_k(0)=0$ and $h'_k(t)=\sum_{j=0}^\infty A_{kj}h_j(t)$ for all $k\geq 1$. Thus, 
\begin{align*}
|h_k(t)|&\leq \sum_{j=0}^\infty A_{kj}\int_0^t |h_j(s)| ds.
\end{align*}
Let $h^*(t)=\sum_{k=0}^\infty |h_k(t)|$. It is not difficult to check $h^*(\cdot)$ is continuous. 
Hence,
$$h^*(t)\leq \sum_{j=0}^\infty( \sum_{k=0}^\infty  A_{kj}) \int_0^t |h_j(s)|ds\leq\int_0^t h^*(s)ds.$$

By Gr\"onwall's inequality, $h^*(t)\equiv 0$. This proves the lemma.
\end{proof}

\mn
Given the connection between random walks and the tree evolution in Lemma \ref{Pk}, it is clear that spectral properties of $\vA$ and $\vB$ are key to understanding the evolution of tree functionals. The next result derives some properties. In all the ensuing results in this Subsection, we will {\bf always} make the Assumptions \ref{ass:mean} and \ref{ass:pmf}.

\begin{prop}[Spectral properties of $\vA, \vB$]
\label{prop:spectral-prop}
\begin{enumeratea}
\item For any positive $s$ such that $f(s)<\infty$, the vector $\vv_s = (v_i(s):i\geq 0)$ with $v_i(s) = s^{-i}$, $i \ge 0$, is a non-negative left sub-invariant eigenvector of $\vA$ for eigenvalue $f(s)/s$.
\item For any $s\geq 1$ such that $f(s) < \infty$, the vector $\vv_s = (v_i(s):i\geq 0)$ as before is a non-negative left sub-invariant eigenvector of $\vB$ for eigenvalue $f(s)/s$.
\item Recall $q_*$ from Definition \ref{def:R-q-s0}. When $\E[Z]>1$, the vector $\vu = (u_i:i\geq 0)$ with $u_0= 0$ and for $i\geq 1$ $u_i = q_*^{i-1}$  is a non-negative right eigenvector of $\vB$. 

\end{enumeratea}
\end{prop}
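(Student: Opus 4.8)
The plan is to verify all three statements by direct computation, exploiting the explicit banded structure of $\vA$ and $\vB$ in \eqref{eqn:matrix-def}: we have $A_{0j}=B_{0j}=0$, $A_{1j}=p_j$, $B_{1j}=c_j$, and $A_{ij}=B_{ij}=p_{j-i+1}\ind\{j\ge i-1\}$ for $i\ge2$. For a row vector $\vv=(v_i)_{i\ge0}$, the left action $(\vv^\top\vA)_j=\sum_{i\ge0}v_iA_{ij}$ has only finitely many nonzero terms in each column (since $A_{ij}=0$ once $i>j+1$), so no convergence issue arises; ``left sub-invariant eigenvector for eigenvalue $\lambda$'' will mean $(\vv^\top\vA)_j\le\lambda v_j$ for all $j$.

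For part (a) I would fix $s>0$ with $f(s)<\infty$, set $\vv_s=(s^{-i})_{i\ge0}$, and compute $(\vv_s^\top\vA)_j=\sum_{i=1}^{j+1}s^{-i}p_{j-i+1}$; the substitution $k=j-i+1$ rewrites this as $s^{-(j+1)}\sum_{k=0}^{j}s^kp_k$, which is at most $s^{-(j+1)}f(s)=(f(s)/s)v_j(s)$, giving sub-invariance (non-negativity being obvious). For part (b) the computation is identical except that the $i=1$ term carries $c_j$ instead of $p_j$, so $(\vv_s^\top\vB)_j=(\vv_s^\top\vA)_j+s^{-1}c_{j+1}$ (using $c_j-p_j=c_{j+1}$). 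The slack left over in (a) is $(f(s)/s)v_j(s)-(\vv_s^\top\vA)_j=s^{-(j+1)}\sum_{k\ge j+1}s^kp_k$, so sub-invariance for $\vB$ reduces to $\sum_{k\ge j+1}p_k\le\sum_{k\ge j+1}s^{k-j}p_k$, which holds precisely because $s\ge1$ and $k-j\ge1$ on this range. This $s\ge1$ restriction is the only genuinely new ingredient in (b) relative to (a).

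For part (c) I would use Lemma \ref{lemma:prop-R-s0}(d) to note that $\E[Z]>1$ forces $q_*\in(0,1)$, so $\vu$ with $u_0=0$ and $u_i=q_*^{i-1}$ ($i\ge1$) is non-negative with finite entries, and then check $\vB\vu=\vu$ row by row (so the eigenvalue is $1$). Row $0$ is trivial; for $i\ge2$, reindexing gives $(\vB\vu)_i=\sum_{k\ge0}p_ku_{k+i-1}=q_*^{i-2}\sum_{k\ge0}p_kq_*^k=q_*^{i-2}f(q_*)=q_*^{i-1}=u_i$ by the fixed-point identity $f(q_*)=q_*$; for $i=1$, swapping the order of summation and summing the geometric series,
\[
(\vB\vu)_1=\sum_{j\ge1}c_jq_*^{j-1}=\sum_{k\ge1}p_k\sum_{j=1}^{k}q_*^{j-1}=\frac{1}{1-q_*}\sum_{k\ge1}p_k(1-q_*^k)=\frac{1-f(q_*)}{1-q_*}=1=u_1,
\]
again using $f(q_*)=q_*$, with absolute convergence guaranteed by $\E[Z]<\infty$ (Assumption \ref{ass:mean}) since $\sum_{j=1}^{k}q_*^{j-1}\le k$.

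I do not expect a serious obstacle: the whole proposition is bookkeeping built on the reindexing $k=j-i+1$ plus a truncation bound for (a)--(b), and the fixed-point equation $f(q_*)=q_*$ plus a geometric sum for (c). The two points that need care are the hypothesis $s\ge1$ in (b) — without it the extra mass $s^{-1}c_{j+1}$ from the modified first row of $\vB$ can outstrip the available slack from (a) — and confirming that the right-eigenvalue in (c) is the \emph{same} constant $1$ in every row, which is exactly what the fixed-point equation ensures and which in turn explains why $\E[Z]>1$ (equivalently $q_*<1$) is needed, since for $q_*=1$ the row-$1$ sum would instead equal $\E[Z]\neq1$ in general.
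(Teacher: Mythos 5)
Your proposal is correct and follows essentially the same route as the paper: the same reindexing $k=j-i+1$ and truncation bound $\sum_{k=0}^{j}s^kp_k\le f(s)$ for (a), the same observation that the extra first-row mass $s^{-1}c_{j+1}$ of $\vB$ is absorbed by the slack $s^{-(j+1)}\sum_{k\ge j+1}s^kp_k$ precisely when $s\ge1$ for (b), and the same row-by-row verification of $\vB\vu=\vu$ via $f(q_*)=q_*$ (with only a cosmetically different interchange of summation in row $1$) for (c). No gaps.
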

\begin{proof}
(a) For $j\geq 0$,
\begin{align*}
\sum_{i=1}^\infty s^{-i}A_{ij}&=\sum_{i=1}^{j+1}s^{-i}p_{j+1-i}=s^{-(j+1)}\sum_{i=1}^{j+1}s^{j+1-i}p_{j+1-i}
\leq s^{-j} \frac{f(s)}{s}.
\end{align*}

(b) Recall $c_j=\sum_{l=j}^{\infty}p_l$. For $j\geq 0$,
\begin{align*}
\sum_{i=1}^\infty s^{-i}B_{ij}&=\sum_{i=1}^{j+1}s^{-i}p_{j+1-i}+s^{-1}c_{j+1}=s^{-(j+1)}\left( \sum_{i=1}^{j+1}s^{j+1-i}p_{j+1-i}+s^{j}c_{j+1}\right)\\
&\leq s^{-(j+1)}\left( \sum_{i=0}^{j}s^{i}p_{i}+ \sum_{i=j+1}^{\infty} s^{i}p_{i}\right) = s^{-j} \frac{f(s)}{s},
\end{align*}
where, in the first inequality, we have used $s \ge 1$. 

(c) Note that since $f(q_*)=q_*$,
\begin{align*}
(\vB \vu)_1&=\sum_{k=0}^\infty c_ku_k=\sum_{k=1}^\infty \left(1-\sum_{l=0}^{k-1} p_l\right) q_*^{k-1}=\left(\frac{1}{1-q_*}-\sum_{l=0}^\infty p_l\sum_{k=l+1}^\infty q_*^{k-1}\right)\\
&=\frac{1}{1-q_*}-\frac{f(q_*)}{1-q_*}
=1=u_1.
\end{align*}
For $j\geq 2$, 
\begin{align*}
(\vB \vu)_j&=\sum_{k=0}^\infty p_ku_{k+j-1}=\sum_{k=0}^\infty p_k q_*^{k+j-2}=q_*^{j-2}f(q_*)=q_*^{j-1} = u_j.
\end{align*}
Clearly, $(\vB\vu)_0 = 0 = u_0$.

\end{proof}

\subsection{Urn models and multitype branching processes}
\label{sec:urn-model-def}
This section connects the evolution of $\bcP$ with urn models and eventually to finite dimensional multitype branching processes. 
%These will be used to obtain lower bounds on the degree and PageRank distribution exponents. 
We will consider a truncated version of $\bcP$ where we keep track of vertices at distance at most $k \ge 1$ from the root for some fixed $k$. Recall the matrix $\vA$ from \eqref{eqn:matrix-def} and let $\vA_k$ denote a $k\times k$ matrix such that $(\vA_k)_{ij}=A_{ij}$ for $1\leq i,j\leq k$. Let $\ve_j^{(k)}$ denote the unit basis vector in $\bR^{k}$ with one in the $j$-th co-ordinate and zero elsewhere. 

\begin{defn}[Urn model encoding distance from the root]
\label{defn:urn-model}
Fix $k\geq 1$. Consider a generalized multitype Polya-urn process with types $\{1,\dots,k\}$ starting with a single ball of type $1$. When a ball of type $i$ is drawn, it is returned along with $\mvxi_i=(\xi_{i1},\dots,\xi_{ik})$ other balls, where 
$$\begin{cases}
\pr(\mvxi_i=\textbf{e}^{(k)}_j)=A_{ji}  & (1\leq j\leq k) \\
\pr(\mvxi_i=\mvzero)=1-\sum_{j=1}^k A_{ji} 
\end{cases}
$$
It is easy to see $\E(\xi_{ij})=A_{ji}$. 
\end{defn}
A key tool in studying such urn processes is the so-called Athreya-Karlin embedding in finite multitype branching processes \cite{athreya1968embedding, janson2004functional}, which in this case corresponds to the following: we start with one individual (ball) at time zero of type 1. Each individual of type $i$ lives for an exponential mean one unit of time and upon dying gives birth to an offspring of type $i$ and possibly another vertex whose type is determined by an i.i.d. sample of $\mvxi_i$ (only one offspring of type $i$ is produced if the sampled $\mvxi_i=0$). In this continuous time process let $\bar{\bcP}^k(t) = (\bar{\cP}_i(t) :1\leq i\leq k)$ denote the individuals (balls) of various types alive at time $t$. 

\begin{lem}
\label{lem:urn-mod-stoch-dom}
Let $\sigma_1$ denote the first time that the root reproduces a vertex at distance one in $\cT^*$ so that $\cP_1(\sigma_1) = 1 $. We can couple $\bar{\bcP}^k(\cdot) $ with the process $\bcP(\cdot)$ defined in Section  \ref{sec:quasi}  such that $\cP_1(t+\sigma_1) \geq \bar{\cP}_1(t)$ for all $t$. 
\end{lem}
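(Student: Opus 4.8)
The plan is to build both processes on one probability space by exhibiting the continuous-time multitype branching process $\bar{\bcP}^k(\cdot)$ of the Athreya--Karlin embedding as a distinguished sub-population of the vertices of $\cT^*$ at distance at most $k$ from the root. First I would record the two sets of dynamics side by side. In $\cT^*$, a vertex at distance $i$ from $v_0$ reproduces at rate one, and at each reproduction an independent $Z\sim\vp$ is drawn: if $Z\le i$ a new vertex is born at distance $i-Z+1$, and if $Z>i$ nothing happens. In the urn embedding, a type-$i$ individual ($1\le i\le k$) reproduces at rate one and is then replaced by a type-$i$ individual together with, independently, one individual of type $i-Z+1$ if $1\le i-Z+1\le k$ and nothing otherwise; since $A_{ji}=p_{i-j+1}=\pr(Z=i-j+1)$, this is exactly the law of $\mvxi_i$ from Definition~\ref{defn:urn-model}. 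The crucial bookkeeping identity is thus that a child's distance (resp.\ type) equals $i-Z+1$ where $i$ is the parent's distance (resp.\ type) and $Z$ is the sampled step.

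Next I would set up the coupling. Before time $\sigma_1$ the root is the only vertex of $\cT^*$ and it only ever begets distance-one vertices, so $\cT^*(\sigma_1)$ consists of $v_0$ and a single distance-one vertex, which I match to the sole type-$1$ individual of $\bar{\bcP}^k(0)$. I then run $\cT^*$ forward from $\sigma_1$ while maintaining an injective matching of the individuals of $\bar{\bcP}^k(t)$ into the distance-$\le k$ vertices of $\cT^*(t+\sigma_1)$, sending type-$j$ individuals to distance-$j$ vertices, governed by the rule: a matched vertex shares its rate-one clock and its sampled $Z$ with its partner; when this clock rings at a matched vertex at distance $i\le k$, the identity above shows that the tree creates a distance-$(i-Z+1)$ child and the urn creates a type-$(i-Z+1)$ individual for precisely the same values of $Z$, save for the single case $i=k$, $Z=0$, in which only the tree acts (it produces a distance-$(k+1)$ vertex). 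Whenever both act, the new tree vertex and new urn individual become matched; a matched vertex persists in $\cT^*$ (whose vertex set only grows and in which distances are frozen) in lockstep with the persistence of its partner. All remaining vertices of $\cT^*$ --- the root, the distance-one vertices produced by the root, the vertices at distance $>k$, and all their descendants --- evolve under the ordinary tree dynamics but carry no partner.

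Finally I would conclude as follows. With this coupling in place the matched sub-population is a faithful copy of $\bar{\bcP}^k$, and since distinct individuals are matched to distinct vertices carrying the same index, $\bar{\cP}_j(t)$ equals the number of matched distance-$j$ vertices of $\cT^*(t+\sigma_1)$, hence is at most $\cP_j(t+\sigma_1)$, for every $1\le j\le k$ and $t\ge0$; specialising to $j=1$ gives the claim. The only step requiring real care --- and the main obstacle --- is verifying that the matching can always be extended, i.e.\ that every creation of a type-$j$ individual in the truncated urn is accompanied by the birth of a distance-$j$ vertex in $\cT^*$ (the reverse being allowed to fail, which is exactly why the inequality is typically strict). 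This comes down to the identity $j=i-Z+1$ together with the observation that truncation at level $k$ only ever suppresses urn creations (those with $i-Z+1>k$), never tree creations; the rest is the routine check that synchronised rate-one Poisson clocks and i.i.d.\ $\vp$-samples constitute a bona fide coupling of the two processes.
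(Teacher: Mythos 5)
Your coupling is essentially the paper's own: the paper also drives the truncated urn by the reproduction events of distance-$\le k$ vertices of $\cT^*$ after time $\sigma_1$, updating the urn exactly when the new child lands at distance between $1$ and $k$ and doing nothing for root births, births beyond level $k$, and failed reproductions. Your explicit injective matching of urn individuals to tree vertices (so that only matched vertices' clocks drive urn draws) is a careful spelling-out of a bookkeeping point the paper's two-line proof leaves implicit, and it is exactly what is needed for the constructed urn to have the correct law while keeping $\bar{\cP}_j(t)\le\cP_j(t+\sigma_1)$.
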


\begin{proof}
We describe an explicit coupling: 
\begin{enumeratei}
\item  If at time $t+\sigma_1$, a vertex in $\cT^*$ at distance $1\leq i\leq k$ from the root reproduces to produce a vertex at distance $1 \leq j\leq k$, a ball of type $i$ is removed from the urn at time $t$ and is returned along with a ball of type $j$.
\item If the new vertex is at distance greater than $k$ from the root, or if the root or a vertex of distance greater than $k$ reproduces, then no change is made to the urn model.
\end{enumeratei}
It is clear from our construction that $\cP_1(t+\sigma_1)\geq \bar{\cP}_1(t)$ for all $t\geq 0$. 
\end{proof}

The following is obtained by a direct application of \cite[Theorem 3.1]{janson2004functional}. Define $k_0 := \inf\{k \ge 1: p_k >0\}$. Note that $k_0< \infty$ as $p_0 < 1$.
\begin{prop}
\label{prop:finite-urn}
For $k \ge k_0$, $\vA_k$ possesses a positive largest (Perron-Frobenius) eigenvalue $\alpha_k$ and a strictly positive right eigenvector $\bar{\vv}_k = (\bar{\vv}_k(i):1\leq i\leq k)$ such that, 
$$e^{-\alpha_k t} \bar{\cP}_1^k(t)\overset{a.s.}{\longrightarrow} W \bar{\vv}_k(1)$$
for a strictly positive random variable $W$.
\end{prop}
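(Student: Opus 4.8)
The plan is to read the statement off from the asymptotic theory of finite-type continuous-time Markov branching processes (CTMBP), which is precisely the regime of the Athreya--Karlin embedding described above. Concretely, $(\bar{\bcP}^k(t))_{t\ge0}$ is the $k$-type CTMBP in which each individual lives an $\exp(1)$ time and, at its death, is replaced by one individual of its own type together with (independently) at most one extra individual, of type $j$ with probability $A_{ji}$; thus its mean reproduction matrix is $I+\vA_k^\top$, its expected type-profile obeys $\tfrac{d}{dt}\E[\bar{\bcP}^k(t)]=\vA_k\,\E[\bar{\bcP}^k(t)]$ as in Lemma~\ref{lem:inf-ODE}, and it starts from the single individual $\ve_1^{(k)}$. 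Once the Perron--Frobenius structure of $\vA_k$ is in hand, the claim is a direct application of \cite[Theorem 3.1]{janson2004functional}.

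First I would pin down the spectral input. From \eqref{eqn:matrix-def}, $\vA_k$ is upper Hessenberg with every sub-diagonal entry equal to $p_0\in(0,1)$, and Assumption~\ref{ass:pmf}(a) forces $p_m>0$ for some $m\ge2$, hence, for $k\ge k_0$, a strictly positive entry above the diagonal in the first row. The directed cycle $1\to m\to m-1\to\cdots\to1$ (the later steps along the positive sub-diagonal) shows $\vA_k$ is irreducible; being also nonzero, Perron--Frobenius theory gives a simple, real, strictly dominant eigenvalue $\alpha_k>0$ of $\vA_k$ --- equivalently, the Malthusian parameter of the branching process --- with strictly positive left and right eigenvectors, the right one being the asserted $\bar{\vv}_k$, so in particular $\bar{\vv}_k(1)>0$.

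With the spectral picture settled, \cite[Theorem 3.1]{janson2004functional} applies: the mean matrix $I+\vA_k^\top$ has $1+\alpha_k$ as its simple dominant eigenvalue with strictly positive eigenvectors, the offspring counts are uniformly bounded so every integrability hypothesis is automatic, and $\ve_1^{(k)}$ has positive overlap with the left Perron eigenvector; the theorem then gives $e^{-\alpha_k t}\bar{\bcP}^k(t)\overset{a.s.}{\longrightarrow}W\bar{\vv}_k$ coordinatewise for a single non-negative scalar random variable $W$, and taking the first coordinate is the assertion. It remains to rule out $W\equiv0$: the process can never go extinct (each reproduction re-inserts the parent, and for $k\ge k_0$ every type reproduces at a rate bounded below by a positive constant), so the total population is non-decreasing and tends to $\infty$; hence the non-negative martingale obtained by projecting $\bar{\bcP}^k(t)$ onto the left Perron eigenvector of $\vA_k$ has an a.s.\ strictly positive limit by the Kesten--Stigum theorem (whose $L\log L$ requirement is trivial here), forcing $W>0$ a.s. The only steps requiring genuine care are these two --- verifying irreducibility of $\vA_k$ from its banded shape, and excluding degeneracy of $W$ --- everything else being a mechanical matching of hypotheses in \cite{janson2004functional}.
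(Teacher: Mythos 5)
Your proof is correct and takes essentially the same route as the paper's: check irreducibility of $\vA_k$ (the paper likewise deduces it from $p_0>0$ and $p_0+p_1<1$), apply \cite[Theorem 3.1]{janson2004functional} to the Athreya--Karlin embedding, and obtain strict positivity of $W$ from the impossibility of extinction. The only cosmetic difference is that you argue positivity of $W$ via the Kesten--Stigum martingale for the left Perron eigenvector, whereas the paper cites \cite[Lemma 2.1]{janson2004functional} directly; both rest on the same non-extinction observation.
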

\begin{proof}
Recalling $p_0+ p_1<1$ and $p_0 >0$, it follows that $\vA_k$ is irreducible in the sense of \cite{janson2004functional} for every $k \ge k_0$ and one easily sees that assumptions (A1)-(A6) in \cite{janson2004functional} are satisfied, required for the application of \cite[Theorem 3.1]{janson2004functional}. The strict positivity of $W$ follows from the fact that extinction is impossible in our case (see \cite[Lemma 2.1]{janson2004functional}).
\end{proof}
Thus, for $k \ge k_0$, $\alpha_k$ plays a crucial role in the growth of $\bar{\cP}_1^k(\cdot)$. The next result describes its asymptotics as $k\uparrow \infty$.  

\begin{prop}\label{Perroneigen}
$\alpha_k \uparrow 1/R$ as $k\to\infty$.
\end{prop}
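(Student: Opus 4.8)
The plan is to prove $\alpha_\infty := \lim_{k\to\infty}\alpha_k = 1/R$; the limit exists because, after padding with a row and column of zeros, $\vA_k$ is entrywise dominated by $\vA_{k+1}$, so $\alpha_k=\rho(\vA_k)\le\rho(\vA_{k+1})=\alpha_{k+1}$. For the \emph{upper bound}, fix $s>0$ with $f(s)<\infty$. By Proposition~\ref{prop:spectral-prop}(a) the strictly positive vector $\vv_s^{(k)}:=(s^{-1},\dots,s^{-k})$ satisfies $\sum_{i=1}^k s^{-i}(\vA_k)_{ij}\le\sum_{i\ge1}s^{-i}A_{ij}\le\frac{f(s)}{s}\,s^{-j}$ for $1\le j\le k$. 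Pairing this left sub-invariance relation with a nonnegative right Perron eigenvector of $\vA_k$ gives the standard Perron--Frobenius comparison $\alpha_k\le f(s)/s$; minimizing over $s$ and using Lemma~\ref{lemma:prop-R-s0}(a) together with the remark after it (which identify $\inf\{f(s)/s:\,s>0,\ f(s)<\infty\}=f(s_0)/s_0=1/R$) yields $\alpha_k\le 1/R$, hence $\alpha_\infty\le 1/R$.

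For the matching lower bound I introduce an auxiliary growth rate. Put $a_n:=\lim_{k\to\infty}(\vA_k^n)_{11}$; this increasing limit exists (it equals the $(1,1)$-entry of the $n$th power of the infinite matrix $\vA$, closed walks $1\to1$ never visiting the absorbing state $0$) and, since the total population of the truncated branching process of Definition~\ref{defn:urn-model} is stochastically dominated by a rate-one Yule process, $h_k(t):=(e^{t\vA_k})_{11}=\E[\bar\cP_1^k(t)]\le e^t$. Consequently $g(t):=\sum_{n\ge0}\tfrac{t^n}{n!}a_n=\lim_{k\to\infty}h_k(t)$ is finite (monotone convergence) and $\le e^t$. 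Each $h_k$ is supermultiplicative ($h_k(s+t)\ge h_k(s)h_k(t)$), as is $g$, so by Fekete's lemma the growth rates $\lim_{t\to\infty}\tfrac1t\log h_k(t)$ and $\lim_{t\to\infty}\tfrac1t\log g(t)$ exist and equal the corresponding suprema over $t>0$; for $k\ge k_0$ the former equals $\alpha_k$ since $h_k(t)\sim c\,e^{\alpha_k t}$ with $c>0$ for the irreducible nonnegative $\vA_k$ (Proposition~\ref{prop:finite-urn}). I claim $\lim_{t\to\infty}\tfrac1t\log g(t)=1/R$. Solving the linear system in Lemma~\ref{lem:inf-ODE} with $\cP_0\equiv1$ gives $\E[\cP_1(t)]=p_0\int_0^t g(u)\,du$, while Lemma~\ref{Pk} gives $\E[\cP_1(t)]=\sum_{i\ge0}\tfrac{t^i}{i!}\pr(\barT_1=i)$; comparing coefficients, $p_0\,a_n=\pr(\barT_1=n+1)$. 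By \eqref{eqn:R-def}, $\pr(n<\barT_1<\infty)^{1/n}\to1/R$, and summation of this shows $\limsup_n\pr(\barT_1=n)^{1/n}=1/R$, so $\limsup_n a_n^{1/n}=1/R$. A routine saddle-point estimate for the sum defining $g$ (whose $n$th term is maximized near $n\approx t/R$) then upgrades this to $\lim_{t\to\infty}\tfrac1t\log g(t)=1/R$.

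Now the \emph{lower bound}: fix $\eps>0$ and choose $t_0$ large enough that $\tfrac1{t_0}\log g(t_0)>1/R-\eps/2$ and $t_0>2(\log2)/\eps$ (possible since $\tfrac1t\log g(t)\to1/R$). As $h_k(t_0)\uparrow g(t_0)$, pick $K$ with $h_K(t_0)>\tfrac12 g(t_0)$. Then for all $k\ge K$,
\[
\alpha_k=\sup_{t>0}\tfrac1t\log h_k(t)\ \ge\ \tfrac1{t_0}\log h_k(t_0)\ \ge\ \tfrac1{t_0}\log h_K(t_0)\ >\ \tfrac1{t_0}\bigl(-\log2+(1/R-\eps/2)t_0\bigr)\ >\ 1/R-\eps.
\]
Hence $\alpha_\infty\ge 1/R-\eps$ for every $\eps>0$, so $\alpha_\infty\ge 1/R$; combined with the upper bound, $\alpha_k\uparrow 1/R$.

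The only substantial step is the identification $\lim_{t\to\infty}\tfrac1t\log g(t)=1/R$, i.e.\ that the killed infinite matrix $(A_{ij})_{i,j\ge1}$ has spectral radius $1/R$: the inequality $\le1/R$ is immediate from \eqref{eqn:R-def}, but the matching lower bound requires the saddle-point transfer from the (subexponentially corrected) decay $\pr(\barT_1=n)\asymp R^{-n}$ along a subsequence to the exponential growth of the Poisson-weighted sum $g(t)=\sum_n\tfrac{t^n}{n!}a_n$. The remaining ingredients---monotonicity of Perron roots, the Perron--Frobenius sub-invariance comparison, Fekete's lemma, and the single-time comparison $h_k(t_0)\to g(t_0)$---are soft.
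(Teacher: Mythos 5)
Your argument is correct, and it takes a genuinely different route from the paper's. The paper conjugates $\vA_k$ by its Perron right eigenvector to produce an honest Markov chain on $\{1,\dots,k\}$, which converts the eigenvalue equation into the identity $\E_1[e^{\theta_k\tau}\ind_{\{\tau<T\wedge\tau_k\}}]=1$ for the return time $\tau$ of the random walk to state $1$, killed at $0$ and above level $k$; the limit $\theta_k\downarrow\log R$ is then extracted from the generating-function analysis of $\Phi(r)=\E_1[e^{r\tau}\ind_{\{\tau<T\}}]$ in Lemma \ref{Phi_func} ($\Phi(\log R)<1$, $\Phi(r)=\infty$ for $r>\log R$). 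You instead work with the semigroup entries $h_k(t)=(e^{t\vA_k})_{11}$, use supermultiplicativity and Fekete's lemma to identify $\alpha_k=\sup_{t>0}t^{-1}\log h_k(t)$, pass to the monotone limit $g=\lim_k h_k$, and identify $\lim_t t^{-1}\log g(t)=1/R$ from the identity $p_0a_n=\pr(\barT_1=n+1)$ together with \eqref{eqn:R-def} and a Stirling/saddle-point estimate. What the paper's route buys is the strict inequality $\theta_k>\log R$ (hence $\alpha_k<1/R$ for every finite $k$) and a tight link to the quasi-stationary machinery reused elsewhere; what yours buys is that it bypasses Lemma \ref{Phi_func} entirely and needs only the coarse tail asymptotic \eqref{eqn:R-def}, at the price of being a statement about growth rates with no quantitative control on $1/R-\alpha_k$. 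Three small remarks on your writeup. First, for the inequality $\alpha_k\ge t_0^{-1}\log h_k(t_0)$ you only need the one-sided bound $h_k(t)\le p(t)e^{\alpha_k t}$ (the spectral abscissa of a nonnegative matrix is its Perron root), so the asymptotic equivalence $h_k(t)\sim ce^{\alpha_kt}$ with $c>0$ is more than is required and need not be justified via Proposition \ref{prop:finite-urn}. Second, the identity $p_0a_n=\pr(\barT_1=n+1)$ is most safely obtained by direct path counting — $A_{ij}=p_{j-i+1}$ is exactly the one-step kernel of $S$, and the final step into $0$ contributes the factor $A_{10}=p_0$ — rather than by solving the infinite ODE system of Lemma \ref{lem:inf-ODE}, whose uniqueness the paper only establishes inside the proof of Lemma \ref{Pk}; either way the step is sound. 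Third, your observation that Fekete upgrades the subsequential lower bound coming from $\limsup_n a_n^{1/n}=1/R$ to the full limit is exactly right and is the point where the argument could otherwise have leaked.
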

\begin{rem}
In principle we are asserting that the maximal eigenvalue of the submatrix $\vA_k$ of the infinite dimensional operator $\vA$ converges in the limit $k \rightarrow \infty$ to the maximal eigenvalue of $\vA$. If $\vA$ was a compact operator then this would follow from standard function analytic methods, however it can be checked that $\vA$ is \emph{not a compact operator} and thus we give a proof relying on the specific probabilistic interpretation of $\vA$ and the corresponding random walk. 
\end{rem}

\begin{rem}
In Section \ref{sec:inf-dim-mtbp} we will construct an infinite dimensional multitype branching process, driven by the matrix $\vB$ to track the {\bf entire} height profile of $\cT$. The analogous argument for $\cT^*$ does not work, since the corresponding process with $\vB$ replaced by matrix $\vA$ is {\bf not} $\alpha$-recurrent (Lemma \ref{lemma:appen-A-trans}) and thus necessitates the truncation scheme here.
\end{rem}
\begin{proof}
Let $\{Z_i\}_{i\geq 1}$ be a collection of i.i.d. random variables distributed as $\vp$ and recall that $S_n=S_0+\sum_{i=1}^n (Z_i-1)$. Define the stopping times, 
\begin{align*}
T=\inf\{ n\geq 0: S_n= 0\}, \quad
\tau=\inf\{ n\geq 1: S_n=1\}, \quad
\tau_k=\inf\{ n\geq 0: S_n>k\}, \, k \ge 1.
\end{align*}
In this proof, for $j\in \bZ$, let $\pr_j$ and $\E_j$  denote the probability and expectation operators for the walk started from $S_0 =j$. For $r\geq 0$, define $\Phi(r)=\E_1[ e^{r\tau} \cdot 1_{\{\tau<T\}}]$. The proof of the Proposition hinges on the following lemma, whose proof is postponed to the end of this Section.
\begin{lemma}\label{Phi_func}
$\Phi(\log R)< 1$ and $\Phi(r)=\infty$ when $r>\log R$.
%We have $\Phi(\log R)\leq 1$ and $\Phi(r)>1$ when $r>\log R$. Further, if $p_0+p_1<1$, $\Phi(\log R)< 1$ and $\Phi(r)=\infty$ when $r>\log R$.
\end{lemma}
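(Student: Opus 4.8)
The plan is to reduce the assertion about $\Phi$ to a sharp understanding of the one‑dimensional generating function
\[
G(r):=\E_1\big[e^{rT}\ind\{T<\infty\}\big]=\sum_{n\ge1}\pr_1(T=n)\,e^{rn},
\]
the exponential moment generating function of the first hitting time of $0$ from $1$ (so $T=\barT_1$ under $\pr_1$). The structural fact I would exploit throughout is that $\{S_n\}$ is \emph{skip-free to the left}: its increments $Z_i-1$ lie in $\{-1,0,1,\dots\}$, so from any level $k\ge1$ the walk can reach a lower level only by passing through $k-1,k-2,\dots$ in order; in particular the walk started at $1$ returns to $1$ strictly before it can hit $0$, so $\{\tau<T\}=\{Z_1\ge1,\ \tau<\infty\}$ under $\pr_1$.

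\textbf{Step 1 (reduce $\Phi$ to $G$).} Conditioning on the first step $S_1=Z_1\sim\vp$ and using skip-freeness (the successive first-passage times from $j$ to $j-1$, $j=k,\dots,1$, are i.i.d.\ copies of $T$) gives the fixed-point identity $G(r)=e^r\sum_{k\ge0}p_kG(r)^k=e^rf(G(r))$, valid in $[0,\infty]$, and likewise $\Phi(r)=\sum_{k\ge1}p_k\,e^r\,G(r)^{k-1}$ (the term $k=0$ drops out because $Z_1=0$ forces $T=1<\tau$). Combining the two, whenever $G(r)<\infty$ one obtains
\[
\Phi(r)=\frac{e^r}{G(r)}\big(f(G(r))-p_0\big)=1-\frac{p_0}{f(G(r))},
\]
while if $G(r)=\infty$ then, since $p_0+p_1<1$ there is some $k\ge2$ with $p_k>0$, so $\Phi(r)\ge p_ke^rG(r)^{k-1}=\infty$. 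Hence the lemma is equivalent to proving (i) $G(\log R)<\infty$, with value to be identified, and (ii) $G(r)=\infty$ for $r>\log R$.

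\textbf{Step 2 (identify $G$; handle $r>\log R$).} Part (ii) is a radius-of-convergence statement for the power series $\sum_n\pr_1(T=n)e^{rn}$: from \eqref{eqn:R-def} the tail sums $\pr(n<T<\infty)=\sum_{m>n}\pr_1(T=m)$ have $n$-th root $\to1/R$, and a two-sided squeeze gives $\limsup_n\pr_1(T=n)^{1/n}=1/R$, so the radius of convergence in $e^r$ is exactly $R$ and $G(r)=\infty$ for $r>\log R$. For (i) I would identify $G$ on $(-\infty,\log R)$ with a branch of the inverse of $g(s):=f(s)/s$: there $G(r)<\infty$, so $g(G(r))=e^{-r}>1/R$, and by Lemma \ref{lemma:prop-R-s0}(a) together with $g(0^+)=\infty$ (since $p_0>0$) and $g(s_0)=1/R$ (since $R=s_0/f(s_0)$), the map $g$ is a continuous strictly decreasing bijection of $(0,s_0)$ onto $(1/R,\infty)$; write $\sigma$ for its inverse. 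Since $G$ is continuous and increasing on $(-\infty,\log R)$ with $G(r)\le e^r\to0$ as $r\to-\infty$, and since $G(r)\ne s_0$ for $r<\log R$, continuity forces $G(r)=\sigma(e^{-r})$ for every $r<\log R$.

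\textbf{Step 3 (boundary and conclusion).} As $r\uparrow\log R$ we have $e^{-r}\downarrow1/R$, hence $\sigma(e^{-r})\uparrow s_0$; on the other hand, monotone convergence gives $G(r)=\sum_n\pr_1(T=n)e^{rn}\uparrow\sum_n\pr_1(T=n)R^n$. Therefore $\sum_n\pr_1(T=n)R^n=s_0$, which is finite by Lemma \ref{lemma:prop-R-s0} and the remark following it; this is precisely (i), with $G(\log R)=s_0$. Feeding this into Step 1 gives $\Phi(\log R)=1-p_0/f(s_0)=1-p_0R/s_0<1$ (as $p_0>0$ and $0<f(s_0)<\infty$), while $\Phi(r)=\infty$ for $r>\log R$. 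The main obstacle is Step 3 — equivalently, the finiteness $\sum_n\pr_1(T=n)R^n<\infty$, i.e.\ that the critically tilted walk is ``$R$-transient'' rather than $R$-recurrent; this is exactly where the assumption $p_0+p_1<1$ enters (through $s_0,f(s_0)<\infty$), and the monotone identification $G(r)=\sigma(e^{-r})\uparrow s_0$ is what lets me bypass a direct renewal-theoretic argument. The remaining points (the passage-time decomposition and the interchanges of sums and limits) are routine since every quantity in sight is nonnegative.
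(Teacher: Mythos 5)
Your argument is correct and follows essentially the same route as the paper's: both reduce $\Phi$ to the generating function $G(r)=\chi(e^r)$ of $T$ via the skip-free/strong-Markov first-step decomposition, identify $\chi$ on $[0,R)$ as the inverse of $s\mapsto s/f(s)$, and evaluate the boundary value $\chi(R)=s_0$ to conclude $\Phi(\log R)=1-p_0/f(s_0)<1$ and $\Phi(r)=\infty$ for $r>\log R$. Your two minor variations — extracting $\limsup_n\pr_1(T=n)^{1/n}=1/R$ from \eqref{eqn:R-def} by a squeeze instead of invoking Pakes' ratio limit, and pinning down $\chi(R)=s_0$ by monotone convergence along $r\uparrow\log R$ combined with the branch identification $G(r)=\sigma(e^{-r})<s_0$, rather than via lower semicontinuity of $\chi$ along a sequence — are both sound and have the small advantage of treating the cases $R=1$ and $R>1$ uniformly.
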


Choose any $k \ge k_0$. As $\vA_k$ is substochastic, $\alpha_k \le 1$ by standard Perron-Frobenius theory. Write $\alpha_k=e^{-\theta_k}$ for $\theta_k \ge 0$. We claim that $\theta_k>0$ and
\beq\label{prop_tau}
\E_1[ e^{\theta_k \tau} 1_{\{ \tau<T\wedge \tau_k\}}]=1.
\eeq
To see this, consider the right eigenvector  $\bar{\vv}_k$ of $\vA_k$ for $k\geq 1$. Then, $\vP_{ij}=\frac{A_{ij} \bar{\vv}_k(j)}{\alpha_k \bar{\vv}_k(i)}$, $1 \le i,j \le k$ defines a probability transition matrix. Denote by $\mathbb{Q}_1$ the law of the Markov chain associated with $\vP$ starting from state 1. By slight abuse of notation, keep using $T,\tau, \tau_i$ as above, to denote the associated stopping times for this Markov chain. Then $\mathbb{Q}_1(\tau=1)=\vP_{11}=\frac{A_{11}}{\alpha_k}$. For $i\geq 2$, observe that 
\begin{align*}
\mathbb{Q}_1(\tau=i)&=\sum_{ \{z_j\}_{ 1\leq j \leq i-1}: 2\leq z_j\leq k} \frac{1}{\alpha_k^i} A_{1 z_1}A_{z_1z_2}\cdots A_{z_{i-1}1}\\
&=e^{i\theta_k}\cdot \pr_1(\tau=i, i<T\wedge \tau_k ).
\end{align*}
Since $\vP$ is irreducible (and finite),
$$1=\mathbb{Q}_1(\tau<\infty)=\E_1\left[ e^{\theta_k \tau}1_{ \{ \tau<T\wedge \tau_k\}}\right],$$
i.e.,  \eqref{prop_tau} holds.  \tcr{Since $p_0 >0$,  $\pr_1(\tau<T\wedge \tau_k) < 1$ and this implies that $\theta_k >0$.}

%Since $k \ge k_0$, $\pr_1(\tau<T\wedge \tau_k)>0$ and this implies $\theta_k>0$ and \eqref{prop_tau} holds.
Note that as 
$$1=\E_1\left[ e^{\theta_{k+1} \tau}1_{ \{ \tau<T\wedge \tau_{k+1}\}}\right]=\E_1\left[ e^{\theta_k \tau}1_{ \{ \tau<T\wedge \tau_k\}}\right]\leq \E_1\left[ e^{\theta_k \tau}1_{ \{ \tau<T\wedge \tau_{k+1}\}}\right],$$
we have $\theta_{k+1}\leq \theta_k$ for any $k\geq 1$ and thus $\theta_k \downarrow \theta^*$ for some $\theta^*\geq 0$.

It remains to show that $\theta^*=\log R$, which implies  $\alpha_k=e^{-\theta_k}\uparrow e^{-\theta^*}=1/R$ as $k\to\infty$. 
First we prove $\theta^*\geq \log R$. It follows from the monotonicity of $\{\theta_k\}$ and \eqref{prop_tau} that
\begin{align*}
\Phi(\theta_k)&=\E_1[e^{\theta_k\tau}1_{\{\tau<T\}}]=\lim_{i\to\infty} \E_1[e^{\theta_k\tau}1_{\{\tau<T\wedge \tau_i \}}]\geq \lim_{i\to\infty} \E_1[e^{\theta_i\tau}1_{\{\tau<T\wedge \tau_i \}}]=1.
\end{align*}
$\Phi(\cdot)$ is strictly increasing (as $\pr_1(\tau< T)>0$) and $\Phi(\log R) < 1$ by Lemma \ref{Phi_func}, implying that $\theta_k > \log R$. Therefore, $\theta^*=\lim_k \theta_k \geq \log R$.

Next, to prove $\theta^*\leq \log R$, note that 
\begin{align*}
\Phi(\theta^*)&=\E_1[e^{\theta^*\tau}1_{\{\tau<T\}}]=\lim_{i\to\infty} \E_1[e^{\theta^*\tau}1_{\{\tau<T\wedge \tau_i \}}]\leq \lim_{i\to\infty} \E_1[e^{\theta_i\tau}1_{\{\tau<T\wedge \tau_i \}}]=1.
\end{align*}
As $\Phi(r)=\infty$ for all $r>\log R$ by Lemma \ref{Phi_func}, the above implies that $\theta^*\leq \log R$. The result follows.
\end{proof}

\mn
\textit{Proof of Lemma \ref{Phi_func}.} 

%\mn
%\textbf{Case I: $p_0+p_1=1$.} In this case we can compute that $R=1/p_1$. Thus $\Phi(r)=p_1e^r=e^r/R$ and the result follows.
%
%\mn
%\textbf{Case II: $p_0+p_1<1$.}
Define $\chi(u)=\sum_{j=0}^\infty \pr_1(T=j)u^j$ for $u\geq 0$. 
%\mn
%\textbf{Case I:} $\E[Z]<1$ with $p_0+p_1<1$.
%%Keep in mind the case where $p_0+p_1=1$ and $s_0=\infty$.
%By \cite{daley1969quasi}, the radius of convergence of $\chi(\cdot)$ is $R$. Moreover, $\chi(u)$ converges at $s=R$ with $\chi(R)=s_0$.
Note that $\E_1[e^{rT}]=\chi(e^r)$. Defining $\kappa=\inf\{ n\geq 0: S_n=1\}$, Strong Markov property implies that 
\begin{align}\label{Phi_chi}
\Phi(r)&=e^r \sum_{j=1}^\infty p_j \E_j[e^{r\kappa}]=e^r \sum_{j=1}^\infty p_j \left(\E_1[e^{rT}]\right)^{j-1}=e^r \sum_{j=1}^\infty p_j \chi(e^r)^{j-1}\nonumber\\
&=e^r \left( \frac{f(\chi(e^r))-p_0}{\chi(e^r)}\right)<\infty
\end{align}
for all $r$  such that $f(\chi(e^r))< \infty$ and $\Phi(r)=\infty$ when $\chi(e^r)< \infty$ but $f(\chi(e^r))= \infty$. We claim that
\beq\label{chi_func}
\chi(R)=s_0 \quad \text{ and } \quad \chi(u)=\infty \quad  \text{ when }u> R.
\eeq
The desired result now follows from \eqref{Phi_chi}:
\begin{align*}
\Phi(\log R)&=R \cdot \frac{f(s_0)-p_0}{s_0}=\frac{s_0}{f(s_0)}\left(\frac{f(s_0)}{s_0}-\frac{p_0}{s_0}\right)=1-\frac{p_0}{f(s_0)}<1,
\end{align*}
recalling $f(s_0)< \infty$, and $\Phi(r)=\infty$ when $r>\log R$. 

It remains to prove the claim in \eqref{chi_func}. Lemma 1 in  \cite{pakes1973conditional} implies that 
$$\lim_{n\to\infty} \frac{\pr_1(T=n+1)}{\pr_1(T=n)}=\frac{1}{R}.$$
By the ratio test for power series, $\chi(u)<\infty$ if $s<R$ and $\chi(u)=\infty$ if $s>R$. 

In the cases where $R=1$, by Lemma \ref{lemma:prop-R-s0} we have $s_0=1$ and $\E[Z]\leq 1$. It follows from \cite[Lemma 11.3]{gut2009stopped} that, 
 $$\chi(1)=\sum_{j=1}^\infty \pr_1(T=j)=\pr_1(T<\infty)=1.$$

When $R>1$, to show $\chi(R)=s_0$, observe that for $u<R$,
$$\chi(u)=p_0u+\sum_{j=1}^\infty p_ju(\chi(u))^j=uf(\chi(u)).$$
As $\chi(u)$ is strictly increasing on $[0,R)$, it has an inverse function $\chi^{-1}(\cdot)$ given by
$$\chi^{-1}(z)=\frac{z}{f(z)}$$
for $z\in [0,\chi(R))$, i.e., $\frac{z}{f(z)}\in [0,R)$. Let $\{s_l:l\geq 1\}$ be a sequence of positive numbers such that $\lim_{l\to\infty}s_l=s_0$ and $s_l<s_0, \frac{s_l}{f(s_l)}\in [0,R)$ for all $l\in\mathbb{N}$. Hence, 
$$\lim_{l\to\infty} \chi\left(\frac{s_l}{f(s_l)}\right)=\lim_{l\to\infty} s_l=s_0.$$
The final step is now to show $\chi(R) = \lim_{l\to\infty} \chi\left(\frac{s_l}{f(s_l)}\right)$. We define a series of functions $\{ \chi_k(u): k\geq 1\}$ with $\chi_k(u)=\sum_{j=1}^k \pr_1(T=j)u^j$. For every $k\geq 1$,  $\chi_k(u)$ is continuous and $\{ \chi_k(u): k\geq 1\}$ is a non-decreasing sequence for all $u\geq 0$. As $\chi(\cdot)$ is the limit of a monotone increasing sequence of continuous functions, it is lower semicontinuous, i.e., $\chi(R)\leq \liminf_{s\uparrow R} \chi(u)$.
Therefore,
$$\chi(R)\leq \liminf_{s\uparrow R} \chi(u) \leq \lim_{l\to\infty} \chi\left(\frac{s_l}{f(s_l)}\right)=\lim_{l\to\infty} s_l=s_0.$$
Further, by the monotonicity of $\chi(\cdot)$, $\chi(R)\geq \chi(\frac{s_l}{f(s_l)})$ for any $l\in \mathbb{N}$ and hence $\chi(R)\geq \lim_{l\to\infty}\chi(s_l/f(s_l))= s_0$. This completes the proof.
\qed

\subsection{Infinite dimensional multitype branching processes}
\label{sec:inf-dim-mtbp}

In this Section we introduce a specific class of  continuous time multitype branching processes $\{\MBP(t): t \ge 0\}$ studied in full generality in  \cite{jagers1989general,jagers1996asymptotic}. The process $\MBP(\cdot)$ will once again track distances in  $\TT(\cdot)$ so that  a vertex at distance $\ell$ from the root corresponds to a type $\ell$ individual in $\MBP(\cdot)$.

The Ulam-Harris set $I=\cup_{n\geq 0} \mathbb{N}^n$, with $N^{0} = \emptyset$ representing the root, is used to encode the set of all possible individuals (vertices) in $\MBP$,  and let $(S,\mathcal{S})$ denote the space of types with a countably generated $\sigma$-algebra $\mathcal{S}$, where $S=\{0,1,\dots\}$. Start with a single individual (the root) of some type $r \in S$. Any existing individual of type $r' \in S$ in the population independently reproduces according to a rate one Poisson process, and each new individual $y$ is independently assigned the type $\rho(y)=s$ with probability $B_{sr'}$, where $\vB$ is the matrix defined in \eqref{eqn:matrix-def}. The reproduction process $\xi_x$ of an individual $x$ is a measure on $S \times \mathbb{R}_+$, with $\xi_x(A \times B)$ denoting the number of children of $x$ of types in $A$ born at times in $B$. We will write $\xi$ for the reproduction process of the root.

Observe that, if we assign each vertex in our tree process $\cT(\cdot)$ the type equalling distance from the root, then $\cT(\cdot)$ has the same distribution as $\MBP(\cdot)$ with root of type $0$. In particular, for each $k\geq 0$ and all $t\geq 0$,
\beq
 \tilde{\cP}_k(\cdot) \equald | \{ x\in \MBP(\cdot): \rho(x)=k\}|.
\eeq

%
%\textcolor{red}{At birth the new-born individual is assigned a \text{type} from $S$ and allotted a \textit{life path} from the  the \textit{life space} $(\Omega,\mathcal{A})$.  The life space should be thought of as ``abstract, with a countably generated $\sigma$-algebra and rich enough to carry those functions that are of interest for the particular study".} 

%be the \textit{life space} of individuals in the process, on which a \textit{life kernel} $\pr_s$ with starting type $s\in S$ can be defined that determines the probability structure of the process.

Let $\{\tau_k:k \geq 1\}$ denote the birth times of the root in $\MBP(\cdot)$. 
For $r\in S$, we will denote by $\E_r$ the expectation operator when the root is of type $r$. 
%The dynamics of $\MBP(\cdot)$ is tied to the matrix $\vB$ defined in \eqref{eqn:matrix-def} and we will use the superscript $\vB$ to emphasize this.
Define the reproduction kernel $\mu^\vB$,
$$\mu^\vB(r, ds\times dt)=\E_r[\xi(ds\times dt)].$$
Note that for $r,s \in S$, $\mu^\vB(r, s\times dt) := \mu^\vB(r, \{s\}\times dt) = B_{sr}dt.$
For any $\lambda>0$ define the measure associated with the Laplace transform of $\mu^\vB$,
$$\mu^\vB_{\lambda}(r,ds\times dt)= e^{-\lambda t}\mu^\vB(r,ds\times dt).$$
Define
$$ \mu^\vB_\lambda(r,s):= \mu^\vB_\lambda(r,s \times \RR_+) = \int_0^\infty e^{-\lambda t} \mu^\vB(r, s\times dt)=\frac{B_{sr}}{\lambda}.$$

% Then we can observe the relation between $\TT(t)$ and $\MBP(t)$: for each $k\geq 0$ and all $t\geq 0$,
% \beq
%  \tilde{\cP}_k(t) =| \{ x\in \MBP(t): \rho(x)=k\}|.
% \eeq
The $n$-th convolution of $\mu^\vB_\lambda$ is denoted by $\mu_{\lambda}^{(n)}$ (suppressing dependence on $\vB$) where $\mu_{\lambda}^{(1)} := \mu^\vB_\lambda$ and for $n \ge 2$,
\begin{equation*}
\mu_\lambda^{(n)}(r,A\times B)=\int_{S\times \RR_+} \mu^{(n-1)}_\lambda(s,A\times (B-t))\mu^\vB_\lambda(r,ds\times dt).
\end{equation*}
%The population $\MBP$ is said to be \textbf{Malthusian and supercritical}, if the associated Malthusian rate $\alpha := \inf\{\lambda>0:\sum_{n=0}^\infty \mu_{\lambda}^{(n)}(s,A\times \RR_+)<\infty\}$ is positive, such that $\mu_{\alpha}$ has Perron root one and is irreducible and $\alpha$-recurrent in the terminology of \cite{nummelin2004general}. To be self-contained we will state and prove the $\alpha$-recurrence in the following subsection.

%\SBana{Discuss Perron root and Malthusian rate in this context.}

The Malthusian rate associated with $\MBP(\cdot)$ is defined as
$$
\alpha := \inf\{\lambda>0:\sum_{n=0}^\infty \mu_{\lambda}^{(n)}(s,S \times \RR_+)<\infty \text{ for some } s \in S\}.
$$
Since $\vB^T$ is a stochastic matrix, it follows that $\alpha=1$, and $\MBP(\cdot)$ is Malthusian and supercritical as required for the results of \cite{jagers1989general,jagers1996asymptotic}. We show in Appendix \ref{apprec} that $\mu^\vB_\alpha$ is irreducible and $\alpha$-recurrent in the terminology of \cite{nummelin2004general}. Choosing $\pi(r)=(1-q_*)u_r, \, r \in S$ (note that this is {\bf non-trivial} if and only if $\E[Z] > 1$ so that $q_* < 1$), where $\vu=(u_i: i\geq 0)$ is the right eigenvector of $\vB$ in Proposition \ref{prop:spectral-prop}, and $h(\cdot)\equiv 1$, note that $\sum_{r\in S} h(r)\pi(r)=1$ and
%It follows from the discussion in Section 3 of \cite{jagers1989general} that there is a $\sigma$-finite measure $v$ on the type space $(S,\mathcal{S})$ and strictly positive almost everywhere $v$-measurable function $h$ on the same space, such that 
$$\sum_{r\in S} \pi(r)\mu^\vB_\alpha(r,s)=\pi(s)\quad \text{ and }\quad \sum_{s\in S} \mu^\vB_\alpha(r,s)h(s)=h(r).$$

To apply the results in \cite{jagers1989general,jagers1996asymptotic}, we need to check that, in addition to the above, the following conditions are satisfied for $\MBP(\cdot)$. In the following, for a non-negative random variable $U$ on the probability space on which $\MBP$ is defined, write $\E_{\pi}(U) := \int_S \E_s(U)\pi(ds)$. Although $\alpha=1$ in our case, we retain this notation to highlight the dependence on $\alpha$ in potentially more general applications.
\begin{enumeratei}
\item \label{it:one} The reproduction kernel $\mu^\vB$ is non-lattice, Malthusian, supercritical and satisfies
$\sup_s \mu(s,S\times[0,\ep])<1$
for some $\ep>0$. 
\item \label{it:two} The homogeneity assumption on $h$, i.e., $\inf_{s\in S} h(s)>0$, is satisfied.
\item \label{it:three} The kernel $\mu^\vB$ has strong $\alpha$-recurrence in the sense that 
$$0<\beta :=\int_{S\times S\times \RR_+} te^{-\alpha t}h(s)\mu^\vB(r,ds\times dt) \pi(dr)<\infty,$$
\item \label{it:four}The $x\log x$-condition is satisfied, i.e., 
$\E_\pi [\bar\xi \log^+\bar \xi]<\infty$, where  $\bar \xi=\int_{S\times \RR_+} e^{-\alpha t}h(s) \xi(ds\times dt)$. 

\end{enumeratei}

For Condition \eqref{it:one}, the Malthusian and supercritical property was discussed before. The remaining conditions are immediate from the fact that births happen according to a rate one Poisson process. Condition \eqref{it:two} is trivially satisfied due to our choice of $h\equiv 1$. Condition \eqref{it:three} can be checked by computing,
\begin{align*}
\beta&=\sum_{r=0}^\infty \sum_{s=0}^\infty \int_0^\infty te^{-\alpha t} h(s) \mu^\vB(r,s\times dt) \pi(r) dt=\sum_{r=0}^\infty \sum_{s=1}^\infty \int_0^\infty te^{-\alpha t} B_{sr}\pi(r) dt\\
&= \left(\int_0^\infty te^{-\alpha t} dt\right) \cdot \sum_{s=1}^\infty \sum_{k=-1}^\infty B_{s,s+k} \pi(s+k) \\
&=\frac{1-q_*}{\alpha^2} \cdot \left(\sum_{k=1}^\infty c_kq_*^{k-1} +\sum_{s=2}^\infty \sum_{k=-1}^\infty p_{k+1}q_*^{s+k-1}\right)=\frac{1}{\alpha^2} \cdot (1-q_*)\left(1+\frac{q_*}{1-q_*}\right)=\frac{1}{\alpha^2}.
\end{align*}
To check Condition \eqref{it:four}, note that since $h\equiv 1$, $\bar \xi=\sum_{k\geq 1} e^{-\alpha \tau_k}$. Let $\FF_{(k)}$ denote the natural filtration up to the birth of the $k$-th child in $\xi$. Then
\begin{align}\label{xlogx}
\nonumber \E_\pi [\bar\xi \log^+\bar \xi] &\leq \E_\pi[\bar \xi^2] =\sum_{k,l \geq 1} \E[ e^{-\alpha (\tau_k+\tau_l)}]\\
\nonumber &=\sum_{k\geq 1} \E[e^{-2\alpha \tau_k}]+2\sum_{ k<l} \E[ \E[e^{-\alpha(\tau_k+\tau_l)}|\FF_{(k)}]]\\
&=\sum_{k\geq 1} \left(\frac{1}{1+2\alpha}\right)^k +2\sum_{ k<l } \left(\frac{1}{1+\alpha}\right)^{l-k} \left(\frac{1}{1+2\alpha}\right)^k<\infty.
\end{align}
\textbf{$\mathbb{L}^1$-convergence in the non-fringe regime.} 
Let $\sigma_x$ denotes the birth time of vertex $x$ in the multitype branching process $\MBP(\cdot)$. Consider a \emph{characteristic} $\psi: S\times [0,\infty)\to [0,\infty)$, which is a random c\`adl\`ag measurable function, defined on the probability space on which $\MBP$ evolves, thought of as giving a `score' to the root at time $t$ based on its type and its genealogical tree. For $x \in I$ and $t \ge \sigma_x$, write $\psi_x(\rho(x), t - \sigma_x)$ for the corresponding score computed by evaluating the characteristic $\psi$ on the subtree rooted at $x$, when this vertex is of age $t-\sigma_x$. See \cite[Section 7]{jagers1989general} for a more formal treatment. Define the cumulative $\psi$-score
$$Z^\psi(t)=\sum_{x:\sigma_x\leq t}\psi_x(\rho(x),t-\sigma_x).$$

%\begin{theorem}[Theorem 1 in \cite{jagers1996asymptotic}]\label{jagersthm1}
%Consider a non-lattice strictly Malthusian, supercritical branching population, counted with a bounded characteristic $\psi$ such that the function $e^{-\alpha t}\E_s[\psi(t)]$ is directly Riemann integrable with respect to $v$. Then, for $v$-almost sure $s\in S$, 
%$$\lim_{t\to\infty} e^{-\alpha t}\E[Z^\psi(t)]=h(s) \E_v[\hat{\psi}(\alpha)]/\alpha \beta.$$
%\end{theorem}

We will write $\E_s[\psi(t)]=\E[\psi(s,t)]$. The following theorem gives convergence of expectations for normalized cumulative $\psi$-scores.

\begin{theorem}[Theorem 1 in \cite{jagers1996asymptotic}]\label{jagersthm1}
Consider a non-lattice strictly Malthusian, supercritical branching population, counted with a bounded characteristic $\psi$ such that the function $t \mapsto e^{-\alpha t}\E_s[\psi(t)]$ is directly Riemann integrable with respect to $\pi$. 
Then, for $\pi$-almost every $s\in S$, 
$$ e^{-\alpha t}\E[Z^\psi(t)] \to h(s)\E_\pi[\hat{\psi}(\alpha)]/\alpha \beta.$$
\end{theorem}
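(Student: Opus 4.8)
The statement is Theorem 1 of \cite{jagers1996asymptotic}, a key-renewal-type result for general (Crump--Mode--Jagers) branching processes with a Polish type space, so ``proving'' it means reproving that result in our countable-type setting. The plan is to reduce it to a Markov renewal theorem on the type space $S$.

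\textbf{Step 1 (a Markov renewal equation for the expected score).} Decompose $Z^\psi(t)$ into the root's own contribution $\psi(\rho(\emptyset),t)$ and the contributions of the subtrees hanging off the children of the root. Conditioning on the root's reproduction process $\xi$ and using the branching property --- given a child of type $s$ born at time $u\le t$, its subtree is an independent copy of $\MBP$ rooted at a type-$s$ individual --- one obtains
\[
\E_r[Z^\psi(t)]=\E_r[\psi(t)]+\int_{S\times[0,t]}\E_s[Z^\psi(t-u)]\,\mu^\vB(r,ds\times du).
\]
Multiplying by $e^{-\alpha t}$ and writing $z(r,t)=e^{-\alpha t}\E_r[Z^\psi(t)]$, $g(r,t)=e^{-\alpha t}\E_r[\psi(t)]$, this becomes a \emph{proper} Markov renewal equation
\[
z(r,t)=g(r,t)+\int_{S\times[0,t]}z(s,t-u)\,\mu^\vB_\alpha(r,ds\times du),
\]
properness because $\vB^{T}$ being stochastic forces $\mu^\vB_\alpha(r,S\times\RR_+)=1$ (recall $\alpha=1$ here).

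\textbf{Step 2 (solve by iteration).} Unfolding the equation gives $z(r,t)=\int_{S\times[0,t]}g(s,t-u)\,\nu_\alpha(r,ds\times du)$ with $\nu_\alpha=\sum_{n\ge0}\mu_\lambda^{(n)}$ (at $\lambda=\alpha$) the discounted renewal measure; the Malthusian hypothesis together with boundedness of $\psi$ guarantees the series converges and $\nu_\alpha$ is locally finite, so this representation is legitimate.

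\textbf{Step 3 (apply the key renewal theorem on $S$).} Doob-transform by the right eigenfunction $h$ (here $h\equiv1$) and the left eigenmeasure $\pi$ of $\mu^\vB_\alpha$, normalised so that $\pi(h)=1$; this turns the modulating chain into a positive recurrent chain with stationary law $\pi$ whose interarrival measure has finite mean $\beta=\int t e^{-\alpha t}h(s)\,\mu^\vB(r,ds\times dt)\,\pi(dr)$ (Condition (iii)). Since $\mu^\vB$ is non-lattice (Condition (i)), $\mu^\vB_\alpha$ is $\alpha$-recurrent (Appendix \ref{apprec}), and $g(s,\cdot)$ is directly Riemann integrable with respect to $\pi$ by hypothesis, the general-state-space key renewal theorem (Shurenkov's Markov renewal theorem, as invoked in \cite{jagers1996asymptotic}) yields, for $\pi$-a.e.\ $r$,
\[
z(r,t)\ \longrightarrow\ \frac{h(r)}{\beta}\int_S\Big(\int_0^\infty e^{-\alpha v}\E_s[\psi(v)]\,dv\Big)\pi(ds)=\frac{h(r)}{\alpha\beta}\,\E_\pi[\hat\psi(\alpha)],
\]
the last equality after interchanging expectation and integral and using the normalisation $\hat\psi(\alpha)=\alpha\int_0^\infty e^{-\alpha v}\psi(v)\,dv$ of \cite{jagers1996asymptotic}. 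This is the claim; the $\pi$-a.e.\ qualifier is intrinsic to renewal theory on a non-compact state space in the absence of extra (e.g.\ spread-out) regularity.

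\textbf{Main obstacle.} The analytic heart is Step 3: establishing and applying a key renewal theorem over the non-compact type space $S$. This is exactly where $\alpha$-recurrence (verified in Appendix \ref{apprec}), control of the tails of the renewal measure $\nu_\alpha$, and the direct Riemann integrability of $g$ with respect to $\pi$ all enter, and it is the reason the hypotheses of the theorem are phrased precisely as they are; Steps 1 and 2 are essentially bookkeeping with the branching property and the resolvent $\sum_n\mu_\alpha^{(n)}$.
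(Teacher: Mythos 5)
The paper does not prove this statement at all: it is imported verbatim as Theorem~1 of the cited reference \cite{jagers1996asymptotic}, so there is no internal proof to compare against. Your sketch faithfully reproduces the standard derivation underlying that reference --- decompose $Z^\psi$ over the root's children to get the Markov renewal equation $z=g+\mu^\vB_\alpha * z$ (proper because $\vB^T$ is stochastic and $\alpha=1$), iterate to express $z$ against the renewal measure, and invoke Shurenkov's Markov renewal theorem under the non-lattice, strong $\alpha$-recurrence and direct Riemann integrability hypotheses --- and it is correct modulo the black-box use of that renewal theorem, which is exactly how the original proof proceeds. The only point worth flagging is the normalisation of $\hat\psi(\alpha)$: you take $\hat\psi(\alpha)=\alpha\int_0^\infty e^{-\alpha v}\psi(v)\,dv$, whereas the paper's Corollary~\ref{jagers_expectation} computes $\E_\pi[\hat\psi(\alpha)]$ as if $\hat\psi(\alpha)=\int_0^\infty e^{-\alpha v}\psi(v)\,dv$; the discrepancy is a harmless factor of $\alpha$, invisible here since $\alpha=1$, but your convention is the one that makes the displayed limit $h(s)\E_\pi[\hat\psi(\alpha)]/\alpha\beta$ agree with the renewal-theoretic output $h(s)\int_S\hat g\,d\pi/\beta$.
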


\begin{corollary}\label{jagers_expectation}
In the non-fringe regime where $\E[Z]>1$,
$$\lim_{t\to\infty} e^{-t}\E[\tilde{\cP}_1(t)] = 1-q_*.$$
\end{corollary}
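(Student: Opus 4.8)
The plan is to deduce Corollary \ref{jagers_expectation} directly from Theorem \ref{jagersthm1}. I would apply that theorem to $\MBP(\cdot)$ with the bounded, deterministic characteristic $\psi(s,t):=\ind\set{s=1}$, which scores an individual $1$ exactly when it is of type $1$, irrespective of age. Since no individual of $\MBP(\cdot)$ ever dies, the cumulative score $Z^\psi(t)=\sum_{x:\sigma_x\le t}\psi_x(\rho(x),t-\sigma_x)$ is precisely the number of type-$1$ individuals present at time $t$, so $\E_r[Z^\psi(t)]=\E_r[\tilde{\cP}_1(t)]$ for each root type $r$. All hypotheses of Theorem \ref{jagersthm1} have been established in the discussion above ($\mu^\vB$ non-lattice, strictly Malthusian and supercritical with $\alpha=1$; the eigen-identities for $\pi(r)=(1-q_*)u_r$ and $h\equiv1$; conditions \eqref{it:one}--\eqref{it:four}; $0<\beta=1/\alpha^2<\infty$), and direct Riemann integrability with respect to $\pi$ of $u\mapsto e^{-\alpha u}\E_s[\psi(s,u)]=e^{-u}\ind\set{s=1}$ is immediate, since $\pi$-averaging gives $u\mapsto(1-q_*)e^{-u}$, which is bounded, continuous and decreasing to $0$. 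For this $\psi$ one has $\hat\psi(\alpha)(s)=\int_0^\infty e^{-u}\ind\set{s=1}\,du=\ind\set{s=1}$, hence $\E_\pi[\hat\psi(\alpha)]=\pi(\set{1})=(1-q_*)u_1=1-q_*$, and Theorem \ref{jagersthm1} would yield, for $\pi$-almost every root type and in particular for type $1$ (which lies in the support of $\pi$ precisely because $\E[Z]>1$ forces $q_*<1$),
\[
e^{-t}\,\E_1[\tilde{\cP}_1(t)]\To \frac{h(1)\,\E_\pi[\hat\psi(\alpha)]}{\alpha\beta}=1-q_*.
\]

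The second step is to transfer this from a type-$1$ root to the type-$0$ root of $\cT(\cdot)$, which is needed because $\pi(\set{0})=(1-q_*)u_0=0$, so Theorem \ref{jagersthm1} gives no information about a type-$0$ root. With root of type $0$ the root emits children at the points $0<\tau_1<\tau_2<\cdots$ of a rate-one Poisson process, each child of type $1$ (since $B_{s0}=\ind\set{s=1}$), and the number of type-$1$ vertices in the subtree of the $k$-th child, as a function of that subtree's age, is an i.i.d.\ (over $k$) copy $\tilde{\cP}_1^{(k)}(\cdot)$ of $(\tilde{\cP}_1(u))_{u\ge0}$ under the type-$1$ law, independent of the Poisson process. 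Since the type-$0$ root itself contributes nothing to $\tilde{\cP}_1$, this gives $\tilde{\cP}_1(t)=\sum_{k:\tau_k\le t}\tilde{\cP}_1^{(k)}(t-\tau_k)$; taking expectations and applying Campbell's formula for the rate-one Poisson process gives
\[
\E[\tilde{\cP}_1(t)]=\E_0[\tilde{\cP}_1(t)]=\int_0^t \E_1[\tilde{\cP}_1(v)]\,dv.
\]
Combining with the limit above and the elementary fact that $e^{-v}a(v)\to c$ implies $e^{-t}\int_0^t a(v)\,dv\to c$ then gives $e^{-t}\E[\tilde{\cP}_1(t)]\to 1-q_*$, as claimed.

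The only genuine obstacle is this mismatch of initial types: the stable-age/renewal theory of \cite{jagers1996asymptotic} controls only $\pi$-typical roots, whereas the quantity of interest is rooted at a type-$0$ vertex outside the support of $\pi$. The fix via the first-generation decomposition is clean because the offspring law of the type-$0$ root is completely explicit — a rate-one stream of type-$1$ children — and everything else reduces to bookkeeping with the already-verified hypotheses and the explicit constants $\alpha=\beta=1$, $h\equiv1$, $\pi(\set{1})=1-q_*$.
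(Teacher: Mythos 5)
Your proposal is correct and follows the paper's own route: apply Theorem \ref{jagersthm1} to $\MBP(\cdot)$ with the characteristic $\psi(s,t)=\ind\set{s=1, t\ge 0}$, using $\alpha=\beta=1$, $h\equiv 1$ and $\E_\pi[\hat\psi(\alpha)]=\pi(1)=1-q_*$. The one place you go beyond the paper is the first-generation decomposition $\E_0[\tilde{\cP}_1(t)]=\int_0^t\E_1[\tilde{\cP}_1(v)]\,dv$ (via the rate-one Poisson stream of type-$1$ children of the type-$0$ root) to pass from $\pi$-typical root types to the actual type-$0$ root, which has $\pi(\set{0})=0$; the paper applies the theorem directly without commenting on this, so your extra step is a legitimate and correct tightening of the same argument rather than a different route.
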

\begin{proof}

Let $\psi(s,t)=\mathbf{1}\{s=1,t\geq 0\}$. We can easily verify that $t \mapsto e^{-\alpha t}\E_s[\psi(t)]$ is directly Riemann integrable with respect to $\pi$. Moreover,
$$\E_\pi[\hat\psi(\alpha)]=\sum_{r=0}^\infty \pi(r) \int_0^\infty e^{-\alpha t} \E_r[\psi(t)]dt =\frac{\pi(1)}{\alpha}=1-q_*$$
as $\pi(1)=1-q_*$ and $\alpha=1$. Recall from previous section that $\beta=1/\alpha^2=1$. Applying Theorem \ref{jagersthm1} with $\psi(s,t)=\mathbf{1}\{s=1,t\geq 0\}$ and $h\equiv 1$ gives the result.

\end{proof}

Recall that $n(t)$ denotes the population size at time $t$. Let $\mathscr{F}_{t}$  be the filtration generated by the entire life histories of the first $n(t)$ vertices. Let $mx$ denote the mother of vertex $x$ and define for any $t,c\geq 0$,
$$\mathscr{I}(t)=\{ x; \sigma_{mx}\leq t<\sigma_x<\infty\},\quad \mathscr{I}(t,c)=\{ x; \sigma_{mx}\leq t, t+c<\sigma_x<\infty\}.$$

Define $w_t=\sum_{x\in \mathscr{I}(t)} e^{-\sigma_x}$ and $w_{t,c}=\sum_{x\in \mathscr{I}(t,c)} e^{-\sigma_x}$.  We collect some useful properties of $w_t$ and $w_{t,c}$  in the following lemma. Recall from Lemma \ref{lem:yule-asymp} that $W\sim \mathrm{Exp}(1)$ is the almost sure limit of $e^{-t}n(t)$ as $t \rightarrow \infty$.

\begin{lemma}\label{key_martingale}
(i) $\{w_t\}$ is a non-negative martingale with respect to $\{\mathscr{F}_t\}$ and $\E[w_t]=1$.\\
(ii) $w_t\to W$ almost surely and in $\mathbb{L}^2$ as $t\to\infty$. \\
(iii) $\E[w_{t,c}]\to k(c)$ as $t\to\infty$, where $k(c)\downarrow 0$ as $c\to\infty$.
\end{lemma}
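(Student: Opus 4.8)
The plan is to reduce all three assertions to two elementary facts about $\MBP(\cdot)$ together with two structural observations about the filtration. The facts: (A) every individual gives birth along an independent rate-one Poisson process, so for an individual born at time $u$, Campbell's formula gives that the sum of $e^{-\sigma}$ over its children with birth time $\sigma>a$ has mean $\int_{a\vee u}^\infty e^{-v}\,dv=e^{-(a\vee u)}$ and variance $\int_{a\vee u}^\infty e^{-2v}\,dv=\tfrac12 e^{-2(a\vee u)}$; (B) $n(\cdot)$ is a rate-one Yule process, so $\E[n(t)]=e^{t}$ and, applying Lemma \ref{mart_power} at $s=0$ with $n(0)=1$, $\E[(e^{-t}n(t))^{2}]\le 2$. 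The structural observations: the members of $\mathscr I(s)$ together with their birth times are $\mathscr F_{s}$-measurable, since the mother of each such vertex is among the first $n(s)$ vertices whose whole reproduction $\mathscr F_{s}$ records; and, given $\mathscr F_{s}$ and these birth times, the subtrees hanging below those members of $\mathscr I(s)$ born after time $s$ are conditionally independent fresh copies of the process, whose birth-time structure (all that matters here, since types do not affect reproduction) is a rate-one Yule process.

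For part (i): writing $w_{t}=\sum_{m:\sigma_{m}\le t}\sum_{x:mx=m,\ \sigma_{x}>t}e^{-\sigma_{x}}$, fact (A) with $a=t\ge \sigma_m$ gives that each inner sum has conditional mean $e^{-t}$, independently of $\sigma_m$, so $\E[w_{t}]=e^{-t}\E[n(t)]=1$ and in particular $w_{t}\in\mathbb L^{1}$ and $w_t$ is $\mathscr F_t$-measurable. For the martingale identity with $s<t$ I will use the standard stopping-line bookkeeping: each $x\in\mathscr I(t)$ has a unique ancestor $\pi(x)\in\mathscr I(s)$ — the first ancestor of $x$ (itself included) whose own mother was born by time $s$ — which exists and is unique because birth times strictly decrease along ancestral lines; this partitions $\mathscr I(t)$ over $y\in\mathscr I(s)$, the block of a $y$ with $\sigma_{y}>t$ being $\{y\}$ (contribution $e^{-\sigma_{y}}$) and the block of a $y$ with $\sigma_{y}\le t$ being the members of $\mathscr I(t)$ strictly below $y$. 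Conditioning on $\mathscr F_{s}$, the contribution of a block of the second type is, by the same computation as for $\E[w_t]$ carried out inside the subtree of $y$ started at time $\sigma_y$, equal to $e^{-t}$ times the mean number of descendants of $y$ (including $y$) alive at time $t$, which by fact (B) applied to that subtree is $e^{-t}\,e^{\,t-\sigma_{y}}=e^{-\sigma_{y}}$. Summing over $y\in\mathscr I(s)$ yields $\E[w_{t}\mid\mathscr F_{s}]=\sum_{y\in\mathscr I(s)}e^{-\sigma_{y}}=w_{s}$.

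For part (ii): let $\mathscr G_{t}$ be the $\sigma$-field generated by the genealogy and birth times of all individuals born by time $t$, and write $w_{t}=\sum_{m:\sigma_{m}\le t}X_{m}$ with $X_{m}=\sum_{x:mx=m,\ \sigma_{x}>t}e^{-\sigma_{x}}$; given $\mathscr G_{t}$ the $X_{m}$ are independent with conditional mean $e^{-t}$ and conditional variance $\tfrac12 e^{-2t}$ by fact (A). Hence $\E[w_{t}^{2}]=\E[(e^{-t}n(t))^{2}]+\tfrac12 e^{-2t}\E[n(t)]\le 2+\tfrac12$, so $\{w_{t}\}$ is an $\mathbb L^{2}$-bounded martingale and therefore converges almost surely and in $\mathbb L^{2}$ to some $w_{\infty}$. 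Moreover $\E[(w_{t}-e^{-t}n(t))^{2}]=\tfrac12 e^{-2t}\E[n(t)]=\tfrac12 e^{-t}\to 0$, so $w_{t}-e^{-t}n(t)\to 0$ in $\mathbb L^{2}$; combined with $e^{-t}n(t)\to W$ almost surely and in $\mathbb L^{2}$ (Lemma \ref{lem:yule-asymp}), this forces $w_{\infty}=W$ a.s.

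For part (iii): the one-line computation of part (i), now with the constraint $\sigma_{x}>t+c$ and $\sigma_{m}\le t\le t+c$, gives via fact (A) (with $a=t+c$) that each inner sum has conditional mean $e^{-(t+c)}$, so $\E[w_{t,c}]=e^{-(t+c)}\E[n(t)]=e^{-c}$ for every $t\ge 0$; thus the limit is $k(c)=e^{-c}$, which decreases to $0$ as $c\to\infty$. The only genuinely delicate step is the bookkeeping in part (i): checking that $\pi(\cdot)$ really partitions $\mathscr I(t)$ over $\mathscr I(s)$, and, above all, justifying that given $\mathscr F_{s}$ the subtree below a member $y\in\mathscr I(s)$ with $\sigma_{y}>s$ is conditionally a fresh copy depending on $\mathscr F_{s}$ only through $\sigma_{y}$ — this is what legitimizes applying facts (A) and (B) inside that subtree. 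Everything else is a routine computation.
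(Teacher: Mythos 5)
Your proof is correct, but it takes a genuinely different route from the paper: the paper disposes of all three parts by citing the general theory of the reproductive martingale for supercritical general branching processes (Nerman's Proposition 2.4, (2.17), Corollary 2.5, Theorem 5.4 and Lemma 3.5, plus Jagers' Theorem 4.1 together with the bound $\E_\pi[\bar\xi^2]<\infty$ from \eqref{xlogx}), whereas you give a self-contained computation exploiting the special structure here, namely that every individual reproduces along a rate-one Poisson process independently of its type. Your stopping-line decomposition of $\mathscr I(t)$ over $\mathscr I(s)$ for the martingale property is sound (the verification that $\pi$ is well defined, that $\pi(x)\in\mathscr I(s)$, and that the block of a $y$ with $\sigma_y\le t$ is exactly the time-$t$ stopping line of $y$'s subtree all check out), and the branching property along $\mathscr I(s)$ given $\mathscr F_s$ that you flag as the delicate step is precisely the standard fact the paper outsources to Nerman; asserting it is acceptable. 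The most notable payoff of your approach is in part (ii): rather than invoking Nerman's identification of the limit of the reproductive martingale, you get $w_\infty=W$ directly from the exact variance identity $\E[(w_t-e^{-t}n(t))^2]=\tfrac12 e^{-t}$, which is both elementary and sharper; similarly in part (iii) you obtain $\E[w_{t,c}]=e^{-c}$ exactly for all $t$, which is stronger than the stated convergence. The trade-off is that your argument leans on the type-independence of the reproduction clock and so does not generalize to the broader Jagers--Nerman setting the paper's citations cover, but for this model it is a valid and more transparent proof.
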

\begin{proof}
(i) Follows from Proposition 2.4 and (2.17) in \cite{ nerman1981convergence}.
The $\mathbb{L}^2$ convergence in (ii) follows from Theorem 4.1 in \cite{jagers1984growth} using the fact that $\E_v[\bar \xi^2]< \infty$ derived in \eqref{xlogx}. Further, as $w_t$ is the reproductive martingale (see equation (2.15) in \cite{nerman1981convergence}), the almost sure convergence to $W$ follows from \cite[Corollary 2.5 and Theorem 5.4]{nerman1981convergence}.
(iii) follows from \cite[Lemma 3.5]{nerman1981convergence}.
\end{proof}
A version of the following theorem is proved for a class of multitype branching processes in \cite{jagers1996asymptotic}. However, it requires the finiteness of $\xi(S \times \RR_+)$, which is not satisfied in our case. We thus give a direct proof which bypasses some technicalities in \cite{jagers1996asymptotic} introduced by their generality of hypotheses.
\begin{theorem}\label{root_condensation}
In the non-fringe regime where $\E[Z]>1$, as $t\to\infty$,
$$e^{-t} \tilde{\cP}_1(t) \overset{ \mathbb{L}^1}{\longrightarrow}  (1-q_*)W,$$
where $W$ is as defined in Lemma \ref{key_martingale}.
\end{theorem}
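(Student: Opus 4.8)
The plan is to combine the first-moment asymptotics already in hand with a Nerman-type ancestral cut and a crude second-moment bound. Under the identification of $\cT(\cdot)$ with $\MBP(\cdot)$ rooted at type $0$, the quantity $\tilde{\cP}_1(t)$ is the number of type-$1$ individuals born by time $t$ (no individual ever dies). For a root type $r\in S$ let $N^{(r)}(u)$ denote the analogous count in an $\MBP$ started from a single type-$r$ individual, so $\tilde{\cP}_1(t)\equald N^{(0)}(t)$. Two inputs are needed. First, applying Theorem \ref{jagersthm1} with the characteristic $\psi(s,u)=\mathbf{1}\{s=1\}$, together with the computation in the proof of Corollary \ref{jagers_expectation}, gives $e^{-u}\E_r[N^{(r)}(u)]\to 1-q_*$ for $\pi$-a.e.\ $r$, i.e.\ for every $r\ge 1$ (as $\pi(r)=(1-q_*)q_*^{r-1}>0$); only types $\ge 1$ will arise below. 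Second, $N^{(r)}(u)$ is dominated by the total population $n(u)$, which is a rate-one Yule process started from one individual \emph{regardless of the root type}, so Lemma \ref{mart_power} with $s=0$ gives the bounds, uniform in $r$ and $u\ge 0$,
\[
e^{-u}\E_r[N^{(r)}(u)]\le 1, \qquad e^{-2u}\E_r\big[(N^{(r)}(u))^2\big]\le 2 .
\]
Here $q_*<1$ precisely because $\E[Z]>1$, which is where the non-fringe hypothesis is used; without it the asserted limit degenerates.

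Fix $s\ge 0$ and cut the genealogy at time $s$: each type-$1$ individual born in $(s,t]$ lies in the subtree rooted at a unique $x\in\mathscr{I}(s)$, so
\[
\tilde{\cP}_1(t)=\tilde{\cP}_1(s)+\sum_{x\in\mathscr{I}(s)} N_x(t-\sigma_x),
\]
where $N_x(u)$ counts the type-$1$ individuals in the subtree rooted at $x$ (including $x$) born within time $u$ of $x$'s birth. Since $\mathscr{F}_s$ contains the full life histories of the first $n(s)$ individuals, the (infinite) set $\mathscr{I}(s)$ and the pairs $(\rho(x),\sigma_x)$, $x\in\mathscr{I}(s)$, are $\mathscr{F}_s$-measurable, each $\rho(x)\ge 1$ (only the root has type $0$), and by the branching property the pendant subtrees are, conditionally on $\mathscr{F}_s$, mutually independent with $N_x(u)\equald N^{(\rho(x))}(u)$. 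Thus $\E[\tilde{\cP}_1(t)\mid\mathscr{F}_s]=\tilde{\cP}_1(s)+\sum_{x\in\mathscr{I}(s)}\E_{\rho(x)}[N^{(\rho(x))}(t-\sigma_x)]$; multiplying by $e^{-t}$, using $e^{-(t-\sigma_x)}\E_{\rho(x)}[N^{(\rho(x))}(t-\sigma_x)]\in[0,1]$ with limit $1-q_*$, and dominated convergence against the summable weights $\{e^{-\sigma_x}\}_{x\in\mathscr{I}(s)}$ (whose sum is $w_s<\infty$), yields $e^{-t}\E[\tilde{\cP}_1(t)\mid\mathscr{F}_s]\to(1-q_*)w_s$ almost surely as $t\to\infty$. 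Its expectation is $e^{-t}\E[\tilde{\cP}_1(t)]\to 1-q_*=\E[(1-q_*)w_s]$ by Corollary \ref{jagers_expectation} and $\E[w_s]=1$, so Scheff\'e's lemma upgrades this to $\mathbb{L}^1$ convergence.

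Finally, control the fluctuation $e^{-t}\big(\tilde{\cP}_1(t)-\E[\tilde{\cP}_1(t)\mid\mathscr{F}_s]\big)$. Conditional independence and the law of total variance give $\var(\tilde{\cP}_1(t)\mid\mathscr{F}_s)=\sum_{x\in\mathscr{I}(s)}\var_{\rho(x)}(N^{(\rho(x))}(t-\sigma_x))$, so with the uniform second-moment bound and $\sigma_x>s$ on $\mathscr{I}(s)$,
\[
\big\| e^{-t}\tilde{\cP}_1(t)-e^{-t}\E[\tilde{\cP}_1(t)\mid\mathscr{F}_s]\big\|_2^2
= e^{-2t}\,\E\Big[\sum_{x\in\mathscr{I}(s)}\var_{\rho(x)}\!\big(N^{(\rho(x))}(t-\sigma_x)\big)\Big]
\le 2\,\E\Big[\sum_{x\in\mathscr{I}(s)}e^{-2\sigma_x}\Big]\le 2e^{-s}\E[w_s]=2e^{-s},
\]
uniformly over $t\ge s$. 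Combining this with the previous paragraph and the triangle inequality,
\[
\limsup_{t\to\infty}\big\| e^{-t}\tilde{\cP}_1(t)-(1-q_*)W\big\|_1\le \sqrt{2}\,e^{-s/2}+(1-q_*)\,\|w_s-W\|_1 ,
\]
and letting $s\to\infty$, using $w_s\to W$ in $\mathbb{L}^1$ (Lemma \ref{key_martingale}(ii)), completes the proof. The one genuinely delicate step is the ancestral-cut bookkeeping — checking that $\mathscr{I}(s)$ and the types and birth times of its members are $\mathscr{F}_s$-measurable while the pendant subtrees remain fresh and conditionally independent — together with securing moment control \emph{uniform in the root type}; the latter, which is the crux in the general theory, is disposed of here cheaply by dominating $N^{(r)}(u)$ by the type-blind Yule population.
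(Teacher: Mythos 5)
Your proof is correct, but it takes a genuinely different route from the paper's. The paper cuts the genealogy at a late time $t$ and evaluates at $t+t'$, which forces it to (i) truncate the characteristic at age $M$ and control the truncation error, (ii) establish convergence of $m_s(\cdot)$ \emph{uniformly over the types present at time $t$}, which in turn requires the height bound $\cH_{\cT(t)} < 2\kappa_0 t$ from Proposition \ref{prop:ht-cts}, and (iii) introduce the sets $\mathscr{I}(t,c)$ and Lemma \ref{key_martingale}(iii) to discard individuals born long after the cut. You instead cut at a \emph{fixed} time $s$ and send $t\to\infty$ first: the relevant types then form a fixed $\mathscr{F}_s$-measurable countable set, so the pointwise (in the type $r\ge 1$, i.e.\ $\pi$-a.e.) convergence $e^{-u}\E_r[N^{(r)}(u)]\to 1-q_*$ from Theorem \ref{jagersthm1} combines with the summable weights $e^{-\sigma_x}$ via dominated convergence, and no uniformity in the type, no height estimate, and no characteristic truncation are needed; the Scheff\'e step is a neat way to get $\mathbb{L}^1$ convergence of the conditional expectation without a separate uniform-integrability argument. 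The fluctuation term is handled identically in spirit to the paper's bound on $X(t,t',c)$ — conditional independence of the pendant subtrees plus the type-blind Yule domination from Lemma \ref{mart_power} — but your version yields the clean bound $2e^{-s}$ uniform in $t$, after which $s\to\infty$ with $w_s\to W$ in $\mathbb{L}^1$ finishes the argument. Both proofs rest on the same three inputs (Theorem \ref{jagersthm1}, the Yule second-moment bound, and Lemma \ref{key_martingale}); yours trades the paper's delicate late-time bookkeeping for a simpler interchange of limits, at no loss of rigor.
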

\begin{proof}
Recall that $\alpha=\beta=1$.
Let $\psi(s,t)=\mathbf{1}\{s=1,t\geq 0\}$ so that $Z^\psi(t)=\tilde{\cP}_1(t)$. For any $M>0$, write $\psi=\psi_M+\psi'_M$ where $\psi_M(s,t)=\psi(s,t)\mathbf{1}\{t\leq M\}$ and $\psi'_M(s,t)=\psi(s,t)\mathbf{1}\{t> M\}$. 
To simplify notation, let 
$$\gamma:=\E_v[\hat{\psi}(\alpha)]/\alpha \beta=1-q_* \quad \text{ and }\quad \gamma_M:=\E_v[\hat{\psi}_M(\alpha)]/\alpha \beta=(1-e^{-M})(1-q_*).$$

Note that 
\begin{align}\label{errs}
\nonumber\E|e^{-t}Z^\psi(t)-\gamma W|&\leq e^{-t}\E|Z^{\psi}(t)-Z^{\psi_M}(t)|\\
&\quad +\E|e^{-t}Z^{\psi_M}(t)-\gamma_M W|+|\gamma_M-\gamma|\E[W].
\end{align}
The first term is equal to $e^{-t}\E[Z^{\psi'_M}(t)]$. Theorem \ref{jagersthm1} implies that
$$\lim_{t\to\infty} e^{-t}\E[Z^{\psi'_M}(t)]=(1-q_*)e^{-M}.$$
Hence, $e^{-t}\E[Z^{\psi'_M}(t)]$ can be made arbitrarily small by taking both $t$ and $M$ to be large. The third term vanishes as $M\to\infty$ since $\E[W]=1$ and $\lim_{M\to\infty}\gamma_M=\gamma$.
 
 It remains to deal with the second term in \eqref{errs}. Let $\zeta_t=e^{-t}Z^{\psi_M}(t)$ and $m_s(t)=\E_s[\zeta_t]$. By Theorem \ref{jagersthm1} we have 
\beq\label{m_convergence}
\lim_{t\to\infty} m_s(t)= \E_v[\hat{\psi}_M(\alpha)]/\alpha \beta=:\gamma_M
\eeq
for $v$-almost sure $s\in S$.

Observe that $\psi_M(u)=0$ for all $u\geq M$. Hence, for $t'\geq M$ we have
\begin{align*}
\zeta_{t+t'}&= \sum_{x\in \mathscr{I}(t)}e^{-\sigma_x}\zeta_{t+t'-\sigma_x}\circ \rho(x),
\end{align*}
where $\zeta_{t+t_0-\sigma_x}\circ \rho(x)$ denotes the normalized $\psi_M$ score for the vertex $x$, whose type is $\rho(x)$, at time $t+t'-\sigma_x$.
For $t,c\geq 0$ and $t'\geq c$, write $\eta(t+t'-\sigma_x)\circ \rho(x)=\zeta_{t+t'-\sigma_x}\circ \rho(x)-m_{\rho(x)}(t+t'-\sigma_x)$ and define
$$X(t,t',c)=\sum_{x\in \mathscr{I}(t)\backslash \mathscr{I}(t,c)} e^{-\sigma_x}\eta(t+t'-\sigma_x)\circ \rho(x).$$

By triangle inequality we have
\begin{align}\label{fourerrs}
\nonumber | \zeta_{t+t'}-\gamma_M W|&\leq  |X(t,t',c)| +\left|\sum_{x\in \mathscr{I}(t,c)} e^{-\sigma_x} \left(\zeta_{t+t'-\sigma_x}\circ \rho(x)-\gamma_M\right)\right|\\
&\quad + \gamma_M| w_{t}-W| +\sum_{x\in \mathscr{I}(t)\backslash \mathscr{I}(t,c)}e^{-\sigma_x}\left|m_{\rho(x)}(t+t'-\sigma_x)-\gamma_M\right|.
\end{align}
Fix any $\ep>0$. Our goal is to show that there exist $t, c$ (depending on $\ep$) sufficiently large and $t'_0=t'_0(t,c) \in \RR_+$ such that for all $t' \ge t'_0$, $\E| \zeta_{t+t'}-\gamma_M W|\leq \ep$. To do this we will deal with each term in \eqref{fourerrs} separately.

Note that $X(t,t',c)$ is a sum of independent random variables each having expectation zero under the conditional law $\pr(\cdot| \mathscr{F}_t)$. Hence, $\E[X(t,t',c)|\mathscr{F}_t]=0$ and
\begin{align*}
\E[X(t,t',c)^2|\mathscr{F}_t]&=\operatorname{Var}[X(t,t',c)|\mathscr{F}_t]\\
&=\sum_{x\in \mathscr{I}(t)\backslash \mathscr{I}(t,c)} e^{-2 \sigma_x} \operatorname{Var}[\eta(t+t'-\sigma_x)\circ  \rho(x) |\mathscr{F}_t]\\
&\leq \sum_{x\in \mathscr{I}(t)\backslash \mathscr{I}(t,c)} e^{-2 \sigma_x}  \E[ (\eta(t+t'-\sigma_x)\circ  \rho(x))^2|\mathscr{F}_t]
\end{align*}

Since $\zeta_t\leq e^{-t}n(t)$, it follows from Lemma \ref{mart_power} that for all $t\geq 0$ and $s\in S$,
$$\E_s[\zeta^2_t]\leq \E[\E[e^{-2t}n(t)^2|\FF_0]] \le 2.$$  
Note that for any $s\in S$, $m_s=\E_s[\zeta_t]\leq e^{-t}\E[n(t)]=1$. Then, for any $x\in \mathscr{I}(t)\backslash \mathscr{I}(t,c)$,  
\begin{align*}
\E[ (\eta(t+t'-\sigma_x)\circ  \rho(x))^2|\mathscr{F}_t]\leq& 2\E[(\zeta_{t+t'-\sigma_x}\circ \rho(x))^2+m_{\rho(x)}(t+t'-\sigma_x) ^2|\mathscr{F}_t]\\
\leq &  2(2+1)=6.
\end{align*}
Therefore, 
\begin{align*}
\E[X(t,t',c)^2]&=\E\left[\E[X(t,t',c)^2|\mathscr{F}_t]\right]\\
& \leq 6 \E\left[\sum_{x\in \mathscr{I}(t)\backslash \mathscr{I}(t,c)} e^{-2 \sigma_x}\right]\leq C'e^{-t}\E[w_t] = 6e^{-t}.
\end{align*}
Hence,
\beq\label{err1}
\E|X(t,t',c)|\leq \E[X(t,t',c)^2]^{1/2}\leq \sqrt{6}e^{- t/2}.
\eeq
There exists some $N_1\in \RR_+$ such that for $t\geq N_1$, and any choice of $t',c$, $\E|X(t,t',c)|\leq \ep/4$.
%
%Next we want to check that 
%$$\sup_t \sum_{x\in \mathscr{I}(t)\backslash \mathscr{I}(t,c)} \E[e^{-2\alpha \sigma_x}\eta^2(t+t_0-\sigma_x)\circ \rho(x)]<\infty.$$
%Again, since $\E[\eta^2(t+t_0-\sigma_x)\circ \rho(x)]\leq C'$, under the law $\pr(\cdot| \mathscr{F}_t)$
%$$\sup_t \sum_{x\in \mathscr{I}(t)\backslash \mathscr{I}(t,c)} \E[e^{-2\alpha \sigma_x}\eta^2(t+t_0-\sigma_x)\circ \rho(x)]\leq C'  \sup_t e^{-\alpha t}w_t\to 0 \quad \text{ almost surely.}$$
%
%Hence $X(t,t_0,c)$ converges in distribution to a point mass at zero. This implies that $X(t,t_0,c)$ converges to 0 in probability.

To address the second term in \eqref{fourerrs}, simply observe that 
\begin{align*}
&\E\left[\left|\sum_{x\in \mathscr{I}(t,c)} e^{-\sigma_x} \left(\zeta_{t+t'-\sigma_x}\circ \rho(x)-\gamma_M\right)\right|\big|\mathscr{F}_t\right]\\
\leq &\sum_{x\in \mathscr{I}(t,c)} e^{-\sigma_x}( m_{\rho(x)}(t+t'-\sigma_x)+\gamma_M)\leq (1+\gamma_M)w_{t,c}.
\end{align*}
Lemma \ref{key_martingale}(iii) implies that there exists some $N_2\in \RR_+$ so that if $c,t\geq N_2$ then, for any choice of $t'$,
\beq\label{err3}
\E\left|\sum_{x\in \mathscr{I}(t,c)} e^{-\sigma_x} \left(\zeta_{t+t'-\sigma_x}\circ \rho(x)-\gamma_M\right)\right| \leq (1+\gamma_M)\E[w_{t,c}]\leq \ep/4.
\eeq

For the third term, since $w_t$ converges to $W$ in $\mathbb{L}^1$, there exists some $N_3\in \RR_+$ so that 
\beq\label{err4}
\E[\gamma_M|w_t-W|]\leq \ep/4
\eeq
for all $t\geq N_3$. 

Finally we upper bound the expectation of the fourth term in \eqref{fourerrs}. Let $\mathcal{E}_t=\{\mathcal{H}_{\TT(t)}<2\kappa_0 t \}$. It follows from Proposition \ref{prop:ht-cts} (whose proof is independent of the tools using $\MBP$) that $\lim_{t\to\infty}P(\mathcal{E}^c_t)=0$. Note that on $\mathcal{E}_t$, the possible types for $x\in \mathscr{I}(t)\backslash \mathscr{I}(t,c)$ in $\MBP(\cdot)$ are in the set $[2\kappa_0 t]:=\{0,1,\dots, \lceil 2\kappa_0 t\rceil\}$.
%
%Since $x\in \mathscr{I}(t)\backslash \mathscr{I}(t,c)$, $\sigma_x\leq t+c$ and $t+t_0-\sigma_x\geq t_0-c$. For a fixed $t$, $\rho(x)\leq ht(t)+1$ and hence we can choose $t_0=t_0(t)$ to be sufficiently large so that 
%$$|m_{\rho(x)}(t+t_0-\sigma_x)-\gamma_M|\leq \ep$$ 
%for all $x\in \mathscr{I}(t)\backslash \mathscr{I}(t,c)$. It follows that 
Observe that 
\begin{align*}
&\E_0\left[\sum_{x\in \mathscr{I}(t)\backslash \mathscr{I}(t,c)}e^{-\sigma_x}\left|m_{\rho(x)}(t+t'-\sigma_x)-\gamma_M\right|\right]\\
\leq &\E_0\left[\sum_{x\in \mathscr{I}(t)\backslash \mathscr{I}(t,c)}e^{-\sigma_x}\sup_{s\in [2\kappa_0 t]}\left|m_{s}(t+t'-\sigma_x)-\gamma_M\right|; \mathcal{E}_t\right]\\
&\quad +\E_0\left[ \sum_{x\in \mathscr{I}(t)\backslash \mathscr{I}(t,c)}e^{-\sigma_x}(1+\gamma_M); \mathcal{E}^c_t\right].
\end{align*}
There exists some $N_4\in\RR_+$ so that for all $t\geq N_4$, and any choice of $t',c$, 
\begin{align}\label{err2_1}
\nonumber\E_0\left[ \sum_{x\in \mathscr{I}(t)\backslash \mathscr{I}(t,c)}e^{-\sigma_x}(1+\gamma_M); \mathcal{E}^c_t\right]&\leq (1+\gamma_M)\E[w_t; \mathcal{E}^c_t]\\
&\leq(1+\gamma_M)\left( \E[|w_t-W|]+\E[W; \mathcal{E}^c_t]\right)\leq \ep/8,
\end{align}
where the last inequality follows from Lemma \ref{key_martingale}.

Choose and fix any $t \ge \max_{1\leq i\leq 4} \{ N_i\}$ and $c \ge N_2$.
Since $x\in \mathscr{I}(t)\backslash \mathscr{I}(t,c)$, we have $t+t'-\sigma_x\geq t'-c$. Thus, there exists $t'_0=t'_0(t,c) \in \RR_+$ such that for all $t' \ge t'_0$, 
$$\sup_{s\in[ 2\kappa_0 t]} |m_{s}(t+t'-\sigma_x)-\gamma_M|\leq \ep/8.$$
Hence, for all $t' \ge t'_0$,
\beq\label{err2_2}
\E_0\left[\sum_{x\in \mathscr{I}(t)\backslash \mathscr{I}(t,c)}e^{-\sigma_x}\sup_{s\in [2\kappa_0 t]}\left|m_{s}(t+t'-\sigma_x)-\gamma_M\right|\right]\leq (\ep/8) \E[w_t]=\ep/8.
\eeq
Combining \eqref{err2_1} and \eqref{err2_2} gives for all $t' \ge t'_0$,
\beq\label{err2}
\E_0\left[\sum_{x\in \mathscr{I}(t)\backslash \mathscr{I}(t,c)}e^{-\sigma_x}\left|m_{\rho(x)}(t+t'-\sigma_x)-\gamma_M\right|\right]\leq \ep/4.
\eeq

Using \eqref{err1}, \eqref{err3}, \eqref{err4}, \eqref{err2} in \eqref{fourerrs}, we conclude that, for any given $\ep>0$, there exists $t_0(\ep) \in \RR_+$ such that for all $t \ge t_0(\ep)$,
$$\E| \zeta_{t}-\gamma_M W|\leq \ep.$$
Thus we have established the convergence of $e^{-t}Z^\psi(t)$ to $\gamma W$ in $\mathbb{L}^1$. The proof is complete.

\end{proof}

\section{Proofs: Local weak convergence}
\label{sec:fringe-proof}

The goal of this Section is to prove Theorem \ref{thm:fringe}. Recall the continuous time version of the process $\cT(\cdot, \vp)$ in Definition \ref{defn:cts-time}; here we label vertices as $\set{v_i:i\geq 0}$ in the order they enter the system starting with $v_0$ and recall that $\sigma_i$ denotes the birth time of vertex $v_i$. For the rest of the Section we will suppress dependence on $\vp$. We will start by studying asymptotics of empirical functionals of the fringe distribution of this process as $t\to\infty$ and then leverage these results to prove Theorem \ref{thm:fringe}. 

Let $\bcF:=\set{\FF_t:t\geq 0}$ denote the natural filtration of the process $\set{\cT(t): t\geq 0}$.  Recall the space $\bT$ of finite rooted trees from Section \ref{sec:fri-decomp}.  For any $\TT\in \mathbb{T}$ and vertex $v\in \mathbb{V}(\TT)$, we will denote by $\TT_v$ the fringe at vertex $v$ namely the subtree rooted at $v$, consisting of all vertices in $\TT$ whose path to the root of $\TT$ passes through $v$. Let $\phi:\mathbb{T} \to \RR$ denote a non-negative bounded measurable function.   For any $i\geq 0$, define $\phi_i:\RR_+ \to \RR$ by,
$$\phi_i(u)=
\begin{cases}
\phi(\TT_{v_i}(u+\sigma_i)), & u\geq 0,\\
0, &s <0.
\end{cases}
$$
Thus the stochastic process $\set{\phi_i(u):u\geq 0}$ tracks the evolution of the ``score'' of the fringe tree below $v_i$ as the age of $v_i$ increases.  Now note that for any time $s >0$ and for $v_i$ such that $\sigma_i\leq s$, the age of vertex $v_i$ at time $s$ is $s-\sigma_i$. 
Write $\cZ^\phi(s)=\sum_{i:\sigma_i\leq s}\phi_i(s-\sigma_i)$; in words we are aggregating the scores of the fringe trees of vertices born before time $s$.  For the rest of this section, fix any $\bcF$ adapted process $\set{a(t):t\geq 0}$  with $a(t)\convas \infty$, $a(t)/t\convas 0$. We will write $y(t) = t+a(t)$.  Recall the process $\cT^*(\cdot, \vp)$ in Definition \ref{def:fringe-limit}. When $\vp$ is clear from context, we will write $\cT^*(\cdot)$ for $\cT^*(\cdot, \vp)$.

\begin{theorem}\label{thm:local_limit-cts-time}
Let $\phi:\mathbb{T} \to \RR$ be a bounded measurable function. Assume that $s \to \E[\phi(\TT^*(s))]$ is Lipschitz on $[0,\infty)$. Then
$$e^{-y(t)}\cZ^\phi(y(t))\xrightarrow{P} W\int_0^\infty e^{-s}\E[\phi(\TT^*(s))]ds \quad \text{ as } t\to\infty,
$$
where $W\sim \mathrm{Exp}(1)$ is the almost sure limit of $e^{-t}n(t)$ as $t \rightarrow \infty$.
\end{theorem}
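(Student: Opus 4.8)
The plan is to reduce everything to a conditional second‑moment estimate, using only the Yule‑type size controls of Section~\ref{sec:prelim-est-embed}. We may assume $\phi\ge 0$: replacing $\phi$ by $\phi+c$ adds $c\,e^{-y(t)}n(y(t))\to cW$ to the left side (since $e^{-t}n(t)\to W$ along every, possibly random, time tending to infinity) and adds $c$ to $m:=\int_0^\infty e^{-s}\E[\phi(\cT^*(s))]\,ds$, hence $cW$ to the right side. Write $g(u)=\E[\phi(\cT^*(u))]$, bounded by $\|\phi\|_\infty$ and Lipschitz by hypothesis, so $m<\infty$. Since $\{n(t)\}$ is a rate‑one Yule process with $\E[n(t)]=e^t$ (Lemma~\ref{lem:yule-asymp}) and, conditionally on its birth time, the fringe below any non‑root vertex is an independent copy of $\cT^*$ (the remark following Definition~\ref{defn:cts-time}), one gets
\[
\E[\cZ^\phi(s)]=\E[\phi(\cT(s))]+\int_0^s g(s-u)\,e^u\,du=O(1)+e^s\!\int_0^s g(w)e^{-w}\,dw,
\]
so $e^{-s}\E[\cZ^\phi(s)]\to m$. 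As $\cZ^{\mathbf 1}(s)=n(s)$, it suffices to prove $e^{-y(t)}\cZ^{\psi}(y(t))\xrightarrow{P}0$ for the centred characteristic $\psi:=\phi-m$, which has $|\psi|\le C:=\|\phi\|_\infty+m$, bounded Lipschitz mean $h(u):=g(u)-m$, and obeys the crucial cancellation $\int_0^\infty e^{-u}h(u)\,du=0$.

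Put $X_s:=e^{-s}\cZ^{\psi}(s)$. Because $|\cZ^\phi(s)|\le C n(s)$, Lemma~\ref{mart_power} gives $\E[(e^{-s}n(s))^3]\le 8$ and hence a uniform $\mathbb L^3$‑bound on $X_s$. For $r\le s$, using $\E[n(v)\,|\,\FF_r]=n(r)e^{v-r}$, the first‑moment computation applied from time $r$ onwards, and the cancellation,
\[
\big|\E[\cZ^{\psi}(s)\,|\,\FF_r]\big|\le C\,n(r)+ n(r)e^{-r}e^{s}\Big|\int_{s-r}^{\infty}h(w)e^{-w}\,dw\Big|\le 2C\,n(r),
\]
so $\big|\E[X_s\,|\,\FF_r]\big|\le 2C\,(e^{-r}n(r))\,e^{-(s-r)}$. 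Taking $r=t$ and $s=y(t)=t+a(t)$ this is at most $2C\,(e^{-t}n(t))\,e^{-a(t)}$, which $\to 0$ a.s.\ since $e^{-t}n(t)\to W$ and $a(t)\to\infty$ --- precisely where the hypotheses on $a(\cdot)$ enter. By the $\mathbb L^3$‑bound this yields $\E[\E[X_{y(t)}\,|\,\FF_t]^2]\to0$, so via $\E[X_{y(t)}^2]=\E[\E[X_{y(t)}\,|\,\FF_t]^2]+\E[\var(X_{y(t)}\,|\,\FF_t)]$ the proof reduces to showing
\[
\sup_{s\ge r}\;e^{-2s}\,\E\big[\var(\cZ^{\psi}(s)\,|\,\FF_r)\big]\xrightarrow[r\to\infty]{}0 .
\]

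This conditional second‑moment bound is the main obstacle. Expand $\var(\cZ^\psi(s)\,|\,\FF_r)=\sum_{i,j}\cov(\psi_i(s-\sigma_i),\psi_j(s-\sigma_j)\,|\,\FF_r)$ and split pairs by the graph most‑recent‑common‑ancestor of $v_i,v_j$. For graph‑incomparable pairs the two fringes evolve, given $\FF_r$, from disjoint Poisson clocks and are therefore conditionally independent once their branch point has occurred, so their covariance comes only through the shared ancestral line; summing these contributions is exactly where the identity $\int_0^\infty e^{-u}h(u)\,du=0$ must be used to beat the naive $O(e^{2s})$ estimate and get $O(s e^{s})$ (uniformly in $s\ge r$, times an $\FF_r$‑measurable factor of order $n(r)$). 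Ancestor--descendant pairs are handled crudely: there are at most $n(s)+\sum_{v\in\cT(s)}\mathrm{depth}(v)$ of them, and $\E[\sum_{v\in\cT(s)}\mathrm{depth}(v)]=O(s e^{s})$ by Grönwall and the exponential height tail of Lemma~\ref{diam_tail}, contributing $O(s e^{s})$. Combining, $e^{-2s}\E[\var(\cZ^\psi(s)\,|\,\FF_r)]=O(e^{-r}\,\mathrm{poly}(r))$ uniformly in $s\ge r$, which closes the reduction.

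In short: the first‑moment computation, the $\mathbb L^3$‑bound from Lemma~\ref{mart_power}, the conditional‑mean estimate, and the passage to the random time $y(t)$ are routine given the Yule size estimates; the one genuinely technical step is the incomparable‑pair sum in the conditional variance, where one must track how the centred fringe‑scores decorrelate along the ancestral branching of $\cT$ in order to extract the extra factor $e^{-r}$ that the cancellation $\int_0^\infty e^{-u}h(u)\,du=0$ makes available.
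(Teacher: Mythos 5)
Your overall architecture coincides with the paper's (condition on $\FF_t$, control the conditional mean and the conditional variance, conclude by Chebyshev), and your treatment of the conditional mean is a genuine, clean alternative: by centering with $\psi=\phi-m$ and exploiting $\int_0^\infty e^{-u}h(u)\,du=0$ together with $\E[n(u)\mid\FF_r]=n(r)e^{u-r}$, you get $|\E[X_{y(t)}\mid\FF_t]|\le 2C(e^{-t}n(t))e^{-a(t)}$ directly, bypassing the paper's Lemma on birth-time approximations ($\tilde\sigma_{t,i}\approx\alpha_i(t)$) for this half of the argument. That part is fine.

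The conditional variance, however, is where the real work lies, and your proposal does not carry it out. You write that for graph-incomparable pairs the fringes are ``conditionally independent once their branch point has occurred, so their covariance comes only through the shared ancestral line,'' and then assert that summing these contributions gives $O(se^{s})$. Neither claim is justified, and the first is imprecise in exactly the way that makes this step hard: even for incomparable $v_i,v_j$, the scores $\psi_i(s-\sigma_i)$ and $\psi_j(s-\sigma_j)$ are coupled through the birth times, because births inside the fringe of $v_i$ advance the global population clock and hence shift $\sigma_j$ (and all later birth times). So the covariance is not localized to ``the shared ancestral line.'' The paper resolves precisely this entanglement by (i) a one-sided conditioning on a $\sigma$-field $\FF^*_{i,j}$ containing the descendants of $v_i$, which replaces $\phi_j$ by its mean $m(y(t)-\tilde\sigma_{t,j})$ on $\{j\nrightarrow i\}$, and (ii) the uniform birth-time approximation $\sup_i|\tilde\sigma_{t,i}-\alpha_i(t)|\to 0$ (conditionally on $\FF_t$, in $L^1$), which lets one replace the random birth times by $\FF_t$-measurable ones and show that the sum of products of means factorizes into $\left(e^{-y(t)}\E[\cZ^\phi(y(t))\mid\FF_t]\right)^2+o_{a.s.}(1)$; note that no cancellation identity is needed there. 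Your sketch flags this as ``the main obstacle'' and ``the one genuinely technical step'' but supplies neither the decoupling mechanism nor the estimate; as written, the claimed extra factor $e^{-r}$ for the incomparable-pair sum is an assertion, not a proof. (The ancestor--descendant count and the pairs involving vertices born before $r$ are handled correctly and match the paper's Lemma on $\tilde T_{3,1}$ and the $T_1,T_2$ terms.)
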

The main tool to prove this Theorem is the following Proposition. 

\begin{prop}[Conditional moments of $\cZ^\phi(t)$]\label{prop:moments}
In the setting of Theorem \ref{thm:local_limit-cts-time},  as $t\to\infty$,
\begin{align}
\label{first_moment}e^{-y(t)}\E\left[ \cZ^\phi(y(t))|\FF_t\right]&\overset{a.s.}{\longrightarrow} W\int_0^\infty e^{-u}\E[\phi(\TT^*(u))]du,\\
\label{variance}\var\left[ e^{-y(t)}\cZ^\phi(y(t))\big|\FF_t\right]&\overset{a.s.}{\longrightarrow} 0.
\end{align}
\end{prop}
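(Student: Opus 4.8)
Write $g(u):=\E[\phi(\cT^*(u))]$ — bounded, and Lipschitz by hypothesis — and set $c_\phi:=\int_0^\infty e^{-u}g(u)\,du$, the asserted limit constant. Two facts underlie the proof. (i) The size process $n(\cdot)=|\cT(\cdot,\vp)|$ is a rate-one Yule process (Lemma~\ref{lem:yule-asymp}) whose counting dynamics (the birth times $\sigma_i$ and the Yule genealogy) depend only on the reproduction clocks and are therefore independent of the i.i.d.\ exploration variables $\{Z_i\}$, which remain i.i.d.\ $\vp$ given that genealogy. (ii) For a non-root vertex $v_i$ the fringe $\TT_{v_i}(\sigma_i+u)$ is a measurable functional of the Yule-progeny of $v_i$ together with the exploration variables carried within it, and as a process in $u$ it evolves exactly as $\cT^*(\cdot)$ of Definition~\ref{defn:cts-time} (the ``if $Z>\dist$ nothing happens'' rule of $\cT^*$ is precisely the event that the attached vertex escapes the subtree of $v_i$); in particular $\E[\phi(\TT_{v_i}(y(t)))\mid\FF_t,\sigma_i]=g(y(t)-\sigma_i)$ for $\sigma_i>t$.

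\textbf{Step 1: reduce to vertices born after $t$.} Split $\cZ^\phi(y(t))=\cZ_{\mathrm{old}}(t)+\cZ_{\mathrm{new}}(t)$ according to $\sigma_i\le t$ versus $t<\sigma_i\le y(t)$. Since $\cZ_{\mathrm{old}}(t)$ has at most $n(t)$ bounded summands, $e^{-y(t)}|\cZ_{\mathrm{old}}(t)|\le\|\phi\|_\infty e^{-a(t)}(e^{-t}n(t))\convas0$ and $e^{-2y(t)}\E[\cZ_{\mathrm{old}}(t)^2\mid\FF_t]\convas0$. Lemma~\ref{mart_power} applied at time $y(t)$ gives $\E[(e^{-y(t)}n(y(t)))^2\mid\FF_t]\le(e^{-t}n(t))^2+e^{-2t}n(t)$, which is a.s.\ bounded and dominates $e^{-2y(t)}\E[\cZ_{\mathrm{new}}(t)^2\mid\FF_t]$; a Cauchy--Schwarz bound on the cross term then reduces \eqref{first_moment} and \eqref{variance} to the same statements for $\cZ_{\mathrm{new}}(t)$.

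\textbf{Step 2: first moment.} The $n(t)$ individuals alive at time $t$ launch $n(t)$ independent Yule-from-one sub-populations, and each vertex born after $t$ lies — with all of its Yule-progeny, hence its whole fringe — in exactly one of them. Writing $\Xi_k$ for the $\phi$-mass accumulated by time $y(t)$ from the new vertices in the $k$-th sub-population, $\cZ_{\mathrm{new}}(t)=\sum_{k=1}^{n(t)}\Xi_k$ with the $\Xi_k$ i.i.d.\ under $\pr(\cdot\mid\FF_t)$. Conditioning on the $k$-th sub-population's genealogy and using fact (ii), $\E[\Xi_1\mid\FF_t]=\int_0^{a(t)}g(a(t)-s)\,e^{s}\,ds=e^{a(t)}\int_0^{a(t)}e^{-v}g(v)\,dv$, where $e^{s}\,ds$ is the mean birth intensity at age $s$ of a Yule-from-one process. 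Hence
\[
e^{-y(t)}\E[\cZ_{\mathrm{new}}(t)\mid\FF_t]=e^{-t}n(t)\int_0^{a(t)}e^{-v}g(v)\,dv\;\convas\;W\,c_\phi,
\]
by $e^{-t}n(t)\convas W$ and $a(t)\to\infty$, which is \eqref{first_moment}.

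\textbf{Step 3: variance, and the main obstacle.} Conditioning by $\mathcal H_t:=\sigma(\FF_t,\text{Yule genealogy on }[0,y(t)])$,
\[
\var[\cZ_{\mathrm{new}}(t)\mid\FF_t]=\E\big[\var[\cZ_{\mathrm{new}}(t)\mid\mathcal H_t]\,\big|\,\FF_t\big]+\var\big[\E[\cZ_{\mathrm{new}}(t)\mid\mathcal H_t]\,\big|\,\FF_t\big].
\]
Given $\mathcal H_t$ the scores $\phi(\TT_{v_i}(y(t)))$ and $\phi(\TT_{v_j}(y(t)))$ are built from exploration variables in disjoint Yule-progenies, hence conditionally independent, unless $v_i,v_j$ form a \emph{nested pair} (one lies in the other's Yule-subtree); so $\var[\cZ_{\mathrm{new}}(t)\mid\mathcal H_t]\le\|\phi\|_\infty^2(N_t+P_t)$ with $N_t$ the number of new vertices and $P_t$ the number of nested pairs. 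Here $\E[N_t\mid\FF_t]\le n(t)e^{a(t)}$, while each new vertex has conditional mean Yule-progeny $e^{y(t)-\sigma_i}$ at time $y(t)$, so $\E[P_t\mid\FF_t]\le2\int_0^{a(t)}n(t)e^{s}e^{a(t)-s}\,ds=2a(t)n(t)e^{a(t)}$. For the second term, $\E[\cZ_{\mathrm{new}}(t)\mid\mathcal H_t]=\sum_{k=1}^{n(t)}\E[\Xi_k\mid\mathcal H_t]$ is a sum of i.i.d.\ terms each bounded in $\mathbb L^2$ (conditionally on $\FF_t$) by $\|\phi\|_\infty$ times the sub-population size, so $\var\big[\E[\cZ_{\mathrm{new}}(t)\mid\mathcal H_t]\mid\FF_t\big]\le2\|\phi\|_\infty^2 n(t)e^{2a(t)}$ by Lemma~\ref{mart_power}. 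Multiplying the three bounds by $e^{-2y(t)}=e^{-2t-2a(t)}$ yields $O\big(a(t)e^{-a(t)}\cdot e^{-t}(e^{-t}n(t))\big)+O\big(e^{-t}(e^{-t}n(t))\big)\convas0$ since $a(t)\to\infty$; this proves \eqref{variance}. The sole genuine difficulty is that a fringe's growth feeds back into $n(\cdot)$, so distinct fringes and the later birth times are not independent; the two devices above — conditioning on the $Z$-independent genealogy, and decomposing the post-$t$ evolution into $n(t)$ independent single-ancestor sub-populations — decouple them, and the resulting error terms vanish precisely because the excursion length $a(t)=o(t)$ is dwarfed by the exponential growth of $n(\cdot)$.
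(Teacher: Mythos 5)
Your proof is correct, but it takes a genuinely different route from the paper's. The paper's argument is built on Lemma~\ref{birthtime}: it replaces the post-$t$ birth times $\tilde\sigma_{t,i}$ by the $\FF_t$-measurable approximations $\alpha_i(t)$, turns the conditional first moment into a Riemann sum (this is where the Lipschitz hypothesis on $u\mapsto\E[\phi(\TT^*(u))]$ is consumed), and for the variance expands the pair sum $T_3$, splitting tree-descendant pairs (bounded by $\height\times n$ via the height tail estimate of Lemma~\ref{diam_tail}) from non-descendant pairs (factorized through the $\sigma$-fields $\FF^*_{i,j}$ and then run through the birth-time approximation again). You instead exploit the branching structure directly: the decomposition of the post-$t$ population into $n(t)$ conditionally i.i.d.\ Yule-from-one sub-populations, the exact Campbell/compensator identity $\E[\Xi_1\mid\FF_t]=\int_0^{a(t)}g(a(t)-s)e^s\,ds$, and a law-of-total-variance conditioning on the Yule skeleton $\mathcal H_t$, with the ``within'' term controlled by the mean count of Yule-nested pairs ($O(a(t)n(t)e^{a(t)})$, again by the intensity formula) and the ``between'' term by the i.i.d.\ sub-population structure. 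The two independence facts you rely on — that the fringe of a non-root vertex is a functional of its own Yule-progeny and the exploration variables attached to reproduction events therein, and that conditionally on the Yule skeleton those exploration variables remain i.i.d.\ — are exactly right, since a vertex can only join the fringe of $v_i$ through a reproduction event of a vertex already in that fringe. What your route buys: it dispenses with Lemma~\ref{birthtime} and the height estimates entirely, and in fact never uses the Lipschitz hypothesis (boundedness and measurability of $\phi$ suffice), so your argument proves the proposition under weaker assumptions than stated. What it gives up is the explicit birth-time approximation machinery, which the paper packages as a standalone lemma.
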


\begin{proof}[{\bf Proof of Theorem \ref{thm:local_limit-cts-time} assuming Proposition \ref{prop:moments}:}]
For any $\delta>0$, write
\begin{align*}
 &\pr\left[ \left|e^{-y(t)}\cZ^\phi(y(t))-W\int_0^\infty e^{-s}\E[\phi(\TT^*(s))]ds\right|>\delta\right]\\
 \leq & \E\left[\pr\left[\left|e^{-y(t)}\cZ^\phi(y(t))-\E[e^{-y(t)}\cZ^\phi(y(t))|\FF_t]\right|>\delta/2\bigg|\FF_t\right]\right]\\
 &\qquad +\pr\left[\left| \E[e^{-y(t)}\cZ^\phi(y(t))|\FF_t]-W\int_0^\infty e^{-s}\E[\phi(\TT^*(s))]ds\right|>\delta/2\right].
 \end{align*}
We assert that each of these two terms converge to zero. For the first term, this follows by applying Chebyshev's inequality with \eqref{variance}, along with the bounded convergence theorem. For the second term this follows from \eqref{first_moment}.
\end{proof}

\subsection{{\bf Proof of Proposition \ref{prop:moments} }:}
We start with the following technical Lemma providing $\cF_t$ measurable approximations of birth times of new individuals after large $t$. Throughout recall the notation $n(t) = |\cT(t)|$. As this Lemma lies at the heart of our local limit computations, we provide an intuitive explanation first. Suppose we know the population size $n(t)$ at time $t$ and want to `guess' the birth time $\alpha_i(t)$ of the $(n(t) + i)$-th individual based on this information. From Lemma \ref{lem:yule-prop}, we know that the population size (which is a Yule process) grows approximately exponentially with time. Therefore, a good guess is $\alpha_i(t)$ satisfying $n(t)e^{\alpha_i(t)-t} = n(t)+i$. The following lemma shows that, for sufficiently large $t$, this is a uniformly good guess for all birth times after $t$.

\begin{lemma}\label{birthtime}
Let $\{\tilde{\sigma}_{t,i} : i\geq 1\}$ denote the birth times after time $t$, i.e., $\tilde{\sigma}_{t,i}=\sigma_{i+n(t)}$. Define $\alpha_i(t):=t+\log\left(1+\frac{i}{n(t)}\right)$ for $i\geq 1$. Then
\beq\label{birthtime_as}
\sup_{i\geq 1} |\tilde{\sigma}_{t,i}-\alpha_i(t)| \overset{a.s.}{\longrightarrow}0 \quad\text{ as }\quad t\to\infty.
\eeq
Moreover,
\beq\label{birthtime_exp}
\E\left[ \sup_{i\geq 1} |\tilde{\sigma}_{t,i}-\alpha_i(t)| \big|\FF_t\right] \overset{a.s.}{\longrightarrow}0 \quad\text{ as }\quad t\to\infty.
\eeq
\end{lemma}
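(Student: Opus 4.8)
The plan is to reduce everything to two facts about the population-size process $n(t)=|\cT(t)|$, which is a rate-one Yule process: (i) by Lemma~\ref{lem:yule-asymp}, $\sigma_n-\log n\to -\log W$ almost surely with $W\sim\exp(1)$ (so $-\log W$ is a.s.\ finite) and $n(t)\to\infty$ a.s.; (ii) by the strong Markov property at time $t$, the law of the post-$t$ evolution given $\FF_t$ depends only on $n(t)$, and combined with the memoryless property of the exponential holding time in progress at time $t$, the birth time of $v_{n(t)+i}$ is $\tilde\sigma_{t,i}=t+\sum_{\ell=n(t)}^{n(t)+i}\hat{\mathcal{E}}_\ell$, where, conditionally on $\FF_t$, the $\{\hat{\mathcal{E}}_\ell:\ell\ge n(t)\}$ are independent with $\hat{\mathcal{E}}_\ell\sim\exp(\ell)$. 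Note $\alpha_i(t)=t+\log(n(t)+i)-\log n(t)$ is $\FF_t$-measurable, consistent with conditioning on $\FF_t$.

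For \eqref{birthtime_as}: since $\sigma_n-\log n\to -\log W$ is convergence of a sequence indexed by $\mathbb{N}$, on the a.s.\ event where it holds, for every $\eta>0$ there is $N$ with $\sup_{n\ge N}|\sigma_n-\log n+\log W|\le\eta$; as $n(t)\to\infty$, for all large $t$ every index $n(t)+i$ ($i\ge1$) exceeds $N$, so $\sup_{i\ge1}|\sigma_{n(t)+i}-\log(n(t)+i)+\log W|\to 0$ a.s. Squeezing $t$ between $\sigma_{n(t)-1}$ and $\sigma_{n(t)}$, and using $\log n(t)-\log(n(t)-1)\to 0$, gives $t-\log n(t)\to-\log W$ a.s. Since $\tilde\sigma_{t,i}-\alpha_i(t)=[\sigma_{n(t)+i}-\log(n(t)+i)+\log W]-[t-\log n(t)+\log W]$, taking $\sup_{i\ge1}$ and the triangle inequality shows $\sup_{i\ge1}|\tilde\sigma_{t,i}-\alpha_i(t)|$ is bounded by the sum of the two quantities just shown to vanish, proving \eqref{birthtime_as}.

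For \eqref{birthtime_exp}, work under the regular conditional distribution given $\FF_t$ (the underlying path space is Polish) and write $\tilde\sigma_{t,i}-\alpha_i(t)=R_i+D_i$, where $D_i=\sum_{\ell=n(t)}^{n(t)+i}\ell^{-1}-\log\frac{n(t)+i}{n(t)}$ is $\FF_t$-measurable and $R_i=\sum_{\ell=n(t)}^{n(t)+i}(\hat{\mathcal{E}}_\ell-\ell^{-1})$. Comparing the harmonic sum with $\int \frac{dx}{x}$ gives $0\le D_i\le 1/n(t)$ for all $i\ge1$, hence $\sup_{i\ge1}|D_i|\le 1/n(t)$. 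Conditionally on $\FF_t$, $(R_i)_{i\ge1}$ is a mean-zero martingale with $\sum_{\ell\ge n(t)}\operatorname{Var}(\hat{\mathcal{E}}_\ell\mid\FF_t)=\sum_{\ell\ge n(t)}\ell^{-2}\le 2/n(t)$, so it converges in conditional $\mathbb{L}^2$ to $R_\infty$ with $\E[R_\infty^2\mid\FF_t]\le 2/n(t)$; Doob's $\mathbb{L}^2$ maximal inequality applied under this conditional law yields $\E[(\sup_{i\ge1}|R_i|)^2\mid\FF_t]\le 8/n(t)$, and conditional Jensen gives $\E[\sup_{i\ge1}|R_i|\mid\FF_t]\le (8/n(t))^{1/2}$. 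Therefore $\E[\sup_{i\ge1}|\tilde\sigma_{t,i}-\alpha_i(t)|\mid\FF_t]\le (8/n(t))^{1/2}+1/n(t)\to 0$ a.s.\ because $n(t)\to\infty$ a.s.

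The step needing the most care is the clean identification of the conditional law of the post-$t$ birth times: the bookkeeping relating vertex indices to population size, the use of memorylessness for the holding time in progress at time $t$, and then the application of the martingale maximal inequality under the regular conditional distribution given $\FF_t$. These are all standard, but must be set up precisely; everything else is elementary estimation of harmonic sums and exponential moments.
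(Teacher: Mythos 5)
Your proof is correct, and for \eqref{birthtime_exp} it follows essentially the same route as the paper: represent the post-$t$ interbirth times as independent exponentials with rates $n(t), n(t)+1,\dots$, split $\tilde\sigma_{t,i}-\alpha_i(t)$ into a conditionally centered martingale plus a deterministic harmonic-sum-minus-logarithm remainder of order $1/n(t)$, and control the supremum of the martingale part via Doob's $\mathbb{L}^2$ inequality under the conditional law, yielding an $O(n(t)^{-1/2})$ bound. The genuine difference is in \eqref{birthtime_as}: the paper simply cites an external reference (Lemma~3.4 of Garavaglia--van der Hofstad--Woeginger), whereas you derive it directly from Lemma~\ref{lem:yule-asymp}(b) by writing $\tilde\sigma_{t,i}-\alpha_i(t)$ exactly as the difference of $\sigma_{n(t)+i}-\log(n(t)+i)$ and $t-\log n(t)$, each converging a.s.\ to $-\log W$ (the latter by squeezing $t$ between consecutive birth times). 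This makes the lemma self-contained at the cost of a few extra lines, and is a perfectly valid alternative. One bookkeeping slip: your sum $\sum_{\ell=n(t)}^{n(t)+i}\hat{\mathcal{E}}_\ell$ has $i+1$ terms, whereas the $i$-th birth after time $t$ is reached after exactly $i$ interbirth times, with rates $n(t),\dots,n(t)+i-1$ (compare the paper's $\sum_{j=0}^{i-1}E_j/(n(t)+j)$); the upper summation limit should be $n(t)+i-1$. This does not affect any of your estimates -- the bound $0\le D_i\le 1/n(t)$ and the variance bound $\sum_{\ell\ge n(t)}\ell^{-2}\le 2/n(t)$ hold either way -- but the representation as stated is off by one exponential.
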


\begin{proof}

\eqref{birthtime_as} follows from \cite[Lemma 3.4]{garavaglia2018trees}.

It remains to prove the convergence in expectation. As before, let $\{E_j:j\geq 0\}$ denote an i.i.d. sequence of exponential rate one random variables independent of $\FF_t$. The dynamics of the Yule process implies that conditional on $\cF_t$, all the joint distributions can be constructed simultaneously as
\[\tilde{\sigma}_{t,i} = t+\sum_{j=0}^{i-1} \frac{E_j}{n(t)+j}, \qquad i\geq 1.   \]
Thus, 
$$|\tilde{\sigma}_{t,i}-\alpha_i(t)|\leq \bigg| \sum_{j=0}^{i-1} \left(\frac{E_j}{n(t)+j}-\frac{1}{n(t)+j}\right)\bigg|+\bigg |\sum_{j=0}^{i-1} \frac{1}{n(t)+j}-\log\left(\frac{n(t)+i}{n(t)}\right)\bigg|.$$
It is easy to see that there exists some constant $C>0$ such that the second term is upper bounded by $\frac{C}{n(t)}$ for all $i \ge 1$ and all $t\geq 0$. Since $\{E_j\}_{j\geq 0}$ are independent of $\FF_t$, defining $\mathcal{M}_i:=\sum_{j=0}^{i-1} \frac{E_j-1}{n(t)+j}$, the sequence $\set{\mathcal{M}_i:i\geq 1}$ conditioned on $\FF_t$ is a martingale. By Cauchy-Schwarz inequality and Doob's $L^2$ inequality, for any $k \ge 1$,
\begin{align*}
\E\left[ \sup_{1\leq i\leq k} \bigg| \sum_{j=0}^{i-1} \frac{E_j-1}{n(t)+j} \bigg|\big|\FF_t\right]&=\E\left[\sup_{1\leq i\leq k}|\mathcal{M}_i|\big|\FF_t\right]\leq C' \left(\E[\mathcal{M}_k^2|\FF_t]\right)^{1/2}\\
&=C'\left( \sum_{j=0}^{k-1} \frac{1}{(n(t)+j)^2}\right)^{1/2}\leq C'\left( \sum_{j=0}^{\infty} \frac{1}{(n(t)+j)^2}\right)^{1/2}\\
&\leq \frac{C''}{\sqrt{n(t)}}
\end{align*}
for some positive constants $C',C''$ not depending on $k$. Therefore,
$$\E\left[ \sup_{i\geq 1} |\tilde{\sigma}_{t,i}-\alpha_i(t)| \big|\FF_t\right]\leq \frac{C}{n(t)}+\frac{C''}{\sqrt{n(t)}}\overset{a.s.}{\longrightarrow}0 \quad \text{as }t\to\infty.$$
\end{proof}

We will now commence on the proof of Proposition \ref{prop:moments}. 

\begin{proof}[{\bf Proof of \eqref{first_moment} }]
For the rest of this proof, let $m(s):=\E[\phi(\TT^*(s))]$ for $s\geq 0$ and $m(s)=0$ for $s<0$. By boundedness assumption on $\phi$ and Lipschitz continuity of $m(\cdot)$, there exists constants  $M, L <\infty$ such that 
\begin{equation}
    \label{eqn:lip-L-M}
  |\phi(\cdot)|\leq M, \qquad  |m(s) - m(t)|\leq L|t-s|. 
\end{equation}
Note that
$\cZ^\phi(y(t))=\sum_{i: \sigma_i\leq t} \phi_i(y(t)-\sigma_i)+ \sum_{i: \sigma_i\in (t,y(t)]} \phi_i(y(t)-\sigma_i).$ To simplify notation, we will write the second term as $\tilde{\cZ}^{\phi}(y(t)):= \sum_{i: \sigma_i\in (t,y(t)]} \phi_i(y(t)-\sigma_i)$. By \eqref{eqn:lip-L-M}, $|\phi_i(\cdot)|\leq M$ for all $i\geq 0$. It is straightforward then, observing from Lemma \ref{lem:yule-prop} that $e^{-t}n(t)$ converges almost surely to a finite random limit,
\beq\label{err_beforet}
e^{-y(t)}\E\left[  \sum_{i: \sigma_i\leq t} \phi_i(y(t)-\sigma_i)\big |\FF_t\right]\leq M\cdot \E[ e^{-y(t)} n(t)|\FF_t] \overset{a.s.}{\longrightarrow} 0 \quad \text{as }t\to\infty.
\eeq
To address the second term, recalling from Lemma \ref{birthtime} that $\tilde{\sigma}_{t,i}=\sigma_{i+n(t)}$, write
\beq\label{cal1}
\E[ \tilde{\cZ}^\phi(y(t))|\FF_t]=\E\left[ \sum_{i: \tilde{\sigma}_{t,i}\leq y(t)} m(y(t)-\tilde{\sigma}_{t,i})|\FF_t\right].
\eeq
Applying Lemma \ref{birthtime} we can approximate \eqref{cal1} by using the $\cF_t$-measurable approximations $\{\alpha_i(t)\}_{i \geq 1}$ of the birth times $\{\tilde{\sigma}_{t,i}\}_{i \ge 1}$, 
\begin{align}\label{diff0}
\nonumber & e^{-y(t)} \bigg|\E\left[ \sum_{i: \tilde{\sigma}_{t,i}\leq y(t)} m(y(t)-\tilde{\sigma}_{t,i})-\sum_{i: \alpha_i(t)\leq y(t)} m(y(t)-\tilde{\sigma}_{t,i})\big|\FF_t\right] \bigg|\\
\nonumber \leq &  e^{-y(t)} \cdot M\E\left[\sum_i 1_{ \{ \tilde{\sigma}_{t,i}\leq y(t), \alpha_i(t)> y(t)\}}|\FF_t\right]\\
 \nonumber \leq & M\cdot e^{-y(t)} \E\left[ |n(y(t))-\lfloor n(t)(e^{a(t)}-1)\rfloor|\big|\FF_t\right]\\
\nonumber \leq & M \cdot \left( \E\left[ | e^{-y(t)}n(y(t))- e^{-t}n(t)|\big|\FF_t\right] + e^{-y(t)}(n(t)+1)\right)\\
\leq &M \left(\E\left[ ( e^{-y(t)}n(y(t))- e^{-t}n(t))^2\big|\FF_t\right]\right)^{1/2}+Me^{-y(t)}(n(t)+1).
\end{align}

Since $\set{e^{-s}n(s):s\geq 0}$ is an $\bL^2$ bounded martingale we can compute 
$$\E\left[ ( e^{-y(t)}n(y(t))- e^{-t}n(t))^2\big|\FF_t\right]=\E\left[ (e^{-y(t)}n(y(t)))^2|\FF_t\right]-(e^{-t}n(t))^2\leq e^{-2t}n(t),$$
where the last inequality follows from Lemma \ref{mart_power}. Therefore,
\beq\label{diff1}
\eqref{diff0}\leq 2M e^{-t}\sqrt{n(t)}+2Me^{-y(t)}(n(t)+1). %\overset{a.s.}{\longrightarrow}0 \quad \text{ as }t\to\infty.
\eeq

Now we estimate
\begin{align}\label{diff2}
\nonumber &e^{-y(t)}\bigg| \E\left[ \sum_{i: \alpha_i(t)\leq y(t)} m(y(t)-\tilde{\sigma}_{t,i})-\sum_{i: \alpha_i(t)\leq y(t)} m(y(t)-\alpha_i(t)) \big|\FF_t\right] \bigg|\\
\leq& e^{-y(t)} n(t)e^{a(t)} \cdot L \E\left[ \sup_{i\geq 1} |\tilde{\sigma}_{t,i}-\alpha_i(t)| \big|\FF_t\right]=e^{-t}n(t)\cdot  L \E\left[ \sup_{i\geq 1} |\tilde{\sigma}_{t,i}-\alpha_i(t)| \big|\FF_t\right],
\end{align}
where $L$ is as in \eqref{eqn:lip-L-M}. Next, let $\mathcal{M}^\phi(y(t)):=\sum_{i: \alpha_i(t)\leq y(t)}m(y(t)-\alpha_i(t))$. Combining \eqref{diff1} and \eqref{diff2} yields
\begin{align}\label{diff3}
\nonumber &e^{-y(t)}\bigg| \E[ \tilde{\cZ}^\phi(y(t))|\FF_t]-\E[\mathcal{M}^\phi(y(t))|\FF_t]\bigg|\\
\leq &2M(e^{-t}\sqrt{n(t)}+ e^{-y(t)}(n(t)+1))+e^{-t} n(t)\cdot L \E\left[ \sup_{i\geq 1} |\tilde{\sigma}_{t,i}-\alpha_i(t)| \big|\FF_t\right].
\end{align}
The part $e^{-t}\sqrt{n(t)}+ e^{-y(t)}(n(t)+1)$ vanishes almost surely as $t\to\infty$. Then applying Lemma \ref{birthtime} gives that \eqref{diff3} goes to 0 almost surely as $t\to\infty$. It remains to evaluate the almost sure limit of $e^{-y(t)}\E[\mathcal{M}^\phi(y(t))|\FF_t]$. Observe that, using the boundedness and Lipschitz continuity of $m(\cdot)$, 
\begin{align}\label{Mphi}
\nonumber &e^{-y(t)}\E[\mathcal{M}^\phi(y(t))|\FF_t]\\
\nonumber=& e^{-y(t)} \sum_{i: \alpha_i(t)\leq y(t)}m(y(t)-\alpha_i(t))= e^{-y(t)} \sum_{i=1}^{ \lfloor n(t)e^{a(t)}\rfloor} m\left( a(t)-\log\left( \frac{n(t)+i}{n(t)}\right)\right) +o_{a.s.}(1)\\
\nonumber=&e^{-y(t)} \sum_{i=1}^{\lfloor e^{a(t)}\rfloor} \sum_{j=1}^{n(t)} m\left( a(t)-\log\left(i+\frac{j}{n(t)}\right)\right)+o_{a.s.}(1)=e^{-y(t)} \sum_{i=1}^{\lfloor e^{a(t)}\rfloor} n(t)m(a(t)-\log i)+o_{a.s.}(1)\\
\nonumber=&e^{-y(t)}n(t) \int_1^{e^{a(t)}} m(a(t)-\log x) dx+o_{a.s.}(1)=e^{-y(t)}n(t)  \int_0^{a(t)} m(u)e^{a(t)-u} du +o_{a.s.}(1)\\
=&e^{-t}n(t)\int_0^{a(t)}m(u)e^{-u} du+o_{a.s.}(1) \overset{a.s.}{\longrightarrow} W\int_0^\infty e^{-u} m(u)du.
\end{align}
This proves \eqref{first_moment}.

\end{proof}

\begin{proof}[{\bf Proof of \eqref{variance} }]
Note that 
\begin{align*}
\E\left[ (\cZ^\phi(y(t)))^2\big|\FF_t\right]&=\E\left[\sum_{i,j:\sigma_i\leq t, \sigma_j\leq t} \phi_i(y(t)-\sigma_i)\phi_j(y(t)-\sigma_j)\big|\FF_t\right]\\
&\quad +2\E\left[\sum_{i,j: \sigma_i\leq t< \sigma_j\leq y(t)}  \phi_i(y(t)-\sigma_i)\phi_j(y(t)-\sigma_j)\big| \FF_t\right]\\
&\quad + \E\left[ \sum_{i,j} \phi_i(y(t)-\tilde{\sigma}_{t,i})\phi_j(y(t)-\tilde{\sigma}_{t,j}) \big|\FF_t\right]\\
&=:T_1(t)+2T_2(t)+T_3(t),
\end{align*}
where for the rest of this Section,  the sum in $T_3(t)$ is implicitly over $i,j$ with $\sigma_i, \sigma_j \in (t,y(t)]$ but we have suppressed this to ease notation. Now,  it can be readily checked that 
$$e^{-2y(t)}T_1(t)\leq M^2e^{-2y(t)} (n(t))^2 \overset{a.s.}{\longrightarrow} 0\quad \text{ as }t\to\infty,$$
and 
$$e^{-2y(t)}T_2(t)\leq Me^{-y(t)}n(t)\cdot e^{-y(t)} \E\left[\tilde{\cZ}^\phi(y(t))\big|\FF_t\right]\overset{a.s.}{\longrightarrow} 0\quad \text{ as }t\to\infty,$$
where the second line follows from combining Lemma \ref{lem:yule-asymp} with \eqref{first_moment}.  Thus $\E\left[ (e^{-y(t)}\cZ^\phi(y(t)))^2\big|\FF_t\right]=e^{-2y(t)}T_3(t)+o_{a.s.}(1)$.
We write (again recalling that all the ensuing sums are over vertices born in $(t, y(t)]$), 
\begin{align*}
T_3(t)&=\E\left[ \sum_i \phi_i^2(y(t)-\tilde{\sigma}_{t,i}) \big|\FF_t\right]+ 2\E\left[ \sum_{i<j}  \phi_i(y(t)-\tilde{\sigma}_{t,i})\phi_j(y(t)-\tilde{\sigma}_{t,j}) \big|\FF_t\right] = \eps(t) + \tilde{T}_3(t).
\end{align*}
We restrict our attention to the second term since the first term is $o_{a.s.}(e^{-2y(t)})$: 
\beq\label{T3err0}
e^{-2y(t)}\eps(t):= e^{-2y(t)}\E\left[ \sum_i \phi_i^2(y(t)-\tilde{\sigma}_{t,i}) \big|\FF_t\right]\leq M^2e^{-2y(t)} n(t)e^{a(t)}  \overset{a.s.}{\longrightarrow} 0\quad \text{ as }t\to\infty.
\eeq

For $i<j$, we write $j\to i$ if $v_j$ is a descendant of $v_i$ (and write $j\nrightarrow i$ otherwise). By convention we have $i\to i$. Then for $i<j$,
\begin{align}\label{T3decomp}
\nonumber&\E\left[\phi_i(y(t)-\tilde{\sigma}_{t,i})\phi_j(y(t)-\tilde{\sigma}_{t,j}) \big|\FF_t\right]\\
\leq &M^2\pr\left[ j\to i, \tilde{\sigma}_{t,j}\leq y(t)\big|\FF_t\right]+\E\left[ \phi_i(y(t)-\tilde{\sigma}_{t,i})\phi_j(y(t)-\tilde{\sigma}_{t,j})\ind_{\{j\nrightarrow i\}}\big|\FF_t\right].
\end{align}
Recall that, for the rest of the argument, we are only interested in pairs born in the interval $(t, y(t)]$. For any fixed time $T$, vertex $v_i$ with $\sigma_i\leq T$, let $\cD_v(T)$ denote the number of descendants of $v$ by time $T$. Summing the bound \eqref{T3decomp} over all pairs of vertices born in the interval $(t,y(t)]$, one gets, 
\begin{align}
    \tilde{T}_3(t) \leq 2M^2 \E[\sum_{i: \,\sigma_i\in (t,y(t)]} & \cD_{v_i}(y(t))|\cF_t] + 2\E[\sum_{\substack{i< j: \, \sigma_i,   \sigma_j \in (t, y(t)]}} \phi_i(y(t)-\tilde{\sigma}_{t,i})\phi_j(y(t)-\tilde{\sigma}_{t,j})\ind_{\{j\nrightarrow i\}}|\cF_t]\nonumber \\
    & := \tilde{T}_{3,1}(t) + \tilde{T}_{3,2}(t) \label{eqn:T3-final-dcomp}.
\end{align}
The following lemma completes the proof of \eqref{variance}.
\begin{lemma}
\label{lem:t3-analy}
As $t\to\infty$, 
\begin{enumeratea}
\item $e^{-2y(t)} \tilde{T}_{3,1}(t) \convas 0$. 
\item $e^{-2y(t)} \tilde{T}_{3,2}(t) \leq ( e^{-y(t)}\E[\cZ^\phi(y(t))|\FF_t])^2+o_{a.s.}(1)$.
\end{enumeratea}

\end{lemma}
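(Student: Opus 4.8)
The plan is to prove (a) and (b) in turn, in each case reducing to Yule-process estimates together with the $\FF_t$-measurable birth-time surrogates $\alpha_i(t)$ of Lemma~\ref{birthtime}. Throughout I write $m(s)=\E[\phi(\TT^*(s))]$ for $s\ge0$ and $m(s)=0$ for $s<0$, I keep the constants $M$ (a bound on $|\phi|$) and $L$ (the Lipschitz constant of $m$) from \eqref{eqn:lip-L-M}, I set $\Delta_t:=\sup_{i\ge1}|\tilde\sigma_{t,i}-\alpha_i(t)|$ so that $\E[\Delta_t|\FF_t]\overset{a.s.}{\longrightarrow}0$ by Lemma~\ref{birthtime}, and I recall the $\FF_t$-measurable quantity $\mathcal{M}^\phi(y(t))=\sum_{i:\alpha_i(t)\le y(t)}m(y(t)-\alpha_i(t))$, which by \eqref{Mphi} and \eqref{first_moment} satisfies $e^{-y(t)}\mathcal{M}^\phi(y(t))=e^{-y(t)}\E[\cZ^\phi(y(t))|\FF_t]+o_{a.s.}(1)$.

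For part (a): every descendant of a vertex born after time $t$ is again born after $t$, and a vertex of $\cT(y(t))$ has at most $\height(\cT(y(t)))$ ancestors, so interchanging the order of summation yields the deterministic bound $\sum_{i:\sigma_i\in(t,y(t)]}\cD_{v_i}(y(t))\le (n(y(t))-n(t))\,\height(\cT(y(t)))\le n(y(t))\,\height(\cT(y(t)))$. Taking $\E[\cdot|\FF_t]$ and Cauchy--Schwarz, it then suffices to show $\E[n(y(t))^2|\FF_t]=O_{a.s.}(e^{2y(t)})$ and $\E[\height(\cT(y(t)))^2|\FF_t]=o_{a.s.}(e^{2y(t)})$; the former is Lemma~\ref{mart_power}, and for the latter I would decompose $\height(\cT(y(t)))\le\height(\cT(t))+\mathscr{D}_t$, where $\mathscr{D}_t$ is the maximal generation-depth added during $(t,y(t)]$ to the genealogical forest hanging below the $n(t)$ vertices present at time $t$: here $\height(\cT(t))$ is $\FF_t$-measurable and $O_{a.s.}(t)$, while a union bound over those $n(t)$ subtrees together with Lemma~\ref{diam_tail} gives $\E[\beta^{\mathscr{D}_t}|\FF_t]\le 2n(t)e^{2\beta a(t)}$ for every $\beta\ge1$, whence $\E[\mathscr{D}_t^2|\FF_t]=O_{a.s.}(n(t)e^{2\beta a(t)})=o_{a.s.}(e^{2y(t)})$ since $a(t)=o(t)$ and $\log n(t)\sim t$. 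Combining, $\tilde T_{3,1}(t)\le 2M^2\cdot O_{a.s.}(e^{y(t)})\cdot o_{a.s.}(e^{y(t)})=o_{a.s.}(e^{2y(t)})$, which is (a).

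For part (b): fix $i<j$ with $v_i,v_j$ both born in $(t,y(t)]$, and let $k<l$ be the new-vertex indices with $\sigma_i=\tilde\sigma_{t,k}$, $\sigma_j=\tilde\sigma_{t,l}$. The key point is that on $\{j\nrightarrow i\}$ the vertices $v_i,v_j$ lie in distinct branches at time $\sigma_j$, and therefore at all later times (ancestry is frozen at birth, and $v_i$, being older, can never become a descendant of $v_j$); since a fringe subtree gains vertices only through reproductions of its own current members, the post-$\sigma_j$ evolutions of $\TT_{v_i}$ and $\TT_{v_j}$ are driven by disjoint, hence independent, collections of Poisson clocks and i.i.d.\ exploration lengths. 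Thus, conditionally on $\FF_{\sigma_j}$ and on the $\FF_{\sigma_j}$-measurable event $\{j\nrightarrow i\}$, these evolutions are independent; and as $v_j$ is a non-root vertex freshly born at $\sigma_j$, its fringe evolves as $\TT^*$, so $\E[\phi_j(y(t)-\sigma_j)|\FF_{\sigma_j}]=m(y(t)-\sigma_j)$. Dropping the indicator and taking $\E[\cdot|\FF_t]$ via the tower property (using that $m(y(t)-\sigma_j)$ is $\FF_{\sigma_j}$-measurable) then yields $\E[\phi_i\phi_j\ind_{\{j\nrightarrow i\}}|\FF_t]\le\E[\phi_i(y(t)-\sigma_i)\,m(y(t)-\sigma_j)|\FF_t]$.

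To finish, I would pass to the $\FF_t$-measurable surrogates: the Lipschitz bound gives $|m(y(t)-\sigma_j)-m(y(t)-\alpha_l(t))|\le L\Delta_t$, and $\E[\phi_i(y(t)-\sigma_i)|\FF_t]=\E[m(y(t)-\sigma_i)|\FF_t]\le m(y(t)-\alpha_k(t))+L\E[\Delta_t|\FF_t]$ (the first equality being exactly the computation behind \eqref{cal1}), so that for every pair with $\alpha_k(t),\alpha_l(t)\le y(t)$ one gets $\E[\phi_i\phi_j\ind_{\{j\nrightarrow i\}}|\FF_t]\le m(y(t)-\alpha_k(t))\,m(y(t)-\alpha_l(t))+2ML\,\E[\Delta_t|\FF_t]$; the remaining ``boundary'' pairs (born in $(t,y(t)]$ but misclassified by the $\alpha$-surrogates) contribute at most $M^2$ each, and their number has conditional expectation $o_{a.s.}(e^{2y(t)})$ by the estimate already carried out in \eqref{diff0}--\eqref{diff1}. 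Summing over all pairs, using that the $m(y(t)-\alpha_i(t))$ are nonnegative so $\sum_{k<l}m(y(t)-\alpha_k(t))m(y(t)-\alpha_l(t))\le\tfrac12(\mathcal{M}^\phi(y(t)))^2$, and bounding the number of pairs by $n(y(t))^2$, I obtain $\tilde T_{3,2}(t)=2\sum_{i<j}\E[\phi_i\phi_j\ind_{\{j\nrightarrow i\}}|\FF_t]\le(\mathcal{M}^\phi(y(t)))^2+2ML\,n(y(t))^2\E[\Delta_t|\FF_t]+o_{a.s.}(e^{2y(t)})$; multiplying by $e^{-2y(t)}$ and using $e^{-y(t)}n(y(t))\overset{a.s.}{\longrightarrow}W$ together with $e^{-y(t)}\mathcal{M}^\phi(y(t))=e^{-y(t)}\E[\cZ^\phi(y(t))|\FF_t]+o_{a.s.}(1)$ gives (b). The main obstacle I anticipate is making the conditional independence in (b) fully rigorous: one must carefully verify that once $v_i$ and $v_j$ sit in different branches, every vertex ever added to $\TT_{v_i}$ (and the fresh clocks and exploration variables it carries), including those produced by a subtree member that ``explores past'' $v_i$ and attaches strictly above it, stays disjoint from all the randomness feeding $\TT_{v_j}$, so that the two fringe processes are genuinely measurable with respect to independent $\sigma$-algebras; the rest is routine bookkeeping with Cauchy--Schwarz, the Lipschitz approximation, and the Yule estimates of Lemmas~\ref{lem:yule-asymp}, \ref{mart_power}, \ref{diam_tail} and \ref{birthtime} that already underpin the proof of \eqref{first_moment}.
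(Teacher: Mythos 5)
Your proposal is correct and follows essentially the same route as the paper: for (a) the domination $\sum_i \cD_{v_i}(y(t))\le n(y(t))\height(\cT(y(t)))$, the split of the height at time $t$ with a union bound over the $n(t)$ Yule subtrees via Lemma~\ref{diam_tail}, and Cauchy--Schwarz; for (b) a decoupling step reducing $\E[\phi_i\phi_j\ind_{\{j\nrightarrow i\}}|\FF_t]$ to $\E[m(y(t)-\tilde\sigma_{t,i})m(y(t)-\tilde\sigma_{t,j})|\FF_t]$, followed by the Lipschitz replacement of $\tilde\sigma_{t,\cdot}$ by $\alpha_\cdot(t)$ and the bound $\sum_{k<l}\le\tfrac12(\mathcal{M}^\phi)^2$. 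The only (immaterial) difference is the choice of conditioning $\sigma$-field in the decoupling: you condition on $\FF_{\sigma_j}$ and use conditional independence of the two disjoint fringes, whereas the paper conditions on the enlarged $\sigma$-field $\FF^*_{i,j}$ that makes $\phi_i\ind_{\{j\nrightarrow i\}}$ measurable while keeping $\phi_j$ independent of it on that event.
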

\begin{proof}[Proof of Lemma \ref{lem:t3-analy}(a) ]
Note that, when a new vertex is attached to some existing vertex $v$, the number of descendants of $v$ and all its ancestor vertices increases by one. Hence, we can dominate $\sum_{i: \sigma_i\in (t,y(t)]} \cD_{v_i}(y(t))$ pathwise by $\height(\cT(y(t),\mathbf{p}))n(y(t))$ where recall that $\height(\cdot)$ denotes the height of the associated tree. Moreover, observe that, conditionally on $\cF_t$, $\height(\cT(y(t),\mathbf{p})) \le \height(\cT(t,\mathbf{p})) + \operatorname{max}_{i \le n(t)}h_i(t,y(t))$, where $h_i(t,y(t))$ denotes the height of the (maximal) tree rooted at the $i$th vertex formed entirely by its descendants that arrived in the time interval $(t,y(t)]$. Further, note that there exists a collection 
$\{h^*_i(a(t)): i \le n(t)\}$, distributed as the heights of $n(t)$ independent Yule trees run till time $a(t)$, independent of $\cF_t$, so that we can couple to get $h_i(t,y(t)) \le h^*_i(a(t))$ for every $i \le n(t)$. Consequently, using Lemma \ref{diam_tail} with $\beta = e$,
\begin{align*}
&\E\left((\operatorname{max}_{i \le n(t)}h_i(t,y(t)))^2 \big| \cF_t\right) \le n(t) \E((h^*_1(a(t))^2) \le n(t)\left(9e^2a(t)^2 + \int_{9e^2a(t)^2}^{\infty}\prob(h^*_1(a(t)) \ge \sqrt{x})dx\right)\\
&\le n(t)\left(9e^2a(t)^2 + \int_{9e^2a(t)^2}^{\infty} 2e^{2ea(t)} e^{-\sqrt{x}}dx\right) = 9e^2a(t)^2n(t) + 2n(t)(1+ 3ea(t))e^{-ea(t)}.
\end{align*}
This implies that $e^{-2y(t)}\E\left((\operatorname{max}_{i \le n(t)}h_i(t,y(t)))^2 \big| \cF_t\right) \rightarrow 0$ almost surely as $t \rightarrow \infty$. 
Also, by Lemmas \ref{lem:yule-asymp} and \ref{mart_power},
$$
e^{-2y(t)}\E(n(y(t))^2 \big| \cF_t) \le e^{-2t}(n(t)^2 + n(t)) \convas W^2
$$
as $t \rightarrow \infty$. Moreover, using the monotonicity of $\height(\cT(t,\mathbf{p}))$ in $t$ and Lemma \ref{diam_tail}, it follows that $e^{-y(t)}\height(\cT(t,\mathbf{p})) \rightarrow 0$ almost surely as $t \rightarrow \infty$. Therefore, using Cauchy-Schwarz inequality,
\begin{align*}
e^{-2y(t)}\tilde{T}_{3,1}(t)& \le e^{-2y(t)}\E[\height(\cT(t,\mathbf{p}))n(y(t))|\cF_t]+e^{-2y(t)}\E[\operatorname{max}_{i \le n(t)}h_i(t,y(t)))^2 n(y(t))|\cF_t]\\
&\le e^{-2y(t)}\height(\cT(t,\mathbf{p}))n(t)e^{a(t)}\\
&\qquad + \left(e^{-2y(t)}\E\left((\operatorname{max}_{i \le n(t)}h_i(t,y(t)))^2 \big| \cF_t\right)\right)^{1/2}\left(e^{-2y(t)}\E(n(y(t))^2 \big| \cF_t)\right)^{1/2},
\end{align*}
which converges almost surely to $0$ as $t \rightarrow \infty$.

\end{proof}
\begin{proof}[Proof of Lemma \ref{lem:t3-analy}(b) ]
For $i < j$, let $\FF^*_{i,j}$ be the $\sigma$-field generated by the tree process up to the $(n(t)+j)$-th birth time, and the birth times and attachment locations of all vertices that are descendants of $v_i$. Note that $\phi_i(y(t)-\tilde{\sigma}_{t,i})1_{\{j\nrightarrow i\}}$ is $\FF^*_{i,j}$-measurable and $\phi_j(y(t)-\tilde{\sigma}_{t,j})$ is independent of $\FF^*_{i,j}$ on the event $\{j\nrightarrow i\}$. Hence first conditioning on $\FF^*_{i,j}$ and then using the tower property for conditional expectations we get,
\begin{align}
\E\left[\phi_i(y(t)-\tilde{\sigma}_{t,i})\phi_j(y(t)-\tilde{\sigma}_{t,j}) \ind_{\{j\nrightarrow i\}}\big|\FF_t\right]&=\E\left[\phi_i(y(t)-\tilde{\sigma}_{t,i})\ind_{\{j\nrightarrow i\}}m(y(t)-\tilde{\sigma}_{t,j}) \big|\FF_t\right] \nonumber\\
&\leq \E\left[m(y(t)-\tilde{\sigma}_{t,i})m(y(t)-\tilde{\sigma}_{t,j}) \big|\FF_t\right]. \label{eqn:549}
\end{align}
The last inequality above follows from a similar conditioning by a sigma field containing information about birth times and attachment locations of all individuals except the descendants of $i$ (excluding $i$).

By the boundedness and Lipschitz assumption on the mean functional $m(\cdot)$, for $i<j$, where $j$ satisfies $\max\{ \tilde{\sigma}_{t,j}, \alpha_j(t)\}\leq y(t)$, there exists some $C>0$ such that
$$|m(y(t)-\tilde{\sigma}_{t,i})m(y(t)-\tilde{\sigma}_{t,j})-m(y(t)-\alpha_i(t))m(y(t)-\alpha_j(t))|\leq C( |\tilde{\sigma}_{i,t}-\alpha_i(t)|+|\tilde{\sigma}_{j,t}-\alpha_j(t)|).$$
Hence (and writing $\E_{\cF_t}(\cdot) = \E(\cdot|\cF_t)$),
\begin{align}
\label{eqn:813} &e^{-2y(t)}\Bigg| \E_{\cF_t}\left[ \sum_{\substack{i< j: \\ \tilde{\sigma}_{t,i},   \tilde{\sigma}_{t,j} \in (t, y(t)]}} m(y(t)-\tilde{\sigma}_{t,i})m(y(t)-\tilde{\sigma}_{t,j})\right.\nonumber\\
&\qquad\qquad\qquad\qquad\left. -\sum_{\substack{i< j: \\ {\alpha}_{i}(t), {\alpha}_{j}(t) \in (t, y(t)]}} m(y(t)-\alpha_i(t))m(y(t)-\alpha_j(t))\right]\Bigg|\\
\label{T3err2}\leq & e^{-2y(t)}\cdot C\E_{\cF_t}\left[ \sum_{i<j: \max\{ \tilde{\sigma}_{t,j}, \alpha_j(t)\}\leq y(t)} ( |\tilde{\sigma}_{i,t}-\alpha_i(t)|+|\tilde{\sigma}_{j,t}-\alpha_j(t)|)\right]\\
\label{T3err3}&+ e^{-2y(t)} \cdot M^2 \E_{\cF_t}\left[ \sum_{i<j} 1_{\{ \tilde{\sigma}_{t,j}\leq y(t)<\alpha_j(t)\}}\right] \\
\label{T3err4} &+ e^{-2y(t)} \cdot M^2 \E_{\cF_t}\left[ \sum_{i<j} 1_{\{\alpha_j(t)\leq y(t) < \tilde{\sigma}_{t,j}\}}\right].
\end{align}
By Lemma \ref{birthtime},
\begin{align*}
\eqref{T3err2}&\leq e^{-2y(t)}\cdot 2C \E\left[ \sum_{i<j: \max\{ \tilde{\sigma}_{t,j}, \alpha_j(t)\}\leq y(t)} \sup_{\ell\geq 1}|\tilde{\sigma}_{t,\ell}-\alpha_\ell(t)| \big|\FF_t\right]\\
&\leq 2C e^{-2y(t)} (n(t))^2 e^{2a(t)} \E\left[ \sup_{\ell\geq 1}|\tilde{\sigma}_{t,\ell}-\alpha_\ell(t)| \big|\FF_t\right]\\
&=2C (e^{-t}n(t))^2 \E\left[ \sup_{\ell\geq 1}|\tilde{\sigma}_{t,\ell}-\alpha_\ell(t)| \big|\FF_t\right] \overset{a.s.}{\longrightarrow}0.
\end{align*}
As for \eqref{T3err3}, observe that 
\begin{align*}
\eqref{T3err3} & \leq M^2 e^{-2y(t)} \E\left[n(y(t))|n(y(t))-\lfloor n(t)(e^{a(t)}-1)\rfloor|\, \big| \, \FF_t\right]\\
&\leq M^2 e^{-2y(t)}n(t)e^{a(t)} \E\left[|n(y(t))-\lfloor n(t)(e^{a(t)}-1)\rfloor|\, \big| \, \FF_t\right]\\
&\qquad +
 M^2 e^{-2y(t)} \E\left[(n(y(t))-\lfloor n(t)(e^{a(t)}-1)\rfloor)^2\, \big| \, \FF_t\right],
\end{align*}
which goes to zero almost surely as $t \rightarrow \infty$ following the same reasoning as in \eqref{diff1}. Similarly we have 
\begin{align*}
    \eqref{T3err4} &\leq M^2 e^{-2y(t)}\E\left[n(t)(e^{a(t)}-1)\max\{ n(t)(e^{a(t)}-1)-n(y(t)),0\}\big|\FF_t\right]\\
    &\leq M^2e^{-2y(t)}n(t)e^{a(t)}\E\left[|n(y(t))- n(t)(e^{a(t)}-1)|\,\big| \, \FF_t\right] \overset{a.s.}{\longrightarrow} 0.
\end{align*}

Noting that the processes $\alpha_{i}(t)$ are $\cF_t$ adapted so that the conditional expectation of the second sum in \eqref{eqn:813} is itself,  the last step is to estimate 
\begin{align*}
\mathcal{M}^{\phi, (2)}(y(t)):= \sum_{\substack{i<j: \\ {\alpha}_{i}(t), {\alpha}_{j}(t) \in (t, y(t)]}} m(y(t)-\alpha_i(t))m(y(t)-\alpha_j(t))\le \frac{1}{2} \left(  \sum_{i: \alpha_{i}(t)\le  y(t)} m(y(t)-\alpha_i(t))\right)^2.
\end{align*}
Recall the definition of $\mathcal{M}^\phi(y(t))=\sum_{i: \alpha_i(t)\leq y(t)}m(y(t)-\alpha_i(t))$ from the proof of the first moment convergence in \eqref{first_moment}. Then, from \eqref{diff3} and \eqref{err_beforet},
\begin{align}\label{Mphi2}
\nonumber 2e^{-2y(t)}\mathcal{M}^{\phi, (2)}(y(t))&\le \left( e^{-y(t)}\mathcal{M}^\phi(y(t))\right)^2\\
\nonumber &=\left( e^{-y(t)}\E[\tilde{Z}^\phi(y(t))|\FF_t] \right)^2+o_{a.s.}(1)\\
&=\left( e^{-y(t)}\E[\cZ^\phi(y(t))|\FF_t] \right)^2+o_{a.s.}(1).
\end{align}
Using \eqref{eqn:549}, asymptotics for the terms in \eqref{T3err2}, \eqref{T3err3}, \eqref{T3err4} and \eqref{Mphi2} completes the proof.

\end{proof}
This completes the proof of the second moment namely \eqref{variance}. 
\end{proof}

\subsection{Completing the proof of Theorem \ref{thm:fringe}: } The goal now is to transfer the continuous time embedding asymptotics in Theorem \ref{thm:local_limit-cts-time} to the discrete time process $\set{\cT_n(\vp):n\geq 1}$. In Lemma \ref{lem:cts-disc}, recall the stopping times $T_n=\inf\{ t\geq 0: n(t)=n+1\}$, connecting the embedding of the discrete process in continuous time. 

\begin{theorem}\label{fringe}
Let $\phi$ be a non-negative functional on $\bT$ satisfying the assumptions in Theorem \ref{thm:local_limit-cts-time}. Let $v^{(n)}$ be a uniformly chosen vertex in the graph $\cT_n(\vp)$. Let $\TT_{v^{(n)}}$ denote the fringe tree of $v^{(n)}$. Then
$$\E[\phi(\TT_{v^{(n)}})|\cT_n]:= \frac{1}{n+1}\sum_{i=0}^n \phi_i(T_n-\sigma_i) \to  \int_0^\infty e^{-s}\E[\phi(\TT^*(s))]ds \quad \text{ in probability as }n\to\infty.$$
\end{theorem}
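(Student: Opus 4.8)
The plan is to deduce Theorem~\ref{fringe} from the continuous-time statement of Theorem~\ref{thm:local_limit-cts-time} by replacing the random stopping time $T_n$ by a time of the admissible form $y(t)=t+a(t)$. First note that at time $T_n$ exactly the vertices $v_0,\dots,v_n$ have been born (indeed $\sigma_n=T_n$), so $\cZ^\phi(T_n)=\sum_{i=0}^n\phi_i(T_n-\sigma_i)$, and the quantity to be controlled is $\frac1{n+1}\cZ^\phi(T_n)$. Writing this as $\big(e^{-T_n}\cZ^\phi(T_n)\big)\big/\big(e^{-T_n}(n+1)\big)$, the denominator is $e^{-T_n}n(T_n)\convas W\sim\exp(1)$ with $W>0$ a.s.\ by Lemma~\ref{lem:yule-asymp}, so it suffices to prove $e^{-T_n}\cZ^\phi(T_n)\convp W\,I$, where $I:=\int_0^\infty e^{-s}\E[\phi(\TT^*(s))]\,ds$.

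To realize a time near $T_n$ inside the framework of Theorem~\ref{thm:local_limit-cts-time}, I would fix the deterministic sequence $t_n:=\log(n+1)-\sqrt{\log(n+1)}$ (which increases to $\infty$ with $\log(n+1)-t_n\to\infty$ and $(\log(n+1)-t_n)/t_n\to0$), set the $\cF_{t_n}$-measurable shift $\hat a_n:=\log(n+1)-\log n(t_n)$, and let $\{a(t):t\ge0\}$ be the piecewise-constant (hence $\cF_t$-adapted) process equal to $\hat a_n$ on $[t_n,t_{n+1})$. Since $n(t_n)=e^{t_n}W_n$ with $W_n:=e^{-t_n}n(t_n)\convas W>0$, one has $\hat a_n=\sqrt{\log(n+1)}-\log W_n\convas\infty$ and $\hat a_n/t_n\convas0$, so $a(\cdot)$ satisfies the hypotheses of Theorem~\ref{thm:local_limit-cts-time}, which then gives $e^{-y(t)}\cZ^\phi(y(t))\convp W\,I$ as $t\to\infty$. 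Evaluating along $t=t_n$ and writing $\hat y_n:=t_n+\hat a_n=\log(n+1)-\log W_n$ yields $e^{-\hat y_n}\cZ^\phi(\hat y_n)\convp W\,I$.

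It remains to transfer this from $\hat y_n$ to $T_n$. Decompose $\hat y_n-T_n=\big(\log(n+1)-T_n-\log W\big)+\big(\log W-\log W_n\big)$. The first bracket is $O_{\mathrm P}(n^{-1/2})$, using $T_n\overset{d}{=}\sum_{k=1}^n E_k/k$ with i.i.d.\ $E_k\sim\exp(1)$ (whose mean is $\log n+\gamma+O(1/n)$ and whose tail sum past $n$ has variance $\le1/n$) together with $\log W=-\lim_k\big(\sum_{j=1}^k(E_j-1)/j+\gamma\big)$ from Lemma~\ref{lem:yule-asymp}(b); the second bracket is $O_{\mathrm P}(e^{-t_n/2})$, since $\E[(W-e^{-t}n(t))^2]=e^{-t}$ (a consequence of the martingale identity behind Lemma~\ref{mart_power}, as $\E[(e^{-t}n(t))^2]=2-e^{-t}$ and $\E[W^2]=2$) and $W>0$ a.s. As $t_n\sim\log n$, both contributions tend to $0$ even after multiplication by $\log n$, so $\hat y_n-T_n\convas0$ and $|\hat y_n-T_n|\log n\convp0$. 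Now, over the short window with endpoints $s=T_n\wedge\hat y_n$, $s'=T_n\vee\hat y_n$, at most $n(s')-n(s)\le n(s')(s'-s)$ new vertices are born; the fringe $\TT_{v_i}$ of a vertex present at time $s$ changes only if $v_i$ is an ancestor of (or equal to) one of these; and $\phi$ is bounded by some $M$; hence
\[
|\cZ^\phi(T_n)-\cZ^\phi(\hat y_n)|\ \le\ CM\,\big(n(s')-n(s)\big)\big(\height(\cT(s'))+1\big)\ \le\ C'M\,(n+1)\,|\hat y_n-T_n|\,\height(\cT(s')),
\]
using $e^{-s'}n(s')=O_{\mathrm P}(1)$. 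Since $s'\le T_n+1$ eventually and $\height(\cT(s'))=O_{\mathrm P}(\log n)$ by Proposition~\ref{prop:ht-cts}, the right side is $o_{\mathrm P}(n)$, so $\frac1{n+1}|\cZ^\phi(T_n)-\cZ^\phi(\hat y_n)|\convp0$. Combining with $e^{\hat y_n-T_n}\convas1$, $e^{-T_n}(n+1)\convas W$, and $e^{-\hat y_n}\cZ^\phi(\hat y_n)\convp W\,I$,
\[
\frac1{n+1}\cZ^\phi(T_n)=\frac1{n+1}\cZ^\phi(\hat y_n)+o_{\mathrm P}(1)=\big(e^{-\hat y_n}\cZ^\phi(\hat y_n)\big)\, e^{\hat y_n-T_n}\,\big(e^{-T_n}(n+1)\big)^{-1}+o_{\mathrm P}(1)\convp \frac{W\,I}{W}=I.
\]

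The main obstacle is exactly this last transfer step. Theorem~\ref{thm:local_limit-cts-time} only controls $\cZ^\phi$ at times $t+a(t)$ with $a$ adapted and sublinear, so $T_n$ must be replaced by an $\cF_{t_n}$-measurable proxy $\hat y_n$; and because $\phi$ need not vary monotonically — or even continuously — along the tree process, bounding $|\cZ^\phi(T_n)-\cZ^\phi(\hat y_n)|$ forces a quantitative estimate $|\hat y_n-T_n|=o_{\mathrm P}(1/\log n)$ that beats the extra factor $\height(\cT)=O(\log n)$ coming from the fact that one late-arriving vertex alters the fringe of all of its ancestors. The delicate balance is therefore in the choice of $t_n$ (close enough to $\log n$ that $a(t)/t\to0$, yet far enough that $e^{-t_n/2}\log n\to0$) and in the $\mathbb L^2$-rate $\E[(W-e^{-t}n(t))^2]=e^{-t}$.
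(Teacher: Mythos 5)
Your reduction is the same as the paper's: both proofs replace $T_n$ by the $\cF_{t_n}$-measurable proxy $\log n-\log\bigl(e^{-t_n}n(t_n)\bigr)$ with $t_n=\log n-\sqrt{\log n}$, feed it into Theorem \ref{thm:local_limit-cts-time} as an admissible $t+a(t)$, and then transfer back to $T_n$. Where you genuinely diverge is the transfer step. The paper sandwiches $e^{-T_n}\cZ^\phi(T_n)$ between $e^{-T_n^{\pm}(\eps)}\cZ^\phi(T_n^{\mp}(\eps))$ for a fixed $\eps$ and then removes $\eps$ via a continuity argument on the limit $X=WI$ (the event $A_{\delta,\eps}$); this avoids any quantitative rate for $|\tilde T_n-T_n|$ but leans on monotonicity of $t\mapsto\cZ^\phi(t)$. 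You instead bound the increment $|\cZ^\phi(T_n)-\cZ^\phi(\hat y_n)|$ directly by (number of births in the window) $\times$ (height of the tree), which forces you to prove the sharper estimate $|\hat y_n-T_n|=o_{\mathrm P}(1/\log n)$ — your computation of this rate via $\E[(W-e^{-t}n(t))^2]=e^{-t}$ and the tail of $\sum_{k>n}(E_k-1)/k$ is correct, and the choice of $t_n$ indeed makes $e^{-t_n/2}\log n\to 0$. The payoff of your version is that it works verbatim for non-monotone bounded $\phi$ (such as the indicators $\ind\{\cdot=\mathbf{s}_0\}$ actually used in Theorem \ref{thm:fringe}), at the cost of invoking the height bound.

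One step needs repair: the inequality $n(s')-n(s)\le n(s')(s'-s)$ is not a pathwise fact about a Yule process (arbitrarily many births can occur in an arbitrarily short window). What you actually need, and can get from estimates you already have, is
\[
\frac{n(s')-n(s)}{n(s')}=1-\frac{e^{-s}n(s)}{e^{-s'}n(s')}\,e^{-(s'-s)}=O_{\mathrm P}\bigl(|\hat y_n-T_n|+n^{-1/2}\bigr),
\]
using $e^{-u}n(u)=W+O_{\mathrm P}(e^{-u/2})$ for $u\in\{s,s'\}$ with $s\wedge s'\ge \log n+O_{\mathrm P}(1)$. Multiplying by $\height(\cT(s'))=O_{\mathrm P}(\log n)$ still gives $o_{\mathrm P}(1)$ after dividing by $n$, so the conclusion stands, but the inequality should be stated in this probabilistic form rather than as a deterministic bound.
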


\begin{proof}
 Write 
$$\E[\phi(\TT_{v^{(n)}})|\cT_n]=\frac{1}{n+1}\sum_{i=0}^n \phi_i(T_n-\sigma_i):=\frac{1}{n+1}\cZ^\phi(T_n).$$
Let $\Theta(t):=e^{-t}n(t)$. Note that, by Lemma \ref{lem:yule-prop}, $\Theta(t) \convas W$ and thus, 
\begin{equation}
    \label{eqn:tn-as}
T_n-\log n+\log W\overset{a.s.}{\longrightarrow} 0.    
\end{equation}
 However, working the limit random variable $W$ for finite $t$ approximations of the embedding is difficult so we will work with approximations of the limit $W$.  Define $t_n=\log n-\sqrt{\log n}$, $\tilde{T}_n=\log n-\log \Theta({t_n})$. Using \eqref{eqn:tn-as} and $\Theta(t_n) \convas W$, we get,
 $$|\tilde{T}_n-T_n|\leq |\log n-\log W - T_n|+|\log W-\log \Theta(t_n)|\overset{a.s.}{\longrightarrow}0.$$
  For $\ep>0$, define  $T^{\pm}_n(\ep):=\tilde{T}_n\pm \ep$. For any $\ep>0$ there exists a random $n_{\ep}$ such that for all $n\geq n_\ep$, $T^-_n(\ep)\leq T_n \leq T^+_n(\ep)$ almost surely. Since $\phi$ is non-negative, for $n > n_\eps$, 
\beq\label{sandwiching}
e^{-T^+_n(\ep)}\cZ^\phi(T^-_n(\ep))\leq e^{-T_n}\cZ^\phi(T_n)\leq e^{-T^-_n(\ep)}\cZ^\phi(T^+_n(\ep)).
\eeq
Recall the limit random variable in Theorem \ref{thm:local_limit-cts-time} and to simplify notation let $X:=W\int_0^\infty e^{-s}\E[\phi(\TT^*(s)]ds$. Also recall that $W\sim \mathrm{Exp}(1)$. Fix $\delta>0$. For any $\eta>0$ we can take $\ep>0$ such that,
\begin{align}\label{Xcomparison}
\nonumber&\pr\left( (X+\delta)e^{-2\ep}>X+\delta/2, (X-\delta)e^{2\ep}<X-\delta/2\right)\\
=&\pr\left( X<\frac{\delta (e^{2\ep}-1/2)}{e^{2\ep}-1}\wedge \frac{\delta (e^{-2\ep}-1/2)}{1-e^{-2\ep}}\right)\geq 1-\eta.
\end{align}
Write $A_{\delta,\ep}=\{ (X+\delta)e^{-2\ep}>X+\delta/2, (X-\delta)e^{2\ep}<X-\delta/2\}$.
\begin{align}\label{limsup_comp}
\nonumber &\limsup_{n\to\infty} \pr(e^{-T_n}\cZ^\phi(T_n)>X+\delta)\\
\nonumber\leq &\limsup_{n\to\infty} \pr( e^{-T^-_n(\ep)}\cZ^\phi(T^+_n(\ep))>X+\delta)+\limsup_{n\to\infty} \pr(e^{-T_n}\cZ^\phi(T_n)> e^{-T^-_n(\ep)}\cZ^\phi(T^+_n(\ep)))\\
\nonumber =&\limsup_{n\to\infty} \pr( e^{2\ep}e^{-T^+_n(\ep)}\cZ^\phi(T^+_n(\ep))>X+\delta)\\
\leq &\limsup_{n\to\infty} \pr( e^{-T^+_n(\ep)}\cZ^\phi(T^+_n(\ep))>X+\delta/2,A_{\delta,\ep})+\eta,
\end{align}
where the third line comes from \eqref{sandwiching} and the last line follows from \eqref{Xcomparison}. Note that $T_n^\pm(\ep)=t_n+(\sqrt{\log n}-\log\Theta(t_n)\pm \ep)$ is measurable with respect to $\FF_{t_n}$ and 
$$\sqrt{\log n}-\log\Theta({t_n})\pm \ep \overset{a.s.}{\longrightarrow}\infty \quad\text{and}\quad \frac{\sqrt{\log n}-\log\Theta({t_n})\pm \ep}{t_n}\overset{a.s.}{\longrightarrow}0.$$
Thus applying Theorem \ref{thm:local_limit-cts-time}, we get that for any $\delta,\ep>0$, as $n\to\infty$,
$$e^{-T^\pm_n(\ep)}\cZ^\phi(T^\pm_n(\ep)) \to X \quad \text{in probability}.$$
Applying this back to \eqref{limsup_comp} shows that $\limsup_{n\to\infty} \pr(e^{-T_n}\cZ^\phi(T_n)>X+\delta)\leq \eta$ for arbitrary $\eta>0$, i.e., $\limsup_{n\to\infty} \pr(e^{-T_n}\cZ^\phi(T_n)>X+\delta)=0$. Following a similar argument we have $\limsup_{n\to\infty} \pr(e^{-T_n}\cZ^\phi(T_n)<X-\delta)=0$, thus establishing 
\[e^{-T_n}\cZ^\phi(T_n) \probc X.\]
Since $e^{-T_n}\cdot (n+1) \convas W$, combining this with the above equation gives, 
$$\E[\phi(\TT_{v^{(n)}})|\cT_n]=\frac{1}{n+1}\cZ^\phi(T_n)=\frac{\cZ^\phi(T_n)}{e^{T_n}}\cdot \frac{e^{T_n}}{n+1}\to  \int_0^\infty e^{-s}\E[\phi(\TT^*(s))]ds \quad \text{in probability}.$$
\end{proof}

\begin{proof}[{\bf Proof of Theorem \ref{thm:fringe}}]
To complete the proof of part (a), namely, the convergence in probability in the fringe sense, it suffices to show that for any fixed finite rooted tree $\mathbf{s}_0$, the function $m(u) := \pr\left(\cT^*(u) = \mathbf{s}_0\right), \, u \ge 0,$ is Lipschitz continuous in $u$. But this follows upon noting that there exists a finite positive constant $C(|\mathbf{s}_0|)$ depending only on the size of $\mathbf{s}_0$ such that, for any $u>0$, $\delta>0$,
\begin{align*}
&\left|\pr\left(\cT^*(u+\delta) = \mathbf{s}_0\right) - \pr\left(\cT^*(u) = \mathbf{s}_0\right) \right|\\
&\le \sum_{\mathbf{s} \subseteq \mathbf{s}_0}\pr(\text{There is a birth in the Yule process with initial population size } |\mathbf{s}| \text{ before time } \delta)\\
&\le C(|\mathbf{s}_0|) \delta,
\end{align*}
where the sum above is over all rooted subtrees of $\mathbf{s}_0$.

Next we prove part (c). Observe that, from part (a), $\E[D] = \E\left(\cP_1(\tau)\right)$, where $\cP_1(\cdot)$ is defined in Section \ref{sec:quasi} and $\tau$ is an independent $\mathrm{Exp}(1)$ random variable. Using Lemma \ref{Pk},
$$
\E[D] = \E(\cP_1(\tau)) = \sum_{i=0}^{\infty}\frac{\E(\tau^i)}{i!}\pr(\barT_1 = i) = \sum_{i=0}^{\infty}\pr(\barT_1 = i) = \pr(\barT_1 < \infty),
$$
where we have used $\E[\tau^i]=i!$ to obtain the second equality. By standard results on recurrence of random walks, eg. see the Remark after Lemma 1 in \cite{pakes1973conditional}, the last term above is $1$ if and only if $\E[Z] \le 1$. This proves the claimed assertions on $\E[D]$. Further, again using Lemma \ref{Pk},
$$
\E(|\cT^*(\tau,\vp)|) = \sum_{k=0}^{\infty}\E(\cP_k(\tau)) = \sum_{k=0}^{\infty}\sum_{i=0}^{\infty}\frac{\E(\tau^i)}{i!}\pr(\barT_k = i) = \sum_{k=0}^{\infty}\pr(\barT_k < \infty) = 1 + \sum_{k=1}^{\infty}(\pr(\barT_1 < \infty))^k.
$$
The right hand side above is finite if and only if $\E[D] = \pr(\barT_1 < \infty) <1$, proving the assertions on expected tree size.

Part (b) now follows from Theorem \ref{thm:aldous} (a).
\end{proof}

\section{Proofs: Degree distribution asymptotics}
\label{sec:proof-deg-dist}
In this section, we prove Theorem \ref{thm:deg-dist}. The high correlation in the evolution of degrees of different vertices renders conventional tools inapplicable and one has to develop new stochastic analytic techniques to track the degree evolution. The proofs of the degree distribution upper bounds in Theorem \ref{thm:deg-dist} rely crucially on the asymptotics of a weighted linear combination of vertex counts at different distances from the root of $\cT^*$, summarized in Theorem \ref{updegree}. This theorem also plays a key role in subsequent sections involving fixed vertex degree asymptotics and PageRank asymptotics. The lower bounds in Theorem \ref{thm:deg-dist} rely on a softer analysis involving approximation by multitype branching processes with finitely many types using tools developed in Section \ref{sec:urn-model-def}.

Recall the process $\bcP(\cdot) = (\cP_i(\cdot):i\geq 0)$ from Section \ref{sec:quasi} keeping track of the number of vertices at various levels in the process $\cT^*$. Let $\tau\sim \mathrm{Exp}(1)$ be an independent random variable. The limit in Theorem \ref{thm:fringe} now results in the following description of the limit degree distribution. 

\begin{corollary}\label{corr: limit_degree}
Let $D$ be as in Theorem \ref{thm:fringe}. Then $D\overset{d}{=} \cP_1(\tau)$.
\end{corollary}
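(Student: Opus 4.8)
The plan is to unwind the definitions of the relevant objects and identify the limiting fringe tree $\cT^*(\tau,\vp)$ with the tree $\cT^*$ evaluated at an independent exponential time, so that its root degree is exactly $\cP_1(\tau)$. Recall from Definition \ref{def:fringe-limit} that $\pi_{\vp}$ is the law of $\cT^*(\tau,\vp)$ with $\tau\sim\exp(1)$ independent of the process $\cT^*$, and that by Theorem \ref{thm:fringe}(a) the sequence $\set{\cT_n(\vp):n\geq 1}$ converges in probability in the fringe sense to $\pi_{\vp}$, with $D$ defined as the root degree of a $\pi_{\vp}$-distributed random tree. Therefore $D$ has, by construction, the distribution of the root degree of $\cT^*(\tau,\vp)$.

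The remaining step is to observe that the root degree of $\cT^*(\tau,\vp)$ equals the number of vertices at distance one from the root in $\cT^*(\tau,\vp)$: since $\cT^*$ is a rooted tree with edges directed from children to parents (Definition \ref{defn:cts-time}), every edge incident to the root joins it to one of its depth-one vertices, and conversely. Thus the root degree of $\cT^*(\tau,\vp)$ is precisely $\cP_1(\tau)$, where $\cP_1(\cdot)$ is the process tracking the number of depth-one vertices of $\cT^*$ introduced in Section \ref{sec:quasi} (with the convention $\cP_0\equiv 1$), and $\tau\sim\exp(1)$ is independent of $\cP_1(\cdot)$. Combining the two observations gives $D\overset{d}{=}\cP_1(\tau)$.

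There is essentially no obstacle here; the corollary is a bookkeeping consequence of Theorem \ref{thm:fringe} together with the identification of ``root degree'' with ``number of depth-one vertices'' in a directed rooted tree. The only minor point to be careful about is that $\tau$ must be taken independent of the entire process $\cT^*$ (equivalently, of the whole trajectory $\set{\cP_1(t):t\geq 0}$), which is exactly how $\pi_{\vp}$ was defined in Definition \ref{def:fringe-limit}, so the law of $\cP_1(\tau)$ is unambiguous. Hence the proof is a short paragraph chaining these identifications together.

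\begin{proof}
By Theorem \ref{thm:fringe}(a), $D$ is distributed as the root degree of a random tree sampled according to $\pi_{\vp}$, which by Definition \ref{def:fringe-limit} is the law of $\cT^*(\tau,\vp)$ for $\tau\sim\exp(1)$ independent of the process $\cT^*$. Since $\cT^*(\tau,\vp)$ is a rooted tree with all edges oriented from children towards parents, the edges incident to the root are in bijection with the vertices at graph distance one from the root; hence the root degree of $\cT^*(\tau,\vp)$ equals $\cP_1(\tau)$, where $\set{\cP_i(\cdot):i\geq 0}$ is the depth profile of $\cT^*$ from Section \ref{sec:quasi} and $\tau$ is independent of $\set{\cP_1(t):t\geq 0}$. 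Therefore $D\overset{d}{=}\cP_1(\tau)$.
\end{proof}
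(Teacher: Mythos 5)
Your proof is correct and matches the paper's (implicit) reasoning exactly: the paper states this corollary as an immediate consequence of Theorem \ref{thm:fringe} and Definition \ref{def:fringe-limit}, since $D$ is by definition the root degree of a $\pi_{\vp}$-distributed tree, i.e.\ of $\cT^*(\tau,\vp)$, and that root degree is precisely the number of depth-one vertices $\cP_1(\tau)$. Your write-up just makes the bookkeeping explicit.
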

Thus understanding the evolution of $\bcP$ will play a key role in the proof of Theorem \ref{thm:deg-dist}. We will begin by stating Theorem \ref{updegree}. Assuming this Theorem, we will prove Theorem \ref{thm:deg-dist}. The rest of the Section compartmentalized in Section \ref{sec:proof-updegree} is then devoted to the proof of Theorem \ref{updegree}. Recall the matrix $\vA = (A_{ij})$ as in \eqref{eqn:matrix-def}. With $s_0$ as in Definition \ref{def:R-q-s0} let,  
\begin{equation}
    \label{eqn:cpst}
    \cpst_s(t)=\sum_{i=1}^\infty s^{-i}\cP_i(t), \qquad \mbox{for } t\geq 0,s>0, \qquad \cpst(t) \equiv \cpst_{s_0}(t). 
\end{equation}
Recall the p.g.f of $\vp$, $f(\cdot)$. 
\begin{theorem}\label{updegree}
\begin{enumeratei}
\item For any $s>0$ such that $f(s)<\infty$ and for all $t\geq 0$,
\beq\label{Ptub}
\E[\cpst_s(t)]\leq \frac{p_0}{f(s)} e^{\frac{f(s)}{s}t}. 
\eeq
\item When $\E[Z]\leq 1$, for any $s\in [1, s_0], \theta\geq 1$, there exists constant $C_{\theta,s} < \infty$ such that,
\beq\label{moment_fringe}
\E[(\cpst_s(t))^\theta]\leq C_{\theta,s}e^{\frac{f(s)}{s}\theta t}, \qquad \forall t> 0. 
\eeq
\item When $\E[Z]>1$ so that $s_0<1$ by Lemma \ref{lemma:prop-R-s0}, let $\alpha^*(\theta):=\max\{ \frac{f(s_0^\theta)}{s_0^\theta},\frac{\theta}{R}\}$. For any $\theta\geq 1$, there exists some constant $C_\theta>0$ such that for all $t\geq 0$,
\begin{align}
\label{moment_nonfringe}\E[(\cpst(t))^\theta]&\leq C_\theta (1+t^\theta)e^{\alpha^*(\theta)t}.
%\label{lb_nonfringe}E[(\cpst(t))^\theta]&\geq  e^{\theta t/R}.
\end{align}
\end{enumeratei}

\begin{corollary}\label{updegree_root}
Define $\tilde\cpst_s(t):=\sum_{i=1}^{\infty} s^{-i}\tilde\cP_i(t)$. 
\begin{enumeratei}
\item
For all $t\geq 0$ and $s\geq 1$ such that $f(s)<\infty$,
$$\E[\tilde\cpst_s(t)]\leq \frac{p_0}{f(s)} e^{\frac{f(s)}{s}t}.$$
\item When $\E[Z]\leq 1$, for any $s\in [1, s_0], \theta\geq 1$, there exists constant $C_{\theta,s} < \infty$ such that,
\beq\label{moment_nonfringe}
\E[(\tilde\cpst_s(t))^\theta]\leq C_{\theta,s}e^{\frac{f(s)}{s}\theta t}, \qquad \forall t> 0. 
\eeq
\end{enumeratei}
\end{corollary}
\begin{proof}
The result follows from the same proof as of Theorem \ref{updegree} upon noting that $(s^{-i}:i\geq 0)$ is a left subinvariant eigenvector of $\vB$ associated with the eigenvalue $f(s)/s$ when $s\geq 1$ (see Proposition \ref{prop:spectral-prop}(b)).
\end{proof}
\end{theorem}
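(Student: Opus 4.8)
\noindent\emph{Proof strategy.} All four bounds come from controlling the drift of the weighted level counts $\cpst_s(t)=\sum_{i\ge1}s^{-i}\cP_i(t)$ under the evolution of $\cT^*$, using that $(s^{-i})_{i\ge0}$ is a left sub-invariant eigenvector of $\vA$ for eigenvalue $f(s)/s$ (Proposition \ref{prop:spectral-prop}(a)). A preliminary remark: for every $s>0$ and real $\theta\ge1$ we have $\E[\cpst_s(t)^\theta]<\infty$, since $\cpst_s(t)\le s^{-\height(\cT^*(t))}|\cT^*(t)|$, the population $|\cT^*(t)|$ is dominated by a rate-one Yule process and so has all moments, and $\height(\cT^*(t))$ is dominated by the height of a rate-one Yule tree, which by Lemma \ref{diam_tail} has finite exponential moments; this also lets one localize every computation below at the stopping times $\inf\{t:|\cT^*(t)|=N\}$ and pass $N\to\infty$ by Fatou, so the Dynkin/ODE manipulations are legitimate. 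For \textbf{part (i)}: by Lemma \ref{lem:inf-ODE} (equivalently, differentiating the representation in Lemma \ref{Pk}) together with the computation in the proof of Proposition \ref{prop:spectral-prop}(a), which also gives $\sum_{i\ge1}s^{-i}A_{i0}=p_0/s$, one obtains the linear differential inequality $\tfrac{d}{dt}\E[\cpst_s(t)]\le\tfrac{f(s)}{s}\E[\cpst_s(t)]+\tfrac{p_0}{s}$ with $\E[\cpst_s(0)]=0$; equivalently $e^{-\frac{f(s)}{s}t}\big(\cpst_s(t)+\tfrac{p_0}{f(s)}\big)$ is a supermartingale, and Gr\"onwall yields \eqref{Ptub}.

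\smallskip
\noindent\textbf{Part (ii).} For $1\le s\le s_0$ every jump of $\cpst_s$ (a level-$i$ vertex producing a level-$(i-z+1)$ vertex) has size $s^{-(i-z+1)}\le s^{-1}\le1$. The plan is to induct on integer moments $m\ge1$, the base case being (i). Expanding $(\cpst_s+\delta)^m-\cpst_s^m=\sum_{l=1}^m\binom ml(\cpst_s)^{m-l}\delta^l$ inside the generator and using $\delta^l\le\delta$ for all $l\ge1$, the $l=1$ term contributes $m\tfrac{f(s)}{s}(\cpst_s)^m$ up to lower order, while each $l\ge2$ term is at most $\binom ml(\cpst_s)^{m-l}\big(\tfrac{f(s)}{s}\cpst_s+\tfrac{p_0}{s}\big)$, a polynomial in $\cpst_s$ of degree $\le m-1$. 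Hence $\tfrac{d}{dt}\E[\cpst_s(t)^m]\le m\tfrac{f(s)}{s}\E[\cpst_s(t)^m]+K_{m,s}e^{(m-1)\frac{f(s)}{s}t}$ by the induction hypothesis (using $f(s)/s\ge1/R>0$ for $s\le s_0$, Lemma \ref{lemma:prop-R-s0}(a), so that all moments of order $m'\le m-1$ are $\lesssim e^{(m-1)\frac{f(s)}{s}t}$); integrating this linear inequality gives $\E[\cpst_s(t)^m]\le C_{m,s}e^{m\frac{f(s)}{s}t}$ with no polynomial factor, since $m>m-1$. Non-integer $\theta$ then follows from Lyapunov's inequality $\E[X^\theta]\le\E[X^{\lceil\theta\rceil}]^{\theta/\lceil\theta\rceil}$, the exponent $\theta f(s)/s$ being linear in $\theta$. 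Corollary \ref{updegree_root} is proved identically, replacing $\vA$ by $\vB$; this forces $s\ge1$, since $(s^{-i})$ is left sub-invariant for $\vB$ only then (Proposition \ref{prop:spectral-prop}(b)), and in the fringe regime $s_0\ge1$ so the range $[1,s_0]$ is available.

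\smallskip
\noindent\textbf{Part (iii), and the main obstacle.} Now $s_0<1$, so jumps $s_0^{-j}$ grow with depth and the estimate $\delta^l\le\delta$ fails. The generator computation instead gives, for each $l\ge1$, $\sum_{\mathrm{jumps}}(\mathrm{rate})\,\delta^l\le\tfrac{f(s_0^l)}{s_0^l}\,\cpst_{s_0^l}(t)+p_0s_0^{-l}$ (with $f(s_0^l)<\infty$ since $s_0^l<1\le r_f$), introducing the weighted sums $\cpst_{s_0^l}$ at the strictly smaller parameters $s_0^l$. Expanding $(\cpst_{s_0})^m$ as before, the $l=1$ term yields the exact main-drift coefficient $mf(s_0)/s_0=m/R$ for $(\cpst_{s_0})^m$, while the $l=m$ term produces the single summand $\cpst_{s_0^m}$, whose expectation is controlled by part (i) at rate $f(s_0^m)/s_0^m$; solving the resulting inequality $\phi'\le\tfrac mR\phi+(\text{poly in }t)\,e^{\max\{m/R,\,f(s_0^m)/s_0^m\}t}$ produces precisely the exponent $\alpha^*(m)=\max\{f(s_0^m)/s_0^m,\,m/R\}$ and the prefactor $(1+t^m)$ in the borderline case of equal rates. \textbf{The main obstacle} is the intermediate cross terms $(\cpst_{s_0})^{m-l}\cpst_{s_0^l}$ for $2\le l\le m-1$: their `effective degree' is $m$, so the induction on $m$ does not close directly, and the naive pointwise bound $\cpst_{s_0^l}\le(\cpst_{s_0})^l$ inflates the drift past $\alpha^*(m)$ (already for $m=2$ it gives $2/R+f(s_0^2)/s_0^2$). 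The resolution I would use is a \emph{joint} induction on $m$ carried out over the (finitely many, for fixed $m$) parameters $\{s_0^j:j\ge1\}$, proving $\E[\cpst_{s_0^j}(t)^m]\le C_{j,m}(1+t^m)e^{\beta_{j,m}t}$ with $\beta_{j,m}=\max\{f(s_0^{jm})/s_0^{jm},\,mf(s_0^j)/s_0^j\}$; a cross term is then split by H\"older with exponents tuned so that both factors are moments of order $\le m-1$, e.g. $\E[(\cpst_{s_0^j})^{m-l}\cpst_{s_0^{jl}}]\le\E[\cpst_{s_0^j}(t)^{m-1}]^{\frac{m-l}{m-1}}\,\E\big[\cpst_{s_0^{jl}}(t)^{\lceil(m-1)/(l-1)\rceil}\big]^{\frac{l-1}{m-1}}$, each factor supplied by the induction hypothesis and Lyapunov's inequality. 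The residual work — and where all the care lies — is verifying that the exponents obtained in this way recombine to at most $\alpha^*(m)$; this uses that $k\mapsto f(s_0^k)/s_0^k$ is convex with minimum $1/R$ attained at $k=1$ (exactly the equation $sf'(s)=f(s)$ defining $s_0$, specialized to $s=s_0$). Finally, the extension of \eqref{moment_nonfringe} to non-integer $\theta\ge1$ is carried out by the same scheme, the binomial expansion being replaced by the convexity estimate $(x+\delta)^\theta-x^\theta\le\theta\delta(x+\delta)^{\theta-1}$ together with the pointwise inequality $\cpst_{s_0^\theta}\le(\cpst_{s_0})^\theta$.
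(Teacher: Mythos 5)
Your treatment of parts (i) and (ii), and of Corollary \ref{updegree_root}, matches the paper's proof essentially step for step: the sub-invariance computation gives the linear drift inequality for the first moment, and for $s\ge 1$ the jump sizes $s^{-i}$ are at most $1$, so $\delta^l\le\delta$ collapses the binomial expansion onto lower-order moments and the induction closes; non-integer $\theta$ by Jensen. That is exactly what the paper does.

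Part (iii) is where you have a genuine gap, and you have correctly located it but not closed it. The H\"older recombination you propose does not in general produce the exponent $\alpha^*(m)$. Write $h(k)=f(s_0^k)/s_0^k$; since $p_0>0$ and $s_0<1$, $h(k)\sim p_0 s_0^{-k}$ for large $k$. Already at $m=3$, $j=1$, $l=2$, your splitting gives $\E[\cpst_{s_0}\cpst_{s_0^2}]\le \E[\cpst_{s_0}^2]^{1/2}\E[\cpst_{s_0^2}^2]^{1/2}$, whose exponential rate is $\tfrac12\bigl(\alpha^*(2)+\beta_{2,2}\bigr)\ge\tfrac12 h(4)$, while the target is $\alpha^*(3)=\max\{h(3),3/R\}$; for $s_0$ small, $\tfrac12 h(4)\approx\tfrac12 p_0 s_0^{-4}> p_0 s_0^{-3}\approx h(3)$, so the exponents do not recombine. (Convexity of $k\mapsto h(k)$ is true but works against you here: it makes $h(jl)$ grow superlinearly.) Your fallback for non-integer $\theta$ then invokes the pointwise bound $\cpst_{s_0^\theta}\le(\cpst_{s_0})^\theta$, which is precisely the inflation of the drift you yourself ruled out two sentences earlier. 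So as written the scheme does not deliver \eqref{moment_nonfringe}.

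The paper's resolution is different and is the one structural observation you are missing: a birth at level $i$ can only occur if $\sum_j A_{ij}\cP_j(t)>0$, which forces some occupied level $j\ge i-1$ and hence $\cpst(t)\ge s_0^{-(i-1)}$. This lets one use only a \emph{second-order} Taylor bound
\[
(\cpst(t)+s_0^{-i})^\theta-\cpst(t)^\theta\le \theta s_0^{-i}\cpst(t)^{\theta-1}+\tfrac{\theta(\theta-1)}{2}s_0^{-2i}\bigl[\cpst(t)^{\theta-2}\vee(\cpst(t)+s_0^{-i})^{\theta-2}\bigr],
\]
absorb $(\cpst+s_0^{-i})^{\theta-2}\le(1+1/s_0)^{\theta-2}\cpst^{\theta-2}$ on the event where the rate is positive, and bound the quadratic-in-$\delta$ contribution by $s_0^{-d(t)}\cpst(t)^{\theta-1}$ where $d(t)$ is the maximal occupied level. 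A single H\"older step, $\E[s_0^{-d(t)}\cpst^{\theta-1}]\le(\E[\cpst^\theta])^{(\theta-1)/\theta}(\E[s_0^{-\theta d(t)}])^{1/\theta}$, together with $\E[s_0^{-\theta d(t)}]\le\E[\cpst_{s_0^\theta}(t)]\le \frac{p_0}{f(s_0^\theta)}e^{\alpha^*(\theta)t}$ from part (i), yields $g'(t)\le C g(t)^{(\theta-1)/\theta}$ for $g(t)=e^{-\alpha^*(\theta)t}\E[\cpst(t)^\theta]$, whence $g(t)\le C_\theta(1+t^\theta)$ — this is also where the polynomial prefactor $(1+t^\theta)$ actually comes from (not from a resonance of equal rates, as you suggest), and it handles all real $\theta\ge1$ at once without a separate induction on integer moments.
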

The proof of Theorem \ref{updegree} is deferred to the end of this Section. The above bounds coupled with the preliminary analysis of $\cP_1$ in Section \ref{sec:urn-model-def} immediately lead to the following two Corollaries.

\begin{corollary}
\label{corr:cp1-infty}
For any $\delta>0$, we have the following limit
\beq\label{lower}
\lim_{t\to\infty} e^{-\left(\frac{1}{R}-\delta\right)t}\cP_1(t)=\infty \quad a.s.
\eeq
\beq\label{upper}
\lim_{t\to\infty} t^{-(1+\delta)}e^{-t/R}\cP_1(t)=0 \quad a.s.
\eeq
\end{corollary}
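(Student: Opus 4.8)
I will prove \eqref{upper} by a first moment estimate (Theorem~\ref{updegree}(i)) combined with the Borel--Cantelli lemma and the monotonicity of $t\mapsto\cP_1(t)$, and \eqref{lower} by comparison with the finite-type urn / multitype branching embedding of Section~\ref{sec:urn-model-def} (Lemma~\ref{lem:urn-mod-stoch-dom}, Proposition~\ref{prop:finite-urn}) together with the spectral asymptotics $\alpha_k\uparrow 1/R$ of Proposition~\ref{Perroneigen}.

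\textbf{Upper bound \eqref{upper}.} Since $\cpst_{s_0}(t)=\sum_{i\ge1}s_0^{-i}\cP_i(t)\ge s_0^{-1}\cP_1(t)$, one has $\cP_1(t)\le s_0\,\cpst_{s_0}(t)$. Applying Theorem~\ref{updegree}(i) with $s=s_0$ — legitimate because, under Assumptions~\ref{ass:mean} and~\ref{ass:pmf}, $f(s_0)<\infty$ and $f(s_0)/s_0=1/R$ (Lemma~\ref{lemma:prop-R-s0} and the remark after it) — gives $\E[\cP_1(t)]\le \tfrac{p_0 s_0}{f(s_0)}e^{t/R}=p_0R\,e^{t/R}$ for all $t\ge0$. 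Fix $\delta>0$ and $\eps>0$. By Markov's inequality, $\pr\big(\cP_1(n)>\eps\,n^{1+\delta}e^{n/R}\big)\le p_0R/(\eps n^{1+\delta})$, which is summable over $n\in\bN$, so Borel--Cantelli yields an a.s. finite $N$ with $\cP_1(n)\le\eps\,n^{1+\delta}e^{n/R}$ for all $n\ge N$. Because $\cP_1$ is non-decreasing in $\cT^*$ (vertices are never deleted and a distance-one vertex stays at distance one), for $t\ge N$ with $n=\lfloor t\rfloor$,
$$
t^{-(1+\delta)}e^{-t/R}\cP_1(t)\le n^{-(1+\delta)}e^{-n/R}\cP_1(n+1)\le \eps\Big(\tfrac{n+1}{n}\Big)^{1+\delta}e^{1/R}\le \eps\,2^{1+\delta}e^{1/R},
$$
and letting $\eps\downarrow0$ proves \eqref{upper}.

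\textbf{Lower bound \eqref{lower}.} Fix $\delta>0$. By Proposition~\ref{Perroneigen} we may choose $k\ge k_0$ with $\alpha_k>1/R-\delta$. Lemma~\ref{lem:urn-mod-stoch-dom} couples the Athreya--Karlin embedding $\bar\bcP^k(\cdot)$ with $\bcP(\cdot)$ so that $\cP_1(t+\sigma_1)\ge\bar\cP_1^k(t)$ for all $t\ge0$, where $\sigma_1<\infty$ a.s. is the first birth time of a distance-one vertex in $\cT^*$ (finite since the root reproduces according to a rate-one Poisson process and attaches at distance one with probability $p_0>0$ each time). By Proposition~\ref{prop:finite-urn}, $e^{-\alpha_k t}\bar\cP_1^k(t)\convas W\bar{\vv}_k(1)$ with $W>0$ a.s. and $\bar{\vv}_k(1)>0$, hence the limit is a.s. strictly positive. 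Writing $s=t+\sigma_1$ and using $\cP_1(s)\ge\bar\cP_1^k(s-\sigma_1)$ for $s\ge\sigma_1$,
$$
e^{-(1/R-\delta)s}\cP_1(s)\ge e^{-\alpha_k\sigma_1}\,e^{(\alpha_k-1/R+\delta)s}\cdot\big(e^{-\alpha_k(s-\sigma_1)}\bar\cP_1^k(s-\sigma_1)\big).
$$
As $s\to\infty$ the factor $e^{(\alpha_k-1/R+\delta)s}\to\infty$ (since $\alpha_k-1/R+\delta>0$ and $\sigma_1<\infty$), while the last factor converges to $W\bar{\vv}_k(1)>0$; this gives \eqref{lower}.

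\textbf{Main obstacle.} Both bounds are assemblies of results already established; the genuinely non-trivial inputs are exactly the two facts that pin the growth exponent to $1/R$ — the sharp first moment bound of Theorem~\ref{updegree}(i) with the identity $f(s_0)/s_0=1/R$ for the upper direction, and the operator-approximation statement $\alpha_k\uparrow 1/R$ of Proposition~\ref{Perroneigen} (whose proof handles the non-compactness of $\vA$) for the lower direction. I note the argument is uniform over the fringe and non-fringe regimes: the only regime-dependent feature is whether $s_0\ge1$ or $s_0<1$, and this is immaterial since $\cP_1(t)\le s_0\cpst_{s_0}(t)$ and $f(s_0)<\infty$ in all cases.
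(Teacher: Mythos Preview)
Your proof is correct and follows essentially the same approach as the paper's own proof. For \eqref{upper} both you and the paper use the first-moment bound $\E[\cP_1(t)]\le p_0Re^{t/R}$ from Theorem~\ref{updegree}(i), Markov's inequality at integer times, Borel--Cantelli, and monotonicity of $\cP_1$ to interpolate; for \eqref{lower} both arguments combine Proposition~\ref{Perroneigen}, the coupling of Lemma~\ref{lem:urn-mod-stoch-dom}, and the a.s.\ limit in Proposition~\ref{prop:finite-urn} in the same way (the paper picks $\alpha_k>1/R-\delta/2$ and factors out $e^{\delta t/2}$, whereas you pick $\alpha_k>1/R-\delta$ and factor out $e^{(\alpha_k-1/R+\delta)s}$, which is a cosmetic difference).
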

\begin{proof}
 Fix $\delta\in (0,1/R)$. Using Proposition \ref{Perroneigen} choose $k=k_\delta\in \mathbb{N}$ large enough such that the Perron-Frobenius eigenvalue $\alpha_k$ of the $k\times k$ principal submatrix $A_k$ of $A$ satisfies 
$$\frac{1}{R}-\frac{\delta}{2}< \alpha_k \leq \frac{1}{R}.$$
By the stochastic domination in Lemma \ref{lem:urn-mod-stoch-dom} and the limit result for finite urns in Proposition \ref{prop:finite-urn}, 
\begin{align*}
\lim_{t\to\infty} e^{-\left(\frac{1}{R}-\delta\right)t}\cP_1(t)
&=\lim_{t\to\infty} e^{-\left(\frac{1}{R}-\delta\right)(t+\sigma_1)}\cP_1(t+\sigma_1)\\
&\geq e^{-\left(\frac{1}{R}-\delta\right)\sigma_1} \lim_{t\to\infty} e^{-\left(\frac{1}{R}-\delta\right)t}\bar{\cP}_1(t)\\
&\geq e^{-\left(\frac{1}{R}-\delta\right)\sigma_1} \lim_{t\to\infty} e^{\delta t/2}\cdot (e^{-\alpha_k t}\bar{\cP}_1(t))=\infty \quad a.s.
\end{align*}
This proves \eqref{lower}. To prove \eqref{upper}, for any given $\ep>0$ and $N\geq 0$ we can define the event 
$$E_N=\{ \sup_{s \in [N,N+1]} s^{-(1+\delta)} e^{-s/R} \cP_1(s)>\ep\}.$$ 
By Theorem \ref{updegree}, for any $t\geq 0$
$$\E[\cP_1(t)]\leq s_0\E[\cpst(t)]\leq  \frac{p_0 s_0}{f(s_0)}e^{\frac{f(s_0)}{s_0}t} = p_0 R e^{t/R}.$$ Hence,
\begin{align*}
\pr(E_N)&\leq \pr(\cP_1(N+1)>\ep e^{N/R}N^{1+\delta})\leq \frac{\E[\cP_1(N+1)]}{\ep e^{N/R}N^{1+\delta}}\\
&\leq \frac{ p_0Re^{(N+1)/R}}{\ep e^{N/R}N^{1+\delta}}=\frac{p_0R e^{1/R}}{\ep N^{1+\delta}}.
\end{align*}
Applying Borel-Cantelli Lemma then gives $\pr(\limsup_{N\to\infty} E_N)=0$. Hence,
$$t^{-(1+\delta)}e^{-t/R}\cP_1(t) \overset{a.s.}{\longrightarrow}0,$$
proving \eqref{upper}.
\end{proof}

\begin{corollary}\label{tau_moment}
Let $\tau \sim \mathrm{Exp}(1)$ independent of $\cT^*$. 
\begin{enumeratei}
\item When $\E[Z]\leq 1$, for $\theta \in [1,\frac{s}{f(s)})$ and $s\in [1,s_0]$,
$\E[ (\cpst_s(\tau))^\theta]<\infty.$
\item When $\E[Z]>1$, let $q_*<1$ be as in Definition \ref{def:R-q-s0}. For  $\theta\in [1,R \wedge \frac{\log q_*}{\log s_0})$, $\E[ (\cpst(\tau))^\theta]<\infty.$
\end{enumeratei}

\end{corollary}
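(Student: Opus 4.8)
The plan is to deduce both statements from the moment bounds already established in Theorem \ref{updegree} by a single, elementary integration against the law of $\tau$. Since $\tau\sim\exp(1)$ is independent of the process $\cT^*$ and all quantities involved are non-negative, Tonelli's theorem gives, for any $s>0$ with $f(s)<\infty$ and any $\theta\ge 1$,
\[
\E\big[(\cpst_s(\tau))^\theta\big]=\int_0^\infty e^{-t}\,\E\big[(\cpst_s(t))^\theta\big]\,dt ,
\]
and similarly for $\cpst=\cpst_{s_0}$. So the task in each regime reduces to plugging in the appropriate bound from Theorem \ref{updegree} and checking for which $\theta$ the resulting integral is finite.

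For part (i), where $\E[Z]\le 1$ and $s\in[1,s_0]$, I would substitute the bound $\E[(\cpst_s(t))^\theta]\le C_{\theta,s}e^{(f(s)/s)\theta t}$ from Theorem \ref{updegree}(ii); the integral above then becomes $C_{\theta,s}\int_0^\infty e^{-(1-(f(s)/s)\theta)t}\,dt$, which converges exactly when $(f(s)/s)\theta<1$, i.e.\ $\theta<s/f(s)$. This is immediate and requires nothing beyond Theorem \ref{updegree}(ii); note the interval $[1,s/f(s))$ is nonempty precisely when $f(s)<s$, which holds for $s$ in the relevant range by the monotonicity of $f(s)/s$ in Lemma \ref{lemma:prop-R-s0}.

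For part (ii), where $\E[Z]>1$ (so $R>1$ and, by Lemma \ref{lemma:prop-R-s0}(d), $q_*<s_0<1$), I would use Theorem \ref{updegree}(iii): $\E[(\cpst(t))^\theta]\le C_\theta(1+t^\theta)e^{\alpha^*(\theta)t}$ with $\alpha^*(\theta)=\max\{f(s_0^\theta)/s_0^\theta,\ \theta/R\}$. Integrating against $e^{-t}$ gives $C_\theta\int_0^\infty(1+t^\theta)e^{-(1-\alpha^*(\theta))t}\,dt$, which is finite whenever $\alpha^*(\theta)<1$ (the polynomial prefactor is harmless once the exponential rate is strictly negative). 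The one genuine step is to show that $\alpha^*(\theta)<1$ holds on $[1,R\wedge(\log q_*/\log s_0))$: the condition $\theta/R<1$ is just $\theta<R$, and for the other term I would set $g(u)=f(u)/u$ and invoke Lemma \ref{lemma:prop-R-s0}(a), which says $g$ is strictly decreasing on $(0,s_0)$; since $g(q_*)=f(q_*)/q_*=1$ and $g(s_0)=1/R<1$, we get $g(u)<1$ on $(q_*,s_0]$, while for $\theta\ge 1$ and $s_0<1$ we have $s_0^\theta\in(0,s_0]$, so $f(s_0^\theta)/s_0^\theta<1\iff s_0^\theta>q_*\iff\theta<\log q_*/\log s_0$ (using $\log s_0<0$). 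Intersecting the two conditions yields the claimed range. I expect this last bookkeeping with exponents and logarithms --- really just an application of the monotonicity of $f(s)/s$ from Lemma \ref{lemma:prop-R-s0} --- to be the only non-mechanical part, and it is minor; there is no substantive obstacle here, since all the hard analytic work is carried by Theorem \ref{updegree}.
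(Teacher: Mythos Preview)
Your proposal is correct and follows essentially the same approach as the paper's proof: both parts integrate the moment bounds from Theorem \ref{updegree} against the exponential law of $\tau$, and in part (ii) you verify $\alpha^*(\theta)<1$ via the same monotonicity argument from Lemma \ref{lemma:prop-R-s0}(a) together with the identity $f(q_*)/q_*=1$.
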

\begin{proof}
(i) Simply follows from \eqref{moment_fringe}. For $\theta\in (0,\frac{s}{f(s)})$,
$$\E[ (\cpst_s(\tau))^\theta]=\int_0^\infty e^{-t} \E[(\cpst_s(t))^\theta] dt\leq  C_{\theta,s} \int_0^\infty e^{-t}e^{\frac{f(s)}{s}\theta t}dt<\infty.$$
(ii) We first start by evaluating $\alpha^*(\cdot)$ in Theorem \ref{updegree}(iii).  For $\E[Z]>1$ and $\theta\in [1,R\wedge \frac{\log q_*}{\log s_0})$ we obviously have then $\theta/ R<1$. Further, noting that $s_0\geq  s_0^\theta > s_0^{(\log q_*)/(\log s_0)} $, Lemma \ref{lemma:prop-R-s0}(a) gives  
$$\frac{f(s_0^\theta)}{s_0^\theta} <  \frac{ f(s_0^{(\log q_*)/(\log s_0)})}{ s_0^{(\log q_*)/(\log s_0)}}=\frac{f(q_*)}{q_*}=1.$$
Hence $\alpha^*(\theta)<1$. Using \eqref{moment_nonfringe} gives
$$
\E[ (\cpst(\tau))^\theta]=\int_0^\infty e^{-t} \E[(\cpst(t))^\theta] dt\leq  C_{\theta} \int_0^\infty e^{-t}(1+t^\theta)e^{\alpha^*(\theta) t}dt<\infty,$$
from which the result follows.
\end{proof}

\begin{proof}[{\bf Proof of Theorem \ref{thm:deg-dist} assuming Theorem \ref{updegree}}]
Throughout we use the representation in Corollary \ref{corr: limit_degree}. 

\noindent {\bf Upper bound for the tail exponent:}  
Note that by construction, for any $s>0$, $\cP_1(t) \leq s \cpst_s(t)$.  We start with the regime $\E[Z]\leq 1$. Note that $s/f(s)\uparrow R$ as $s\uparrow s_0$.  Corollary \ref{tau_moment}(i) gives that for any $\delta\in(0,R)$, there exists some $s_\delta>0$ such that $$\E(\cP_1(\tau)^{R-\delta})\leq s_\delta \E(\cpst_{s_\delta}(\tau)^{R-\delta})\leq s_\delta C_\delta.$$
It follows that
\begin{align}
\pr(D\geq k)&=\pr(\cP_1(\tau)\geq k)\leq \frac{\E[ (\cP_1(\tau))^{R-\delta}]}{k^{R-\delta}}\leq \frac{s_\delta C_\delta}{k^{R-\delta}} \label{eqn:802},
\end{align}
 for finite constant $C_\delta$. This implies that, 
$$\limsup_{k\to\infty}  \frac{\log \pr(D\geq k)}{\log k}\leq \limsup_{k\to\infty} \frac{ \log (s_\delta C_\delta)-(R-\delta)\log k}{\log k}=-R+\delta.$$
As $\delta>0$ can be chosen arbitrarily small, this completes the upper bound for the tail exponent when $\E[Z] \leq 1$.

The case $\E[Z] >1$ follows the exact same argument but using Corollary \ref{tau_moment}(ii) with the exponent $R$ in \eqref{eqn:802} replaced by $R \wedge \frac{\log q_*}{\log s_0}$.

\noindent{\bf Lower bound for the tail exponent:} Here we want to show that $R$ is a lower bound on the tail exponent in all regimes. Fix $0< \delta <R$.   Note that for any fixed $k\geq 1$,
\begin{align}\label{lld}
\pr(D\geq k)&=\int_0^\infty e^{-s} \pr(\cP_1(s)\geq k) ds= \int_0^\infty e^{-s} \pr\left(e^{-\frac{s}{R+\delta}}\cP_1(s)\geq e^{-\frac{s}{R+\delta}}k\right) ds\notag\\
&\geq \int_{\set{s:e^{-\frac{s}{R+\delta}}k \leq 1}} e^{-s}\pr\left(e^{-\frac{s}{R+\delta}}\cP_1(s)\geq 1\right) ds  = \int_{(R+\delta)\log k}^\infty e^{-s}\pr\left(e^{-\frac{s}{R+\delta}}\cP_1(s)\geq 1\right) ds.
\end{align}
By \eqref{lower} in Corollary \ref{corr:cp1-infty}, $e^{-\frac{s}{R+\delta}}\cP_1(s) \to \infty$ a.s. Thus  there exists $k_0 \geq 1$ such that 
$\pr\left(e^{-\frac{s}{R+\delta}}\cP_1(s)\geq 1\right) \geq 1/2$ for all $s\geq (R+\delta)\log k_0$. Thus, for $k\geq k_0$, we have
$$\pr(D\geq k)\geq  \frac{1}{2} \int_{(R+\delta)\log k}^\infty e^{-s}ds=\frac{1}{2 k^{R+\delta}},$$
which leads to
$\liminf_{k\to\infty}  {\log \pr(D\geq k)}/{\log k}\geq -(R+\delta)$
for arbitrarily small $\delta>0$.
\end{proof}

\subsection{{\bf Proof of Theorem \ref{updegree}}} 
The following lemma gives a tractable formulation for the expectation of powers of $\cpst$. This will be used to set up differential equations involving $\E[(\cpst(\cdot))^\theta]$ for $\theta \ge 1$ whose analysis will lead to the proof of Theorem \ref{updegree}. \color{black}In the following, a crucially used object will be the generator $\bar \LL$ of the continuous time Markov process $\bcP(\cdot)$ taking values in 
$$
\bar \cS := \{\vx = (x_i : i \ge 0) \in (\mathbb{N}_0)^{\mathbb{N}_0} : x_0=1, \, \exists \, l_{\vx} \in \mathbb{N} \text{ such that } x_l>0 \, \forall\, l \le l_{\vx} \text{ and } x_l=0 \, \forall\, l>l_{\vx}\}.
$$ 
For any function $f: \bar \cS \rightarrow \mathbb{R}$, define the action of the generator $\bar \LL$ on $f$ as the function $\bar \LL f : \bar \cS \rightarrow \mathbb{R}$ given by
$$
\bar \LL f(\vx) = \sum_{i=1}^{\infty}\left[f(\vx + \ve_i) - f(\vx)\right]\sum_{j=0}^{\infty}A_{ij}x_j, \  \ \vx \in \bar \cS,
$$
whenever the right hand side above is well-defined (here $\ve_i$ is the i-th coordinate unit vector). For notational convenience, for $g: \bar \cS \rightarrow \mathbb{R}$, we will write $\LL[g(\bcP(t))] := \bar \LL g (\bcP(t)), \, t \ge 0$.
\color{black}
\begin{lemma}\label{generator}
For any $s>0, t\geq 0$ and $\theta\geq 1$,
\beq
\label{eqn:1036}
\E[ (\cpst_s(t))^\theta]=\int_0^t \E[\LL[(\cpst_s(r))^\theta]]dr<\infty,
\eeq
where
\begin{equation}
    \label{eqn:full-gene}
    \LL[(\cpst_s(t))^\theta]=\sum_{i=1}^\infty \left[ (\cpst_s(t)+s^{-i})^\theta-(\cpst_s(t))^\theta\right] \sum_{j=0}^\infty A_{ij}\cP_j(t).
\end{equation}
\end{lemma}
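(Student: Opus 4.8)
The plan is to realize $\cpst_s(\cdot)$ as a functional of a non-explosive pure-jump Markov chain, compute the action of its generator on $(\cpst_s)^\theta$ explicitly, and then obtain the integral identity by a routine Dynkin/localization argument; the one non-mechanical ingredient is an a priori moment bound, which I would extract from the height estimate in Lemma \ref{diam_tail}.

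\textbf{The Markov chain and its generator.} In $\cT^*$ vertices are only ever added, and the offset $Z$ sampled at a reproduction event depends on nothing but the distance of the reproducing vertex from the root; hence the level profile $\mathbf{X}(t):=(\cP_i(t))_{i\ge 1}$ (with $\cP_0(t)\equiv 1$) is itself a continuous-time Markov chain on the set of finitely supported $\mathbf{n}=(n_i)_{i\ge 1}$ with $n_i\in\bZ_+$. From such a state $\mathbf{n}$ (set $n_0:=1$) it jumps to $\mathbf{n}+\mathbf{e}_i$ at rate $\lambda_i(\mathbf{n}):=\sum_{j\ge 0}A_{ij}n_j$, the aggregate rate at which one of the $n_j$ vertices at level $j$ reproduces (each at rate one) and its offspring lands at level $i$ (probability $A_{ij}=p_{j+1-i}$). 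Since $\sum_i\lambda_i(\mathbf{n})=\sum_j n_j\sum_i A_{ij}=\sum_j n_j(1-c_{j+1})\le\sum_j n_j<\infty$, and the population is dominated by a rate-one Yule process, the chain is non-explosive. Writing $g(\mathbf{n})=\sum_{i\ge 1}s^{-i}n_i$, so that $g(\mathbf{X}(t))=\cpst_s(t)$, the generator $\LL$ of this chain acts on $F=g^\theta$ by $\LL F(\mathbf{n})=\sum_i\lambda_i(\mathbf{n})\big[(g(\mathbf{n})+s^{-i})^\theta-g(\mathbf{n})^\theta\big]$, which is exactly the right-hand side of \eqref{eqn:full-gene}; for fixed finitely supported $\mathbf{n}$ only finitely many summands here are nonzero, so this rearrangement raises no convergence question.

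\textbf{A priori moment bound (the main point).} Let $N(t)=|\cT^*(t)|$ and let $H(t)$ be the maximal distance of a vertex of $\cT^*(t)$ from the root. An offspring produced by a reproduction at level $j$ sits at level $\le j+1$, so $H(t)$ is at most the generation-height of the genealogical tree of $\cT^*(t)$, which embeds into a rate-one Yule genealogy once phantom offspring are inserted for the reproductions at which nothing happened. Consequently $\cpst_s(t)\le s^{-H(t)}N(t)$, and by Cauchy--Schwarz
\[\E\big[(\cpst_s(t))^\theta\big]\le \E\big[s^{-2\theta H(t)}\big]^{1/2}\,\E\big[N(t)^{2\theta}\big]^{1/2}<\infty,\]
since $\E[\beta^{H(t)}]<\infty$ for every $\beta\ge 1$ by Lemma \ref{diam_tail} (applied with $\beta=s^{-2\theta}\vee 1$) and the Yule population has finite moments of all orders. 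This is the step I expect to require the most care, since it is where a genuine tail estimate, rather than a formal computation, enters; everything else is bookkeeping.

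\textbf{Dynkin's formula and the limit.} Set $\tau_m=\inf\{t\ge 0:N(t)\ge m\}$, so $\tau_m\uparrow\infty$ a.s.\ by non-explosion. On $[0,\tau_m)$ the chain lives on the finite set $\{\mathbf{n}:\sum_i n_i<m\}$ with bounded rates, so $F(\mathbf{X}(t\wedge\tau_m))-F(\mathbf{X}(0))-\int_0^{t\wedge\tau_m}\LL F(\mathbf{X}(r))\,dr$ is a genuine martingale, and since $\cpst_s(0)=0$ this yields
\beq\label{eqn:plan-dynkin}
\E\big[(\cpst_s(t\wedge\tau_m))^\theta\big]=\E\Big[\int_0^{t\wedge\tau_m}\LL[(\cpst_s(r))^\theta]\,dr\Big].
\eeq
Because $\theta\ge 1$ the integrand $\LL[(\cpst_s(\cdot))^\theta]$ is non-negative, so by monotone convergence and Tonelli the right side of \eqref{eqn:plan-dynkin} increases to $\int_0^t\E[\LL[(\cpst_s(r))^\theta]]\,dr$ as $m\to\infty$. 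On the left, $\cpst_s(t\wedge\tau_m)\uparrow\cpst_s(t)$ a.s.\ (each $\cP_i$ is non-decreasing in time), and $\cpst_s(t\wedge\tau_m)^\theta\le\cpst_s(t)^\theta$, which is integrable by the previous step; hence dominated convergence gives $\E[(\cpst_s(t\wedge\tau_m))^\theta]\to\E[(\cpst_s(t))^\theta]$. Equating the two limits gives $\E[(\cpst_s(t))^\theta]=\int_0^t\E[\LL[(\cpst_s(r))^\theta]]\,dr$, and this common value is finite by the moment bound, which is precisely the assertion of the lemma.
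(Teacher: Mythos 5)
Your proof is correct and its two substantive ingredients coincide with the paper's: the a priori bound $\cpst_s(t)\le s^{-H(t)}N(t)$ followed by Cauchy--Schwarz, Lemma \ref{diam_tail} and the Geometric law of the Yule population to get $\E[(\cpst_s(t))^\theta]<\infty$, and the explicit generator formula \eqref{eqn:full-gene}. The only place you diverge is in how the Dynkin identity is made rigorous: the paper truncates the functional spatially, working with $\cpst_{s,N}(t)=\sum_{i=1}^N s^{-i}\cP_i(t)$, applies the generator identity to each truncation, and passes to the limit $N\to\infty$ by monotone convergence of the generator terms; you instead localize in time via the stopping times $\tau_m$, apply Dynkin to the stopped chain, and use monotone/dominated convergence in $m$. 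Both routes are standard and each leans on the same facts (non-negativity of the generator terms for $\theta\ge 1$ and the a priori integrability of $\cpst_s(t)^\theta$); your version makes the localization slightly more explicit, at the cost of one small imprecision: the set $\{\mathbf{n}:\sum_i n_i<m\}$ is not finite as written, and you should restrict to states reachable from the single-root initial condition, for which the occupied levels are bounded by the population size, so that $F$ and the jump rates are indeed bounded before $\tau_m$. This is cosmetic and does not affect the validity of the argument.
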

\begin{proof}
Let $\yu(\cdot)$ denote a rate one Yule process as in Lemma \ref{diam_tail}, let $Y(t) = |\yu(t)|$ denote the size and $\hght(t)$ the corresponding height of the genealogical tree at time $t$. Note that the height of $\TT^*(t)$ is stochastically dominated by the height $\hght(t)$ of the Yule process. Thus for any $\theta \geq 1$ using Cauchy-Schwartz inequality, 
\begin{align*}
\E[ (\cpst_s(t))^\theta]&\leq \E[  (\sup_{i\leq \hght(t)} s^{-i}\cdot Y(t))^\theta]\leq \E[ (1\vee s^{-\theta \hght(t)}) (Y(t))^\theta]\\
&\leq \sqrt{\E[(1\vee s^{-2\theta \hght(t)})]}\sqrt{ \E[ (Y(t))^{2\theta}]}<\infty,
\end{align*}
where the finiteness of the first term follows from Lemma \ref{diam_tail} and finiteness of the second term follows from distributional identity $Y(t)\sim \text{Geometric}(e^{-t})$. This proves $\E[ (\cpst_s(t))^\theta]< \infty$.  To prove \eqref{eqn:1036}, first consider the truncation $\cpst_{s,N}(t)=\sum_{i=1}^N s^{-i} \cP_i(t)$. Applying the generator of the Markov process $\bcP(\cdot)$ gives
$$\E[ (\cpst_{s,N}(t))^\theta]=\int_0^t \E \LL[ (\cpst_{s,N}(r))^\theta]dr$$
where,
$$\LL[ (\cpst_{s,N}(t))^\theta]:=\sum_{i=1}^\infty \ind_{\{ i\leq N\}}  \left[ (\cpst_{s,N}(t)+s^{-i})^\theta-(\cpst_{s,N}(t))^\theta\right] \sum_{j=0}^\infty A_{ij}\cP_j(t).$$
Note that for each $i\geq 1$, 
$$\ind_{\{ i\leq N\}}  \left[ (\cpst_{s,N}(t)+s^{-i})^\theta-(\cpst_{s,N}(t))^\theta\right] \nearrow \left[ (\cpst_s(t)+s^{-i})^\theta-(\cpst_s(t))^\theta\right]$$ as $N\to\infty$.
By monotone convergence theorem,
$$\E[ (\cpst_s(t))^\theta]=\lim_{N\to\infty} \E[ (\cpst_{s,N}(t))^\theta]=\int_0^t \E[\LL[(\cpst_s(r))^\theta]]dr.$$
\end{proof}

\begin{rem}\label{rem:mar}
By the same argument conditional on $\mathcal{F}_u$ for any fixed $u \ge 0$, it follows that $\E[ (\cpst_s(t+u)^\theta -(\cpst_s(u))^\theta \, \vert \, \mathcal{F}_u]=\int_u^{t+u} \E[\LL[(\cpst_s(r))^\theta] \, \vert \, \mathcal{F}_u]dr, \ t \ge 0$. This implies that the process $\left(\cpst_s(t)^\theta - \int_0^{t} \LL[(\cpst_s(r))^\theta]dr\right)_{t \ge 0}$ is a martingale. 
\end{rem}

\ \\ 
\label{sec:proof-updegree}\noindent {\bf Proof of Theorem \ref{updegree}(i):}
We begin by proving \eqref{Ptub} namely $\theta=1$ case. Using \eqref{eqn:full-gene} and the form of $\vA$ gives, 
\begin{align*}
    \cL(\cpst_s(t)) &= \sum_{i=1}^\infty s^{-i} \sum_{j=i-1}^{\infty} p_{j-i+1} \cP_j(t) = \sum_{j=0}^{\infty}\left[\sum_{i=1}^{j+1} s^{-i}p_{j-i+1}\right]\cP_j(t),\\
    &= \sum_{j=0}^{\infty}\left[\sum_{i=1}^{j+1} s^{j-i+1} p_{j-i+1}\right] s^{-(j+1)} \cP_j(t) \leq \frac{p_0}{s} + \frac{f(s)}{s} \sum_{j=1}^\infty s^{-j}\cP_j(t)
    = \frac{p_0}{s} + \frac{f(s)}{s} \cpst(t).
\end{align*}
Here we have essentially re-derived Proposition \ref{prop:spectral-prop}(a) on $(s^{-i}:i\geq 0)$ being a sub-invariant eigenvector with eigenvalue $f(s)/s$. Using Lemma \ref{generator} gives
\[\frac{d}{dt} \E(\cpst_s(t)) \leq \frac{p_0}{s} + \frac{f(s)}{s}\E(\cpst_s(t)).  \]
Integrating completes the proof. 

\noindent
{\bf Proof of Theorem \ref{updegree}(ii):} To prove \eqref{moment_fringe}, we will first start by assuming that $\theta$ is an integer and argue by induction. After completing the proof for integer $\theta$, we will extend the proof to general $\theta$.  By \eqref{Ptub}, the assertion is true when $\theta=1$. We will use $A_{ij}$ instead of $p_{j-i+1}$ for ease of notation.  Suppose  \eqref{moment_fringe} holds for $\theta\leq k-1$. 
\begin{align*}
\LL[ (\cpst_s(t))^k]&=\sum_{i=1}^\infty \left[ (\cpst_s(t)+s^{-i})^k-(\cpst_s(t))^k\right] \sum_{j=0}^\infty A_{ij}\cP_j(t)\\
&=\sum_{i=1}^\infty \sum_{l=1}^k {k \choose l} (\cpst_s(t))^{k-l} s^{-il}\sum_{j=0}^\infty A_{ij}\cP_j(t)\leq \sum_{i=1}^\infty \sum_{l=1}^k {k \choose l} (\cpst_s(t))^{k-l} s^{-i}\sum_{j=0}^\infty A_{ij}\cP_j(t) \\
&= \sum_{l=1}^k{k \choose l} (\cpst_s(t))^{k-l} \sum_{j=0}^\infty \left( \sum_{i=1}^\infty s^{-i} A_{ij}\right)\cP_j(t)\leq \sum_{l=1}^k{k \choose l} (\cpst_s(t))^{k-l} \left( s^{-1}p_0+ \frac{f(s)}{s} \cpst_s(t)\right)\\
&=\frac{kf(s)}{s} (\cpst_s(t))^k+\frac{f(s)}{s} \sum_{l=2}^k{k \choose l} (\cpst_s(t))^{k-l+1}+ s^{-1}p_0 \sum_{l=1}^k{k \choose l} (\cpst_s(t))^{k-l}.
\end{align*}
Here the key inequality in line two follows from assuming that $s\ge 1$ so that $s^{-il} \leq s^{-i}$, whilst the inequalities in the ensuing lines mimic calculations in the $\theta =1$ case. By the induction hypothesis, for $1\leq i\leq k-1$, $\exists $ finite constants $C_{i,s}$ such that $\E((\cpst_s(t))^{i}) \leq C_{i,s}\exp(\frac{f(s)}{s} i\cdot t)$. Let $\tilde C_{k,s}:=\max_{1\leq i\leq k-1}\{ C_{i,s}\}$. Using Lemma \ref{generator} gives, 
\begin{align*}
\frac{d}{dt} \left[ e^{-k\frac{f(s)}{s}t}\E[(\cpst_s(t))^k]\right]&\leq e^{-k\frac{f(s)}{s}t} \left[ \frac{f(s)}{s} \sum_{l=2}^k{k \choose l} \E[(\cpst_s(t))^{k-l+1}]+ s^{-1}p_0 \sum_{l=1}^k{k \choose l} \E[(\cpst_s(t))^{k-l}]\right]\\
&\leq \tilde C_{k,s}   e^{-\frac{f(s)}{s}t}\cdot \left[\frac{f(s)}{s}\sum_{l=2}^k{k \choose l}  e^{-(l-2)\frac{f(s)}{s}t}+s^{-1}p_0\sum_{l=1}^k{k \choose l} e^{-(l-1)\frac{f(s)}{s}t} \right]\\
&\leq \tilde C_{k,s}e^{-\frac{f(s)}{s}t}\cdot 2^k \left(\frac{f(s)}{s}+s^{-1}p_0\right).
\end{align*}
Therefore, there exists $C_{k,s}>0$ so that
$$\E[(\cpst_s(t))^k]\leq  \tilde C_{k,s}e^{k\frac{f(s)}{s}t}\int_0^t e^{-\frac{f(s)}{s}r}dr \cdot 2^k \left(\frac{f(s)}{s}+s^{-1}p_0\right)\leq C_{k,s} e^{k\frac{f(s)}{s}t},$$
proving \eqref{moment_fringe} for $\theta=k$. To extend to $\theta\in (k-1,k]$, we apply Jensen's inequality to obtain,
$$\E[(\cpst_s(t))^\theta ]\leq \E[(\cpst_s(t))^k]^{\frac{\theta}{k}}\leq C_{k,s}^{\frac{\theta}{k}} e^{\theta \frac{f(s)}{s}t}=C_{\theta,s} e^{\theta \frac{f(s)}{s}t}.$$ 

\noindent
{\bf Proof of Theorem \ref{updegree}(iii):} It remains to prove \eqref{moment_nonfringe}  when $\E[Z]>1$ so that $s_0< 1$.  Recall that $\cpst = \cpst_{s_0}$.  Note that for any $\theta>1, i\geq 1$,
\begin{align*}
(\cpst(t)+s_0^{-i})^\theta-\cpst(t)^\theta& \leq \theta s_0^{-i} \cpst(t)^{\theta-1}+\frac{\theta(\theta-1)}{2} s_0^{-2i}\left[ \cpst(t)^{\theta-2}\vee (\cpst(t)+s_0^{-i})^{\theta-2}\right].
%&\leq  \theta s_0^{-i} \cpst(t)^{\theta-1}+\frac{\theta(\theta-1)}{2} s_0^{-2i}
\end{align*}
It follows that 
\begin{align}\label{generator-2}
\nonumber \LL[ (\cpst(t))^\theta]&=\sum_{i=1}^\infty \left[ (\cpst(t)+s_0^{-i})^\theta-(\cpst(t))^\theta\right] \sum_{j=0}^\infty A_{ij}\cP_j(t)\\
\nonumber &\leq \sum_{i=1}^\infty \theta s_0^{-i} \cpst(t)^{\theta-1} \sum_{j=0}^\infty A_{ij}\cP_j(t)\\
&\quad +\sum_{i=1}^\infty \frac{\theta(\theta-1)}{2} s_0^{-2i}\left[ \cpst(t)^{\theta-2}\vee (\cpst(t)+s_0^{-i})^{\theta-2}\right] \sum_{j=0}^\infty A_{ij}\cP_j(t).
%&\leq \theta \cpst(t)^{\theta-1} \sum_{j=0}^\infty \left(\sum_{i=1}^\infty s_0^{j+1-i} A_{ij}\right)s_0^{-(j+1)}P_j(t)\\
%&\quad + \frac{\theta(\theta-1)}{2} s_0^{-2i} (1+s_0^{-1})^{\theta-2}
\end{align}
Since $\sum_{j=0}^\infty A_{ij}\cP_j(t)=\sum_{j=i-1}^\infty p_{j+1-i}\cP_j(t)$, it is easy to see $\sum_{j=0}^\infty A_{ij}\cP_j(t)>0$ only when $\cP_{i-1}(t)\geq 1$, which implies $\cpst(t)\geq s_0^{-(i-1)}$. 

Hence for $\theta\geq 2$,
$$\left[ \cpst(t)^{\theta-2}\vee (\cpst(t)+s_0^{-i})^{\theta-2}\right] \sum_{j=0}^\infty A_{ij}\cP_j(t)\leq (1+1/s_0)^{\theta-2}\cpst(t)^{\theta-2}\sum_{j=0}^\infty A_{ij}\cP_j(t).$$
Taking into account the case where $\theta\in (1,2)$, we have
$$\left[ \cpst(t)^{\theta-2}\vee (\cpst(t)+s_0^{-i})^{\theta-2}\right] \sum_{j=0}^\infty A_{ij}\cP_j(t)\leq \left(1\vee(1+1/s_0)^{\theta-2}\right)\cpst(t)^{\theta-2}\sum_{j=0}^\infty A_{ij}\cP_j(t).$$
Plugging this back into \eqref{generator-2} gives
\begin{align*}
\LL[ (\cpst(t))^\theta]&\leq  \theta \cpst(t)^{\theta-1} \sum_{j=0}^\infty \left(\sum_{i=1}^\infty s_0^{-i} A_{ij}\right)\cP_j(t)\\
&\quad + \frac{\theta(\theta-1)}{2} \left(1\vee(1+1/s_0)^{\theta-2}\right) \cpst(t)^{\theta-2} \sum_{j=0}^\infty \left(\sum_{i=1}^\infty s_0^{-2i}A_{ij}\right) \cP_j(t)\\
&\leq \frac{\theta p_0}{s_0}\cpst(t)^{\theta-1}+\frac{\theta}{R}\cpst(t)^\theta+ \frac{\theta(\theta-1)}{2} \left(1\vee(1+1/s_0)^{\theta-2}\right) \frac{p_0}{s_0^2} \cpst(t)^{\theta-2}\\
&\quad+\frac{\theta(\theta-1)}{2} \left(1\vee(1+1/s_0)^{\theta-2}\right)\frac{f(s_0^2)}{s_0^2} \cpst(t)^{\theta-2}\sum_{j=1}^{\infty}s_0^{-2j}\cP_j(t).
\end{align*}
Define $d(t)=\sup\{i\geq 0: \cP_i(t)\geq 1\}$. Observe that, since $s_0\in (0,1)$, $\sum_{j=1}^\infty s_0^{-2j}\cP_j(t)\leq s_0^{-d(t)}\cpst(t)$. Hence,
\begin{align}\label{induct_1}
\nonumber\LL[ (\cpst(t))^\theta]&\leq \left(\frac{\theta p_0}{s_0}+\frac{\theta(\theta-1)}{2} \left(1\vee(1+1/s_0)^{\theta-2}\right)\frac{f(s_0^2)}{s_0^2} s_0^{-d(t)}\right)\cpst(t)^{\theta-1}\\
\nonumber&\quad+\frac{\theta}{R}\cpst(t)^\theta+ \frac{\theta(\theta-1)}{2} \left(1\vee(1+1/s_0)^{\theta-2}\right) \frac{p_0}{s_0^2} \cpst(t)^{\theta-2}\\
&\leq \tilde C_\theta s_0^{-d(t)}\cpst(t)^{\theta-1}+\frac{\theta}{R}\cpst(t)^\theta,
\end{align}
for some finite constant $\tilde C_\theta$ that depends on $\theta,s_0$ and $p_0$. Notice that the term $\cpst(t)^{\theta-2}$ can be upper bounded by $\cpst(t)^{\theta-1}$ for all $\theta\geq 1$ since $\cpst(t)\geq 1$. For any  $\theta\geq 1$, 
\begin{align*}
\frac{d}{dt} \E[\cpst(t)^\theta]&=\E\left[\LL[ (\cpst(t))^\theta]\right]\leq \tilde C_\theta \E[s_0^{-d(t)}\cpst(t)^{\theta-1}]+\frac{\theta}{R}\E[\cpst(t)^\theta]\\
&\leq \tilde C_\theta \left( \E[\cpst(t)^\theta]\right)^{\frac{\theta-1}{\theta} }\left( \E[s_0^{-\theta d(t)}]\right)^{\frac{1}{\theta}}+\frac{\theta}{R}\E[\cpst(t)^\theta].
\end{align*}
By the definition of $\alpha^*(\theta)$ and \eqref{Ptub},
$$\E[s_0^{-\theta d(t)}]\leq \E\left[ \sum_{j=1}^\infty s_0^{-j\theta}\cP_j(t)\right]\leq \frac{p_0}{f(s_0^\theta)}e^{\frac{f(s_0^\theta)}{s_0^\theta}t}\leq \frac{p_0}{f(s_0^\theta)}e^{\alpha^*(\theta) t}.$$
Let $g(t)=e^{-\alpha^*(\theta) t}\E[\cP^*(t)^\theta]$ and $C'_\theta= \tilde C_\theta \left(\frac{p_0}{f(s_0^\theta)}\right)^{1/\theta}$. Then
\beq \label{g_equation}
g'(t)\leq C'_\theta (g(t))^{\frac{\theta-1}{\theta}}.
\eeq
Define $h(t)=1+g(0)^{1/\theta}+\frac{2C'_\theta}{\theta}t-g(t)^{1/\theta}$. Then \eqref{g_equation} implies that,
$$h'(t)=\frac{2C'_\theta}{\theta}-\frac{1}{\theta}g'(t) (g(t))^{\frac{1}{\theta}-1}\geq \frac{C'_\theta}{\theta}.$$
Since $h(0)>0$ we have $h(t)>0$. It then follows that there exists some $C_\theta>0$ so that $g(t)\leq C_\theta(1+t^{\theta})$ for all $t\geq 0$, i.e., 
$$\E[\cpst(t)^\theta]\leq C_\theta(1+t^{\theta})e^{\alpha^*(\theta) t},$$
which is assertion (iii). \qed

\section{Proofs: Condensation and fixed vertex degree asymptotics}
\label{sec:proof-fixed-vertex}
We abbreviate $d_n(v_k) = \deg(v_k,n), \, k \ge 0, n \ge k$. We will first prove the non-root fixed vertex asypmtotics.
\begin{proof}[{\bf Proof of Theorem \ref{thm:fixed-vertex}}]
This follows from a direct application of Corollary \ref{corr:cp1-infty}. Using the continuous time embedding, note that for any $i \ge 1$, 
$$\{d_n(v_i): n\geq i\}\overset{d}{=} \{ \cP^{v_i}_1(T_n - \sigma_i): n\geq i\},$$
where as before $T_n=\inf\{ t\geq 0: |\TT(t)|=n+1\}$, $\sigma_i$ is the birth time of $v_i$ and $\cP^{v_i}_1(t)$ denotes the number of children of vertex $v_i$ in $\TT(t + \sigma_i)$.
Observe that $\cP^{v_i}_1(\cdot)$ has the same distribution as the process $\cP_1(\cdot)$. By Lemma \ref{lem:yule-asymp}, $\frac{e^{T_n}}{n} \overset{a.s.}{\longrightarrow} \frac{1}{W}$ where $W\sim \mathrm{Exp}(1)$. It follows from \eqref{lower} that 
$$\frac{d_n(v_i)}{n^{1/R-\delta}}=\frac{\cP^{v_i}_1(T_n - \sigma_i)}{n^{1/R-\delta}}=\frac{\cP^{v_i}_1(T_n - \sigma_i)}{e^{(1/R-\delta)(T_n-\sigma_i)}} \cdot \left( \frac{ e^{(T_n-\sigma_i)}}{n}\right)^{1/R-\delta} \overset{a.s.}{\longrightarrow} \infty.$$
Similarly, by \eqref{upper},
\begin{align*}
\frac{ d_n(v_i)}{n^{1/R}(\log n)^{1+\delta}}&=\frac{\cP^{v_i}_1(T_n - \sigma_i)}{n^{1/R}(\log n)^{1+\delta}}\\
&=\frac{\cP^{v_i}_1(T_n-\sigma_i)}{e^{(T_n-\sigma_i)/R}\cdot (T_n-\sigma_i)^{1+\delta}} \cdot \left(\frac{e^{(T_n-\sigma_i)}}{n}\right)^{1/R}\cdot \left( \frac{T_n - \sigma_i}{\log n}\right)^{1+\delta} 
\overset{a.s.}{\longrightarrow} 0.
\end{align*}

\end{proof}
Now we proceed to root degree asymptotics.   

\begin{proof}[{\bf Proof of Theorem \ref{thm:max-degree}(a)}]Recall that $T_n$ is the stopping time when $n(t)=|\TT(t)|$ first becomes $n+1$. Standard properties of the Yule process (see Lemma \ref{lem:yule-prop}) imply that 
$$\pr(|T_n-\log n|\geq M)\leq 2e^{-M},$$
where $M>0$ will be chosen later.

For any fixed $\ep>0$, it follows from Theorem  \ref{root_condensation} that there is some $t_0(\ep)>0$ such that for $t\geq t_0$,
\beq\label{bound_thm5.26}
\E| e^{-t}\tilde{\cP}_1(t)-W_\infty|\leq \ep^3,
\eeq
where $W_\infty:=(1-q_*)W$.

For any $n\geq e^{t_0+M}$ and  some $\delta>0$ to be chosen later, let $a_{n,0}=\log n-M$ and define $a_{n,i}=a_{n,i-1}+\delta$ for $i\geq 1$. Then we can observe that 
\begin{align}\label{two_errs}
\nonumber\E|e^{-T_n}\tilde\cP_1(T_n)-W_\infty|&\leq\sum_{i=0}^{\lceil2M/\delta \rceil} \E\left[|e^{-T_n}\tilde\cP_1(T_n)-W_\infty|\cdot 1_{\{ a_i\leq T_n <a_{i+1}\}}\right]\\
&\quad + \E\left[|e^{-T_n}\tilde\cP_1(T_n)-W_\infty| \cdot 1_{\{|T_n-\log n|\geq M\}}\right].
\end{align}
To address the second term in \eqref{two_errs}, note that 
\begin{align*}
&\E\left[|e^{-T_n}\tilde\cP_1(T_n)-W_\infty|\cdot 1_{\{|T_n-\log n|\geq M\}}\right]\\
\leq & \left(\E[(e^{-T_n}\tilde\cP_1(T_n)-W_\infty)^2]\right)^{1/2}\cdot \pr(|T_n-\log n|\geq M)^{1/2}\\
\leq &  \sqrt{2}\left( 2(n+1)^2\E[ e^{-2T_n}]+ 2\E[W_\infty^2] \right)^{1/2} e^{-M/2}.
\end{align*}
To upper bound $(n+1)^2\E[ e^{-2T_n}]$, write $X_n= (n+1)^2 e^{-2T_n}$. Observe that 
\begin{align*}
\E[X_{n+1}]&=(n+2)^2 \E[e^{-2(T_{n+1}-T_n)}e^{-2T_n}]\\
&=\E[e^{-2T_n}] (n+2)^2 \cdot \frac{n+1}{n+3}=\E[X_n]\cdot \frac{ (n+2)^2}{(n+1)(n+3)},
\end{align*}
where we used the observation $T_{n+1} - T_n \sim \mathrm{Exp}(n+1)$ and is independent of $T_n$.
Hence, for any $n\geq 1$,
\begin{align*}
\E[X_{n+1}]&=\E[X_n]\left( 1+\frac{1}{(n+1)(n+3)}\right)\leq \E[X_n](1+\frac{1}{n^2})\\
&\leq \E[X_0]\prod_{k=1}^n e^{1/k^2}\leq C_0
\end{align*}
for some constant $C_0>0$. This combined, along with the explicit form of $W_{\infty}$, shows that there exists some constant $C>0$ so that 
$ \sqrt{2}\left( 2\E[ e^{-2T_n}n(T_n)^2]+ 2\E[W_\infty^2] \right)^{1/2}\leq C$. Taking $M=2\log(2C/\ep)$ then gives
\beq\label{second_err}
\E\left[|e^{-T_n}\tilde\cP_1(T_n)-W_\infty| \cdot 1_{\{|T_n-\log n|\geq M\}}\right]\leq \ep/2.
\eeq

For $t\in [a_i,a_{i+1})$, we have $e^{-a_{i+1}}\tilde\cP_1(a_i)\leq e^{-t}\tilde\cP_1(t) \leq e^{-a_i}\tilde\cP_1(a_{i+1})$. Hence,
\begin{align*}
&\E\left[|e^{-T_n}\tilde\cP_1(T_n)-W_\infty|\cdot1_{\{ a_i\leq T_n <a_{i+1}\}}\right]\\
\leq & \E\left[\max\{ |e^{-a_{i+1}}\tilde\cP_1(a_i)-W_\infty|, |e^{-a_i}\tilde\cP_1(a_{i+1})-W_\infty|\}\cdot1_{\{ a_i\leq T_n <a_{i+1}\}}\right]\\
\leq & \E\left[ \max\{ e^{-\delta}|e^{-a_i}\tilde\cP_1(a_i)-W_\infty|, e^\delta|e^{-a_{i+1}}\tilde\cP_1(a_{i+1})-W_\infty|\}\cdot1_{\{ a_i\leq T_n <a_{i+1}\}}\right]\\
&\quad+\max\{ e^\delta-1, 1-e^{-\delta}\}\E[W_\infty\cdot1_{\{ a_i\leq T_n <a_{i+1}\}}]\\
\leq & e^{-\delta}\E|e^{-a_i}\tilde\cP_1(a_i)-W_\infty|+e^\delta \E|e^{-a_{i+1}}\tilde\cP_1(a_{i+1})-W_\infty|+2\delta\E[W_\infty\cdot1_{\{ a_i\leq T_n <a_{i+1}\}}]
\end{align*}
where the last term follows from choosing $\delta\in(0,1)$ to be sufficiently small so that $\max\{ e^\delta-1, 1-e^{-\delta}\}\leq 2\delta$. Since $n\geq e^{t_0+M}$, for all $i\geq 0$ we have $a_i\geq t_0$. Applying \eqref{bound_thm5.26} to $t=a_i$ and $t=a_{i+1}$ yields
\begin{align*}
\E\left[|e^{-T_n}\tilde\cP_1(T_n)-W_\infty|\cdot1_{\{ a_i\leq T_n <a_{i+1}\}}\right]
\leq & (e^\delta+e^{-\delta})\ep^3+2\delta\E[W_\infty\cdot1_{\{ a_i\leq T_n <a_{i+1}\}}]\\
\leq &4\ep^3+2\delta\E[W_\infty\cdot1_{\{ a_i\leq T_n <a_{i+1}\}}].
\end{align*}
Hence, the first term in \eqref{two_errs} satisfies
\begin{align*}
\sum_{i=0}^{\lceil2M/\delta \rceil} \E\left[|e^{-T_n}\tilde\cP_1(T_n)-W_\infty|\cdot1_{\{ a_i\leq T_n <a_{i+1}\}}\right]&\leq \sum_{i=0}^{\lceil2M/\delta \rceil} \left(4\ep^3+2\delta\E[W_\infty\cdot1_{\{ a_i\leq T_n <a_{i+1}\}}]\right)\\
&\leq (\lceil2M/\delta \rceil+1)4\ep^3+2\delta\E[W_\infty].
\end{align*}
Take $\delta=\ep/8$. It follows from our choice of $M= 2\log(2C/\ep)$ that there exists $\ep_0>0$ such that for all $\ep \in (0,\ep_0)$ is sufficiently small we have $(\lceil2M/\delta \rceil+1)4\ep^3\leq \ep/4$. Since $\E[W_\infty]\leq 1$,
\beq\label{first_err}
\sum_{i=0}^{\lceil2M/\delta \rceil} \E\left[|e^{-T_n}\tilde\cP_1(T_n)-W_\infty|\cdot1_{\{ a_i\leq T_n <a_{i+1}\}}\right]\leq \ep/4+2(\ep/8)=\ep/2.
\eeq
Collecting \eqref{first_err} and \eqref{second_err} in \eqref{two_errs} gives that for any $\ep \in (0,\ep_0)$, there exist some $t_0,M>0$ depending on $\ep$ so that for all $n\geq e^{t_0+M}$,
$$
\E|e^{-T_n}\tilde\cP_1(T_n)-W_\infty|\leq \ep.
$$
Thus we have established the $\mathbb{L}^1$-convergence of $e^{-T_n}\tilde\cP_1(T_n)$ to $W_\infty$. By Lemma \ref{lem:yule-asymp}, $e^{T_n}/(n+1) \overset{a.s.}{\longrightarrow} 1/W$ where $W \sim \mathrm{Exp}(1)$. Hence, 
\beq\label{tnconvp}
\tilde \cP_1(T_n)/(n+1) \overset{\prob}{\longrightarrow} W_\infty/W = 1-q_*.
\eeq
The result now follows upon noting that $\{\frac{d_n(v_0)}{n+1} : n \ge 0\} \overset{d}{=} \{\frac{  \tilde\cP_1(T_n)}{n+1} : n \ge 0\}$.

\end{proof}

\begin{proof}[{\bf Proof of Theorem \ref{thm:max-degree}(b)}] 
Let $\delta>0$ be given. Note that $\{d_n(v_0): n \ge 1\} \overset{d}{=}\{\tilde{\cP}_1(T_n) : n \ge 1\}$. It is easy to see $\tilde{\cP}_1(\cdot)$ dominates $\cP_1(\cdot)$ and hence it follows from the proof of Theorem \ref{thm:fixed-vertex} that
$$\frac{d_n(v_0)}{n^{1/R-\delta}}\overset{a.s.}{\longrightarrow} \infty.$$

In the fringe regime $\E[Z]\leq 1$ we have $s_0\geq 1$. Using Corollary \ref{updegree_root}(i) with $s=s_0\geq 1$ to replace the estimate from Theorem \ref{updegree}(i) in the proof of \eqref{upper} in Corollary \ref{corr:cp1-infty} yields 
\beq\label{root_upper}
\lim_{t\to\infty} t^{-(1+\delta)}e^{-t/R}\tilde{\cP}_1(t)=0 \quad a.s.
\eeq
The same argument as in the proof Theorem \ref{thm:fixed-vertex} then gives
$$\frac{ d_n(v_0)}{n^{1/R}(\log n)^{1+\delta}}\overset{a.s.}{\longrightarrow} 0.$$
\end{proof}

\section{Proofs: Height}
\label{sec:proof-height}

\color{black}

Recall the processes and notation of Lemma \ref{lem:ht-domination} as well as $\kappa_0, s^*$ from Definition \ref{kappa}. The main goal of this section is to show the following: %Recall that the infimum of $\inf_{s\in(0,1)}\{\kappa_0\log s+f(s)/s\}$ is uniquely achieved at $s^*\in (0,s_0]$.

\begin{prop}
\label{prop:ht-cts}

	\begin{enumeratei}
	 \item {\bf Fringe regime:} When $\E[Z]\leq 1$, 
	$$\cH_{\cT(t)}/t \convas \kappa_0 \quad \text{ as }t\to\infty.$$
	 \item {\bf Non-Fringe regime:} When $\E[Z]> 1$, as $t\to\infty$,
 $$
	\frac{\cH_{\cT(t)}}{t}\overset{a.s.}{\longrightarrow}
	 \begin{cases}
	  \kappa_0 &\quad \text{ if } s^*\in (0,q_*],\\
	  \frac{1}{\log(1/q_*)} &\quad \text{ if } s^* \in (q_*,s_0],
	 \end{cases}
	 $$
	 where $q_*$ is defined as in Definition \ref{def:R-q-s0} (b). 
\end{enumeratei}
\end{prop}

\noindent {\bf Proof of Theorem \ref{thm:height} assuming Proposition \ref{prop:ht-cts}: } By the continuous time embedding, $\cH_n = \cH_{\cT(T_n)}$ where $T_n$ as before is the time for $\cT$ to get to size $n+1$. By Lemma \ref{lem:yule-asymp}(b) $T_n/\log{n} \convas 1$. Combining this with Proposition \ref{prop:ht-cts} completes the proof. \qed

%\tcr{All rewritten from here to the end of this section. Main changes: (i) added a lemma to summarize the properties of $\alpha^*(x)$ and $\kappa(s)$ we need; (ii) changed some notation in the original proof to have consistent notation; (iii) added proofs for the new cases.}

The rest of this Section is devoted to the proof of Proposition \ref{prop:ht-cts}. Recall from Lemma \ref{lemma:prop-R-s0} that under Assumptions \ref{ass:mean} and \ref{ass:pmf}, $q_* < s_0 < 1$ when $\E[Z]>1$. %Recall from Definition \ref{eqn:kappa-ht} that $s^*$ attains the infimum of $\inf_{s\in(0,1)} \frac{ f(s)}{s\log (1/s)}$. 
 Recall that $\kappa(s):=\frac{f(s)}{s\log(1/s)}, \,  s \in (0,1)$.

\begin{lemma}\label{kappa_properties}
\begin{enumeratei}
\item Suppose $\E[Z]>1$. For $x>0$, the infimum of $s \mapsto x \log s +f(s)/s$ is uniquely attained at some $s^*_x\in (0,s_0]$ that satisfies $x=f(s^*_x)/s^*_x-f'(s^*_x)$. In particular, $s^*=s^*_{\kappa_0}\in (0,s_0]$.
\item The infimum of $s \mapsto \kappa(s)$ is uniquely attained at $s^*$.
\end{enumeratei}
\end{lemma}
\begin{proof}
%To prove part (i), note that for $s\in(0,1)$, $ x\log s+\frac{f(s)}{s}<0$ is equivalent to $x>\frac{f(s)}{s\log(1/s)}$. Thus, $x>\inf_{s\in(0,1)}\frac{f(s)}{s\log(1/s)}$ if and only if  $\alpha^*(x)<0$. 

To prove part (i), for $x>0$, write $h_x(s)=x\log s+f(s)/s, \, s \in (0,1)$. As both $\log s$ and $f(s)/s$ are increasing on $(s_0,1)$, we see that 
$$\inf_{s\in (0,1)}h_x(s)=\inf_{s\in(0,s_0]} h_x(s).$$
Differentiating $h_x(s)$ leads to $h'_x(s)=\frac{x-(f(s)/s-f'(s))}{s}$, which implies that the infimum is attained at some $s^*_x\in (0,s_0]$ satisfying the equation $x=f(s^*_x)/s^*_x-f'(s^*_x)$. As $f(s)/s$ and $-f'(s)$ are both strictly decreasing on $(0,s_0]$, the solution $s^*_x$ to this equation is unique.

For part (ii), it is easy to see that $\kappa(s)$ is increasing on $(s_0,1)$ so we can restrict our attention to $(0,s_0]$. Differentiating $\kappa(s)$ leads to 
$$\kappa'(s)=\frac{ \log(1/s) (sf'(s)-f(s))+f(s)}{ (s\log(1/s))^2}=\frac{ sf'(s)-f(s)+\frac{f(s)}{\log(1/s)}}{s^2 \log(1/s)}.$$
It is straightforward to check that both $sf'(s)-f(s)$ and $\frac{f(s)}{\log(1/s)}$ are strictly increasing on $(0,s_0]$. Hence, the infimum of $\kappa(\cdot)$ is uniquely attained at $s$ such that $\kappa'(s)=0$, i.e.,
$$f(s)/s-f'(s)=\frac{f(s)}{s\log(1/s)}.$$
By part (i) and (ii), we have $f(s^*)/s^*-f'(s^*)=\kappa_0=\frac{f(s^*)}{s^*\log(1/s^*)}$, i.e., $s^*$ is the unique point where $\kappa(\cdot)$ is minimized.
\end{proof}

\begin{proof}[{\bf Proof of Proposition \ref{prop:ht-cts}}]

\mn
\textbf{Fringe regime.} Suppose $\E[Z]\leq 1$. Lemma \ref{lem:ht-domination} and Proposition \ref{prop:bt-limit} give $\liminf_{t\to\infty} \cH_{\cT(t)}/t \geq \kappa_0$ almost surely. It remains to prove a corresponding upper bound. 

Let $s\in (0,1)$ and recall that we defined $\cpst_s(t)=\sum_{i=1}^\infty s^{-i}\cP_i(t)$.  Using \eqref{Ptub} in Theorem \ref{updegree} gives for any  $s\in(0,1)$, non-negative $x,t$ and $t' \in (0,t)$,
\begin{align}\label{hi}
\pr(\cH_{\cT^*(t-t')}\geq xt)&=\pr(s^{-\cH_{\cT^*(t-t')}}\geq s^{-xt})\leq \pr( \cpst_s(t-t')\geq s^{-xt})\nonumber\\
&\leq  s^{xt}\E[\cpst_s(t-t')]\leq \frac{p_0}{f(s)}\exp \left(t (x\log s+f(s)/s) - t'\frac{f(s)}{s}\right).
\end{align}
By definition, $\kappa_0$ is the infimum of $x$ such that $\inf_{s\in(0,1)} \{x\log s+f(s)/s\}<0$. It follows from Lemma \ref{kappa_properties}(i) that for any $x>\kappa_0$, there exists $s^*_x \in (0,s_0]$ such that
\beq\label{delta_dev}
-\delta:=\inf_{s\in(0,1)}\{ x\log s+f(s)/s\}=x\log s^*_x+f(s^*_x)/s^*_x<0.
\eeq
Note that in $\TT(\cdot)$, all subtrees rooted at level one evolve as $\TT^*(\cdot)$. In order for $\TT(t)$ to have height larger than $\lceil xt\rceil$, one of the subtrees rooted at level one need to achieve height at least $xt$.

With $R$ as in Definition \ref{def:R-q-s0}, we partition the time interval $[0,t]$ into segments $\{ [t_i,t_{i+1}] :i\geq 0\}$ where $t_0=0$ and $t_{i+1}=(t_i+mR)\wedge t$ for some positive constant $m$ to be chosen later. Let $\cS_i=\{ v_{ij}: j\geq 0\}$ denote the set of vertices at level one that arrived during $(t_i, t_{i+1}]$. Fix any $\ep>0$. For $i\geq 0$, define the event
\beq\label{event_Ei}
E_i(t)=\{ \text{one of the subtrees $\TT_{v_{ij}}$ rooted at $v_{ij}\in \cS_i$ has height at least $(\kappa_0+\ep)t$ at time $t$}\}.
\eeq
In the fringe regime, using Corollary \ref{updegree_root}(i) with $s=s_0\geq 1$ gives
$$\E(|\cS_i|) \leq \E(\tilde\cP_1(t_{i+1})) \leq p_0 R e^{t_{i+1}/R}.$$ 
Recall that by Lemma \ref{lemma:prop-R-s0}(a), $f(s)/s\geq 1/R$ for $s\in (0,1)$. Using \eqref{hi} with $t'=t_i$, $x=\kappa_0 + \ep$ and $s=s^*_x$ as in \eqref{delta_dev} gives, writing $\delta := \delta_x$, 
\begin{align}\label{prob_Ei}
\nonumber \pr(E_i(t))&\leq \E(|\cS_i|) \cdot \pr( \cH_{\cT^*(t-t_i)}\geq xt)\\
\nonumber &\leq \frac{p_0^2R}{f(s^*_x)}\exp\left(\frac{t_{i+1}}{R}+ t( x\log s^*_x+f(s^*_x)/s^*_x)- \frac{f(s^*_x)}{s^*_x}t_i\right)\\
&\leq \frac{p_0^2R}{f(s^*_x)}\exp( -\delta t +\frac{t_{i+1}-t_i}{R})\leq p_0 R\, e^{-\delta t+m}.
\end{align}
Let $i_0=\min \{i\geq 0:  t_{i+1}\geq t\}$. Note that $i_0\leq t/(mR)$. It follows that 
$$\pr(\cH_{\cT(t)}> \lceil xt\rceil)\leq \pr( \cup_{i=0}^{i_0} E_i(t))\leq \left(\frac{t}{mR}+1\right)p_0 R\, e^{-\delta t+m}.$$
Choosing $m=(\delta t)/2$ we have 
\beq\label{height_upper}
\pr(\cH_{\cT(t)} > \lceil xt\rceil)\leq (2/(\delta R)+1)p_0 R\,e^{-\delta t/2}.
\eeq
Given any $\ep>0$ and $N\geq 0$ we can define the event 
$$E_N=\big\{ \exists t\in [N,N+1]: \cH_{\cT(t)}  >\lceil(x+\ep)t\rceil\big\}.$$ 
Let $d_N=(x+\ep)\frac{N}{N+1}$. When $N$ is sufficiently large, $d_N>x$. Then \eqref{height_upper} gives
\begin{align*}
\pr(E_N)&\leq \pr(\cH_{\cT(N+1)} >\lceil(x+\ep)N\rceil)=\pr(\cH_{\cT(N+1)} >\lceil d_N(N+1)\rceil)\\
&\leq (2/(\delta R)+1)p_0 R\,e^{-\delta (N+1)/2}.
\end{align*}
Applying Borel-Cantelli Lemma then gives $\pr(\limsup_{N\to\infty} E_N)=0$. As $x=\kappa_0 + \ep$ and $\ep>0$ is arbitrary, 
$$\limsup_{t\to\infty} \frac{\cH_{\cT(t)}}{t} \leq \kappa_0 \quad \text{ almost surely}.$$
Combining this with the lower bound completes the proof for the case $\E[Z] \le 1$.

\mn
\textbf{Non-fringe regime with $s^*\in (0,q_*]$.} Recall from Lemma \ref{kappa_properties} that $\kappa_0=f(s^*)/s^*-f'(s^*)$. Using Lemma \ref{lemma:prop-R-s0}(a), $f(s)/s-f'(s)$ is a strictly decreasing function on $(0,s_0]$ and $f(s)/s-f'(s)\uparrow \infty$ as $s\downarrow 0$, for any $x>\kappa_0$, there exists some $s^*_x<s^*$ such that 
$$x=f(s^*_x)/s^*_x-f'(s^*_x).$$ 
As before, fix any $\ep>0$ and set $x=\kappa_0 + \ep$. It follows from Lemma  \ref{kappa_properties}(i) that the infimum of $s \mapsto  x\log s+f(s)/s$ on $(0,1)$ is uniquely achieved at $s^*_x$ and by the definition of $\kappa_0$ we see that 
$$-\delta:=\inf_{s\in (0,1)} \{ x\log s+f(s)/s\}=x\log(s^*_x)+f(s^*_x)/s^*_x<0.$$

Define $\{E_i(t)\}_{i\geq 0}$ as in \eqref{event_Ei}. Since $s^*_x<s^*\leq q_*$ we have $f(s^*_x)/s^*_x\geq f(q_*)/q_*=1$. By a similar calculation as \eqref{prob_Ei} with $\E(|\cS_i|) \leq \E(|\cT(t_{i+1})|)=\exp(t_{i+1})$ and $s=s^*_x$, $t_{i+1}=(t_i+m)\wedge t$, we can obtain
\begin{align*}
\pr(E_i(t))&\leq \E(|\cS_i|) \cdot \pr( \cH_{\cT^*(t-t_i)}\geq (\kappa_0+\ep)t)\\
&\leq \frac{p_0}{f(s^*_x)}\exp\left(t_{i+1}+ t( (\kappa_0+\ep)\log s^*_x+f(s^*_x)/s^*_x)- \frac{f(s^*_x)}{s^*_x}t_i\right)\\
&\leq \frac{p_0}{f(s^*_x)}\exp( -\delta t +(t_{i+1}-t_i))\leq e^{-\delta t+m}.
\end{align*}
The rest of the proof follows from the same arguments as the case before by choosing $m=\delta t/2$.

\mn
\textbf{Non-fringe regime with $s^*\in (q_*,s_0]$.} We begin by proving the upper bound. Fix $\ep>0$. Let $\{E_i(t)\}_{i\geq 0}$ be events defined as in \eqref{event_Ei} with $\kappa_0 + \ep$ replaced by $(\log(1/q_*))^{-1}(1+\ep)$. Using the fact that $\E(|\cS_i|) \leq \exp(t_{i+1})$,
by a similar calculation as \eqref{prob_Ei} with $x= (\log(1/q_*))^{-1}(1+\ep)$, $s=q_*$, $t_{i+1}=(t_i+m)\wedge t$, we have
\begin{align*}
\pr(E_i(t))&\leq \E(|\cS_i|) \cdot \pr( \cH_{\cT^*(t-t_i)}\geq (\log(1/q_*))^{-1}(1+\ep)t)\\
&\leq \frac{p_0}{f(q_*)}\exp\left(t_{i+1}+ t(  (\log(1/q_*)^{-1}(1+\ep)\log q_*+f(q_*)/q_*)- \frac{f(q_*)}{q_*}t_i\right)\\
&= \frac{p_0}{f(q_*)}\exp( -\ep t +(t_{i+1}-t_i)) \le e^{ -\ep t +m}.
\end{align*}
Choosing $m=\ep t/2$ and repeating the arguments in previous cases proves the upper bound, i.e.,
$$\limsup_{t\to\infty} \frac{\cH_{\cT(t)}}{t} \leq (\log(1/q_*))^{-1} \quad \text{ almost surely}.$$

It remains to prove the matching lower bound. Let $\delta\in (0,1)$ be a constant that we will choose later. Conditional on the tree $\cT((1-\delta)t)$ at time $(1-\delta)t$, observe that the height of $\cT(t)$ is stochastically lower bounded by the maximum of $|\cT((1-\delta)t)|$ many i.i.d. random variables, each distributed as the location of the rightmost particle of a branching random walk $\BRW(\delta t)$, which has the distribution $B(\delta t)$. 
%If we condition on the tree $\cT((1-\delta)t)$ at time $(1-\delta)t$ and let the graph evolve from then on for the remaining time $\delta t$, then from each existing vertex $v \in \cT((1-\delta)t)$ there will grow an independent ``subtree" process. It is worth noting that in this ``subtree" process of $v$, a descendant particle is allowed to exceed $v$ (but no further than the actual root). This perspective allows us to see that the height of each ``subtree" dominates the location of the rightmost particle of a branching random walk $\BRW(\delta t)$, which has the distribution $B(\delta t)$. 

Fix any $\ep>0$. Let $x=(1-\ep)(\log(1/q_*))^{-1}$. Using the above observation,
\beq\label{height_lb}
\pr( \cH_{\cT(t)} < xt)\leq \pr( B(\delta t)< xt)^{e^{(1-\delta)t-t^{1/2}}}+\pr( |\cT((1-\delta)t)|\leq  e^{(1-\delta)t-t^{1/2}}).
%&\leq  \pr( B(\delta t)\leq xt)^{e^{(1-\delta)t-o(t)}}+
\eeq
Note that $|\cT(\cdot)|$ is a rate 1 Yule process and hence by Lemma \ref{lem:yule-prop} we have for large enough $t$, 
\begin{align}\label{yule_deviation}
\nonumber\pr( |\cT((1-\delta)t)|\leq  e^{(1-\delta)t-t^{1/2}})&=\sum_{k=1}^{ \lfloor e^{(1-\delta)t-t^{1/2}}\rfloor} e^{-(1-\delta)t}(1-e^{-(1-\delta)t})^{k-1}\\
&\leq 1-(1-e^{-(1-\delta)t})^{ e^{(1-\delta)t-t^{1/2}}}\leq  1-\exp(-2e^{-t^{1/2}})\leq  2e^{-t^{1/2}}
\end{align}
by the elementary inequality that $1-2x\leq e^{-2x}\leq 1-x$ for $x\in [0,1/2]$.

%\begin{lem}[{\cite[Section 2.5]{norris-mc-book}}]
%\label{lem:yule-prop}
%Fix $t >0$ and rate $\nu > 0$ and assume $Y_{\nu}(0)=1$. Then $Y_\nu(t)$ has a Geometric distribution with parameter $p=e^{-\nu t}$. Precisely, $\text{ }\pr(Y_\nu(t) = k) = e^{-\nu t}(1-e^{-\nu t})^{k-1}, k\geq 1.$ The process $\set{Y_\nu(t)\exp(-\nu t):t\geq 0}$ is an $\bL^2$ bounded martingale and thus $\exists~ W>0 $ such that  $Y_\nu(t)\exp(-\nu t)\convas W$. Further $W \sim \exp(1)$. 
%\end{lem} 

It remains to estimate $\pr( B(\delta t) < xt)$. For reasons that will become clear soon, we will take  $\delta= \frac{1-\ep}{(1-f'(q_*))\log(1/q_*)}$ so that $x/\delta=1-f'(q_*)$. To verify that $\delta\in (0,1)$ it suffices to show $(1-f'(q_*))\log(1/q_*)\geq 1$. In this case, it follows from Lemma \ref{kappa_properties}(ii) that $s^*\in (q_*,s_0]$ is the unique minimizer of $\kappa(s)=\frac{f(s)}{s\log(1/s)}$, which implies that $\kappa'(q_*)\leq 0$. This, in turn, leads to $(1-f'(q_*))\log(1/q_*)\geq 1$. Moreover,
%As the infimum of $\kappa(s)$ is uniquely attained at $s^*$ and $q^*<s^*$, we see that 
$$\kappa_0=\kappa(s^*)< \kappa(q_*)=\frac{1}{\log(1/q_*)}.$$
%That is, we can choose $\ep>0$ to be sufficiently small so that $x=(1-\ep)(\log(1/q_*))^{-1}>\kappa_0$. 
That is, our choice of $x$ and $\delta$ gives $x/\delta=1-f'(q_*)\geq (\log(1/q_*))^{-1}>\kappa_0$. Then we can apply Lemma \ref{BRW_deviation} to obtain
\begin{align*}
 \pr( B(\delta t)\ge xt)&= \exp( \delta t\inf_{s\in(0,1)}\{ (1-f'(q_*))\log s+f(s)/s\} + o(t)).
\end{align*}
 Lemma \ref{kappa_properties}(i) then shows that the infimum of $s \mapsto (1-f'(q_*))\log s+f(s)/s$ on $(0,1)$ is uniquely achieved at $s=q_*$, which leads to 
\beq\label{BRW_lb}
 \pr( B(\delta t)\ge xt)= \exp\left( \delta t \left((1-f'(q_*))\log q_*+1\right) + o(t)\right)=\exp( -(1-\ep - \delta)t + o(t))
\eeq
as  $\delta= \frac{1-\ep}{(1-f'(q_*))\log(1/q_*)}$. 

Combining \eqref{yule_deviation} and \eqref{BRW_lb} in \eqref{height_lb} yields for large enough $t$,
\begin{align*}
\pr( \cH_{\cT(t)}< xt)&\leq \left(1-\pr( B(\delta t)\ge xt)\right)^{e^{(1-\delta)t-t^{1/2}}}+2e^{-t^{1/2}}\\
&\leq \exp\left( -\pr( B(\delta t)\ge xt)e^{(1-\delta)t-t^{1/2}}\right)+2e^{-t^{1/2}}\\
&= \exp\left( -\exp(-(1-\ep - \delta)t +(1-\delta)t+o(t))\right)+2e^{-t^{1/2}}\\
&\leq \exp(-\exp(\ep t/2))+2e^{-t^{1/2}}.
\end{align*}
The same argument as in the previous cases with the Borel-Cantelli Lemma would lead to
$$\liminf_{t\to\infty} \frac{ \cH_{\cT(t)}}{t} \geq (\log(1/q_*))^{-1}  \quad \text{ almost surely},$$
which concludes our proof.
\end{proof}

\color{black}

\section{Proofs: PageRank asymptotics}
\label{sec:proof-pagerank}
In this section, we prove Theorems \ref{thm:page-rank-fixed} and \ref{thm:pr2}. As before, Assumptions \ref{ass:mean} and \ref{ass:pmf} continue to hold and are not explicitly stated in the results.

As in the case of degrees, our analysis will rely on continuous time versions of PageRank. Consider the PageRank of the root in $\TT^*(t)$ with damping factor $c$, namely, 
\beq\label{cpr1}
R^*_c(t)=(1-c)\left(1 + \sum_{l=1}^\infty c^l \cP_l(t)\right).
\eeq
and the Pagerank of the root in $\TT(t)$ given by 
\beq\label{cpr2}
R_c(t)=(1-c)\left(1+\sum_{l=1}^\infty c^l \tilde \cP_l(t)\right).
\eeq

We begin with a lower bound on $\E[\cpst_s(t)]$ given in Lemma \ref{plb}, which will play a key role in showing that the limiting random variable $W_{k,c}$ in Theorem \ref{thm:page-rank-fixed} is non-degenerate. This lower bound involves a `change of measure' argument which we now present.

For any $s>0$, define a probability transition kernel 
$$p^*_s(x,y)=\frac{s^{-y} p_{x+1-y}}{\alpha(s) s^{-x}}, \quad  x,y\in \mathbb{Z} \text{ and }-\infty<y\leq x+1,$$
where $\alpha(s)=\frac{f(s)}{s}$. We can check that $p^*_s$ is indeed a probability transition kernel by 
$$\sum_{y\leq x+1} \frac{s^{-y} p_{x+1-y}}{\alpha(s) s^{-x}}=\frac{s^{-(x+1)}f(s)}{\alpha(s) s^{-x}}=1.$$
Let $S^*_s$ be a discrete time random walk following the transition kernel $p^*_s$. Set $S^*_s(0)=0$ and define 
$$\tau^*_s=\inf \{ n\geq 1: S^*_s(n)\leq 0\}.$$

\begin{lemma}\label{plb}
For any $s>0$ and  $t\geq 0$,
$$\E[\cpst_s(t)]\geq \pr(\tau^*_s=\infty) e^{\frac{f(s)}{s} t}.$$
In particular, when $\E[Z]<1$ and $s_0>1$, for any $s \in [1,s_0)$ and $t\geq 0$,
$$e^{-\frac{f(s)}{s} t}\E[\cpst_s(t)]\geq \pr(\tau^*_s=\infty)>0.$$
\end{lemma}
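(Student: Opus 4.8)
The statement to prove is a lower bound on $\E[\cpst_s(t)]$ via a change-of-measure (exponential tilting) argument, so the natural strategy is to reinterpret $\E[\cP_i(t)]$ probabilistically and then tilt. The plan is as follows. First I would use Lemma \ref{Pk}, which expresses $\E[\cP_k(t)] = \sum_{i \geq 0} \frac{t^i}{i!}\pr(\barT_k = i)$ in terms of the hitting time $\barT_k$ of zero by the random walk $S_n = S_0 + \sum_{j=1}^n(Z_j-1)$ started at $k$. Summing over levels weighted by $s^{-i}$,
\[
\E[\cpst_s(t)] = \sum_{k=1}^\infty s^{-k}\E[\cP_k(t)] = \sum_{k=1}^\infty s^{-k}\sum_{i=0}^\infty \frac{t^i}{i!}\pr_k(\barT_0 = i) = \sum_{i=0}^\infty \frac{t^i}{i!}\sum_{k=1}^\infty s^{-k}\pr_k(\barT_0 = i),
\]
where the interchange of sums is justified by non-negativity (Tonelli). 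Reversing the path of the walk (a random walk with increments $Z_j - 1$ run backwards is again a random walk, now with increments whose law puts mass $p_{m+1}$ on $m$ for $m \geq -1$, i.e.\ $S^*_s$-type increments before tilting), one recognizes $\sum_{k\geq 1}s^{-k}\pr_k(\barT_0=i)$ as a weighted count of length-$i$ walk paths from $0$ that stay positive and then step to $0$ at time $i$, with each path weighted by $s^{-(\text{final height before the last step})}$ — precisely the kind of quantity the kernel $p^*_s$ is designed to reproduce.

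Second, I would make the tilting explicit. The kernel $p^*_s(x,y) = s^{-y}p_{x+1-y}/(\alpha(s)s^{-x})$ with $\alpha(s) = f(s)/s$ is the Doob $h$-transform / exponential change of measure of the reversed-increment walk by the eigenfunction $x \mapsto s^{-x}$ with eigenvalue $\alpha(s)$; this is exactly the statement of Proposition \ref{prop:spectral-prop}(a) (that $(s^{-i})_i$ is a left sub-invariant — in fact invariant on the relevant part — eigenvector of $\vA$ with eigenvalue $f(s)/s$). Under this change of measure, the Radon--Nikodym derivative of the tilted law of a path of length $n$ with respect to the original law is $\alpha(s)^n s^{x_n}/s^{x_0} = \alpha(s)^n s^{x_n}$ when $x_0 = 0$. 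Hence for the event $\{\tau^*_s > n\}$ (walk $S^*_s$ stays strictly positive through time $n$, equivalently the reversed path builds a surviving level-$1$ subtree lineage), one gets the identity/inequality
\[
\E[\cpst_s(t)] \;=\; \sum_{n\ge 0}\frac{t^n}{n!}\,\alpha(s)^n\,\E^*\!\big[s^{S^*_s(n)}\,;\,\tau^*_s > n\big] \;\geq\; \sum_{n\ge 0}\frac{t^n}{n!}\,\alpha(s)^n\,\pr(\tau^*_s = \infty),
\]
using $s^{S^*_s(n)} \geq \cdots$ — wait, here one must be careful about whether $s \lessgtr 1$; the clean route is instead to bound $\E^*[s^{S^*_s(n)}; \tau^*_s > n] \geq \E^*[s^{S^*_s(n)}\mathbf{1}\{\tau^*_s = \infty\}]$ is not automatic, so the right move is to drop to the subtree interpretation: $\E[\cpst_s(t)]$ counts, with weights, all vertices at all positive levels, and restricting to lineages that survive forever (never return to level $0$ in the reversed walk) gives a lower bound where the weight telescopes to exactly $\pr(\tau^*_s = \infty)e^{\alpha(s)t}$ because $\sum_n \frac{t^n}{n!}\alpha(s)^n = e^{\alpha(s)t}$. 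I would write this carefully using a truncation $\cpst_{s,N}$ and monotone convergence as in Lemma \ref{generator}.

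Third, for the ``in particular'' claim: when $\E[Z] < 1$ and $s_0 > 1$, Lemma \ref{lemma:prop-R-s0}(b) gives $R = s_0/f(s_0) > 1$ and $s_0 \in (1,\infty)$, and for $s \in [1, s_0)$ one has $\alpha(s) = f(s)/s$; the only thing to check is $\pr(\tau^*_s = \infty) > 0$, i.e.\ that the tilted walk $S^*_s$ is transient to $+\infty$. I would verify this by computing the tilted drift $\E^*[S^*_s(1)] = \sum_{y \leq 1} y\, p^*_s(0,y) = \frac{1}{\alpha(s)}\sum_{m \geq -1} (1-m) s^{-(1-m)} p_m$... more cleanly, differentiate: the mean increment equals $1 - \frac{s f'(s)}{f(s)} + $ (a correction from the boundary), and since $g(s) = f(s)/s$ is strictly decreasing on $(0,s_0)$ by Lemma \ref{lemma:prop-R-s0}(a), $g'(s) < 0$ there, which forces $sf'(s) < f(s)$, giving a strictly positive drift for the tilted walk on $[1,s_0)$; positivity of the drift plus aperiodicity (Assumption \ref{ass:pmf}(b)) yields $\pr(\tau^*_s = \infty) > 0$ by the strong law / standard transience criteria. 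The main obstacle I anticipate is bookkeeping: correctly handling the boundary at level $0$ (the reversed walk's absorbing behavior, the fact that $p^*_s(x, \cdot)$ is supported on $y \leq x+1$ but we only tilt the ``interior'' part of $\vA$), and making the telescoping of Radon--Nikodym weights along surviving lineages rigorous — this is where the truncation-plus-monotone-convergence argument and a clean path-reversal lemma need to be stated precisely rather than waved at.
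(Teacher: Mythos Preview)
Your overall approach is exactly the paper's: use Lemma \ref{Pk}, reverse the walk, apply the exponential tilt defining $p^*_s$, and for the second assertion compute the drift of $S^*_s$ via $sf'(s)<f(s)$ on $[1,s_0)$. The drift computation you sketch is essentially identical to the paper's (which evaluates $(f^*)'(1)=(f(s)-sf'(s))/f(s)$ and invokes $g(s)=f(s)-sf'(s)>0$ on $[1,s_0)$).

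The one genuine gap is a bookkeeping error in the change-of-measure step that sends you on an unnecessary detour. You correctly identify the Radon--Nikodym derivative of the \emph{untilted} law with respect to the tilted law along an $n$-step path as $\alpha(s)^n s^{x_n}$ (starting from $x_0=0$). But the quantity you need to transform is not $\pr(\tilde\tau>n)$: it is
\[
\sum_{k\ge 1} s^{-k}\,\pr(\tilde S_n=k,\ \tilde\tau>n)\;=\;\E\!\big[s^{-\tilde S_n};\,\tilde\tau>n\big].
\]
Applying your own RN derivative,
\[
\E\!\big[s^{-\tilde S_n};\,\tilde\tau>n\big]
=\E^*\!\big[s^{-S^*_s(n)}\cdot \alpha(s)^n s^{S^*_s(n)};\,\tau^*_s>n\big]
=\alpha(s)^n\,\pr(\tau^*_s>n).
\]
The factor $s^{-k}$ built into the definition of $\cpst_s$ exactly cancels the $s^{x_n}$ from the tilt, so there is \emph{no} residual $\E^*[s^{S^*_s(n)};\cdots]$ term and no dependence on whether $s\lessgtr 1$. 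This gives the clean identity
\[
\E[\cpst_s(t)]=\sum_{n\ge 0}\frac{(\alpha(s)t)^n}{n!}\,\pr(\tau^*_s>n)\ \ge\ e^{\alpha(s)t}\,\pr(\tau^*_s=\infty),
\]
which is the paper's argument. Your ``subtree interpretation'' and truncation fallback are not needed; the obstacle you anticipated does not exist once the cancellation is recorded correctly.
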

\begin{proof}
Recall the random walk $S$ defined in \eqref{eqn:rw-def}, and recall from Lemma \ref{Pk} that for all $k \ge 1$,
\begin{equation}\label{p1}
\E[\cP_k(t)] = \sum_{i=0}^{\infty}\frac{t^i}{i!}\pr(\barT_k=i),
\end{equation}
where $\barT_k:=\inf\{ n\geq 0: S_n=0| S_0=k\}$. Consider the random walk $\tilde S$ given by $\tilde S_0=0$ and 
$$\tilde S_n := \sum_{j=1}^n (1-Z_j), \, n \ge 1.$$ 
Also, define $\tilde \tau := \inf\{j \ge 1: \tilde S_j \le 0\}.$ By a time-reversal argument, it readily follows that for any $i,k \ge 1$,
$$
\pr(\barT_k=i) = \pr(\tilde S_i = k, \tilde \tau > i).
$$
Using this and \eqref{p1}, we obtain
\begin{equation}\label{p2}
 \E[\cpst_s(t)] = \sum_{i=0}^{\infty}\frac{t^i}{i!} \sum_{k=1}^{\infty} s^{-k} \pr(\tilde S_i = k, \tilde \tau > i).
\end{equation}
The crucial elementary algebraic identity connecting $\tilde S$ to the random walk $S^*_s$ defined before the lemma is the following:
\beq\label{changekernel}
s^{-k}\pr(\tilde S_i=k, \tilde \tau > i)=(\alpha(s))^i \pr(S_s^*(i)=k, \tau^*_s> i), \ i \ge 0, k \ge 1,
\eeq
where $\alpha(s) = \frac{f(s)}{s}$. Using this observation in \eqref{p2}, we obtain
\begin{align*}
\E[\cpst_s(t)]
&=\sum_{i=0}^\infty  \frac{t^i}{i!} (\alpha(s))^i \sum_{k=1}^\infty \pr(S_s^*(i)=k, \tau^*_s>i)\\
&=\sum_{i=0}^\infty \frac{ (\alpha(s) t)^i}{i!} \pr(\tau_s^*>i)\geq e^{\alpha(s)t}\pr(\tau^*_s=\infty).
\end{align*}
This proves the first assertion in the lemma. To prove the second assertion, namely $\pr(\tau^*_s=\infty)>0$ for $s \in [1,s_0)$ in the case $\E[Z] <1$, it suffices to show $\E[S^*_s(1)]>0$ (this implies the result, e.g., by Lemma 11.3 of \cite{gut2009stopped}). We compute the probability generating function $f^*(\cdot)$ (suppressing dependence on $s$) of $S^*_s(1)$:
\begin{align*}
f^*(\psi)&:=\sum_{j=-\infty}^1 \psi^j p^*_s(0,j)=\sum_{j=-\infty}^1 \psi^j \frac{s^{-j}p_{1-j}}{\alpha(s)}=\frac{\psi f(\frac{s}{\psi})}{f(s)}, \ \psi \ge 0.\\
\end{align*}
Then, 
$$\E[S^*_s(1)] = (f^*)'(1)=\frac{1}{f(s)}\left[ f(\frac{s}{\psi})-\frac{s}{\psi}f'(\frac{s}{\psi})\right]\bigg|_{\psi=1}=\frac{f(s)-sf'(s)}{f(s)}.$$
Let $g(s)=f(s)-sf'(s)$. Since $g(1)=1 - \E[Z]>0$, by definition of $s_0$, we have $g(s)>0$ for any $s\in [1,s_0)$ and hence $\E[S^*_s(1)]>0$. This completes the proof.
\end{proof}

%\SBana{Need $p_0 + p_1 <1$ here.}
The following theorem proves the analogous assertions of Theorem \ref{thm:page-rank-fixed} for the continuous time versions of the PageRank defined in \eqref{cpr1} and \eqref{cpr2}. Recall $T_n=\inf\{ t\geq 0: |\TT(t)|=n+1\}$
\begin{theorem}\label{Pagerank_con}
Fix $c \in (0,1)$.
\begin{enumeratea}
\item {\bf Non-fringe regime: }When $\E[Z]>1$,
\begin{enumeratei}
\item For any $\delta>0$, there exists $\ep>0$ such that
\beq\label{PRrootnf}
\liminf_{n \rightarrow \infty}\pr\left(\frac{R_{c}(T_n)}{n} \ge c(1-c)(1-q_*) - \ep\right)\ge 1-\delta.
\eeq
\item For any $\delta>0$,
\beq\label{PR_lu_nf}
\lim_{t\to\infty} e^{-\left(\frac{1}{R}-\delta\right)t}R^*_c(t)=\infty \quad \text{and}  \quad  \lim_{t\to\infty} t^{-(1+\delta)}e^{-t/R}R^*_c(t)=0 \quad \text{a.s.}
\eeq
\end{enumeratei}
\item {\bf Fringe regime: }When $\E[Z]\leq 1$, 
\begin{enumeratei}
\item Fix any $c\in (0,s_0^{-1}]$ with $c<1$. Then for any $\delta>0$,
\begin{align}
\label{PRrootf} \lim_{t\to\infty} e^{-\left(\frac{1}{R}-\delta\right)t}R_c(t)=\infty \quad &\text{and}  \quad  \lim_{t\to\infty} t^{-(1+\delta)}e^{-t/R}R_c(t)=0 \quad\text{a.s.}\\
\label{PRvertex}\lim_{t\to\infty} e^{-\left(\frac{1}{R}-\delta\right)t}R^*_c(t)=\infty \quad &\text{and} \quad \lim_{t\to\infty} t^{-(1+\delta)}e^{-t/R}R^*_c(t)=0 \quad \text{a.s.} 
\end{align}
\item Suppose $s_0> 1$. For any $c\in( s_0^{-1},1)$, there exist non-negative random variables $W_c$ and $W^*_c$  with $\pr(W_c>0)>0$ and $\pr(W^*_c>0)>0$ so that as $t\to\infty$,
$$e^{-cf(1/c)t}R_c(t)\overset{a.s.}{\longrightarrow} W_c \quad\text{ and }\quad e^{-cf(1/c)t}R^*_c(t)\overset{a.s.}{\longrightarrow} W^*_c.$$
\end{enumeratei}
\end{enumeratea}
\end{theorem}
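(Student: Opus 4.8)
The plan is to derive all four parts from the elementary sandwich
\[
(1-c)c\,\cP_1(t)\ \le\ R^*_c(t)\ \le\ (1-c)\Bigl(1+\sum_{l\ge1}c^l\cP_l(t)\Bigr),
\]
together with its analogue for $R_c(t)$ (replace $\cP_l$ by $\tilde\cP_l$), and from the coupling that realizes $\cT^*(\cdot)$ as the subtree of $\cT(\cdot)$ consisting of its surviving vertices: a surviving vertex has the same distance to the root in both trees, so this coupling gives $\cP_l(t)\le\tilde\cP_l(t)$ for all $l$ and $t$, hence $R^*_c(t)\wedge R_c(t)\ge(1-c)c\,\cP_1(t)$ and $\cpst_s(t)\le\tilde\cpst_s(t)$ for every $s>0$. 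All the ``$\to\infty$'' lower bounds in (a)(ii) and in (b)(i)--(ii) are then immediate from Corollary~\ref{corr:cp1-infty}\eqref{lower}, $e^{-(1/R-\delta)t}\cP_1(t)\to\infty$ a.s. For (a)(i), I would instead invoke \eqref{tnconvp}: $R_c(T_n)/n\ge(1-c)c\,\tilde\cP_1(T_n)/n\to(1-c)c(1-q_*)$ in probability, so in fact $\liminf_n\pr\bigl(R_c(T_n)/n\ge c(1-c)(1-q_*)-\ep\bigr)=1$ for every $\ep>0$.

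For the vanishing upper bounds in (a)(ii) and (b)(i), the key point is that $cs_0\le1$ in all these cases (in the non-fringe regime $s_0<1$ and $c<1$; in the fringe case $c\le s_0^{-1}$ is assumed), so $c^l\le s_0^{-l}$ termwise and hence $\sum_{l\ge1}c^l\cP_l(t)\le\cpst_{s_0}(t)$ and $\sum_{l\ge1}c^l\tilde\cP_l(t)\le\tilde\cpst_{s_0}(t)$. Theorem~\ref{updegree}(i) and Corollary~\ref{updegree_root}(i) at $s=s_0$ (recall $f(s_0)<\infty$ and $f(s_0)/s_0=1/R$) then give $\E[R^*_c(t)]\le Ce^{t/R}$ and $\E[R_c(t)]\le Ce^{t/R}$. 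Since $R^*_c(\cdot)$ and $R_c(\cdot)$ are nondecreasing while $s\mapsto s^{-(1+\delta)}e^{-s/R}$ is decreasing, bounding $\sup_{s\in[N,N+1]}s^{-(1+\delta)}e^{-s/R}R^*_c(s)$ by $N^{-(1+\delta)}e^{-N/R}R^*_c(N+1)$ and applying Markov's inequality produces a probability $O(N^{-(1+\delta)})$, summable in $N$; Borel--Cantelli then gives $t^{-(1+\delta)}e^{-t/R}R^*_c(t)\to0$ a.s., and identically for $R_c$, exactly as in the proof of \eqref{upper}.

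The substantive case is the a.s./$L^1$ convergence in (b)(ii). Fix $c\in(s_0^{-1},1)$ (so $s_0>1$, forcing $\E[Z]<1$), put $s:=1/c\in(1,s_0)$ and $a:=f(s)/s=cf(1/c)$, noting $a>1/R>0$ by Lemma~\ref{lemma:prop-R-s0}(a). The generator identity from the proof of Theorem~\ref{updegree}(i) gives $\cL[\cpst_s(t)]-a\,\cpst_s(t)\le p_0/s$; combining this with the martingale of Remark~\ref{rem:mar} at $\theta=1$ and a time-dependent Dynkin computation against the deterministic factor $e^{-at}$ shows that $e^{-at}\cpst_s(t)-\frac{p_0}{as}(1-e^{-at})$ is a supermartingale bounded below, hence $e^{-at}\cpst_s(t)$ converges a.s. to a finite nonnegative limit; the same argument for $\cT$, using $\cL[\tilde\cpst_s(t)]-a\,\tilde\cpst_s(t)\le 1/s$ (which follows from Proposition~\ref{prop:spectral-prop}(b) just as Corollary~\ref{updegree_root}(i) follows from Theorem~\ref{updegree}(i)), gives a.s. convergence of $e^{-at}\tilde\cpst_s(t)$. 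To show the limits are not a.s. zero I would use uniform integrability: Theorem~\ref{updegree}(ii) and Corollary~\ref{updegree_root}(ii) at $\theta=2$ and $s\in[1,s_0]$ bound $\E[(e^{-at}\cpst_s(t))^2]$ and $\E[(e^{-at}\tilde\cpst_s(t))^2]$ uniformly in $t$, so the a.s. convergence upgrades to $L^1$ convergence and the limits have means $\lim_t e^{-at}\E[\cpst_s(t)]$ and $\lim_t e^{-at}\E[\tilde\cpst_s(t)]$, both $\ge\pr(\tau^*_s=\infty)>0$ by Lemma~\ref{plb} together with the coupling $\E[\tilde\cpst_s(t)]\ge\E[\cpst_s(t)]$. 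Finally $e^{-at}R^*_c(t)=(1-c)\bigl(e^{-at}+e^{-at}\cpst_s(t)\bigr)$ and $e^{-at}R_c(t)=(1-c)\bigl(e^{-at}+e^{-at}\tilde\cpst_s(t)\bigr)$ converge a.s. to $W^*_c\ge0$ and $W_c\ge0$ of strictly positive mean, so $\pr(W^*_c>0)>0$ and $\pr(W_c>0)>0$, which is (b)(ii).

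The main obstacle is exactly this last step. Because neither $\vA$ nor $\vB$ possesses an exact positive geometric left eigenvector, and because the finite-type truncation of Section~\ref{sec:urn-model-def} only yields one-sided comparisons, there is no ready-made reproductive martingale or finite multitype branching process comparison at the growth rate $cf(1/c)$; the remedy is the supermartingale estimate above, whose nondegeneracy must then be recovered separately from the change-of-measure lower bound of Lemma~\ref{plb} and the second-moment bounds of Theorem~\ref{updegree}(ii). Everything else is bookkeeping on top of the estimates already established in Sections~\ref{sec:proof-deg-dist}--\ref{sec:proof-fixed-vertex}.
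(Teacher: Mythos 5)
Your proposal is correct and takes essentially the same route as the paper's proof: the same sandwich bounds $(1-c)c\,\cP_1(t)\le R^*_c(t)\le (1-c)(1+\cpst_{s_0}(t))$ (and the $\tilde\cP$ analogue) combined with Corollary \ref{corr:cp1-infty}, \eqref{tnconvp}, Theorem \ref{updegree}(i)--(ii), Corollary \ref{updegree_root} and a Markov/Borel--Cantelli argument for parts (a) and (b)(i), and for (b)(ii) a supermartingale at rate $cf(1/c)$ whose nondegeneracy is recovered from the second-moment uniform integrability and the change-of-measure lower bound of Lemma \ref{plb}. The only cosmetic difference is that the paper verifies $\LL\bigl(e^{-cf(1/c)t}R^*_c(t)\bigr)\le 0$ directly (the constant term is absorbed exactly because $\cP_0\equiv 1$), whereas you add a small deterministic compensator to $e^{-at}\cpst_{1/c}(t)$; both yield the same conclusion.
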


\begin{proof}
We first focus on the  non-fringe regime.

Denoting the size of $\TT(t)$ by $|\TT(t)|$, observe that 
$$R_c(t) \geq (1-c)c\tilde \cP_1(t).$$
\eqref{PRrootnf} then follows from \eqref{tnconvp}.

For $R^*_c(t)$ with any $c \in (0, 1)$, recalling from Lemma \ref{lemma:prop-R-s0}(d) that $s_0^{-1}>1$,
\beq\label{sand}
(1-c)c\cP_1(t)\leq R^*_c(t) \leq (1-c)\left(1 + \sum_{l=1}^\infty s_0^{-l} \cP_l(t)\right)=(1-c)(1+\cpst(t)).
\eeq
Hence, the first limit in \eqref{PR_lu_nf} follows from \eqref{lower} and the first inequality in \eqref{sand}. 

To show the second limit in \eqref{PR_lu_nf} take any $\delta>0$. For $\ep>0$ and $N\geq 0$, define the event 
$$E_N=\{ \sup_{u \in [N,N+1]} u^{-(1+\delta)} e^{-u/R} R^*_{c}(u)>\ep\}.$$ 
It follows from the upper bound in \eqref{sand} and \eqref{Ptub} that for any $c \in (0, 1)$ and $N \ge 1$,
\begin{align*}
\pr(E_N)&\leq \pr(R^*_{c}(N+1)>\ep e^{N/R}N^{1+\delta})\leq \frac{\E[R^*_{c}(N+1)]}{\ep e^{N/R}N^{1+\delta}}\\
&\leq  \frac{(1-c)(1+\E[\cpst(N+1)])}{\ep e^{N/R}N^{1+\delta}}\leq \frac{(1-c)\left(1+\frac{p_0}{f(s_0)}e^{(N+1)/R}\right)}{\ep e^{N/R}N^{1+\delta}}
\leq  \frac{C e^{1/R}}{\ep N^{1+\delta}}
\end{align*}
for some constant $C>0$. Applying Borel-Cantelli Lemma then gives $\pr(\limsup_{N\to\infty} E_N)=0$. Hence,
$$t^{-(1+\delta)}e^{-t/R}R^*_c(t) \overset{a.s.}{\longrightarrow}0,$$
proving the second limit in \eqref{PR_lu_nf}.

We now turn to the fringe regime. Define $\tilde\cpst(t) = \sum_{i=1}^\infty s_0^{-i} \tilde \cP_i(t),$
$$(1-c)c\tilde \cP_1(t)\leq  R_c(t) \leq (1-c)\left(1 + \tilde \cpst(t)\right).$$
The first limit in \eqref{PRrootf} comes from the observation that $\tilde\cP_1(t)$ dominates $\cP_1(t)$ and Corollary \ref{corr:cp1-infty}. The second limit follows from an argument that is essentially the same as that of the second limit in \eqref{PR_lu_nf} except that we apply Corollary \ref{updegree_root}(i) for the expectation of $\tilde \cpst(t)$. 

The proof of \eqref{PRvertex} is essentially the same as that of \eqref{PR_lu_nf} upon noting that the bounds in \eqref{sand} remain valid for $c \in (0, s_0^{-1}]$.

It remains to prove (ii) of part (b). For $t\geq 0$, define
$$M^*_c(t)= e^{-cf(1/c)t}R^*_c(t).$$
Using the generator expression in \eqref{eqn:full-gene} (see also Remark \ref{rem:mar}), observe that for any $t \ge 0$, 
\begin{align*}
\LL (M^*_c(t))&= -cf(1/c)e^{-cf(1/c)t}R^*_c(t)+ e^{-cf(1/c)t}(1-c) \sum_{i=1}^\infty c^i \sum_{j=i-1}^\infty A_{ij}\cP_j(t)\\
&= -cf(1/c)e^{-cf(1/c)t}R^*_c(t)+ e^{-cf(1/c)t}(1-c)\sum_{j=0}^\infty \left(\sum_{i=1}^{j+1} c^i A_{ij}\right)\cP_j(t)\\
&= -cf(1/c)e^{-cf(1/c)t}R^*_c(t)+ e^{-cf(1/c)t}(1-c)c\sum_{j=0}^\infty \left(\sum_{i=1}^{j+1} c^{i-j-1} p_{j-i+1}\right)c^j\cP_j(t)\\
&\le -cf(1/c)e^{-cf(1/c)t} R^*_c(t) + cf(1/c)e^{-cf(1/c)t}R^*_c(t)=0.
\end{align*}
Hence, $M^*_c(t)$
 is a non-negative supermartingale and thus 
 \beq\label{convergence_PR}
 M^*_c(t) \overset{a.s.}{\longrightarrow} W^*_c \quad\text{ as }t \to \infty,
 \eeq
 for some non-negative random variable $W^*_c$. 
 
 By Theorem \ref{updegree}(ii), we conclude that $\sup_{t < \infty} \E(M^*_c(t))^2<\infty$. Using this observation and Lemma \ref{plb},
 $$
 \E(W^*_c) = \lim_{t \rightarrow \infty}\E(M^*_c(t)) \ge \pr(\tau^*_{1/c}=\infty) >0.
 $$
This implies that $\pr(W^*_c>0)>0$.

The proof for the convergence of $M_c(t):= e^{-cf(1/c)t}R_c(t)$ to some $W_c$ follows similarly upon noting that when $\E[Z]\leq 1$, $(c^i: i\geq 0)$ is a sub-invariant left eigenvector of $\vB$ with eigenvalue $cf(1/c)$, see Proposition \ref{prop:spectral-prop}(b). We can show $\LL (M_c(t))\leq 0$, which implies that  $M_c(t)$ is a non-negative supermartingale that converges to a random variable $W_c$. By Corollary \ref{updegree_root}(ii),  $\sup_{t<\infty}\E(M_c(t)^2)<\infty$. Combined with the fact that  $\tilde{\cP}_i(t)$ dominates $\cP_i(t)$ for all $i\geq0,t\geq 0$, we have
$$\E(W_c)=\lim_{t\to\infty}\E(M_c(t))\geq \lim_{t\to\infty}\E(M^*_c(t))>0,$$
proving $\pr(W_c>0)>0$. This concludes the proof of (ii) of part (b), and the Theorem. 
\end{proof}

\begin{proof}[{\bf Proof of Theorem \ref{thm:page-rank-fixed}}]
Using the continuous time embedding, observe that for any $k \ge 0$,
$$\{ R_{v_k,c}(n), n\geq v\}\overset{d}{=}\{ R^\circ_{v_k,c}(T_n-\sigma_k), n\geq v\},$$
where $\sigma_k$ is the birth time of $v_k$, $T_n = \inf\{t \ge 0 : |\TT(t)| = n+1\}$ and $R^\circ_{v_k,c}(t)$ is the PageRank of $v_k$ in $\TT(t + \sigma_k)$. Part (a) and part (b)(i) of the Theorem now follow from Theorem \ref{Pagerank_con} upon noting that $\{R^\circ_{v_k,c}(t) : t \ge 0\}$ has the same distribution as $\{R_c(t) : t \ge 0\}$ for $k=0$ and $\{R^*_c(t) : t \ge 0\}$ for $k\ge 1$.

To prove (b)(ii), note that for $k \ge 1$,
\begin{align*}
\frac{R_{v_k,c}(n)}{n^{cf(1/c)}}&=\frac{R^\circ_{v_k,c}(T_n-\sigma_k)}{e^{cf(1/c)(T_n-\sigma_k)}} \cdot \left(\frac{ e^{T_n-\sigma_k}}{n}\right)^{cf(1/c)}\overset{a.s.}{\longrightarrow} e^{-\sigma_k cf(1/c)} \tilde{W}_{k,c} W^{-cf(1/c)}=: W_{k,c},
\end{align*}
where the limit $W:=\lim_{n\to\infty} ne^{-T_n}$ almost surely exists by Lemma \ref{lem:yule-asymp}. The random variable $\tilde{W}_{k,c}\overset{d}{=}W^*_c$ for $k\geq 1$ and $\tilde{W}_{k,c}\overset{d}{=}W_c$ for $k=0$, where $W^*_c$ and $W_c$ are obtained in Theorem \ref{Pagerank_con} and The result follows.
\end{proof}

\begin{proof}[{\bf Proof of Theorem \ref{thm:pr2}}]
Recall the form of the limiting PageRank distribution given in \eqref{lwlpr}, from which we conclude that $\cR_{\emptyset, c}(\infty)$ has the same distribution as $R_c^*(\tau)$ where $\tau$ is a unit rate exponential random variable independent of the random tree process $\TT(\cdot)$.

Part (a) and the assertion in part (b) for $c \in (0, s_0^{-1}]$ with $c<1$ now follow along the same argument as in the proof of Theorem \ref{thm:deg-dist} upon using the bounds in \eqref{sand}. The analogue of \eqref{eqn:802} in part (a) uses Corollary \ref{tau_moment}(ii) and that in part (b) uses Corollary \ref{tau_moment}(i). The other direction follows from \eqref{lld} and the lower bound in \eqref{sand}.

It only remains to prove the assertion in part (b) for $c \in (s_0^{-1},1)$. Observe that for any $\delta \in (0, \frac{1}{cf(1/c)})$, $r>0$, using Markov's inequality and Corollary \ref{tau_moment}(i), 
$$
r^{\frac{1}{cf(1/c)} - \delta}\pr(\cR_{\emptyset, c}(\infty)\geq r) \le \E\left[(\cR_{\emptyset, c}(\infty))^{\frac{1}{cf(1/c)} - \delta}\right]\le \E\left[\left((1-c)(1+ \cpst_{1/c}(\tau)\right)^{\frac{1}{cf(1/c)} - \delta}\right] < \infty,
$$
which implies 
$$
\limsup_{r \rightarrow \infty}\frac{\log \pr(\cR_{\emptyset, c}(\infty)\geq r)}{\log r} \le - \frac{1}{cf(1/c)}.
$$
Moreover, by the almost sure convergence in Theorem \ref{Pagerank_con} (b)(ii) to a non-degenerate non-negative random variable, there exist positive $\eta_1, \eta_2, t_0$ such that
$$
\pr\left(e^{-cf(1/c)t}R^*_c(t) \ge \eta_1\right) \ge \eta_2, \ t \ge t_0.
$$
Now, proceeding as in Theorem \ref{thm:deg-dist}, for $r \ge \eta_1e^{cf(1/c)t_0}$,
\begin{align*}
&\pr(\cR_{\emptyset, c}(\infty)\geq r) = \int_0^{\infty}e^{-s}\pr\left(R_c^*(s) \ge r\right)ds = \int_0^{\infty}e^{-s}\pr\left(e^{-cf(1/c)s}R_c^*(s) \ge e^{-cf(1/c)s}r\right)ds\\
& \ge \int_{\frac{1}{cf(1/c)}\log(r/\eta_1)}^{\infty}e^{-s}\pr\left(e^{-cf(1/c)s}R_c^*(s) \ge \eta_1\right)ds \ge \eta_2\int_{\frac{1}{cf(1/c)}\log(r/\eta_1)}^{\infty}e^{-s}ds = \eta_2\left(\frac{\eta_1}{r}\right)^{\frac{1}{cf(1/c)}}.
\end{align*}
This implies
$$
\liminf_{r \rightarrow \infty}\frac{\log \pr(\cR_{\emptyset, c}(\infty)\geq r)}{\log r} \ge - \frac{1}{cf(1/c)},
$$
completing the proof of the theorem.
\end{proof}

\section*{Acknowledgements}
Banerjee was supported in part by the NSF CAREER award DMS-2141621. Bhamidi was supported in part by NSF DMS-2113662. Research was partially supported by NSF RTG grant DMS-2134107. \tcr{The authors thank an anonymous referee and an editor whose valuable comments led to a major improvement in the paper, in particular allowing us to realize the existence of potential phase transition behavior for the height in the non-fringe regime.}

\bibliographystyle{plain}
\bibliography{persistence,pref_change_bib,scaling,pdn,CTBP}

\appendix
\section{$\alpha$-recurrence and transience of $\mu^\vB$ and $\mu^\vA$}\label{apprec}

A full account of $\alpha$-recurrence/transience for Markov chains can be found in Chapter 2 of \cite{nummelin2004general}. To be concise we only state what is needed for the rigorous definition of $\alpha$-recurrence.

 Let $\varphi$ be a $\sigma$-finite measure on $S$ and let $A\in\mathcal{S}$ be a $\varphi$-positive set (i.e., $\varphi(A)>0$).
\begin{defn}[irreducible measure]\label{irrdef}
The set $A$ is called $\varphi$-communicating for a Markov kernel $K$ on $S\times S$ if every $\varphi$-positive subset $B\subseteq A$ is attainable from $A$, i.e., $K^{(n)}(x,B)>0$ for some $n\geq 0$ for all $x\in A$ and all $\varphi$-positive $B\subseteq A$.

If the whole state space $S$ is $\varphi$-communicating, then the kernel $\mu$ is called $\varphi$-irreducible and $\varphi$ is called an irreducibility measure for $K$.
\end{defn}

It is clear that any measure $\psi$ which is absolutely continuous with respect to an irreducibility measure $\varphi$ is itself an irreducibility measure. 
\begin{prop}[Proposition 2.4 in \cite{nummelin2004general}]
Suppose kernel $K$ is $\varphi$-irreducible. Then there exists a maximal irreducibility measure $\psi$ in the sense that all other irreducibility measures are absolutely continuous with respect to $\psi$.
\end{prop}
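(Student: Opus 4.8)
The plan is to realize $\psi$ explicitly as a resolvent-type average of $\varphi$ along the kernel and then verify the two defining properties. First I would reduce to the case that $\varphi$ is finite: using a measurable partition $\{S_k\}_{k\ge1}$ of $S$ with $0<\varphi(S_k)<\infty$ (available since $\varphi$ is a nontrivial $\sigma$-finite measure), replace $\varphi$ by the equivalent probability measure $\sum_k 2^{-k}\varphi(\cdot\cap S_k)/\varphi(S_k)$; being absolutely continuous with respect to $\varphi$ it is again an irreducibility measure, so without loss of generality $\varphi$ is a probability measure. Then, fixing a strictly positive probability sequence $a=(a_n)_{n\ge0}$, say $a_n=2^{-(n+1)}$, I would set $K_a=\sum_{n\ge0}a_nK^{(n)}$ and define $\psi:=\varphi K_a=\sum_{n\ge0}a_n\,\varphi K^{(n)}$. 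Since $\varphi$ is a probability measure and each $\varphi K^{(n)}$ has total mass at most one, $\psi$ is a finite (in particular $\sigma$-finite) measure, and $\psi\ge a_0\varphi$ is nontrivial.

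Next I would check that $\psi$ is an irreducibility measure for $K$. Let $B$ be $\psi$-positive. Then $\varphi(A)>0$, where $A=\{y:K_a(y,B)>0\}=\bigcup_{n\ge0}A_n$ with $A_n=\{y:K^{(n)}(y,B)>0\}$; each $A_n$ is measurable because $x\mapsto K^{(n)}(x,B)$ is. Fixing an arbitrary $x\in S$, the $\varphi$-irreducibility of $K$ yields some $m$ with $K^{(m)}(x,A)>0$, hence $K^{(m)}(x,A_n)>0$ for some $n$, and then $K^{(m+n)}(x,B)\ge\int_{A_n}K^{(m)}(x,dy)\,K^{(n)}(y,B)>0$. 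As $x$ was arbitrary, every $\psi$-positive set is attainable from every state, so $\psi$ is an irreducibility measure.

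Finally I would establish maximality: given any irreducibility measure $\varphi'$, I claim $\varphi'\ll\psi$. Suppose $\psi(B)=0$. Then $\varphi K^{(n)}(B)=0$ for all $n$, so for each $n$ one has $K^{(n)}(x,B)=0$ for $\varphi$-a.e. $x$; taking the countable union over $n$ gives $\varphi(B_0)=0$, where $B_0=\{x:K^{(n)}(x,B)>0\text{ for some }n\ge0\}\supseteq B$ is measurable. If $\varphi'(B)$ were positive, then $\varphi'$-irreducibility of $K$ would force $B$ to be attainable from every $x\in S$, i.e. $B_0=S$, and then $\varphi(S)=\varphi(B_0)=0$, contradicting the nontriviality of $\varphi$. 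Hence $\varphi'(B)=0$, so $\varphi'\ll\psi$, which is exactly the asserted maximality. The only genuinely delicate point is the bookkeeping in the first step: knowing that an irreducibility measure may be taken nontrivial and replaced by an equivalent finite one so that the series defining $\psi$ is a legitimate $\sigma$-finite measure, and that the ``$\varphi$-positive subset'' quantifiers in the definition are preserved under passing to an equivalent measure. Everything else is routine manipulation of kernels and null sets and requires no quantitative estimates; a reader content with a citation may simply invoke \cite[Ch.\ 2]{nummelin2004general}.
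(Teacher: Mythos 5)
Your proof is correct and is precisely the classical construction that the cited source (Nummelin, Proposition 2.4; see also Meyn--Tweedie, Proposition 4.2.2) uses: normalize $\varphi$ to a probability measure, set $\psi=\varphi K_a$ with $K_a=\sum_n a_n K^{(n)}$, and verify irreducibility and maximality exactly as you do; the paper itself offers no proof, only the citation. The one hypothesis you should flag is that your bound ``each $\varphi K^{(n)}$ has total mass at most one'' uses that $K$ is (sub)stochastic, which is what ``Markov kernel'' means here; for a general nonnegative kernel one would instead choose the weights $a_n$ (or weight a countable partition of $S$) so that the series defining $\psi$ is still $\sigma$-finite, after which the argument is unchanged.
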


Now we present a definition of $\alpha$-recurrence in the context of our problem. It follows from the discussion on page 194 of  \cite{jagers1989general} (see also Proposition 2.1 of \cite{niemi1986non}) that 
\begin{defn}[$\alpha$-recurrence]\label{recurrence}
Let $\varphi$ be the maximal irreducibility measure for  the kernel $\mu(\cdot,\cdot\times \RR_+)$. The kernel $\mu(\cdot,\cdot\times \RR_+)$ is said to be $\alpha$-recurrent if 
$$\sum_{n=0}^\infty \mu_{\alpha}^{(n)}(s,A\times \RR_+)=\infty$$ 
for all $s\in S$ and $A\in \mathcal{S}$ with $\varphi(A)>0$. Otherwise it is said to be $\alpha$-transient.
\end{defn}

For the rest of this section we will discuss the $\alpha$-recurrence/transience of the continuous time branching process embedding of our model.

\begin{lemma}
When $\E[Z]>1$, the kernel $\mu^\vB(s,r\times dt):=B_{rs}dt$ is $\alpha$-recurrent.
\end{lemma}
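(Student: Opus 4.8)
The plan is to translate $\alpha$-recurrence of the reproduction kernel $\mu^\vB$ into ordinary recurrence of an explicit Markov chain on the type space $S=\{0,1,2,\dots\}$, and then close the argument with the strong law of large numbers. Since the offspring process of each individual is a rate-one Poisson process, the excerpt has already recorded that $\alpha=1$ and that $\vB^T$ is a stochastic matrix; hence $\mu^\vB_\alpha(r,s)=B_{sr}/\alpha=B_{sr}$, and an easy induction from the definition of the convolutions $\mu^{(n)}_\lambda$ (using that $\mu^\vB(u,s\times\cdot)$ is supported on $\RR_+$, so that replacing $B$ by $B-t$ in the time coordinate is harmless) gives
\[
\mu_\alpha^{(n)}(r,\{s\}\times\RR_+)=\big((\vB^T)^n\big)_{rs}=P^n(r,s),\qquad n\ge 0,
\]
where $P:=\vB^T$ is the transition matrix of a Markov chain $\{X_n\}_{n\ge 0}$ on $S$. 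Reading off the columns of $\vB$ and writing $c_j=\pr(Z\ge j)$ with $Z\sim\vp$, one finds $P(0,1)=1$, $P(r,1)=c_r$ and $P(r,s)=p_{r+1-s}$ for $2\le s\le r+1$; equivalently, for $X_n\ge 1$,
\[
X_{n+1}=\max\big(X_n+1-Z_{n+1},\,1\big),
\]
with $\{Z_i\}$ i.i.d.\ $\sim\vp$, while from state $0$ the chain jumps deterministically to $1$. Thus $\{X_n\}$ is the random walk with i.i.d.\ increments $1-Z$ reflected at level $1$.

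Next I would pin down the irreducibility structure so as to make Definition~\ref{recurrence} usable. Because $p_0>0$ we have $P(r,r+1)=p_0>0$; because $p_0+p_1<1$ there is $k^\ast\ge 2$ with $p_{k^\ast}>0$, so from any large state the chain can decrease (by $k^\ast-1$ per step) down into $\{2,\dots,k^\ast\}$, from which $P(s,1)=c_s\ge p_{k^\ast}>0$. Hence $\{1,2,\dots\}$ is a single communicating class. The first row of $\vB$ vanishes, so $P(r,0)=0$ for all $r$ and state $0$ is never revisited; consequently the maximal irreducibility measure $\varphi$ of $P$ (equivalently of $\mu^\vB_\alpha(\cdot,\cdot\times\RR_+)$, in the sense of Definition~\ref{irrdef}) is equivalent to counting measure on $\{1,2,\dots\}$ with $\varphi(\{0\})=0$. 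Therefore $\alpha$-recurrence of $\mu^\vB$ amounts to $\sum_{n\ge 0}\mu^{(n)}_\alpha(s,A\times\RR_+)=\infty$ for all $s\in S$ and all $\varphi$-positive $A$, which (since $\mu^{(n)}_\alpha(s,A\times\RR_+)=P^n(s,A)\ge P^n(s,a)$ for any fixed $a\in A$) reduces to showing $\sum_n P^n(s,a)=\infty$ for every $s\in S$ and every $a\ge 1$, i.e.\ to recurrence of the single state $1$ for $\{X_n\}$ together with the communication argument.

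Finally I would establish that recurrence. Fix a start $X_0=r\ge 2$ and let $\kappa=\inf\{n\ge 1:X_n=1\}$. As long as $X_j\ge 2$ the reflection map acts as the identity on the relevant range, so on $\{\kappa>n\}$ one has $X_n=r+\sum_{m=1}^n(1-Z_m)$, and in particular $\{\kappa>n\}\subseteq\{r+\sum_{m=1}^n(1-Z_m)\ge 2\}$. Since $\E[Z]>1$, the strong law of large numbers gives $\sum_{m=1}^n(1-Z_m)\to-\infty$ a.s., so $\pr_r(\kappa=\infty)=0$; and from state $1$ the chain returns to $1$ in one step with probability $1-p_0$ and otherwise moves to $2$, whence $\pr_1(X_n=1\text{ for some }n\ge 1)=1$. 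Thus $1$ is recurrent and $\sum_n P^n(1,1)=\infty$; by irreducibility on $\{1,2,\dots\}$ this upgrades to $\sum_n P^n(r,a)=\infty$ for all $r,a\ge 1$, and the case $r=0$ follows from $P(0,1)=1$. This gives $\alpha$-recurrence of $\mu^\vB$. The main obstacle is really the bookkeeping of the first two paragraphs — identifying the convolution kernels, using $\alpha=1$, and locating the maximal irreducibility measure so that Definitions~\ref{irrdef}--\ref{recurrence} become the statement about $P=\vB^T$; once that translation is done, the recurrence of the negative-drift reflected random walk is a one-line SLLN argument (and in fact one gets positive recurrence via a Foster--Lyapunov bound with $V(x)=x$ using $\E[(Z-x)^+]\to 0$, though this is not needed here).
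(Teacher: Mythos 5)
Your proof is correct and follows the same overall strategy as the paper: both reduce $\alpha$-recurrence of $\mu^\vB$ (with $\alpha=1$) to ordinary recurrence of the Markov chain with transition matrix $\vB^T$ restricted to $\{1,2,\dots\}$, handle state $0$ by noting that the maximal irreducibility measure assigns it zero mass and that the chain jumps from $0$ to $1$ deterministically, and use irreducibility on $\{1,2,\dots\}$ (via $p_0>0$ and $p_0+p_1<1$) to upgrade recurrence of state $1$ to divergence of $\sum_n P^n(s,a)$ for all relevant $s,a$. The only substantive difference is the recurrence step itself: the paper computes the one-step drift $\E[X_n-X_{n-1}\mid X_{n-1}=i]=1-\E[Z]+\sum_{k\ge i}(k-i)p_k$, observes it is eventually negative, and concludes via a non-negative supermartingale absorption argument; you instead identify the chain exactly as the walk $X_{n+1}=\max(X_n+1-Z_{n+1},1)$, note that before hitting $1$ it coincides with the unreflected walk, and invoke the SLLN (valid since Assumption~\ref{ass:mean} gives $\E[Z]<\infty$) to force the walk to $-\infty$. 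Your route is slightly more direct and avoids the drift computation, while the paper's supermartingale argument would survive without the integrability of $Z$; both are correct here.
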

\begin{proof}
In this case the Malthusian parameter $\alpha=1$. Hence
$$\mu^\vB_{\alpha}(s,r\times \RR_+)=\sum_{n=0}^\infty \frac{ (\vB^n)_{rs}}{\alpha^n}=\sum_{n=0}^\infty (\vB^n)_{rs}.$$
Let $\hat \vB$ denote the matrix obtained by restricting $\vB$ to states $\{1,2,\dots\}$. As $\E[Z]>1$ implies $p_0 + p_1 <1$, it can be checked that $\hat \vB$ is irreducible, which follows from the fact that for any $r,s\in \mathbb{N}$, there exist $m_1,m_2\in \mathbb{N}$ such that $(\hat{\vB}^{m_1})_{r1}>0$ and $(\hat \vB^{m_2})_{1s}>0$.

First, we will show $\sum_{n=0}^\infty (\hat \vB^{n})_{11}=\infty$ which, by irreducibility, will imply $\sum_{n=0}^\infty (\hat \vB^{n})_{rs}=\infty$ for any $r, s \in \mathbb{N}$. Notice that $\hat \vB^n_{11}=((\hat \vB^T)^n)_{11}$ and $\hat \vB^T$ is a probability transition matrix. Let $\{X_n\}_{n\geq 1}$ denote the Markov chain following the transition matrix $\hat \vB^T$ started at $X_0=1$ and let $\tau=\inf\{ n\geq 1: X_n=1\}$ be the first returning time to 1. It is well known that $\sum_{n=0}^\infty ((\hat \vB^T)^{n})_{11}=\infty$ if and only if $X_n$ is recurrent. Therefore, to show  the $\alpha$-recurrence of $\mu^\vB$ we only need to show $P(\tau<\infty|X_0=1)=1$. Note that
\begin{align*}
\E[X_n-X_{n-1}|X_{n-1}=i]&=c_i+\sum_{k=0}^{i-1}(i+1-k)p_k-i\\
&=c_i+(i+1)\sum_{k=0}^{i-1}p_k-\sum_{k=0}^{i-1}kp_k-i=1-(c_ii+\sum_{k=0}^{i-1}kp_k)\\
&=1-\E[Z]+\sum_{k=i}^\infty (k-i)p_k \downarrow 1-\E[Z] \quad \text{ as  $i\to \infty$}.
\end{align*}
When $\E[Z]>1$, there exists $i_0\in \mathbb{N}$ such that for all $i\geq i_0$, $\E[X_n-X_{n-1}|X_{n-1}=i]<0$. Let $\tau_{i_0}=\inf\{ n\geq 0: X_n\leq i_0\}$. We claim that for all $j>i_0$ we have $\pr_{j}(\tau_{i_0}<\infty)=1$.  If we view the set $A=\{1,2,\dots,i_0\}$ as an absorbing state and let $Y_n$ be a Markov chain obtained by projecting $X_n$ onto $\{A,i_0+1,\dots\}$ with $Y_0=j$, then $Y_n$ is a non-negative supermartingale and has to converge almost surely. Note that $Y_n$ can only converge by getting absorbed at $A$, which implies $\pr_{j}(\tau_{i_0}<\infty)=1$. This means that $X_n$ returns to the set $A$ infinitely often almost surely and thus, by irreducibility of $\hat \vB^T$, will return to 1 in finite time almost surely.

For $r\in \mathbb{N}$ and $s=0$, note that $(\vB^n)_{r0}\geq  (\vB^{m_1})_{r1} (\vB^{n-(m_1+1)})_{11} \vB_{10}$ for $n \ge m_1+1$ and the above argument implies $\sum_{n=0}^\infty (\hat \vB^{n})_{r0}=\infty$. It follows from Definition \ref{irrdef} that the maximal irreducibility measure $\varphi$ for $\mu^\vB$ must satisfy $\varphi(\{0\})=0$. Hence we don't need to consider the case where $r=0$.
\end{proof}

\begin{lemma}
\label{lemma:appen-A-trans}
The kernel $\mu^\vA(s,r\times dt):=A_{rs}dt$ is $\alpha$-transient.
\end{lemma}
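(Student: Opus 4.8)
The Malthusian parameter is $\alpha=1$ since $\vA^T$ is substochastic (in fact strictly so because $p_0>0$ makes the first column sum $<1$... wait, let me reconsider: the column sums of $\vA$ are $\sum_{i\ge 1}A_{ij}=\sum_{i=1}^{j+1}p_{j+1-i}=\sum_{k=0}^{j}p_k<1$, so $\vA^T$ is strictly substochastic). Actually the cleanest route: $\alpha$-transience means $\sum_{n=0}^\infty \mu^\vA_\alpha{}^{(n)}(s,A\times\RR_+)<\infty$ for some $s$ in a $\varphi$-positive set, where here this sum equals $\sum_{n=0}^\infty (\vA^n)_{rs}$ (restricting to states $\{0,1,2,\dots\}$, using $\alpha=1$). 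So the plan is to show $\sum_{n=0}^\infty (\vA^n)_{rs}<\infty$ for the relevant $r,s$. As in the companion lemma for $\vB$, one passes to the transpose: $(\vA^n)_{11}=((\vA^T)^n)_{11}$, and $\vA^T$ restricted to $\{1,2,\dots\}$ is the transition matrix of the random walk $S$ of \eqref{eqn:rw-def} killed upon hitting $0$ (indeed $(\vA^T)_{ij}=A_{ji}=p_{i+1-j}$ corresponds to an increment $Z-1$ with $Z\sim\vp$).

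First I would identify $\sum_{n\ge 0}((\vA^T)^n)_{11}$ with the expected number of visits to state $1$ of the walk $S$ started at $1$ before absorption at $0$; equivalently, with $\sum_{n\ge 0}\pr_1(S_n=1,\ n<T)$ where $T=\inf\{n\ge 0:S_n=0\}$. The key point is that this Green's function is finite. When $\E[Z]>1$ the walk has positive drift and is transient, so the number of visits to $1$ is a.s. finite with finite mean (geometric number of returns, each return probability $<1$), giving finiteness immediately. When $\E[Z]\le 1$ the walk is recurrent on $\Zbold$, but here it is \emph{killed at} $0$: the quantity $\sum_n\pr_1(S_n=1,n<T)$ is the Green's function of the substochastic (strictly, since $p_0=P_1(S_1=0)>0$) chain on $\{1,2,\dots\}$, and a standard argument shows it is finite — e.g. it equals $1/\pr_1(\text{no return to }1\text{ before }T)$ and this escape probability is positive because from state $1$ there is probability $p_0>0$ of jumping straight to $0$. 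Either way $\sum_{n}(\vA^n)_{11}<\infty$. Then, exactly as in the $\vB$ lemma, irreducibility of $\vA^T$ on $\{1,2,\dots\}$ (which holds since $p_0>0$ and $\gcd\{j:p_j>0\}=1$ by Assumption \ref{ass:pmf}) propagates finiteness: $(\vA^n)_{rs}$ is sandwiched between constant multiples of $(\vA^{n'})_{11}$ for shifted indices, so $\sum_n(\vA^n)_{rs}<\infty$ for all $r,s\ge 1$; and as for $\vB$, one checks the maximal irreducibility measure $\varphi$ assigns zero mass to $\{0\}$ (state $0$ is absorbing for $\vA^T$, hence leads nowhere), so it suffices to verify transience on $\{1,2,\dots\}$. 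This yields $\alpha$-transience of $\mu^\vA$.

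The main obstacle — really the only subtle point — is the case $\E[Z]\le 1$, where the underlying walk $S$ is recurrent, so one must genuinely use the killing at $0$ to obtain a finite Green's function; the recurrence of $S$ on all of $\Zbold$ is irrelevant once we restrict to the substochastic chain on $\{1,2,\dots\}$, and the escape bound via the direct jump $1\to 0$ with probability $p_0>0$ handles it. This mirrors, by contrast, the $\vB$ case: there the extra mass in the first row (the $c_j$'s, which send a walk at the \emph{root} back up rather than killing it) is precisely what restores recurrence, whereas $\vA$ lacks this reflection and stays transient. I would also remark that this is the structural reason, flagged in the remark preceding Proposition \ref{Perroneigen}, why the infinite-dimensional MTBP argument works for $\cT$ (driven by $\vB$) but not for $\cT^*$ (driven by $\vA$), necessitating the finite-truncation scheme of Section \ref{sec:urn-model-def}.
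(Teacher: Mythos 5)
There is a genuine gap, and it is right at the start: you misidentify the Malthusian parameter. For the kernel $\mu^\vA$ the Malthusian rate is \emph{not} $\alpha=1$; it is $\alpha=1/R$, the Perron root of $\vA$ (substochasticity of $\vA^T$ only gives $\alpha\le 1$, and by Lemma \ref{lemma:prop-R-s0} one has $R>1$ whenever $\E[Z]>1$, and also when $\E[Z]<1$ with $f$ analytic at $1$). Consequently the quantity whose finiteness must be checked is not $\sum_n (\vA^n)_{rs}$ but
\[
\sum_{n=0}^\infty \frac{(\vA^n)_{rs}}{\alpha^n}=\sum_{n=0}^\infty R^n(\vA^n)_{rs},
\]
and your Green's-function/escape-probability argument only controls the unweighted sum. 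Finiteness of the unweighted Green's function of the killed walk says nothing about whether the return (or absorption) probabilities decay fast enough to beat the exponential weight $R^n$: by \cite{pakes1973conditional}, $\pr_1(\barT_1=n)^{1/n}\to 1/R$, so one is summing a series exactly at its radius of convergence, and convergence there is a genuinely delicate boundary statement. This is precisely where the paper's proof does its work: taking $(s,r)=(0,1)$, the sum equals $\chi(R)=\sum_n R^n\pr_1(T=n)$, and the fact $\chi(R)=s_0<\infty$ is the content of \eqref{chi_func}, established in the proof of Lemma \ref{Phi_func} via the functional equation $\chi(u)=uf(\chi(u))$ and a semicontinuity argument. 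Your proof as written establishes $1$-transience, which is weaker than $\alpha$-transience whenever $R>1$ (in particular throughout the non-fringe regime), and only coincides with the claim in the degenerate case $R=1$.

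The surrounding structural remarks you make (why $\vB$ is recurrent while $\vA$ is not, and why this forces the truncation scheme for $\cT^*$) are accurate and match the paper's intent, and your reduction to the transposed killed random walk is the right picture. But to repair the proof you must work at $\lambda=1/R$ and invoke (or reprove) $\chi(R)<\infty$; the elementary geometric-return bound cannot substitute for it.
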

\begin{proof}
Recall the function $\chi(u)=\sum_{n=0}^\infty u^n \pr_1(T=n)$ as defined in the proof of Lemma \ref{Phi_func}.

It is easily checked that for $s,r\in \{0,1,\dots\}$,
$$\sum_{n=0}^\infty (\mu^\vA_{\alpha})^{(n)}(s,r\times \RR_+)=\sum_{n=0}^\infty \frac{ (\vA^n)_{rs}}{\alpha^n}.$$
Take $(s,r)=(0,1)$ and note that $\varphi(1)>0$. Since $(\vA^n)_{10}=\pr_1(T=n)$ we can observe that
$$\sum_{n=0}^\infty (\mu^\vA_{\alpha})^{(n)}(0,1\times \RR_+)=\sum_{n=0}^\infty \frac{ \pr_1(T=n)}{\alpha^n}=\chi(1/\alpha).$$
Note that the Perron root of $\vA$ is $1/R$ and hence the corresponding Malthusian rate is $\alpha=1/R$.
It follows from the proof of Lemma \ref{Phi_func} (see \eqref{chi_func}) that $\chi(1/\alpha)=\chi(R)<\infty$, which immediately implies the $\alpha$-transience for $\mu^\vA$.

\end{proof}

\end{document}